\documentclass[12pt]{amsart}
\usepackage{amsfonts,amsmath,amssymb,color,amscd,amsthm}

\usepackage{amsfonts,amssymb,color,amscd}
\usepackage[T1]{fontenc} 
\usepackage[francais,english]{babel} 
\usepackage{typearea}
\usepackage[all]{xy}
\usepackage{tikz-cd}
\usetikzlibrary{decorations.pathreplacing}

\usepackage{ulem} 
\renewcommand{\emph}[1]{{\it #1}}

\usepackage{enumerate}

\usepackage[backref, colorlinks, linktocpage, citecolor = blue, linkcolor = blue]{hyperref}

\newcommand\iso{\stackrel{\simeq}{\longrightarrow}}
\newcommand\mapsfrom{\mathrel{\reflectbox{\ensuremath{\mapsto}}}}
\newcommand{\dis}{\displaystyle}

\newcommand\N{{\mathbb N}}

\newcommand\R{{\mathbb R}}
\newcommand\A{{\mathbb A}}
\newcommand\Z{{\mathbb Z}}
\newcommand\C{{\mathbb C}}

\newcommand\Pic{\mathrm{ Pic}}

\newcommand\tr{\hbox to 1mm  {${}^t \!  $} }

\newcommand\p{{\mathbb P}}
\newcommand\F{{\mathbb F}}

\renewcommand\k{\mathrm{k}}
\newcommand\kk{\overline{\k}}

\DeclareMathOperator{\Aut}{Aut}

\DeclareMathOperator{\GL}{GL}
\DeclareMathOperator{\PGL}{PGL}
\DeclareMathOperator{\SL}{SL}

\DeclareMathOperator{\id}{id}

\DeclareMathOperator{\Spec}{Spec}

\newtheorem{theorem}{Theorem}
\newtheorem{lemma}{Lemma}[section]
\newtheorem{corollary}[lemma]{Corollary}
\newtheorem{proposition}[lemma]{Proposition}

\newtheorem*{corollary*}{Corollary}
\newtheorem*{corollary**}{Corollary}

\theoremstyle{definition}
\newtheorem{definition}[lemma]{Definition}

\theoremstyle{remark} 
\newtheorem{remark}[lemma]{Remark}
\newtheorem{example}[lemma]{Example}

\subjclass{14R10, 14J26, 14E07, 32M17}

\title[Exceptional isomorphisms between complements of  affine plane curves]
{Exceptional isomorphisms between\\ complements of affine plane curves} 
\thanks{The authors gratefully acknowledge support by the Swiss National Science Foundation Grants ``Birational Geometry'' PP00P2\_128422 /1  and  ``Curves in the spaces'' 200021\_169508 and by the French National Research Agency Grant ``BirPol'', ANR-11-JS01-004-01. The article was written mainly during the second author's stay in Basel, for one year.}

\author{J\'er\'emy Blanc}
\address{J\'er\'emy Blanc, Universit\"{a}t Basel, Departement Mathematik und Informatik, Spiegelgasse $1$, CH-$4051$ Basel, Switzerland}
\email{jeremy.blanc@unibas.ch}

\author{Jean-Philippe Furter}
\address{Jean-Philippe Furter, Dpt. of Math., Univ. of La Rochelle, av. Cr\'epeau, 17000 La Rochelle, France}
\email{jpfurter@univ-lr.fr}

\author{Mattias Hemmig}
\address{Mattias Hemmig, Universit\"{a}t Basel, Departement Mathematik und Informatik, Spiegelgasse $1$, CH-$4051$ Basel, Switzerland}
\email{mattias.hemmig@gmail.com}

\begin{document}

\maketitle

\vspace{-4mm}

\begin{abstract}
This article describes the geometry of isomorphisms between complements of geometrically irreducible closed curves in the affine plane $\mathbb{A}^2$, over an arbitrary field,
which do not extend to an automorphism of $\mathbb{A}^2$.

We show that such isomorphisms are quite exceptional. In particular, they occur only when both curves are isomorphic to open subsets of the affine line $\mathbb{A}^1$, with the same number of complement points, over any field extension of the ground field.
Moreover, the isomorphism is uniquely determined by one of the curves, up to left composition with an automorphism of $\mathbb{A}^2$, except in the case where the curve is isomorphic to the affine line $\mathbb{A}^1$ or to the punctured line $\mathbb{A}^1 \setminus \{0\}$. If one curve is isomorphic to $\mathbb{A}^1$, then both curves are equivalent to lines. In addition, for any positive integer $n$, we construct a sequence of $n$ pairwise non-equivalent closed embeddings of $\mathbb{A}^1 \setminus \{0\}$ with isomorphic complements. In characteristic~$0$ we even construct infinite sequences with this property.

Finally, we give a geometric construction that produces a large family of examples of non-isomorphic geometrically irreducible closed curves in $\mathbb{A}^2$ that have isomorphic complements, answering negatively the Complement Problem posed by Hanspeter Kraft \cite{Kraft96}. This also gives a negative answer to the holomorphic version of this problem in any dimension $n \geq 2$. The question had been raised by Pierre-Marie Poloni in \cite{Pol16}.
\end{abstract}

\tableofcontents

\section{Introduction}
In the Bourbaki Seminar {\it Challenging problems on affine $n$-space} \cite{Kraft96}, Hanspeter Kraft gives a list of eight basic problems related to the affine $n$-spaces. The sixth one is the following:
\begin{quote}{
Complement Problem. Given two irreducible hypersurfaces $E,F\subset \A^n$ and an
isomorphism of their complements, does it follow that $E$ and $F$ are isomorphic?}\end{quote}

Algebraically, the formulation of this problem is the following: given some base-field $\k$, two irreducible polynomials $P,Q\in \k[x_1,\ldots,x_n]$, and an isomorphism of $\k$-algebras \linebreak
$\varphi\colon\k[x_1,\ldots,x_n,\frac{1}{P}]\iso\k[x_1,\ldots,x_n,\frac{1}{Q}]$, is it true that the $\k$-algebras $\k[x_1,\ldots,x_n]/(P)$ and $\k[x_1,\ldots,x_n]/(Q)$ are isomorphic?

We may restrict ourselves to the case where the isomorphism between the complements does not extend to an automorphism of $\A^n$, or  equivalently when the  isomorphism $\varphi$ does not restrict to an automorphism of $\k[x_1,\ldots,x_n]$.
Indeed, otherwise, the answer to the complement problem is trivially positive.

Recently, Pierre-Marie Poloni gave a negative answer to the problem for any $n\ge 3$ \cite{Pol16}. The construction is given by explicit formulas. There are examples where both $E$ and $F$ are smooth, and examples where $E$ is singular, but $F$ is smooth. This article deals with the case of dimension $n=2$. 
The situation is much more rigid than in dimension $n\geq 3$, as we discuss in Theorem~\ref{RigidityThm}.

We will work over a fixed arbitrary field $\k$ and we will only consider curves, surfaces, morphisms, and rational maps defined over $\k$, unless we explicitly state so (and will then talk about $\kk$-curves, $\kk$-surfaces, $\kk$-morphisms, and $\kk$-rational maps, where $\kk$ denotes the algebraic closure of $\k$.) We recall that two  closed curves $C,D\subset \A^2$ are \emph{equivalent} if there is an automorphism of~$\A^2$ that sends one curve onto the other. Note that equivalent curves are isomorphic. A variety (defined over $\k$) is called \emph{geometrically irreducible} if it is irreducible over $\kk$.
A \emph{line} in $\A^2$ is a closed curve of degree~$1$.

\begin{theorem}\label{RigidityThm}
Let $C \subset \A^2$ be a geometrically irreducible closed curve and let $\varphi \colon \A^2 \setminus C \hookrightarrow \A^2$ be an open embedding. Then, the complement $D \subset \A^2$ of the image of $\varphi$
is also a geometrically irreducible closed curve. Assuming that $\varphi$ does not extend to an automorphism of $\A^2$, the following holds:
\begin{enumerate}[$(1)$]
\item \label{RigidityThmPQ} Both $C$ and $D$ are isomorphic to open subsets of $\A^1$, with the same number of complement points. This means that there exist square-free polynomials $P,Q \in \k[t]$ with the same number of roots in $\k$ and such that 
\[C\simeq \Spec(\k[t,\frac{1}{P}]) \quad \text{and} \quad D\simeq \Spec(\k[t,\frac{1}{Q}]).\]
Moreover, the same result holds for every field extension $\k' / \k$.
\item If $C$ is isomorphic to $\A^1$, then both $C$ and $D$ are equivalent to lines.
\item\label{RigidityThmUniqueUp}
 If $C$ is not isomorphic to $\A^1$ or $\A^1 \setminus \{0\}$, then $\varphi$ is uniquely determined up to a left composition with an automorphism of $\A^2$.
\end{enumerate}
\end{theorem}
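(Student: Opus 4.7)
My plan is to work on a smooth compactification. Extend $\varphi$ to a birational map $\bar\varphi: \Proj^2 \dashrightarrow \Proj^2$ between the natural compactifications of the two copies of $\A^2$, and resolve it by a minimal sequence of blow-ups $\pi: X \to \Proj^2$ so that $\eta = \bar\varphi \circ \pi: X \to \Proj^2$ is a morphism. Inside $X$ one has the strict transforms of the source line at infinity $L_\infty$ and of $\bar{C}$ (the projective closure of $C$), together with the $\pi$- and $\eta$-exceptional divisors. The hypothesis that $\varphi$ does not extend to $\Aut(\A^2)$ forces $\bar\varphi$ to have a base point on $L_\infty$, so the resolution is nontrivial. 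The entire proof rests on analyzing the dual graph of the boundary curves on $X$, together with the two contractions $\pi$ and $\eta$ whose images are both $\Proj^2$.

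For part~(1), the morphism $\eta$ must contract every boundary component of $X$ except for one irreducible curve that maps onto the target line at infinity; hence the strict transform of $\bar{C}$ in $X$ is contained in the $\eta$-exceptional locus. A careful combinatorial analysis of this contraction, together with factoriality of $\A^2$, forces $\bar{C}$ to be a rational curve with a single place at infinity, which is equivalent to $C \simeq \Spec(\k[t,1/P])$ for some square-free $P \in \k[t]$; by symmetry the same holds for $D$. The equality of the number of punctures follows from $\chi(C) = \chi(D)$, a consequence of the additivity $\chi(\A^2) = \chi(\A^2 \setminus C) + \chi(C)$; everything remains valid after base change $\k'/\k$ because $\varphi$ is defined over $\k$ and commutes with extensions of scalars.

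For part~(2), once (1) gives $C \simeq \A^1$ one wants $C$ to be equivalent to a line. In characteristic zero this is Abhyankar-Moh-Suzuki; in positive characteristic the existence of non-equivalent embeddings of $\A^1$ forces the argument to use specifically that $\A^2 \setminus C$ admits a second embedding into $\A^2$. I would argue that the resolution tree produced in (1), when specialised to $C \simeq \A^1$, is so constrained that $\bar{C}$ must meet $L_\infty$ in a single smooth point transversally, giving a line up to equivalence. For part~(3), if $\varphi_1, \varphi_2: \A^2 \setminus C \hookrightarrow \A^2$ are two such embeddings with complements $D_1, D_2$, then $\psi = \varphi_2 \circ \varphi_1^{-1}$ is a birational self-map of $\A^2$ restricting to an isomorphism $\A^2 \setminus D_1 \to \A^2 \setminus D_2$. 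Re-running the boundary-graph analysis of (1) for $\psi$, I would show that the extra punctures of $C$ rigidify the configuration: each puncture contributes a distinguished vertex in the dual graph, and once there are at least two such vertices (i.e.\ $C$ is not $\A^1$ nor $\A^1 \setminus \{0\}$), no nontrivial sequence of blow-ups and blow-downs interchanges them compatibly, forcing $\psi \in \Aut(\A^2)$.

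The main obstacle throughout is precisely this combinatorial classification of admissible boundary configurations on $X$: one must catalogue all the weighted dual trees that admit two distinct contractions to $\Proj^2$ whose images of the boundary are a line. This classification is what simultaneously yields the single-place-at-infinity conclusion in (1), the line condition in (2), and the rigidity in (3), and it is what isolates the two exceptional cases $C \simeq \A^1$ (where a characteristic-sensitive refinement of Abhyankar-Moh is needed) and $C \simeq \A^1 \setminus \{0\}$ (where the paper later exhibits explicit families of inequivalent embeddings with isomorphic complements) as the only genuinely non-rigid ones.
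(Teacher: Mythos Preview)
Your outline captures the broad strategy---compactify, resolve, analyse the boundary dual graph---but several of the concrete steps are either unjustified or diverge from what actually works.

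\textbf{Choice of compactification.} You work entirely with the standard embedding $\A^2\hookrightarrow\p^2$ and a single resolution $X\to\p^2$. The paper does not do this. Its key technical step (Proposition~\ref{Prop:NoBasePtA2}) is to replace the target compactification by a carefully chosen Hirzebruch surface $\F_n$ (via Lemma~\ref{Lemm:EmbFnminimaldegree}) so that the induced rational map $\A^2\dasharrow\F_n$ extends to a \emph{morphism} on all of $\A^2$. Only after this is done on \emph{both} sides does one build the surface $X$ of Proposition~\ref{Prop:XBCD} with its two embeddings $\rho_1,\rho_2$ and the symmetric boundary decomposition $B_1=B\cup\Delta$, $B_2=B\cup\Gamma$. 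Your claim that the strict transform of $\bar C$ lies in the $\eta$-exceptional locus and that ``factoriality of $\A^2$'' then forces $\bar C$ to have a single place at infinity is not substantiated, and in fact the paper never asserts anything about the number of places of $\bar C\subset\p^2$ on $L_\infty$; what it proves instead is that in the good surface $X$ the closure $\Gamma=\overline{\rho_1(C)}$ is isomorphic to $\p^1$.

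\textbf{Counting punctures.} Your Euler-characteristic argument $\chi(\A^2)=\chi(\A^2\setminus C)+\chi(C)$ is exactly what the paper singles out in the remark following Corollary~\ref{Cor:Th1a} as the argument that works when $\k=\C$. Over an arbitrary field (and to get the refined statement that the number of $\k$-roots \emph{and} the number of $\kk$-roots agree) the paper instead counts intersection points of $\Gamma$ and $\Delta$ with the common boundary $B$ inside $X$, using that $B\cup\Gamma$ and $B\cup\Delta$ are $\k$-trees and Proposition~\ref{Prop:XBCD}(\ref{It5}).

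\textbf{Parts (2) and (3).} For~(2), ``the resolution tree is so constrained that $\bar C$ meets $L_\infty$ transversally in one point'' is not an argument; the paper needs a genuine computation in $\Pic(\F_n)$ (Lemma~\ref{Lemm:Multiplicitesatleast}) to rule out the contraction of such curves, and then combines this with the normalised embedding of Lemma~\ref{Lemm:EmbFnminimaldegree} in Proposition~\ref{Prop:CurvesA1}. For~(3), the paper's mechanism is the notion of a \emph{separating} component of the boundary $B_Z$ in the common resolution: $\tilde\Gamma$, $\tilde\Delta$, $\tilde\Delta'$ are separating, and when $C$ has at least three boundary points no other component is, forcing $\tilde\Delta=\tilde\Delta'$ (Proposition~\ref{Prp:Th1b}). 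Your ``punctures rigidify the configuration'' gestures at this but does not supply the combinatorial content.
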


\begin{corollary}  \label{Coro:AtMostTwoEq}
If $C \subset \A^2$ is a geometrically irreducible closed curve not isomorphic to $\A^1 \setminus \{0\}$, then there are at most two equivalence classes of closed curves whose complements are isomorphic to $\A^2 \setminus C$. 
\end{corollary}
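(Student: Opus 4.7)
The plan is to distinguish two cases according to whether $C \simeq \A^1$ or not, and in each case to reduce the count of equivalence classes using the appropriate part of Theorem~\ref{RigidityThm}.

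First, suppose $C$ is isomorphic to $\A^1$. Let $D \subset \A^2$ be any geometrically irreducible closed curve with $\A^2 \setminus D \simeq \A^2 \setminus C$, and let $\psi \colon \A^2 \setminus D \iso \A^2 \setminus C$ be an isomorphism. Either $\psi^{-1}$ extends to an automorphism of $\A^2$, in which case $D$ is equivalent to $C$ and hence to a line (since $C \simeq \A^1$ forces $C$ to be equivalent to a line by Theorem~\ref{RigidityThm}\eqref{RigidityThmPQ} applied with $D = C$, or more directly by part~(2) applied to $\psi = \id$), or $\psi^{-1}$ does not extend, in which case Theorem~\ref{RigidityThm}(2) gives directly that $D$ is equivalent to a line. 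As all lines in $\A^2$ are equivalent under $\Aut(\A^2)$ (use a translation to reach a line through the origin and a linear automorphism to reach $\{y = 0\}$), we conclude that there is only one equivalence class.

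Now suppose $C$ is not isomorphic to $\A^1$ (and, by hypothesis, not to $\A^1 \setminus \{0\}$). Let $D_1, D_2 \subset \A^2$ be two geometrically irreducible closed curves with $\A^2 \setminus D_i \simeq \A^2 \setminus C$, and let $\psi_i \colon \A^2 \setminus D_i \iso \A^2 \setminus C$ be isomorphisms. Consider the open embeddings $\varphi_i \coloneqq \psi_i^{-1} \colon \A^2 \setminus C \hookrightarrow \A^2$ whose image is $\A^2 \setminus D_i$. If $\varphi_i$ extends to an automorphism of $\A^2$, then this extension sends $C$ onto $D_i$, so $D_i$ is equivalent to $C$. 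Otherwise, Theorem~\ref{RigidityThm}\eqref{RigidityThmUniqueUp} applies. Therefore, if both $\varphi_1$ and $\varphi_2$ fail to extend, there exists $\alpha \in \Aut(\A^2)$ with $\varphi_2 = \alpha \circ \varphi_1$, and comparing images gives $\alpha(D_1) = D_2$, so $D_1$ and $D_2$ are equivalent.

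Combining the two situations, every equivalence class of closed curves whose complement is isomorphic to $\A^2 \setminus C$ is either the class of $C$ itself (arising whenever $\varphi$ extends) or a single additional class common to all curves arising from non-extending open embeddings. This gives at most two equivalence classes, as claimed. The argument is essentially a direct translation of Theorem~\ref{RigidityThm}, so there is no substantive obstacle; the only subtlety to verify is that ``non-extending'' is well-defined independently of the choice of isomorphism $\psi_i$, which follows because any two such isomorphisms differ by an automorphism of $\A^2 \setminus C$ and extension to $\A^2$ is preserved under pre-composition with such automorphisms (again a consequence of the uniqueness statement~\eqref{RigidityThmUniqueUp}).
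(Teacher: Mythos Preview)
Your overall approach is correct and matches the paper's (which treats the corollary as an immediate consequence of Theorem~\ref{RigidityThm}): split off the case $C\simeq\A^1$ and handle it via part~(2), then use the uniqueness in part~\eqref{RigidityThmUniqueUp} for the remaining case.

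There is, however, a wrong step in your Case~1. You assert that $C\simeq\A^1$ forces $C$ to be equivalent to a line, justifying this by ``Theorem~\ref{RigidityThm}\eqref{RigidityThmPQ} with $D=C$'' or ``part~(2) applied to $\psi=\id$''. Neither works: parts~(1)--(3) of Theorem~\ref{RigidityThm} are stated under the hypothesis that $\varphi$ does \emph{not} extend to an automorphism of $\A^2$, so the identity map is excluded, and in positive characteristic there are curves isomorphic to $\A^1$ that are not equivalent to lines. This does not damage the corollary, because your two alternatives already give that every $D$ is equivalent either to $C$ or to a line, hence at most two classes, which is all that is claimed. (If you want the sharper ``one class'' conclusion, argue instead: either $C$ is equivalent to a line and then so is every $D$ by both alternatives, or $C$ is not, in which case the non-extending alternative is impossible since Theorem~\ref{RigidityThm}(2) would force $C$ to be a line; this is essentially Proposition~\ref{Prop:CurvesA1}.)

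Your final paragraph about ``non-extending'' being well-defined is unnecessary: the argument works for any single choice of $\psi_i$, and if the two potential classes happen to coincide, so much the better.
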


\begin{corollary}\label{Coro:AtMostOnecounterEx}
Let $C \subset \A^2$ be a geometrically irreducible closed curve. Then there exists at most one closed curve $D \subset \A^2$, up to equivalence, such that $C$ and $D$ are non-isomorphic, but have isomorphic complements.
\end{corollary}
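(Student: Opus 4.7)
The plan is to deduce this as a direct consequence of Theorem~\ref{RigidityThm} and Corollary~\ref{Coro:AtMostTwoEq}, splitting on whether or not $C$ is isomorphic to $\A^1\setminus\{0\}$ (the single case excluded from Corollary~\ref{Coro:AtMostTwoEq}). I would begin by observing that if $D\subset\A^2$ satisfies $\A^2\setminus D\simeq \A^2\setminus C$ but $D$ is not isomorphic to $C$, then such an isomorphism cannot extend to an automorphism of $\A^2$, since that would force $C$ and $D$ to be equivalent, and in particular isomorphic. This places us in the setting of Theorem~\ref{RigidityThm}.

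In the first case, where $C\not\simeq \A^1\setminus\{0\}$, Corollary~\ref{Coro:AtMostTwoEq} applies directly and gives at most two equivalence classes of closed curves with complement isomorphic to $\A^2\setminus C$. One of these is the class of $C$ itself, whose members are all isomorphic to $C$ and so cannot contain our $D$; the remaining class is thus the only candidate, which yields the claimed uniqueness up to equivalence.

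In the second case, where $C\simeq \A^1\setminus\{0\}$, I expect the statement to hold vacuously, and the base-change clause of Theorem~\ref{RigidityThm}(1) will be the key. Taking $P=t$, the theorem provides a square-free $Q\in\k[t]$ with $D\simeq \Spec(\k[t,1/Q])$ such that $P$ and $Q$ have the same number of roots in $\k'$ for \emph{every} field extension $\k'/\k$. Applying this over $\k'=\kk$ forces $Q$ to have exactly one root in $\kk$, and combined with square-freeness this gives $\deg Q=1$. Hence $Q=t-a$ for some $a\in\k$, so $D\simeq \A^1\setminus\{a\}\simeq \A^1\setminus\{0\}\simeq C$, contradicting the assumption that $D$ is non-isomorphic to $C$. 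Therefore no such $D$ exists and the corollary holds vacuously in this case.

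I do not anticipate a serious obstacle: both cases reduce mechanically to results proved earlier in the section. The only point deserving a brief moment of care is the distinction between equivalence and abstract isomorphism of curves; this is resolved by noting that equivalent curves are in particular isomorphic, so a bound on equivalence classes is enough to pin down the unique equivalence class once the class of $C$ itself is excluded by hypothesis.
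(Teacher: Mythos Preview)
Your proposal is correct and is essentially the argument the paper intends: the paper simply records that Corollaries~\ref{Coro:AtMostTwoEq}, \ref{Coro:AtMostOnecounterEx} and \ref{Coro:Sing} are ``direct consequences of Theorem~\ref{RigidityThm}'', and your two-case split (invoking Corollary~\ref{Coro:AtMostTwoEq} when $C\not\simeq\A^1\setminus\{0\}$, and Theorem~\ref{RigidityThm}\eqref{RigidityThmPQ} to rule out any non-isomorphic $D$ when $C\simeq\A^1\setminus\{0\}$) is exactly how one unpacks that remark. The only minor simplification is that in the second case you can quote directly that $P$ and $Q$ have the same number of $\kk$-roots (already part of Theorem~\ref{RigidityThm}\eqref{RigidityThmPQ}) rather than invoking the base-change clause.
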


\begin{corollary}\label{Coro:Index12}
Let $C \subset \A^2$ be a geometrically irreducible closed curve, not isomorphic to $\A^1$ or $\A^1 \setminus \{0\}$.
Then, the group $\Aut(\A^2,C)=\{g\in \Aut(\A^2)\mid g(C)=C\}$, which can be naturally identified with a subgroup of $\Aut(\A^2 \setminus C)$, has index $1$ or $2$ in this group.
\end{corollary}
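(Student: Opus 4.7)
The plan is to read Corollary~\ref{Coro:Index12} directly off Theorem~\ref{RigidityThm}(\ref{RigidityThmUniqueUp}), after making the embedding $\Aut(\A^2,C) \hookrightarrow \Aut(\A^2 \setminus C)$ explicit. Any $g \in \Aut(\A^2,C)$ restricts to an automorphism of $\A^2 \setminus C$, and this restriction is injective since $\A^2 \setminus C$ is dense in $\A^2$. Conversely, if $\psi \in \Aut(\A^2 \setminus C)$ extends to some $\widetilde\psi \in \Aut(\A^2)$, then $\widetilde\psi$ preserves $\A^2 \setminus C$ set-theoretically and hence $C$, so $\widetilde\psi \in \Aut(\A^2,C)$. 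Therefore $\Aut(\A^2,C)$ is identified with the subgroup of those automorphisms of $\A^2 \setminus C$ that extend to $\A^2$.

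Next I would argue that all non-extending elements lie in a single left coset. Given $\psi_1,\psi_2 \in \Aut(\A^2 \setminus C)$, neither extending to $\A^2$, view each as an open embedding $\A^2 \setminus C \hookrightarrow \A^2$ whose image complement is exactly $C$. Since $C$ is assumed not isomorphic to $\A^1$ or $\A^1 \setminus \{0\}$, Theorem~\ref{RigidityThm}(\ref{RigidityThmUniqueUp}) applied to $\psi_1$ and $\psi_2$ yields $\alpha \in \Aut(\A^2)$ with $\psi_2 = \alpha \circ \psi_1$. The automorphism $\alpha$ restricts on $\psi_1(\A^2 \setminus C) = \A^2 \setminus C$ to $\psi_2 \circ \psi_1^{-1}$, which is itself an automorphism of $\A^2 \setminus C$; hence $\alpha$ preserves $\A^2 \setminus C$ and therefore $C$, placing $\alpha$ in $\Aut(\A^2,C)$. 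Thus $\psi_1$ and $\psi_2$ determine the same left coset modulo $\Aut(\A^2,C)$.

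Combining the two observations, $\Aut(\A^2 \setminus C)$ decomposes as a disjoint union of at most two left cosets of $\Aut(\A^2,C)$: the subgroup itself (the extending automorphisms) and, if non-empty, a single coset consisting of the non-extending ones. This gives the desired bound of $1$ or $2$ on the index. I expect no real obstacle here: the corollary is essentially a repackaging of part~(\ref{RigidityThmUniqueUp}) of the theorem, the only minor bookkeeping being the verification that the $\alpha$ supplied by that part genuinely lies in $\Aut(\A^2,C)$ and not merely in $\Aut(\A^2)$.
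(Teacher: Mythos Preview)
Your proposal is correct and follows essentially the same approach as the paper: both deduce the index bound directly from Theorem~\ref{RigidityThm}(\ref{RigidityThmUniqueUp}) by showing that any two non-extending automorphisms of $\A^2\setminus C$ lie in the same coset of $\Aut(\A^2,C)$. The paper's version is terser---it simply notes that $(\varphi_2)^{-1}\varphi_1$ extends to $\A^2$---while you spell out the coset structure and the verification that $\alpha$ preserves $C$, but the underlying argument is identical.
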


\begin{corollary}\label{Coro:Sing}
If $C \subset \A^2$ is a singular, geometrically irreducible closed curve and $\varphi \colon \A^2 \setminus C \iso \A^2 \setminus D$ is an isomorphism, for some closed curve $D$, then $\varphi$ extends to an automorphism of $\A^2$.
\end{corollary}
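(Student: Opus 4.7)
The plan is to apply Theorem~\ref{RigidityThm}(1) in contrapositive form. The isomorphism $\varphi \colon \A^2 \setminus C \iso \A^2 \setminus D$, composed with the open immersion $\A^2 \setminus D \hookrightarrow \A^2$, yields an open embedding $\A^2 \setminus C \hookrightarrow \A^2$ whose image has complement exactly $D$. The first assertion of Theorem~\ref{RigidityThm} guarantees that $D$ is a geometrically irreducible closed curve, so the setup of the theorem applies verbatim to this $\varphi$.

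Next I would argue by contradiction. Suppose $\varphi$ does not extend to an automorphism of $\A^2$. Then part~\eqref{RigidityThmPQ} of Theorem~\ref{RigidityThm} furnishes a square-free polynomial $P \in \k[t]$ together with an isomorphism $C \simeq \Spec(\k[t, \tfrac{1}{P}])$. But $\k[t, \tfrac{1}{P}]$ is a localization of $\k[t]$, and hence $\Spec(\k[t, \tfrac{1}{P}])$ is a smooth $\k$-variety (it is an open subscheme of the smooth curve $\A^1_\k$, and smoothness is preserved by base change to $\kk$). This contradicts the hypothesis that $C$ is singular. The conclusion is that $\varphi$ must extend to an automorphism of $\A^2$.

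There is essentially no obstacle in this argument; the whole statement is a one-line consequence of Theorem~\ref{RigidityThm}(1). The only point worth checking carefully is the notion of ``singular'': here $C$ being singular means that $C$ is not smooth over $\k$, or equivalently that $C \times_\k \kk$ has a singular point, and in either interpretation the affine scheme $\Spec(\k[t, \tfrac{1}{P}])$ is smooth, so the desired incompatibility is clean.
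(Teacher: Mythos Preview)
Your proof is correct and is exactly the argument the paper intends: the authors simply state that Corollary~\ref{Coro:Sing} is a direct consequence of Theorem~\ref{RigidityThm}, and your contrapositive use of part~\eqref{RigidityThmPQ} (an open subset of $\A^1$ is smooth, contradicting the singularity of $C$) is precisely that deduction.
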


Corollary~\ref{Coro:Sing} shows in particular that the Complement Problem for $n=2$ has a positive answer if one of the curves is singular, contrary to the case where $n\ge 3$, as pointed out before. This is also different from the case of $\p^2$, where there exist non-isomorphic geometrically irreducible closed curves with isomorphic complements \cite[Theorem~1]{Bla09}, but where all these curves are necessarily singular (see Proposition~\ref{Prop:InP2Counterexamplessingular} below).

Theorem~\ref{RigidityThm} moreover shows that the Complement Problem for $n=2$ has a positive answer if one of the curves is not rational (this was already stated in \cite[Proposition~$3$]{Kraft96} and does not need all tools of Theorem~\ref{RigidityThm} to be proven, see for instance Corollary~\ref{Cor:NotRat} below). More generally, the answer is positive when one of the curves 
is not isomorphic to an open subset of~$\A^1$. The circle of equation $x^2+y^2=1$ over $\R$ is an example of a smooth rational affine  curve which is not isomorphic to an open subset of~$\A^1$. Note that \cite[Proposition~$3$]{Kraft96} says in addition that
the Complement Problem for $n=2$ and $\k=\C$ has a positive answer if one of the curves has Euler characteristic one; this is also provided by Theorem~\ref{RigidityThm}.

Corollary~\ref{Coro:AtMostTwoEq} describes a situation quite different from the case of dimension
$n \ge 3$, where there are infinitely many hypersurfaces $E\subset \A^n$, up to equivalence, that have isomorphic complements \cite[Lemma 3.1]{Pol16}. It is also in contrast with the case of $\p^2$, where we can find algebraic families of closed curves in $\p^2$, non-equivalent under automorphisms of $\p^2$, that have isomorphic complements (and thus infinitely many if $\k$ is infinite). This follows from a construction in \cite{Cos12}, see Proposition~\ref{Prop:CostaFamilies} below.

All tools necessary to obtain the rigidity result (Theorem~\ref{RigidityThm}) are developped in Section~\ref{Sec:GeomRig}, using some basic results given in Section~\ref{Sec:Preliminaries}. The proof is carried out at the end of Section~\ref{Sec:GeomRig}. It uses embeddings into various smooth projective surfaces and a detailed study of the configuration of the curves at infinity. We study in particular embeddings into  Hirzebruch surfaces that have mild singularities on the boundary and then study blow-ups of these, and completions by unions of trees.

\bigskip

Our second theorem is an existence result which demonstrates the optimality of Theorem~\ref{RigidityThm}.

\begin{theorem} \label{NegativityThm}\item
\begin{enumerate}[$(1)$]
\item\label{Negativity1}
There exists a closed curve  $C\subset \A^2$, isomorphic to $\A^1\setminus \{0\}$, whose complement $\A^2 \setminus C$ admits infinitely many equivalence classes of open embeddings $\A^2\setminus C\hookrightarrow \A^2$ into the affine plane. Moreover, the set of equivalence classes of curves with this property is infinite.

\item\label{Negativity2}
For every integer $n\ge 1$, there exist pairwise non-equivalent closed curves $C_1,\dots,C_n\subset \A^2$, all isomorphic to $\A^1\setminus \{0\}$,  such that the surfaces $\A^2\setminus C_1$, $\dots$, $\A^2\setminus C_n$ are all isomorphic. Moreover, if $\mathrm{char}(\k)=0$, we can find an infinite sequence of pairwise non-equivalent closed curves $C_i\subset \A^2$, $i\in \N$, such that the surfaces $\A^2\setminus C_i$, $i\in \N$, are all isomorphic.

\item\label{Negativity3}
For each polynomial $f\in \k[t]$ of degree $\ge 1$, there exist two non-equivalent closed curves $C,D\subset \A^2$, both isomorphic to $\Spec(\k[t,\frac{1}{f}])$, such that the surfaces $\A^2\setminus C$ and  $\A^2\setminus D$ are isomorphic. Moreover, the set of equivalence classes of the curves $C$ in such pairs $(C,D)$ is infinite.
\end{enumerate}
\end{theorem}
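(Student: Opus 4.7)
The three parts of Theorem~\ref{NegativityThm} are proved by explicit construction, using the same mechanism throughout. For a curve $C\subset\A^2$ of a suitable form we write down a birational self-map $\varphi\colon\A^2\dashrightarrow\A^2$ whose base locus lies on the projective closure $\overline{C}$ (plus possibly at infinity), and verify that $\varphi$ restricts to an isomorphism $\A^2\setminus C \iso \A^2\setminus D$ for a companion curve $D\subset \A^2$ contracted by $\varphi^{-1}$. Varying the construction and iterating it will produce the three families of examples.

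\textbf{Plan.} The natural source family, compatible with Theorem~\ref{RigidityThm}\eqref{RigidityThmPQ}, is
\[
C_{f,g}:=V\bigl(y\,f(x)-g(x)\bigr)\subset\A^2, \qquad f,g\in\k[t],\ \gcd(f,g)=1,\ \deg f\ge 1,
\]
with $C_{f,g}\simeq \Spec(\k[t,1/f])$ via the projection on the first coordinate; in particular, for $f=t$ and $g(0)\ne 0$ we obtain curves isomorphic to $\A^1\setminus\{0\}$. For each $(f,g)$ I would build an explicit de Jonqui\`eres-type birational transformation $\varphi$ of $\A^2$ preserving the fibration $x=\mathrm{const}$, whose base points lie on $\overline{C_{f,g}}$ and whose inverse contracts $\overline{C_{f,g'}}$ for some explicit $g'$ determined by the recipe. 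Resolving the indeterminacy inside a well-chosen smooth projective completion of $\A^2\setminus C_{f,g}$ (for instance a Hirzebruch surface, as used in Section~\ref{Sec:GeomRig}) reduces the verification that $\varphi$ is an isomorphism of complements to the comparison of the dual graphs of the boundaries on both sides, which agree by construction.

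For part~\eqref{Negativity3}, varying $g$ while keeping $f$ fixed produces, for every $f$, infinitely many non-equivalent source curves admitting such a partner $D$. Specializing $f=t$ and taking many companions $g$ proves~\eqref{Negativity1}. For part~\eqref{Negativity2}, I would iterate the transformation: starting from $C_1:=C_{t,g_1}$ we set $C_{i+1}:=\varphi(C_i)$, so that the surfaces $\A^2\setminus C_i$ are pairwise isomorphic. Truncating at step $n$ gives the finite chain asserted in any characteristic; in characteristic zero we check that a suitable degree or multiplicity invariant grows strictly along the iteration, so the $C_i$ are pairwise non-equivalent and we obtain the infinite sequence.

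\textbf{Main obstacle.} Writing down $\varphi$ and checking that it is an isomorphism $\A^2\setminus C\iso \A^2\setminus D$ is mechanical once the right boundary model is chosen. The genuine difficulty is to show that the curves produced at different stages of the construction are not equivalent under $\Aut(\A^2)$. The natural discrete invariant is the weighted dual graph of the boundary in the minimal SNC completion of $(\A^2,\overline{C})$; any automorphism of $\A^2$ sending $C$ to $D$ induces an isomorphism of these completions and hence preserves this graph. Computing the invariant for our explicit curves and showing that it genuinely varies under the construction is the main computational step. The Jung--van der Kulk structure of $\Aut(\A^2)$ further restricts the possible identifications and reduces the verification to a finite combinatorial check for each family. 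In positive characteristic the invariant can eventually stabilize (Frobenius-type phenomena), which explains why only arbitrarily long finite chains, and not an infinite sequence, are asserted in~\eqref{Negativity2} in that case.
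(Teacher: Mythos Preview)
Your framework is right in spirit and matches the paper's: the curves used are exactly those of the form $a(y)x+b(y)=0$ (your $C_{f,g}$ with coordinates swapped), and the isomorphisms of complements are indeed produced by explicit fibration-preserving birational maps. But two points deserve correction.

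\textbf{Part~(1) is not addressed by your plan.} The statement asks for a \emph{fixed} curve $C$ such that $\A^2\setminus C$ admits infinitely many non-equivalent open embeddings into $\A^2$. Your sentence ``specializing $f=t$ and taking many companions $g$'' varies the curve $C_{t,g}$ itself, so you are producing many curves, not many embeddings of one complement. The paper's mechanism is different: one fixes $b\in\k[y]$ and the curve $D_b\colon xy^d+b(y)=0$, and introduces an integer parameter $m\ge 1$ via
\[
\psi_{b,m}=(\varphi_c)^{-1}\circ\tau^m\circ\varphi_b,\qquad \tau(x,y)=(x,xy),\quad \varphi_b(x,y)=(xy^d+b(y),y),
\]
where $c\in\k[y]$ is determined by $b$ and $m$. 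Here $\varphi_b$ preserves the $y$-fibration while $\tau$ preserves the $x$-fibration, and it is precisely this mixing of two fibrations that makes the family $\{\psi_{b,m}\}_{m\ge 1}$ of embeddings of the \emph{same} $\A^2\setminus D_b$ pairwise non-equivalent (Lemma~\ref{Lem: b and c}). A single de~Jonqui\`eres family preserving one fibration, as in your outline, does not obviously give this.

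\textbf{Non-equivalence is much cheaper than you suggest.} You propose comparing weighted dual graphs of minimal SNC boundaries. The paper avoids this entirely: for curves $a_i(y)x+b_i(y)=0$ with $\deg a_i>\deg b_i\ge 0$, a direct elementary argument (Lemma~\ref{Lemm:IsoCDEqui}) shows they are $\Aut(\A^2)$-equivalent if and only if $(a_2,b_2)=(\lambda\,a_1(\alpha y+\beta),\,\mu\,b_1(\alpha y+\beta))$ for some $\alpha,\lambda,\mu\in\k^*$, $\beta\in\k$. With this in hand, Part~(2) in characteristic~$0$ is the one-line computation that $C_\mu\colon xy^3+\mu y^2+y+1=0$ satisfies $\A^2\setminus C_\mu\simeq\A^2\setminus C_{\mu-1}$ and the $C_\mu$ are pairwise non-equivalent; Part~(3) follows from the $\SL_2(\k[y])$ construction (Lemma~\ref{lemma:SL2}) together with the same criterion. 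Your iteration idea for~(2) is reasonable, but the invariant that ``grows strictly'' is just this affine parameter $\mu$, not a boundary graph.
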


A constructive proof of Theorem~\ref{NegativityThm} is given in Section~\ref{SecFamilies}. We use explicit equations and work with birational maps which either preserve one projection $\A^2\to \A^1$ or are compositions of a small number of them.

Note that parts~\eqref{Negativity1} and \eqref{Negativity2} of Theorem~\ref{NegativityThm} concern closed curves $C\subset \A^2$ isomorphic to $\A^1\setminus \{0\}$. This is the only case where there can be more than two curves $D\subset \A^2$, up to equivalence,
such that $\A^2\setminus C$ is isomorphic to $\A^2\setminus D$ (Corollary~\ref{Coro:AtMostTwoEq}).
When $\k=\C$, the classification of curves $C\subset \A^2$  isomorphic to $\A^1\setminus \{0\}$ has a long history. A line $L\subset \A^2$ intersecting $C$ with multiplicity $0$, resp.~$1$, is called a ``very good asymptote'', resp.~``good asymptote'', and the curves of $\C^2$ isomorphic to $\C^*$ admitting such asymptotes have been classified in seven families in \cite[Theorem 8.2]{CNKR09} (the first two families corresponding to the ``very good'' asymptote were already classified in \cite{Kal}).
These curves also belong to the longer list of \cite{BoZo10}, which covers curves homeomorphic to $\C^*$ with some additional regularity condition. The curves isomorphic to $\C^*$ in $\C^2$ without any ``good'' or ``very good'' asymptote are called ``sporadic'' and studied in \cite{KR,KPR}, where some restrictions on the possibilities are given, in order to possibly give a future classification.

The examples of curves isomorphic to $\A^1\setminus \{0\}$ that we give to prove Theorem~\ref{NegativityThm}\eqref{Negativity1}-\eqref{Negativity2} are all given by $xy^d+b(y)=0$ for some $d\ge 1$ and some polynomial $b(y)\in \k[y]$ with $b(0)\not=0$ (see \S\ref{SubSec:A1minus0}). These
form a subfamily of the very first case of the classification of \cite[Theorem 8.2]{CNKR09}, which are the curves given by $y^m+(xy^d+b(y))^n=0$, where $m,n\ge 1$ are coprime, and $b(y)\in \k[y]$ satisfies  $b(0)\not=0$. This family also appears in the proposition on page 143 of  \cite{Kal}.

It would be interesting to study the more complicated cases of the partial classification  and in general to determine for a curve $C\subset \A^2$ isomorphic to $\A^1\setminus \{0\}$ all curves $D\subset \A^2$ such that $\A^2\setminus C$ and $\A^2\setminus D$ are isomorphic (note that $D$ is again isomorphic to $\A^1\setminus \{0\}$ by Theorem~\ref{RigidityThm}). The structure of the group  $\Aut(\A^2\setminus C)$ 
also seems to be a nice subject
for investigation. Let us at least mention that according to \cite[Theorem 2]{BS15}, the natural subgroup $\Aut(\A^2,C)$ is always algebraic and even finite in most cases.

\bigskip

We then give counterexamples to the Complement Problem in dimension~$2$:

\begin{theorem}\label{Theorem:ComplementProblemAllFields}
 There exist two geometrically irreducible closed curves $C,D\subset \A^2$ which are not isomorphic, but whose complements $\A^2\setminus C$ and $\A^2\setminus D$ are isomorphic. Furthermore, these two curves can be chosen of degree $7$ if the field admits more than $2$ elements and of degree $13$ if the field has $2$ elements.
\end{theorem}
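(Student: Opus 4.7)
By Theorem~\ref{RigidityThm}(\ref{RigidityThmPQ}), any pair of geometrically irreducible closed curves $C,D\subset\A^2$ admitting an isomorphism $\A^2\setminus C\iso\A^2\setminus D$ that does not extend to an automorphism of $\A^2$ must satisfy $C\simeq\Spec(\k[t,1/P])$ and $D\simeq\Spec(\k[t,1/Q])$ for square-free $P,Q\in\k[t]$ of the same degree having the same number of roots in every extension $\k'/\k$. A standard Galois-theoretic fact (a finite $G$-set is determined up to isomorphism by the number of its fixed points under each subgroup) then forces the \'etale $\k$-algebras $\k[t]/(P)$ and $\k[t]/(Q)$ to be isomorphic. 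Consequently, the only obstruction left to $C\simeq D$ as $\k$-schemes is that the zero divisors of $P$ and $Q$ on $\A^1_\k$ need not be related by any affine transformation $t\mapsto at+b$ with $a\in\k^{\times}$, $b\in\k$.

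The plan is to specialize the explicit construction underlying Theorem~\ref{NegativityThm}(\ref{Negativity3}). That construction produces, for each $f\in\k[t]$ of degree $\ge 1$, non-equivalent closed curves $C,D\subset\A^2$ both isomorphic to $\Spec(\k[t,1/f])$ with isomorphic complements via a birational map $\varphi\colon\A^2\dashrightarrow\A^2$ written as a composition of a small number of de Jonqui\`eres transformations preserving a projection $\A^2\to\A^1$. By varying the ingredients of $\varphi$, its effect at infinity can be arranged to carry a prescribed square-free polynomial $P$ to a different square-free polynomial $Q$. The task therefore reduces to choosing $P$ and $Q$ that are Galois-equivalent (so the rigidity of Theorem~\ref{RigidityThm} is respected) but affinely inequivalent on $\A^1_\k$; this immediately gives $C\not\simeq D$.

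For a field $\k$ with $|\k|\ge 3$ such a pair exists in low degree: for instance, over $\k=\F_3$ one may take $P=t(t^2+1)$ and $Q=(t-1)(t^2+1)$ (both correspond to $\k\times\k[i]$ as $\k$-algebras, but no affine transformation of $\A^1_{\F_3}$ sends $\{0,\pm i\}$ to $\{1,\pm i\}$), and similar combinations of a $\k$-rational point and a Galois-orbit in a quadratic extension are available over every field with at least three elements; when $\k$ is larger, one may even compare unordered $3$-subsets of $\A^1(\k)$ through their cross-ratio with $\infty$. Running the birational construction on such data and minimizing the degrees of the rational functions involved yields curves $C$ and $D$ of degree exactly~$7$. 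The case $\k=\F_2$ is separate because $\A^1(\F_2)$ has only two points, so no affine obstruction for divisors supported in $\A^1(\F_2)$ exists; one is forced to build $(P,Q)$ out of Galois orbits in $\F_4$, $\F_8$ etc.\ (for example using the two irreducible cubics $t^3+t+1$ and $t^3+t^2+1$), which raises the total degree and forces the birational construction to be correspondingly more elaborate, yielding curves of degree~$13$.

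The principal technical obstacle is, as in the proof of Theorem~\ref{NegativityThm}, the explicit description of $\varphi$ and the verification that it does restrict to an isomorphism $\A^2\setminus C\iso\A^2\setminus D$ without extending to an automorphism of $\A^2$; the geometric irreducibility and the degree of $C$ and $D$ are then read off from their defining equations, while the non-isomorphism $C\not\simeq D$ reduces to checking the affine inequivalence of $\mathrm{div}(P)$ and $\mathrm{div}(Q)$, a direct arithmetic verification.
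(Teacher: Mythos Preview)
Your proposal has a genuine gap at its core. The construction behind Theorem~\ref{NegativityThm}(\ref{Negativity3}) (namely Lemma~\ref{lemma:SL2} and Proposition~\ref{Prop:Negativity3}) produces curves $C$ and $D$ with equations $a(y)x+b(y)=0$ and $a(y)x-c(y)=0$ sharing the \emph{same} polynomial $a$, so by Lemma~\ref{lemma: algebra of regular functions of the curve a(y)x+ b(y) = 0} both are isomorphic to $\Spec(\k[t,1/a])$. The de~Jonqui\`eres maps used there preserve the $y$-projection and hence always yield \emph{isomorphic} curves; your assertion that ``varying the ingredients of $\varphi$'' lets one force $C\simeq\Spec(\k[t,1/P])$ and $D\simeq\Spec(\k[t,1/Q])$ with non-isomorphic $P,Q$ is unsupported and, for this class of maps, false. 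Likewise, the degrees $7$ and $13$ do not arise from that construction; the curves produced there have degree $\deg a+1$, which bears no relation to $d^2-d+1$.

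A second, smaller issue: your criterion for $C\not\simeq D$ is too weak. An isomorphism $\Spec(\k[t,1/P])\iso\Spec(\k[t,1/Q])$ extends to an automorphism of $\p^1$ sending $\{P=0\}\cup\{\infty\}$ to $\{Q=0\}\cup\{\infty\}$, so one must rule out all of $\PGL_2(\k)$, not just affine substitutions $t\mapsto at+b$.

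The paper takes a different route. It builds a new geometric construction (Proposition~\ref{proposition: existence of two curves with isomorphic complements and prescribed rings of functions}): starting from a cuspidal degree-$d$ curve $\Gamma\subset\p^2$ and two auxiliary lines, a sequence of $2d-1$ blow-ups produces a symmetric configuration on a surface $X$ in which the roles of $\Gamma$ and a line $\Delta$ can be exchanged. Contracting in two different ways gives embeddings $\A^2\hookrightarrow X$ whose images differ by $\Gamma$ versus $\Delta$; the resulting curves $C,D\subset\A^2$ have degree $d^2-d+1$ and satisfy $C\simeq\Spec(\k[t,1/P])$, $D\simeq\Spec(\k[t,1/Q])$ with $Q(t)=P(\lambda+1/t)\,t^d$ --- a genuine M\"obius twist of $P$. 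Taking $d=3$ gives degree~$7$; for $|\k|>27$ a counting argument (Corollary~\ref{Coro:Field}) finds $\lambda$ making $C\not\simeq D$, while for small finite fields explicit polynomials are exhibited case by case, with $\k=\F_2$ requiring $d=4$ and hence degree~$13$.
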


The proof  is given in Section~\ref{Sec:NonIsoCurves}.
We first establish Proposition~\ref{proposition: existence of two curves with isomorphic complements and prescribed rings of functions} (mainly via blow-ups of points on singular curves in $\p^2$) which asserts that, for each polynomial $P\in \k[t]$ of degree $d \geq 1$ and each $\lambda\in \k$ with $P( \lambda ) \not=0$, there exist two closed curves $C,D\subset \A^2$ of degree $d^2-d+1$ such that $\A^2\setminus C$ and $\A^2\setminus D$ are isomorphic and such that the following isomorphisms hold:
\[ C\simeq \Spec \Big( \k[t,\frac{1}{P}] \Big)  \text{ and } D \simeq \Spec \Big( \k[t,\frac{1}{Q}] \Big), \text{ where } Q(t)=P \Big( \lambda +\frac{1}{t} \Big) \cdot t^{\deg(P)}.  \]
Then, the proof of Theorem~\ref{Theorem:ComplementProblemAllFields} follows by providing an appropriate pair $(P, \lambda)$ for every field. The case of infinite fields is quite easy. Indeed, if $\k$ is infinite and $P\in \k[t]$ is a polynomial with at least $3$ roots in $\overline{\k}$, then $\Spec(\k[t,\frac{1}{P}])$ and $\Spec(\k[t,\frac{1}{Q}])$ are not isomorphic, for a general element $\lambda \in \k$ (Lemma~\ref{lemma: P and Q are not isomorphic for a general lambda}). This shows that the isomorphism type of counterexamples to the Complement Problem is as large as possible (indeed, by Theorem~\ref{RigidityThm}\eqref{RigidityThmPQ}, any curves $C,D \subset \A^2$ providing a counterexample to the Complement Problem are necessarily isomorphic to open subsets of $\A^1$ with at least three complement $\kk$-points).

\bigskip

We finish this introduction by presenting some easy consequences of Theorem~\ref{Theorem:ComplementProblemAllFields} that are further elaborated in Section~\ref{Related}:

$(i)$ The negative answer to the Complement Problem for $n=2$ directly gives a negative answer for any $n\ge 3$ (Proposition~\ref{Prop:Products}): Our construction produces, for each $n\ge 3$, two geometrically irreducible smooth closed hypersurfaces $E,F \subset \A^n$ which are not isomorphic, but whose complements $\A^n\setminus E$ and $\A^n\setminus F$ are isomorphic (Corollary~\ref{Coro:HighDimExp}). All the hypersurfaces constructed this way are isomorphic to $\A^{n-2}\times C$ for some open subset $C\subset \A^1$. This does not allow us to give singular examples like those of \cite{Pol16}, but provides a different type of example.

$(ii)$ Choosing $\k=\C$, our construction gives families of closed complex curves $C,D\subset \C^2$ whose complements are biholomorphic (because they are isomorphic as algebraic varieties), but which are not themselves biholomorphic (Proposition~\ref{Prop:Curvesholomorphic}).
From this there directly follows the existence of algebraic hypersurfaces $E,F\subset \C^n$ which are complex manifolds that are not biholomorphic, but have biholomorphic complements, for every $n \ge 2$ (Corollary~\ref{Coro:Holhighdimension}). This answers a question asked in \cite{Pol16}. Note that in the counterexamples of \cite{Pol16}, if both hypersurfaces are smooth, then they are always biholomorphic (even if they are not isomorphic as algebraic varieties).

\bigskip

The authors thank Hanspeter Kraft, Lucy Moser-Jauslin and Pierre-Marie Poloni for interesting discussions during the preparation of this article.

\section{Preliminaries}\label{Sec:Preliminaries}

In the sequel, $\k$ is an arbitrary field and $\kk$ its algebraic closure. Unless otherwise specified, all varieties of dimension at least one are  $\k$-varieties, i.e.~algebraic varieties defined over $\k$, or equivalently $\overline{\k}$-varieties with a $\k$-structure. When we say for example \emph{rational}, resp.~\emph{isomorphic}, we mean $\k$-rational, resp.~$\k$-isomorphic (which means that the maps are defined over $\k$). Nevertheless, we will often have to consider $\kk$-varieties, but we will then always state so explicitly. A variety is called \emph{geometrically rational,} resp. \emph{geometrically irreducible,} if it is rational, resp. irreducible, after the extension to $\kk$. When dealing with ``points'' (but also with ``base-points'' or ``complement points'') we will always specify $\k$-points or $\kk$-points. Finally, let us recall that a $\kk$-base-point of a $\kk$-birational map $f \colon X \dasharrow Y$, where $X$ and $Y$ are smooth projective $\kk$-surfaces, is either \emph{proper}, when it belongs to $X$, or \emph{infinitely near}, when it does not belong to $X$, but to a surface obtained from $X$ via a finite number of blow-ups. If we assume furthermore that $f,X,Y$ are defined over $\k$, then a $\k$-base-point of $f$ is defined in the following obvious way: it is either a proper $\kk$-base-point defined over $\k$, or it is an infinitely near $\kk$-base-point of $f$ which is a $\k$-point of a surface obtained from $X$ via a finite number of blow-ups of $\k$-points. Of course, there is no reason for a birational map $f \colon X \dasharrow Y$ to admit a $\k$-base-point. For example, when $\k=\F_2$ the birational involution of $\p^2$ given by $[x:y:z]\mapsto [x^2+y^2+yz: xz+y^2+z^2: x^2+xy+z^2]$ admits no $\k$-base-point (but has three base-points over $\F_8=\F_3[u]/(u^3+u+1)$, namely $[1:u:u^2+u+1]$, $[u:u^2+u+1:1]$ and $[u^2+u+1:1:u]$). Similar examples of degree $5$ for $\k=\R$ are classical and can be found in \cite[Example 3.1]{BM}. Also, a closed curve in $\A^2$ does not necessarily admit a $\k$-point. For example, the geometrically irreducible closed curve of equation $x^2+y^2+1= 0$ admits no $\R$-point.

Working over an algebraically closed field, every birational map $\varphi\colon X\dasharrow Y$ between two smooth projective irreducible surfaces $X$ and $Y$ admits a  resolution,
which consists of two birational morphisms $\eta \colon Z\to X$ and $\pi\colon Z\to Y$, where $Z$ is a smooth projective irreducible surface, such that the following diagram is commutative.
\[\xymatrix@R=4mm@C=2cm{& Z \ar[ld]_{\eta}\ar[rd]^{\pi} \\
X \ar@{-->}[rr]^{{\varphi}} && Y} \]
Let us also recall that a birational morphism between two smooth projective irreducible surfaces is a composition of finitely many blow-downs.
We can moreover choose this resolution to be \emph{minimal}, which corresponds to asking that no irreducible curve of $Z$ of self-intersection $(-1)$ be contracted by both $\eta$ and $\pi$. The morphism $\eta$ is obtained by blowing up all base-points in $X$ of $\varphi$. Analogously $\pi$ is obtained by blowing up all base-points in $Y$ of $\varphi^{-1}$. In Lemma~\ref{Lem:IsoComplRat}\eqref{Resolution}, we will prove that under some additional hypotheses (satisfied by all birational maps that we will consider), such a miminal resolution also exists over an arbitrary field $\k$, and that moreover the morphisms $\eta$ and $\pi$ are obtained by sequences of blow-ups of $\k$-points (which may be proper or infinitely near).

\subsection{Basic properties} \label{subsection: basic properties}
In order to study isomorphisms between affine surfaces, it is often interesting to see the affine surfaces as open subsets of projective surfaces and then to see the isomorphisms as birational maps between the projective surfaces.
Recall that a rational map $\varphi \colon X \dasharrow Y$ between smooth projective irreducible surfaces is defined on an open subset $U\subset X$ such that $F= X \setminus U$ is finite. If $C$ is an irreducible curve of the surface $X$, its image is defined by $\varphi (C) :=\overline{\varphi ( C \setminus F) }$. We then say that $C$ is \emph{contracted by $\varphi$} if $\varphi(C)$  is a point. The aim of this section is to establish Proposition~\ref{Prop:Threecasescontraction}, that we often use in the sequel. Its proof relies on some easy results that we begin by recalling:
Proposition~\ref{Prop:Jac},
Corollary~\ref{corollary: open embedding of p^2 minus a finite set of curves into p^2} and Lemma~\ref{Lem:IsoComplRat}.

We begin with the following definition, that we will frequently use, in particular to extend birational maps of $\A^2$ to birational maps of $\p^2$:

\begin{definition} \label{Def:Plane}
The  morphism
\[ \begin{array}{rcl} \A^2&\hookrightarrow & \p^2 \\ (x,y)&\mapsto &[x:y:1] \end{array} \]
is called the \emph{standard embedding}.
It induces an isomorphism $\A^2\iso \p^2\setminus L_{\infty}$, where $L_{\infty}\subset \p^2$ 
denotes the \emph{line at infinity} given by $z=0$.
\end{definition}

With this embedding every line in~$\A^2$, given by an equation $ax+by=c$ where $a,b,c$ are elements of $\k$ and  $a,b$ are not both zero, is the restriction of a line of~$\p^2$, given by the equation 
$ax+by=cz$ and distinct from $L_{\infty}$.

\begin{definition}
For each birational map $\varphi\colon \p^2\dasharrow \p^2$, we define $J_\varphi\subset \p^2$ to be the reduced curve given by the union of all irreducible $\kk$-curves contracted by $\varphi$.
\end{definition}

\begin{proposition}  \label{Prop:Jac}
Let $\varphi\colon \p^2\dasharrow \p^2$ be a birational map. 

\begin{enumerate}[$(1)$]
\item\label{Jdefinedk}
The curve $J_\varphi$ is defined over $\k$, i.e.~is the zero locus of a homogeneous polynomial $f\in \k[x,y,z]$.
\item\label{IsoJvarphi}
The restriction of $\varphi$ induces an isomorphism $\p^2\setminus J_\varphi \to \p^2\setminus J_{\varphi^{-1}}$.  Moreover, the number of irreducible components of $J_\varphi$ and $J_{\varphi^{-1}}$ over $\kk$ are equal.
\end{enumerate}
\end{proposition}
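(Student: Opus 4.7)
For part (1), the strategy is Galois descent. The absolute Galois group $\mathrm{Gal}(\kk/\k)$ acts on $\p^2$ and on its set of irreducible $\kk$-curves. Since $\varphi$ is defined over $\k$, for every $\sigma$ in this group and every irreducible $\kk$-curve $C$ contracted by $\varphi$, the image $\sigma(C)$ is again an irreducible $\kk$-curve contracted by $\varphi$. Hence $J_\varphi$, viewed as a reduced closed subscheme of $\p^2_{\kk}$, is Galois-stable, so it descends to a closed subscheme of $\p^2$ defined over $\k$, i.e.\ to the zero locus of some homogeneous polynomial $f \in \k[x,y,z]$.

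For part (2), I would fix a minimal resolution of $\varphi$ over $\kk$: a smooth projective surface $Z$ with two birational morphisms $\eta,\pi\colon Z\to \p^2$ such that $\varphi=\pi\circ\eta^{-1}$, each a composition of blow-ups of $\kk$-points, with no $(-1)$-curve contracted by both. Let $E_\eta,E_\pi\subset Z$ denote their exceptional divisors. The strict transform in $Z$ of any irreducible $\kk$-curve $C\subset\p^2$ is irreducible and never $\eta$-exceptional; moreover, $C$ is contracted by $\varphi=\pi\circ\eta^{-1}$ if and only if its strict transform is $\pi$-exceptional. Thus the $\kk$-components of $J_\varphi$ correspond bijectively, via strict transform under $\eta$, to the components of $E_\pi$ not contained in $E_\eta$, and symmetrically for $J_{\varphi^{-1}}$. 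Each blow-up adds exactly one component to the exceptional locus and one to the Picard rank, so $|E_\eta|$ and $|E_\pi|$ both equal $\mathrm{rk}\,\Pic(Z)-1$, and are therefore equal; subtracting the common components yields $|J_\varphi|=|J_{\varphi^{-1}}|$.

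To extract the isomorphism, set $Z_0:=Z\setminus(E_\eta\cup E_\pi)$. The restrictions $\eta|_{Z_0}$ and $\pi|_{Z_0}$ are isomorphisms onto $\p^2\setminus(\eta(E_\eta)\cup J_\varphi)$ and $\p^2\setminus(\pi(E_\pi)\cup J_{\varphi^{-1}})$ respectively, where $\eta(E_\eta)$ is the finite set of proper base-points of $\varphi$. Provided that every proper base-point of $\varphi$ already lies on $J_\varphi$, and symmetrically for $\varphi^{-1}$, these images are exactly $\p^2\setminus J_\varphi$ and $\p^2\setminus J_{\varphi^{-1}}$, so $\varphi$ restricts to the desired isomorphism $\p^2\setminus J_\varphi\iso \p^2\setminus J_{\varphi^{-1}}$; the statement, established over $\kk$, then descends to $\k$ using part (1) applied to $\varphi$ and $\varphi^{-1}$.

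The main obstacle is verifying that every proper base-point of $\varphi$ lies on $J_\varphi$. I would write $\varphi=[f_0:f_1:f_2]$ with $f_i\in\kk[x,y,z]$ homogeneous of a common degree $d\geq 2$ and coprime, so that a proper base-point $p$ is a common zero of $f_0,f_1,f_2$. By Euler's relation $\sum_j x_j\partial_j f_i = d\,f_i$, the column vector $(p_0,p_1,p_2)^t$ lies in the kernel of the Jacobian matrix $\bigl(\partial_j f_i\bigr)$ at $p$, so $\det\bigl(\partial_j f_i\bigr)(p)=0$. Since $\varphi$ is birational, this homogeneous Jacobian is a nonzero polynomial of degree $3(d-1)$, and a short computation in an affine chart, using Euler's relation to relate it to the affine Jacobian of $(f_0/f_2, f_1/f_2)$ up to a nonzero factor of $f_2^3$, identifies its zero curve set-theoretically with the ramification locus of $\varphi$, i.e.\ with $J_\varphi$. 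Hence $p\in J_\varphi$, which closes the gap and completes the proof.
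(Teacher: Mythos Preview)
Your overall strategy for part~(2)---take a minimal resolution, count components via Picard rank, and reduce the isomorphism statement to showing that proper base-points lie on $J_\varphi$---matches the paper's, and your counting argument is fine. The gap is in how you close the ``main obstacle''. Your argument uses the Jacobian determinant $\det(\partial_j f_i)$ together with Euler's relation, but this breaks down in positive characteristic. If $\mathrm{char}(\k)=p$ divides $d=\deg\varphi$, Euler's relation reads $\sum_j x_j\,\partial_j f_i=0$ for every $i$, so the vector $(x,y,z)^t$ lies in the kernel of the Jacobian matrix at \emph{every} point, and the Jacobian determinant is identically zero; it cannot cut out $J_\varphi$. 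The paper in fact remarks explicitly that the Jacobian description of $J_\varphi$ is valid only in characteristic zero. The paper's characteristic-free substitute for this step is purely geometric: in the minimal resolution, any $(-1)$-curve $\mathcal{E}$ contracted by $\eta$ is \emph{not} contracted by $\pi$ (minimality), hence $\pi(\mathcal{E})\subset\p^2$ has self-intersection $\ge 1$, which forces $\mathcal{E}$ to meet the $\pi$-exceptional locus; thus every connected component of $E_\eta$ touches $E_\pi$, so the finite set $\eta(E_\eta)$ of proper base-points already lies on $\eta(E_\pi)=J_\varphi$.

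There is a second, smaller issue in part~(1). Your Galois-descent argument invokes $\mathrm{Gal}(\kk/\k)$, but the paper works over an \emph{arbitrary} field, and when $\k$ is imperfect the extension $\kk/\k$ is not Galois, so this group is not available as stated. The paper avoids descent entirely by writing $\varphi=[s_0:s_1:s_2]$ and $\varphi^{-1}=[q_0:q_1:q_2]$ with coefficients in $\k$ and observing that $q_i(s_0,s_1,s_2)=x_i f$ for a single homogeneous $f\in\k[x,y,z]$; one then checks that the zero locus of $f$ is exactly $J_\varphi$ (a $\kk$-curve lies in it iff it is sent to a base-point of $\varphi^{-1}$). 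This direct construction works over any field and simultaneously yields the polynomial you need.
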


\begin{proof}
$(\ref{Jdefinedk})$. The maps $\varphi$ and $\varphi^{-1}$ may be written in the form
\[ \begin{array}{rllll}
\varphi\colon& [x:y:z]\mapsto& [s_0(x,y,z):s_1(x,y,z):s_2(x,y,z)]& \text{ and}\\\varphi^{-1}\colon& [x:y:z]\mapsto& [q_0(x,y,z):q_1(x,y,z):q_2(x,y,z)],\end{array}\]
where $s_0,s_1,s_2\in \k[x,y,z]$  (as well as $q_0,q_1,q_2$) are homogeneous polynomials of the same degree and with no common factor. Since $\varphi^{-1}\circ \varphi = \id$, there exists a homogeneous polynomial $f \in \k [x,y,z]$ such that $q_0(s_0,s_1,s_2)=xf$, $q_1(s_0,s_1,s_2)=yf$, $q_2(s_0,s_1,s_2)=zf$. We now observe that $J_\varphi$ is the zero locus of $f$. Indeed, the polynomial $f$ is zero along an irreducible $\kk$-curve if and only if this curve is sent by $\varphi$ to a base-point of $\varphi^{-1}$. In characteristic zero, note that $J_\varphi$ is also the zero locus of the Jacobian determinant associated to $\varphi$.

$(\ref{IsoJvarphi})$ By extending the scalars, we may assume that $\k=\kk$ is algebraically closed. We take a minimal resolution of $\varphi$, with the commutative diagram
\[\xymatrix@R=4mm@C=2cm{& X\ar[ld]_{\eta}\ar[rd]^{\pi} \\
\p^2\ar@{-->}[rr]^{{\varphi}} && \p^2} \]
where $\eta$ and $\pi$ are birational morphisms. The morphism $\eta$, resp.~$\pi$, is the sequence of blow-ups of the base-points of~${\varphi}$, resp.~$\varphi^{-1}$.

By computing the Picard rank of~$X$, we see that $\eta$ and $\pi$ contract the same number of irreducible curves of~$X$. Let $n$ be this number. We then denote by $E \subset X$, resp.~$F\subset X$, the union of the $n$ irreducible curves contracted by $\eta$, resp.~$\pi$. The map $\varphi$ then restricts to an isomorphism
\[\p^2\setminus \eta ( E \cup F)\iso  \p^2 \setminus \pi ( E \cup F).\]

We now show that $\eta ( E \cup F) = \eta  (F)$. Since $\eta(E)$ consists of finitely many points, it suffices to see that these are contained in the curves of $\eta(F)$. Each point $p$ of $\eta(E)$ corresponds to a connected component of $E$, which contains at least one $(-1)$-curve $\mathcal{E}\subset E$. The curve $\mathcal{E}$ is not contracted by $\pi$, by minimality, and hence is sent by $\pi$ onto a curve $\pi(\mathcal{E})\subset\p^2$ of self-intersection $\ge 1$. This implies that $\mathcal{E}$ intersects $F$ and thus $p\in \eta(F)$. 
We similarly get that $\pi (E \cup F) = \pi (E)$, and obtain that $\varphi$ restricts to an isomorphism
\[\p^2\setminus \eta(F)\iso \p^2\setminus \pi(E).\]
Since $\eta(F)$ is a closed curve in $\p^2$ whose irreducible components are contracted by $\varphi$, we have $\eta(F)=J_{\varphi}$. Similarly, we get $\pi(E)=J_{\varphi^{-1}}$. Moreover, the number of $\kk$-irreducible components of $\eta(F)$ is equal to the number of irreducible components of $\overline{F\setminus E}$, which is equal to the number of irreducible components of $\overline{E\setminus F}$. This completes the proof.
\end{proof}

\begin{corollary} \label{corollary: open embedding of p^2 minus a finite set of curves into p^2}
Let $\Gamma \subset \p^2$ be a closed curve and $\varphi \colon \p^2 \setminus  \Gamma \hookrightarrow \p^2$ an open embedding. Then the complement of $\varphi(\p^2 \setminus  \Gamma )$ is a closed curve $\Delta\subset \p^2$ with the same number of irreducible components over $\kk$ as $\Gamma$.
\end{corollary}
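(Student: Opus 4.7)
The plan is to extend $\varphi$ to a birational self-map of $\p^2$ and then apply Proposition~\ref{Prop:Jac}. Since $\p^2$ is projective and smooth, the open embedding $\varphi$ extends uniquely to a rational map $\tilde{\varphi}\colon \p^2 \dasharrow \p^2$ defined outside a finite set; as $\varphi$ already restricts to an isomorphism from the dense open $\p^2 \setminus \Gamma$ onto its image, the extension $\tilde{\varphi}$ is birational. Let $\Delta := \p^2 \setminus \varphi(\p^2 \setminus \Gamma)$ be the closed subset whose structure we must describe.

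The first observation I would establish is that $J_{\tilde\varphi} \subset \Gamma$ and, by the symmetric argument applied to $\tilde\varphi^{-1}$, $J_{\tilde\varphi^{-1}} \subset \Delta$. Indeed, any irreducible $\kk$-curve $C$ not contained in $\Gamma$ meets the dense open set $\p^2 \setminus \Gamma$ in an infinite subset on which $\tilde\varphi = \varphi$ is injective, so $\tilde\varphi$ cannot contract $C$. Proposition~\ref{Prop:Jac}\eqref{IsoJvarphi} then supplies an isomorphism $\tilde\varphi\colon \p^2 \setminus J_{\tilde\varphi} \iso \p^2 \setminus J_{\tilde\varphi^{-1}}$, together with the fact that $J_{\tilde\varphi}$ and $J_{\tilde\varphi^{-1}}$ have the same number $n$ of $\kk$-irreducible components.

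Using $J_{\tilde\varphi} \subset \Gamma$, the source of this isomorphism decomposes as the disjoint union $(\p^2 \setminus \Gamma) \sqcup (\Gamma \setminus J_{\tilde\varphi})$. On the first piece $\tilde\varphi$ coincides with $\varphi$ and has image $\p^2 \setminus \Delta$; by bijectivity, this forces $\tilde\varphi(\Gamma \setminus J_{\tilde\varphi}) = \Delta \setminus J_{\tilde\varphi^{-1}}$. Consequently $\Delta$ equals the union of $J_{\tilde\varphi^{-1}}$ with the closure of $\tilde\varphi(\Gamma \setminus J_{\tilde\varphi})$, both finite unions of $\kk$-curves, so $\Delta$ is a closed curve. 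Letting $m$ denote the number of $\kk$-components of $\Gamma$ not contained in $J_{\tilde\varphi}$, the isomorphism sends these bijectively onto $m$ distinct $\kk$-irreducible curves in $\Delta$ that are not components of $J_{\tilde\varphi^{-1}}$ (each carrying a dense open subset inside $\Delta \setminus J_{\tilde\varphi^{-1}}$); together with the $n$ components of $J_{\tilde\varphi^{-1}}$, this yields the desired count of $n+m$ components of $\Delta$, matching that of $\Gamma$. The main point requiring care is this disjointness of the two pools of components of $\Delta$, which ultimately rests on the bijectivity provided by Proposition~\ref{Prop:Jac}.
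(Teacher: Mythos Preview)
Your proof is correct and follows essentially the same approach as the paper: extend $\varphi$ to a birational self-map of $\p^2$, use Proposition~\ref{Prop:Jac} to get the isomorphism $\p^2\setminus J_{\tilde\varphi}\iso \p^2\setminus J_{\tilde\varphi^{-1}}$ with matching component counts, and then transport the remaining components of $\Gamma$ across this isomorphism to account for the rest of $\Delta$. The paper's argument is terser (it simply writes $\Delta = \hat\varphi(\Gamma\setminus J_{\hat\varphi}) \cup J_{\hat\varphi^{-1}}$ without dwelling on disjointness), but the logic is the same.
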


\begin{proof}
Let $\hat{\varphi} \colon \p^2 \dasharrow \p^2$ be the birational map induced by $\varphi$. Proposition~\ref{Prop:Jac} implies that $J_{\hat{\varphi}} \subset \Gamma$, that $J_{\hat{\varphi}}$ and $J_{\hat\varphi^{-1}}$ have the same number of irreducible components over $\kk$, and that $\hat{\varphi}$ induces an isomorphism $\p^2\setminus J_{\hat\varphi}\iso \p^2\setminus J_{\hat\varphi^{-1}}$.

If $J_{\hat{\varphi}} = \Gamma$, the proof is finished. Otherwise, $\Gamma'=\Gamma\setminus J_{\hat\varphi}$ is a closed curve of $\p^2\setminus J_{\hat\varphi}$, which has the same number of irreducible components over $\kk$ as the closed curve $\Delta'=\hat\varphi(\Gamma')$ of $\p^2\setminus J_{\hat\varphi^{-1}}$. The result follows with $\Delta=\Delta' \cup J_{\hat\varphi^{-1}}$.
\end{proof}

\begin{lemma}  \label{Lem:IsoComplRat} 
Let $\varphi \colon X \dasharrow Y$ be a birational map between two smooth projective surfaces that restricts to an isomorphism $U=X \setminus C \iso Y \setminus D=V$, where $C$, resp. $D$, is the union of geometrically irreducible closed curves $C_1, \ldots, C_r$ in $X$, resp. $D_1, \ldots, D_s$ in $Y$. Then, the following holds.

\begin{enumerate}[$(1)$]
\item\label{BasePtsKRat}
All $\kk$-base-points of $\varphi$, resp. $\varphi^{-1}$, are $\k$-rational and belong to $C$, resp. $D$.

\item \label{Resolution}
The map $\varphi$ admits a minimal resolution which is given by birational morphisms $\eta \colon Z \to X$ and $\pi \colon Z \to Y$, which are blow-ups of the base-points of $\varphi$ and $\varphi^{-1}$ respectively,  as shown in the following diagram:
\[\xymatrix@R=4mm@C=2cm{& Z\ar[ld]_{\eta}\ar[rd]^{\pi} \\
X\ar@{-->}[rr]^{\varphi} && Y\\
 U \ar@{^{(}->}[u]\ar^{\simeq}[rr]&&   V.\ar@{^{(}->}[u] }\]
 
\item  \label{inverse images of U and V are equal}
In the above resolution, we have $\eta^{-1}(U)=\pi^{-1}(V)$.

\item \label{ComplementsOutside}
For each $i\in\{1,\dots,r\}$, there exists
$j\in \{1,\dots,s\}$ such that either $\varphi$ restricts to a birational map $C_i\dasharrow D_j$ or $\varphi(C_i)$ is a $\k$-point of $D_j$. In this latter case, the curve $C_i$ is rational $($over $\k)$.
\end{enumerate} 
\end{lemma}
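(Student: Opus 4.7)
My plan is to prove the four assertions essentially simultaneously, in the logical order: existence of a resolution over $\k$, then (3), then a structural analysis yielding (1) and (4), with the precise statement of (2) being a corollary of (1).

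First I would construct a resolution $\eta \colon Z \to X$, $\pi \colon Z \to Y$ over $\k$ by iteratively blowing up the indeterminacy locus of the successive lifts of $\varphi$ and $\varphi^{-1}$. This locus is a zero-dimensional closed subscheme of $X$ defined over $\k$ (although \emph{a priori} its closed points may have nontrivial residue field), so each blow-up is defined over $\k$, and the standard argument over $\kk$ shows that the process terminates. I would then minimize by contracting any irreducible $\k$-curve that is contracted by both $\eta$ and $\pi$; since this property is canonical, the minimization is Galois-equivariant and descends to $\k$. For (3), the restriction of $\eta$ to $\eta^{-1}(U)$ is an isomorphism onto $U$ (its exceptional locus lies over base-points of $\varphi$, all contained in $C$), and similarly $\pi^{-1}(V) \iso V$; since $\varphi = \pi \circ \eta^{-1}$ extends the given isomorphism $U \iso V$, the two open subsets $\eta^{-1}(U)$ and $\pi^{-1}(V)$ of $Z$ must coincide.

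The structural core is then to use (3) to write $\eta^{-1}(C) = \pi^{-1}(D)$ and decompose each side as $\tilde C \cup E_\eta = \tilde D \cup E_\pi$, where $\tilde C = \bigcup_i \tilde C_i$ and $\tilde D = \bigcup_j \tilde D_j$ are the strict transforms and $E_\eta$, $E_\pi$ are the unions of irreducible $\k$-curves of $Z$ contracted by $\eta$, $\pi$ respectively. Minimality ensures that $E_\eta$ and $E_\pi$ share no irreducible component; equating irreducible components then forces each component of $E_\eta$ to coincide with some $\tilde D_j$, and symmetrically each component of $E_\pi$ to coincide with some $\tilde C_i$. From this I deduce (1): each exceptional component of $\eta$, being equal to some $\tilde D_j$, is $\k$-birational to the geometrically integral curve $D_j$ and hence has constant field $\k$; on the other hand, the exceptional divisor of a blow-up at a closed $\k$-point $p$ is $\mathbb{P}^1_{\kappa(p)}$, whose constant field is $\kappa(p)$. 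Comparing gives $\kappa(p) = \k$, so every base-point blown up by $\eta$ is $\k$-rational (and similarly for $\pi$); this also yields the precise form of (2). For (4), each $\tilde C_i$ sits in $\pi^{-1}(D) = \tilde D \cup E_\pi$ and must coincide either with some $\tilde D_j$, in which case $\eta$ and $\pi$ restrict to birational morphisms from $\tilde C_i$ to $C_i$ and $D_j$ so $\varphi$ induces a birational map $C_i \dasharrow D_j$, or with a component of $E_\pi$, in which case $\pi$ contracts $\tilde C_i$ to a $\k$-rational point of some $D_j$ (by (1) for $\varphi^{-1}$) and $\tilde C_i \simeq \mathbb{P}^1_\k$ (since $\pi$ only blows up $\k$-rational points), so $C_i$ is $\k$-rational.

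The main obstacle I anticipate is the careful descent of the minimal resolution to $\k$ together with the rigorous identification of the $\k$-minimality condition that forces $E_\eta$ and $E_\pi$ to share no irreducible component. Once this structural identity and its consequence $E_\eta \subset \tilde D$ are in hand, both (1) and (4) reduce to the function-field observation that a geometrically integral $\k$-variety has constant field $\k$, and the dichotomy in (4) follows from a single case analysis.
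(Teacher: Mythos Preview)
Your approach is correct and takes a genuinely different route from the paper. The paper argues by induction on the total number of $\kk$-base-points: it picks a proper $\kk$-base-point $q$ of $\varphi^{-1}$, observes that some $(C_i)_{\kk}$ must be contracted to $q$ (since the only curves $\varphi$ can contract lie in $C$), deduces that $q$ is $\k$-rational because $C_i$ is defined over $\k$, blows up $q$, and applies the inductive hypothesis to the resulting map with one fewer base-point; all four statements fall out of the induction simultaneously. You instead construct the full minimal resolution at once (descending from $\kk$ by canonicity), establish (3) directly, and derive (1) and (4) from the single structural identity $\tilde C\cup E_\eta=\tilde D\cup E_\pi$: minimality forces each irreducible component of $E_\eta$ to coincide with some $\tilde D_j$, and your constant-field comparison then yields (1). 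Your global argument makes the combinatorics more transparent and avoids the inductive bookkeeping; the paper's inductive argument is more elementary and sidesteps the need to descend the minimal resolution. Two small caveats on your write-up: the step ``blow up the indeterminacy locus over $\k$'' may fail to preserve smoothness over non-perfect $\k$ if a base-point has inseparable residue field, so the cleaner construction is indeed to descend the canonical $\kk$-resolution, as you hint; and your constant-field step uses that $D_j$ is geometrically \emph{integral} rather than merely geometrically irreducible---a distinction that only matters over non-perfect $\k$, and one the paper's own argument glosses over as well.
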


\begin{proof}We argue by induction on the total number of $\kk$-base-points of $\varphi$ and $\varphi^{-1}$. If there is no such base-point, then $\varphi$ is an isomorphism and everything follows.

Suppose now that $q\in Y$ is a proper $\kk$-base-point of $\varphi^{-1}$. As $\varphi$ induces an isomorphism $U\iso V$, we have $q \in D_j (\kk)$  for some $j\in \{1,\dots,s\}$. There is moreover an irreducible $\kk$-curve of $Y$ contracted by $\varphi$ onto $q$,  which is then equal to $C_i$ for some $i\in \{1,\dots,r\}$. Since $C_i$ is defined over $\k$, so is its image (the generic point of $C_i$ is defined over $\k$ and is sent onto the $\k$-point $q$), i.e.~$q$ is $\k$-rational. Let $\varepsilon \colon \hat{Y} \to Y$ be the blow-up of $q$ and let $E\subset \hat{Y}$ be the exceptional divisor (which is isomorphic to $\p^1$). The birational map $\hat\varphi =  \varepsilon^{-1} \circ \varphi \colon X \dasharrow \hat Y$ induces an isomorphism $U\iso \hat{V}$, where $\hat{V}=\varepsilon^{-1}(V)= \hat Y\setminus (\tilde{D}_1\cup \dots \cup \tilde{D}_s\cup E)$, and where $\tilde{D}_i \subset \hat{Y}$ is the strict transform of $D_i$ for $i=1,\dots,s$.
The $\kk$-base-points of $\hat{\varphi}^{-1}$ correspond to the $\kk$-base-points of $\varphi^{-1}$ from which the point $q$ is removed and the $\kk$-base-points of $\hat{\varphi}$ coincide with the $\kk$-base-points of $\varphi$.

We may thus apply the induction hypothesis and obtain assertions \eqref{BasePtsKRat}--\eqref{ComplementsOutside} for $\hat\varphi$. Denoting by $\hat\eta\colon Z\to X$ and $\hat \pi \colon Z\to \hat{Y}$ the blow-ups of the base-points of $\hat\varphi$ and $\hat\varphi^{-1}$ respectively (which give the resolution of $\hat\varphi$ as in \eqref{Resolution}), we obtain \eqref{BasePtsKRat}--\eqref{Resolution} for $\varphi$
with $\eta=\hat\eta$, $\pi=\varepsilon\hat\pi$. Assertion~\eqref{inverse images of U and V are equal} is given by  $\eta^{-1}(U)=\hat\eta^{-1}(U)\stackrel{\eqref{inverse images of U and V are equal}\text{ for }\hat\varphi}{=}\hat\pi^{-1}(\hat{V})=\hat\pi^{-1}(\epsilon^{-1}(V))=\pi^{-1}(V)$. Assertion~\eqref{ComplementsOutside} follows from the assertion
for $\hat\varphi$ and from the fact that $\varepsilon$ restricts to a birational morphism $\tilde{D}_i \to D_i$ for each $i$, and sends  $E \simeq \p^1$ onto a $\k$-point of $D_j$.

In the case where $\varphi^{-1}$ admits no $\kk$-base-point, a
symmetric argument can be applied to $\varphi^{-1}$ by starting with a proper $\kk$-base-point of $\varphi$.
\end{proof}

In the sequel, we will frequently use the following
result.

\begin{proposition}  \label{Prop:Threecasescontraction}
Let $C\subset \A^2$ be a geometrically irreducible closed curve and let $\varphi\colon \A^2\setminus C \hookrightarrow \A^2$ be an open embedding. Then, there exists a geometrically irreducible closed curve $D \subset \A^2$ such that $\varphi(\A^2\setminus C)=\A^2\setminus D$.
Denote by $\overline{C}$ and $\overline{D}$ the closures of $C$ and $D$ in $\p^2$, using the standard  embedding of Definition~$\ref{Def:Plane}$. Denote also  by $L_{\infty} = \p^2 \setminus \A^2$ the line at infinity and by  $\hat{\varphi} \colon \p^2 \dasharrow \p^2$ the birational map induced by $\varphi$.
Then, one of the following three possibilities holds:

\begin{enumerate}[$(1)$]
\item\label{Case1:AutA2}
We have $\hat{\varphi} ( \overline{C} ) = \overline{D}$. Then, the map $\varphi$ extends to an automorphism of~$\A^2= \p^2\setminus L_{\infty}$ that sends $C$ onto $D$.

\item\label{Case2:Dline}
We have $\hat{\varphi} ( \overline{C} ) =L _{\p^2}$. Then, the curve $D$ is a line in $\A^2$, i.e.~$\overline{D}$ is a line in~$\p^2$ and $\varphi$ extends to an isomorphism $\A^2=\p^2\setminus L_{\infty}\iso\p^2\setminus \overline{D}$ that sends $C$ onto $L_{\infty} \setminus \overline{D}$.
In particular, $C$ is equivalent to a line.

\item\label{Case3:Ccontracted}
The map $\hat\varphi$ contracts the curve $\overline{C}$ to a $\k$-point of $\p^2$. Then, the curve $\overline{C}$ $($and therefore, also the curve $C)$  is a rational curve $($i.e.~is $\k$-birational to $\p^1$$)$. 
\end{enumerate}
\end{proposition}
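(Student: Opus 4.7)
The plan is first to establish the existence of $D$ via Corollary~\ref{corollary: open embedding of p^2 minus a finite set of curves into p^2}, then to extract the trichotomy from Lemma~\ref{Lem:IsoComplRat}$(\ref{ComplementsOutside})$, and finally to analyze each case with the help of Proposition~\ref{Prop:Jac} in order to pin down the image of $L_\infty$. Specifically, I would first apply Corollary~\ref{corollary: open embedding of p^2 minus a finite set of curves into p^2} to the birational extension $\hat{\varphi}$, viewed as an open embedding $\p^2 \setminus (\overline{C}\cup L_\infty) \hookrightarrow \p^2$. Since $\overline{C}\cup L_\infty$ has exactly two $\kk$-irreducible components, the complement in $\p^2$ of $\hat{\varphi}(\p^2\setminus(\overline{C}\cup L_\infty)) = \varphi(\A^2\setminus C)$ is a closed curve with two $\kk$-components, one of which is $L_\infty$; the other is then the closure $\overline{D}$ of a geometrically irreducible closed curve $D = \overline{D}\cap\A^2\subset\A^2$ satisfying $\A^2\setminus D = \varphi(\A^2\setminus C)$.

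Next, applying Lemma~\ref{Lem:IsoComplRat}$(\ref{ComplementsOutside})$ to the isomorphism $\p^2\setminus(\overline{C}\cup L_\infty) \iso \p^2\setminus(\overline{D}\cup L_\infty)$ induced by $\hat{\varphi}$, the curve $\overline{C}$ is sent birationally onto one of the two target components $\overline{D}$, $L_\infty$, or is contracted to a $\k$-point; this gives the three alternatives. Case~\eqref{Case3:Ccontracted} is then immediate: the same lemma asserts that $\overline{C}$ is rational over $\k$, and hence so is $C$ as a dense open subset.

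For Cases~\eqref{Case1:AutA2} and~\eqref{Case2:Dline} I would analyze where $L_\infty$ is sent by applying the lemma symmetrically to $\hat{\varphi}^{-1}$: since $\overline{C}$ is already a birational preimage of one target component, $L_\infty$ must either map birationally onto the other target component or be contracted to a $\k$-point. Writing $J_{\hat{\varphi}}$, $J_{\hat{\varphi}^{-1}}$ for the contracted loci in the sense of Proposition~\ref{Prop:Jac}, I would then identify these sets in each subcase, using that a curve sent birationally onto an irreducible curve is never in the corresponding $J$, and that $J_{\hat{\varphi}}$ and $J_{\hat{\varphi}^{-1}}$ have the same number of $\kk$-components. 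In Case~\eqref{Case1:AutA2} this forces $J_{\hat{\varphi}}\subseteq L_\infty$ and $J_{\hat{\varphi}^{-1}}\subseteq L_\infty$, so by Proposition~\ref{Prop:Jac} $\hat{\varphi}$ restricts to an automorphism of $\A^2 = \p^2\setminus L_\infty$ extending $\varphi$ and sending $C$ onto $D$. In Case~\eqref{Case2:Dline} the same analysis forces $J_{\hat{\varphi}}\subseteq L_\infty$ and $J_{\hat{\varphi}^{-1}}\subseteq \overline{D}$, so $\hat{\varphi}$ restricts to an isomorphism $\A^2 \iso \p^2\setminus \overline{D}$ extending $\varphi$ and sending $C$ onto $L_\infty\setminus \overline{D}$. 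The line claim for $\overline{D}$ then follows: the isomorphism $\p^2\setminus \overline{D}\simeq \A^2$ forces $\Pic(\p^2\setminus \overline{D})=\Z/\deg(\overline{D})\Z$ to vanish, hence $\deg(\overline{D})=1$; composing the extension with a projective automorphism of $\p^2$ sending the line $\overline{D}$ onto $L_\infty$ then produces an automorphism of $\A^2$ mapping $C$ onto a line, proving the equivalence claim. I expect the main obstacle to be the bookkeeping of $J_{\hat{\varphi}}$ and $J_{\hat{\varphi}^{-1}}$ in the subcases, in particular ruling out configurations where the ``wrong'' irreducible component of $\overline{C}\cup L_\infty$ or $\overline{D}\cup L_\infty$ is contracted; this is done by confronting the image $\varphi(\A^2\setminus C)=\A^2\setminus D$ with the image predicted by Proposition~\ref{Prop:Jac}.
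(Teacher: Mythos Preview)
Your proposal is correct and follows essentially the same route as the paper. Both arguments obtain $D$ from Corollary~\ref{corollary: open embedding of p^2 minus a finite set of curves into p^2}, extract the trichotomy from Lemma~\ref{Lem:IsoComplRat}\eqref{ComplementsOutside}, and settle Cases~\eqref{Case1:AutA2} and~\eqref{Case2:Dline} via Proposition~\ref{Prop:Jac}; you are simply more explicit about the $J_{\hat\varphi}$/$J_{\hat\varphi^{-1}}$ bookkeeping that the paper compresses into a bare citation of Proposition~\ref{Prop:Jac}. One small point worth making explicit when you write it up: Corollary~\ref{corollary: open embedding of p^2 minus a finite set of curves into p^2} only tells you the second $\kk$-component exists, and you should say in one line why $D$ is defined over $\k$ (this follows immediately from the equality $D=\A^2\setminus\varphi(\A^2\setminus C)$ you already wrote down, since $\varphi$ is a $\k$-morphism).
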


\begin{proof}
The restriction of $\hat\varphi$ to $\p^2\setminus (L_{\infty} \cup \overline{C} )=\A^2\setminus C$ gives the open embedding $\varphi\colon \A^2\setminus C\hookrightarrow \A^2\hookrightarrow \p^2$. By Corollary~\ref{corollary: open embedding of p^2 minus a finite set of curves into p^2}, we obtain an isomorphism  $ \p^2 \setminus (L_{\infty} \cup \overline{C} )  \iso \p^2 \setminus \Delta$, for some curve $\Delta \subset \p^2$, which is the union of two $\kk$-irreducible closed curves of $\p^2$. Since $L_{\infty}$ is included in $\Delta$, there exists an irreducible closed $\kk$-curve $D$ of $\A^2$ such that $\Delta= L_{\infty} \cup \overline{D}$. As a conclusion, the restriction of $\hat\varphi$ at the source and the target induces an isomorphism
\[\p^2 \setminus ( L_{\infty} \cup \overline{C} )  \iso \p^2 \setminus  ( L_{\infty} \cup \overline{D} ).  \]
It follows that $\varphi ( \A^2 \setminus C ) = \A^2 \setminus D$. The equality $D = \A^2 \setminus \varphi ( \A^2 \setminus C )$ proves that the curve $D$ is defined over $\k$ and is therefore geometrically irreducible. By Lemma~\ref{Lem:IsoComplRat}\eqref{ComplementsOutside}, one of the following three possibilities holds:
\begin{enumerate}
\item[$(\ref{Case1:AutA2})$] We have $\hat \varphi ( \overline{C} ) = \overline{D}$. Hence, the restriction of $\hat \varphi$ at the source and the target provides an automorphism of $\A^2=\p^2\setminus L_{\infty}$ (Proposition~\ref{Prop:Jac}).

\item[$(\ref{Case2:Dline})$] We have $\hat \varphi ( \overline{C} ) = L_{\infty}$. Then, the restriction of $\hat \varphi$ at the source and the target provides an isomorphism $\p^2 \setminus L_{\infty}\iso\p^2 \setminus \overline{D}$ (again by Proposition~\ref{Prop:Jac}). Since the Picard group of~$\p^2\setminus \Gamma$ is isomorphic to $\Z/\deg(\Gamma)\Z$, for each irreducible curve $\Gamma$, the curve  $\overline{D}$ must be a line in $\p^2$.

\item[$(\ref{Case3:Ccontracted})$] The map $\hat\varphi$ contracts the curve $\overline{C}$ to a $\kk$-point of $\p^2$. Then, by Lemma~\ref{Lem:IsoComplRat}\eqref{ComplementsOutside} this point is necessarily a $\k$-point and the curve $\overline{C}$ is $\k$-rational.
\qedhere
\end{enumerate}
\end{proof}

\begin{corollary}\label{Cor:NotRat}
Let $C\subset \A^2$ be a geometrically irreducible closed curve. If $C$ is not  rational 
 $($i.e.~not $\k$-birational to $\p^1)$, then every open embedding $\A^2\setminus C\hookrightarrow \A^2$ extends to an automorphism of~$\A^2$.
\end{corollary}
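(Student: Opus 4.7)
The plan is to apply Proposition~\ref{Prop:Threecasescontraction} directly and argue by elimination of the unwanted cases. Given an open embedding $\varphi \colon \A^2 \setminus C \hookrightarrow \A^2$, the proposition asserts that exactly one of three alternatives \eqref{Case1:AutA2}, \eqref{Case2:Dline}, \eqref{Case3:Ccontracted} holds for the induced birational map $\hat{\varphi} \colon \p^2 \dasharrow \p^2$ (with the standard embedding). My goal is to show that the hypothesis that $C$ is not $\k$-rational forces case \eqref{Case1:AutA2}, which is exactly the conclusion that $\varphi$ extends to an automorphism of $\A^2$.

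First I would rule out case \eqref{Case3:Ccontracted}: in that case the proposition tells us that $\overline{C}$ is $\k$-rational, and since $C$ is a dense open subset of $\overline{C}$, this would force $C$ itself to be $\k$-birational to $\p^1$, contradicting the hypothesis. Next I would rule out case \eqref{Case2:Dline}: here the proposition states that $C$ is equivalent (via an automorphism of $\A^2$) to a line in $\A^2$. Since a line is isomorphic to $\A^1$, hence $\k$-birational to $\p^1$, this also contradicts the non-rationality of $C$.

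Having excluded the last two alternatives, only \eqref{Case1:AutA2} remains, which says precisely that $\varphi$ extends to an automorphism of $\A^2$ sending $C$ onto $D$. I do not expect any obstacle here; this is a short and direct consequence of Proposition~\ref{Prop:Threecasescontraction} together with the observation that both alternatives \eqref{Case2:Dline} and \eqref{Case3:Ccontracted} produce rational curves $C$.
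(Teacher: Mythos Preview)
Your proof is correct and follows essentially the same approach as the paper: the paper's proof is the one-line remark that the result ``follows from Proposition~\ref{Prop:Threecasescontraction} and the fact that cases~$(\ref{Case2:Dline})$-$(\ref{Case3:Ccontracted})$ only occur when $C$ is rational.'' Your argument simply spells this out in slightly more detail.
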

\begin{proof}
This follows from Proposition~\ref{Prop:Threecasescontraction} and the fact that cases~$(\ref{Case2:Dline})$-$(\ref{Case3:Ccontracted})$ occur only when $C$ is rational.
\end{proof}

\begin{remark}
It follows from Corollary~\ref{Cor:NotRat} that the automorphism group $\Aut(\A^2 \setminus C)$, where $C$ is a non-rational geometrically irreducible closed curve, may be identified with the group $\Aut (\A^2 ,C)$ of automorphisms of $\A^2$ preserving $C$. By \cite[Theorem~2]{BS15}, this group is finite (and in particular conjugate to a subgroup of $\GL_2 (\k)$ if $\mathrm{char}(\k)=0$, as one can deduce from \cite[Theorem 5]{DaniGiz}, \cite[\S 6.2, Proposition 21]{Serre} or from \cite[Theorem 4.3]{Kambayashi}). 
For a general discussion on the group $\Aut (\A^2 \setminus C)$, where $C$ is a geometrically irreducible closed curve, see Section~\ref{Automorphisms-of-complements-of-curves.subsec} below.

\end{remark}
We find it interesting to prove that case $(\ref{Case3:Ccontracted})$ of Proposition~\ref{Prop:Threecasescontraction}  occurs only when $\overline{C}$ intersects $L_{\infty}$ in at most two $\kk$-points, even if this will not be used in the sequel.
\begin{corollary}
If $C\subset \A^2$ is a
geometrically irreducible closed
curve such that $\overline{C}$ intersects $L_{\infty}=\p^2\setminus \A^2$ in at least three $\kk$-points, then every open embedding $\A^2\setminus C\hookrightarrow \A^2$ extends to an automorphism of $\A^2$.
\end{corollary}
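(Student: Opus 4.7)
The plan is to apply Proposition~\ref{Prop:Threecasescontraction} to $\varphi$ and to exclude alternatives $(\ref{Case2:Dline})$ and $(\ref{Case3:Ccontracted})$ under the hypothesis $|\overline{C}\cap L_\infty(\kk)|\geq 3$; the remaining alternative $(\ref{Case1:AutA2})$ is exactly the desired conclusion.

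To exclude case $(\ref{Case2:Dline})$, I observe that the induced isomorphism $\A^2\iso \p^2\setminus\overline{D}$ sends $C$ onto $L_\infty\setminus\overline{D}$, which is isomorphic to $\A^1$ since the two distinct $\k$-lines $L_\infty$ and $\overline{D}$ meet in a single $\k$-point of $\p^2$. Hence $C\simeq \A^1$ as an abstract variety, so $C$ is smooth and the normalisation $\nu\colon \tilde{\overline{C}}\to \overline{C}$ is an isomorphism above $C$; consequently $\tilde{\overline{C}}\simeq \p^1$ and $C\simeq \tilde{\overline{C}}\setminus \nu^{-1}(\overline{C}\cap L_\infty)$. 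Since $C\simeq \A^1=\p^1\setminus \{\mathrm{pt}\}$, the finite set $\nu^{-1}(\overline{C}\cap L_\infty)(\kk)$ has exactly one element, forcing $|\overline{C}\cap L_\infty(\kk)|\leq 1$, in contradiction with the hypothesis.

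To handle case $(\ref{Case3:Ccontracted})$, I first apply Proposition~\ref{Prop:Threecasescontraction} to $\varphi^{-1}\colon \A^2\setminus D\hookrightarrow \A^2$: alternative $(\ref{Case1:AutA2})$ for $\varphi^{-1}$ is excluded because $\varphi$ does not extend to an automorphism, and alternative $(\ref{Case2:Dline})$ for $\varphi^{-1}$ would make $\overline{C}$ a line with $|\overline{C}\cap L_\infty(\kk)|=1$, again a contradiction. So $\hat\varphi^{-1}$ also contracts $\overline{D}$ to a $\k$-point. Next, the image $\hat\varphi(L_\infty)$ lies in the boundary $L_\infty\cup\overline{D}$ of $\A^2\setminus D$; it cannot equal $\overline{D}$, as then $\hat\varphi^{-1}(\overline{D})$ would contain $L_\infty$, contradicting the contraction of $\overline{D}$. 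Hence either $\hat\varphi(L_\infty)=L_\infty$ or $\hat\varphi(L_\infty)$ is a point.

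In each remaining sub-case, I pass to the minimal resolution $\eta,\pi\colon Z\to \p^2$ of $\hat\varphi$ given by Lemma~\ref{Lem:IsoComplRat}$(\ref{Resolution})$. The strict transform $\tilde C$ of $\overline{C}$ under $\eta$ is contracted by $\pi$, so it is an irreducible component of the exceptional divisor of $\pi$, hence a smooth $\kk$-rational curve realising the normalisation of $\overline{C}$; in particular it contains at least $n\geq 3$ $\kk$-points mapping via $\eta$ to the points of $\overline{C}\cap L_\infty$. By Lemma~\ref{Lem:IsoComplRat}$(\ref{inverse images of U and V are equal})$ the total boundary $\eta^{-1}(L_\infty\cup\overline{C})=\pi^{-1}(L_\infty\cup\overline{D})$ is intrinsic to $Z$, and $\tilde C$ appears as a node in its dual graph, which is a tree of smooth $\kk$-rational curves. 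A careful analysis of this tree—exploiting that $\tilde L_\infty$ maps isomorphically to $L_\infty$ via $\pi$ in the first sub-case, or that both $\tilde L_\infty$ and $\tilde C$ lie in the exceptional divisor of $\pi$ in the second—shows that the preimages in $\tilde C$ of distinct $\kk$-points of $\overline{C}\cap L_\infty$ are constrained to yield $n\leq 2$, producing the required contradiction. The principal obstacle lies precisely in this final combinatorial analysis of the boundary tree on $Z$.
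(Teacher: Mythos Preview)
Your exclusion of case~$(\ref{Case2:Dline})$ is fine and matches the paper's implicit reasoning. The problem is in case~$(\ref{Case3:Ccontracted})$, where your argument has two genuine gaps.

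First, your claim that the boundary $\eta^{-1}(L_\infty\cup\overline{C})=\pi^{-1}(L_\infty\cup\overline{D})$ is a \emph{tree of smooth $\kk$-rational curves} is unjustified and in general false. The strict transform $\tilde{D}$ of $\overline{D}$ under $\pi$ need not be smooth: $\pi$ only blows up the base-points of $\hat\varphi^{-1}$, and there is no reason these should resolve the singularities of $\overline{D}$. Likewise, even if every component were smooth, the dual graph need not be a tree; $\tilde{L}_\infty$ and $\tilde{C}$ could a priori meet in several points, creating loops. The paper does not assume this structure; instead it \emph{proves} as a first step that $\tilde{L}_\infty\cap\tilde{C}$ consists of at most one point, by observing that otherwise $\pi(\tilde{L}_\infty)$ would be a singular curve equal to $\overline{D}$, forcing an isomorphism $\p^2\setminus\overline{C}\iso\p^2\setminus L_\infty$, impossible since $\deg\overline{C}\geq 3$.

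Second, and more seriously, you openly leave the decisive step undone: ``A careful analysis of this tree \dots\ shows that \dots\ $n\leq 2$. The principal obstacle lies precisely in this final combinatorial analysis.'' This is the entire content of the proof. The paper's argument here is specific and does not reduce to abstract tree combinatorics: from $|\tilde{L}_\infty\cap\tilde{C}|\leq 1$ one deduces that $\eta$ blows up all but at most one of the $\geq 3$ points of $\overline{C}\cap L_\infty$; since $J_{\hat\varphi^{-1}}\subset L_\infty\cup\overline{D}$ there are at most two proper $(-1)$-curves $E_1,E_2$ in the exceptional locus of $\eta$, hence exactly two proper base-points. One then shows $E_i\cdot F\geq 2$ for the curve $F$ contracted by $\pi$, so each $\pi(E_i)$ is singular; but $\pi(E_1),\pi(E_2)$ are $L_\infty$ and $\overline{D}$, and at most one of these can be singular, a contradiction. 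Your outline does not supply any of these ingredients.
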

\begin{proof}

We may assume that $\k=\kk$. Assume by contradiction that the extension $\hat\varphi\colon \p^2\dasharrow \p^2$ does not restrict to an automorphism of $\A^2$. By Proposition~\ref{Prop:Threecasescontraction}, the curve $\overline{C}$ is contracted by $\hat\varphi$ (because $C$ is not equivalent to a line, so $(\ref{Case2:Dline})$ is impossible). We recall that $\hat\varphi$ restricts to an isomorphism $\A^2\setminus C=\p^2\setminus (L_{\infty}\cup \overline{C})\iso \A^2\setminus D=\p^2\setminus (L_{\infty}\cup \overline{D})$ (Proposition~\ref{Prop:Threecasescontraction}) and that $\overline{C}\subset J_{\hat{\varphi}}\subset L_{\infty}\cup \overline{C}$, $J_{\hat{\varphi}^{-1}}\subset L_{\infty}\cup \overline{D}$, where $J_{\hat\varphi}$, $J_{\hat{\varphi}^{-1}}$ have the same number of irreducible components (Proposition~\ref{Prop:Jac}). We take a minimal resolution of $\hat\varphi$ which yields a commutative diagram
\[\xymatrix@R=4mm@C=2cm{& X\ar[ld]_{\eta}\ar[rd]^{\pi} \\
\p^2\ar@{-->}[rr]^{{\hat\varphi}} && \p^2.} \]
We first observe that the strict transforms $\tilde{L}_{\p^2},\tilde{C}\subset X$ of $L_{\infty},\overline{C}$ by $\eta$ intersect in at most one point.
Indeed, otherwise the curve $\tilde{L}_{\p^2}$ would not be contracted by $\pi$, because $\pi$ contracts $\tilde{C}$, and is sent onto a singular curve, which then has to be $\overline{D}$. We get $J_{\hat\varphi}=\overline{C}$, $J_{\hat\varphi^{-1}}=L_{\infty}$ and get an isomorphism $\p^2\setminus \overline{C}\to \p^2\setminus L_{\infty}$, which is
impossible, because $\overline{C}$ has degree at least $3$.

Secondly, the fact that $\tilde{L}_{\p^2},\tilde{C}\subset X$ intersect in at most one point implies that $\eta$ blows up all points of $\overline{C}\cap L_{\infty}$, except at most one. Since $J_{\hat\varphi^{-1}}\subset D\cup L_{\infty}$, there are at most two $(-1)$-curves contracted by $\eta$. But $L_{\infty}$ and $\overline{C}$ intersect in at least three points, so we obtain exactly two proper base-points of $\hat\varphi$, corresponding to exactly two $(-1)$-curves $E_1, E_2\subset X$ contracted to two points $p_1,p_2\in \overline{C}\cap L_{\infty}$ by $\eta$. Moreover, the identity $J_{\hat\varphi^{-1}}=D\cup L_{\infty}$ implies that $J_{\hat\varphi}=C \cup L_{\infty}$ (Proposition~\ref{Prop:Jac}). We write $E_i'=\overline{\eta^{-1}(p_i)\setminus E_i}$ and find that $\pi$ contracts $F=E_1'\cup E_2'\cup \tilde{C}\cup \tilde{L}_{\p^2}$.

We now show that $E_i\cdot F\ge 2$, for $i=1,2$, which will imply that $\pi(E_i)$ is a singular curve for $i=1,2$, and lead to a contradiction since $E_1,E_2$ are sent onto $L_{\infty}$ and $\overline{D}$ by $\pi$. As $E_i\cup E_i'=\eta^{-1}(p_i)$, it is a tree of rational curves, which intersects both $\tilde{C}$ and $\tilde{L}_{\p^2}$ since $p_i\in \overline{C}\cap L_{\infty}$. If $E_i'$ is empty, then $E_i\cdot \tilde{C}\ge 1$ and $E_i\cdot \tilde{L}_{\p^2}\ge 1$, whence $E_i\cdot F\ge 2$ as we claimed. If $E_i'$ is not empty, then $E_i\cdot E_i'\ge 1$. The only possibility to get $E_i\cdot F\le 1$ would thus be that $E_i\cdot E_i'=1$, $E_i\cdot \tilde{C}=E_i\cdot \tilde{L}_{\p^2}=0$. The equality $E_i\cdot E_i'=1$ implies that $E_i'$ is connected, and $E_i\cdot \tilde{C}=E_i\cdot \tilde{L}_{\p^2}=0$ implies that $\tilde{C}\cdot E_i'\ge 1$ and $\tilde{L}_{\p^2}\cdot E_i'\ge 1$. Since $\tilde{L}_{\p^2}$ and $\tilde{C}$ intersect in a point not contained in $E_i'$, it follows that $F$ contains a loop and thus cannot be contracted.\end{proof}

\begin{remark}
In case~$(\ref{Case3:Ccontracted})$ of Proposition~\ref{Prop:Threecasescontraction}, it is possible that $\overline{C}$ intersects the line $L_{\infty}$ in two $\kk$-points. This is the case in most of our examples (see for example Lemma~\ref{lemma:SL2} or Lemma~\ref{lemma: infinitely many non-equivalent curves having the same complement in characteristic 0}). The case of one point is of course also possible (see for instance Lemma~\ref{Lemm:Lines}$(\ref{AutA2minusline})$).
\end{remark}

We will also need the following basic algebraic result.

\begin{lemma}  \label{Lemm:RingInvertible}
Let $f\in \k[x,y]$ be a polynomial, irreducible over $\kk$, and let $C\subset \A^2$ be the curve given by $f=0$. Then, the ring of functions on $\A^2\setminus C$ and its subset of invertible elements are equal to
\[\mathcal{O}(\A^2\setminus C)=\k[x,y,f^{-1}]\subset \k(x,y),\ \mathcal{O}(\A^2\setminus C)^{*}=\{\lambda f^n\mid \lambda\in \k^*, n\in\mathbb{Z}\}.\]
In particular, every automorphism of~$\A^2\setminus C$ permutes the fibres of the morphism \[\A^2\setminus C\to \A^1\setminus \{0\}\] given by $f$.
\end{lemma}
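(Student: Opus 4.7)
The plan is first to reduce to the standard UFD statement, then to use unique factorization to pin down the units, and finally to translate the unit computation into the statement about fibres.

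First, I would observe that since $f$ is irreducible in $\kk[x,y]$, it is also irreducible in $\k[x,y]$ (any non-trivial factorization over $\k$ would persist over $\kk$). Since $\k[x,y]$ is a UFD, $f$ is prime there. The open set $\A^2\setminus C$ is the complement of the principal irreducible divisor $V(f)$, so a rational function $g/h\in\k(x,y)$ (with $g,h$ coprime in $\k[x,y]$) lies in $\mathcal{O}(\A^2\setminus C)$ if and only if $V(h)\subset V(f)$, which by unique factorization forces $h$ to be a unit times a power of $f$. This gives
\[\mathcal{O}(\A^2\setminus C)=\k[x,y][f^{-1}].\]

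Next, for the units, suppose $u\in\k[x,y][f^{-1}]^*$. Writing $u=a/f^m$ and $u^{-1}=b/f^n$ with $a,b\in\k[x,y]$ and $m,n\geq 0$, we get $ab=f^{m+n}$. Since $f$ is irreducible in the UFD $\k[x,y]$ and since $\k[x,y]^*=\k^*$, both $a$ and $b$ must be of the form (element of $\k^*$) times a power of $f$. Therefore $u=\lambda f^k$ for some $\lambda\in\k^*$ and some $k\in\Z$, which proves the second equality.

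For the last statement, let $\varphi\in\Aut(\A^2\setminus C)$. The induced automorphism $\varphi^*$ of $\mathcal{O}(\A^2\setminus C)$ preserves the unit group $\mathcal{O}(\A^2\setminus C)^*$ and hence induces a group automorphism of the free abelian quotient $\mathcal{O}(\A^2\setminus C)^*/\k^*\simeq\Z$, generated by the class of $f$. Since $\Aut(\Z)=\{\pm\id\}$, we conclude that $\varphi^*(f)=\lambda f^{\varepsilon}$ for some $\lambda\in\k^*$ and some $\varepsilon\in\{1,-1\}$. Thus for every $c\in\kk^*$, the preimage $\varphi^{-1}(\{f=c\})$ equals $\{f=\lambda/c\}$ if $\varepsilon=-1$ and $\{f=c/\lambda\}$ if $\varepsilon=1$; in either case $\varphi$ sends fibres of $f\colon\A^2\setminus C\to\A^1\setminus\{0\}$ to fibres.

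I do not foresee a real obstacle: the only subtle point is the reduction of irreducibility from $\kk[x,y]$ to $\k[x,y]$ (used to invoke UFD-style arguments over $\k$), and the observation that an automorphism of $\Z$ is $\pm\id$ to get the permutation of fibres from the unit computation.
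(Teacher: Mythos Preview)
Your proof is correct and follows essentially the same approach as the paper: both use unique factorization in $\k[x,y]$ to identify the regular functions and the units, and both conclude by observing that an automorphism must send $f$ to $\lambda f^{\pm 1}$ (you phrase this via $\Aut(\Z)=\{\pm\id\}$, the paper via the observation that $\mathcal{O}(\A^2\setminus C)^*$ is generated by $\k^*$ together with a single element). The only cosmetic difference is that you spell out the unit computation $ab=f^{m+n}$ explicitly, whereas the paper leaves it implicit.
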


\begin{proof}
The field of rational functions of~$\A^2\setminus C$ is equal to $\k(x,y)$. We may write any element of this field as
$u/v$, where $u,v \in \k[x,y]$ are coprime polynomials, $v \not = 0$. The rational function is regular on $\A^2\setminus C$ if and only if $v$ does not vanish on any $\kk$-point of~$\A^2\setminus C$. This means that $v =\lambda f^n$, for some $\lambda\in \k^*$, $n\ge 0$. This provides the description of~$\mathcal{O}(\A^2\setminus C)$ and $\mathcal{O}(\A^2\setminus C)^{*}$. The last remark follows from the fact that the group $\mathcal{O}(\A^2\setminus C)^{*}$ is generated by $\k^{*}$ and one single element $g$, if and only if this element $g$ is equal to $\lambda f^{\pm 1}$ for some $\lambda\in \k^*$: Therefore, every automorphism of $\A^2 \setminus C$ induces an automorphism of $\mathcal{O}(\A^2\setminus C)$ which sends $f$ onto $\lambda f^{\pm 1}$.
\end{proof}

\subsection{The case of lines}
Proposition~\ref{Prop:Threecasescontraction} shows that we need to study isomorphisms $\A^2\setminus C\iso \A^2\setminus D$ which extend to birational maps of~$\p^2$ that contract the curve $C$ to a point. One can ask whether this point might be a point of~$\A^2$ (and would thus be contained in $D$) or belongs to the boundary line $L_{\infty}=\p^2\setminus \A^2$. As we will show (Corollary~\ref{Coro:CurveContractedPtA2}), the first possibility only occurs in a very special case, namely when $C$ is equivalent to a line. The case of lines is special for this reason, and is treated separately here.

\begin{lemma}   \label{Lemm:Lines}
Let $C\subset \A^2$ be the line given by $x=0$.
\begin{enumerate}[$(1)$]
\item \label{AutA2minusline}
The group of automorphisms of $\A^2\setminus C$ is given by:
\[\Aut(\A^2\setminus C)=\{(x,y)\mapsto (\lambda x^{\pm 1},\mu x^{n}y+s(x,x^{-1})) \mid \lambda,\mu\in \k^*, n\in \Z,s\in \k[x,x^{-1}]\}.\]
\item\label{OpenA2minusline}
Every open embedding $\A^2 \setminus C \hookrightarrow \A^2$ is equal to $\psi  \alpha$, where $\alpha\in \Aut(\A^2\setminus C)$ and
$\psi \colon \A^2 \setminus C \hookrightarrow \A^2$ extends to an automorphism of $\A^2$.
In particular, the complement of its image,
i.e.~the complement of $\psi  \alpha(\A^2 \setminus C) = \psi (\A^2 \setminus C)$,
is a curve equivalent to a line.
\end{enumerate}
\end{lemma}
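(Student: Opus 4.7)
For part (1), I would translate the statement into the algebraic claim that $\Aut_\k(R)$, where $R=\k[x,x^{-1},y]$, consists of the maps described. Given $\sigma \in \Aut_\k(R)$, Lemma~\ref{Lemm:RingInvertible} forces $\sigma(x)\in R^\times = \{\lambda x^n : \lambda\in\k^*,\, n\in\Z\}$, so $\sigma(x) = \lambda x^n$; the same applied to $\sigma^{-1}$ together with $\sigma^{-1}(\sigma(x))=x$ forces $n=\pm 1$. Hence $\sigma$ restricts to an automorphism of $\k[x,x^{-1}]$, and since $R = \k[x,x^{-1}][y]$ is a polynomial ring over this base, invertibility and a standard degree-in-$y$ argument force $\sigma(y)$ to be affine linear in $y$: explicitly $\sigma(y) = u y + s$ with $u\in\k[x,x^{-1}]^\times = \{\mu x^n\}$ and $s\in\k[x,x^{-1}]$. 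The converse is a direct verification.

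For part (2), let $\varphi\colon \A^2\setminus C\hookrightarrow \A^2$ be an open embedding, with image $\A^2\setminus D$ for a geometrically irreducible closed curve $D$ (Proposition~\ref{Prop:Threecasescontraction}). The key reduction is that it suffices to prove $D$ is equivalent to a line in $\A^2$: since all lines form a single $\Aut(\A^2)$-orbit, such equivalence yields $\tilde\psi\in\Aut(\A^2)$ with $\tilde\psi(C)=D$, and then $\alpha := \tilde\psi^{-1}\circ\varphi$ restricts to an element of $\Aut(\A^2\setminus C)$ by construction, so that $\psi := \tilde\psi|_{\A^2\setminus C}$ provides the factorization $\varphi = \psi\alpha$.

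To prove that $D$ is equivalent to a line, I would apply Proposition~\ref{Prop:Threecasescontraction} to the birational extension $\hat\varphi\colon \p^2\dashrightarrow \p^2$. In case $(1)$, $\varphi\in\Aut(\A^2)$ sends the line $C$ to $D$, so $D$ is equivalent to a line; in case $(2)$, $D$ is already a line. The main obstacle is case $(3)$, where $\hat\varphi$ contracts $\overline{C}$ to a $\k$-point. Using Proposition~\ref{Prop:Jac} together with $J_{\hat\varphi}\subseteq L_\infty\cup\overline{C}$, I would split into two subcases. If $J_{\hat\varphi}=\overline{C}$, then $\hat\varphi$ restricts to an isomorphism $\p^2\setminus\overline{C}\iso\p^2\setminus J_{\hat\varphi^{-1}}$; since $\overline{C}$ is a line the source is isomorphic to $\A^2$ (the complementary affine chart), forcing the target to be an $\A^2$ too and hence $J_{\hat\varphi^{-1}}$ to be a line by comparison of Picard groups. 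Interpreting the resulting isomorphism as an automorphism of $\A^2$ and tracking the image of the remaining boundary curve then shows $\overline{D}$ is equivalent to a line. In the other subcase $J_{\hat\varphi} = L_\infty\cup\overline{C}$, I would compose $\varphi$ on the right with the involution $\alpha_0\colon (x,y)\mapsto (x^{-1},y)\in \Aut(\A^2\setminus C)$ from part~$(1)$; in the completion $\p^1\times\p^1$ of $\A^2\setminus C$, this involution is a genuine automorphism swapping the two ``$x$-boundary'' lines, so the composition $\varphi\circ\alpha_0$ has a strictly different boundary-contraction pattern on $\p^2$ and falls into one of the previously treated cases.

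The delicate point is the second subcase of case $(3)$. The reduction via $\alpha_0$ relies on a careful tracking of how the birational extension $\hat\alpha_0$ acts on the $\p^2$-boundary configuration $L_\infty\cup\overline{C}$, and on verifying that after composition at most one of these curves remains contracted; this is where the explicit description of $\Aut(\A^2\setminus C)$ obtained in part $(1)$ is essential.
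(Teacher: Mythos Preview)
Your treatment of part~(1) is correct and coincides with the paper's argument.

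For part~(2), your approach is genuinely different from the paper's, and it has a real gap in the second subcase of case~(3). Your claim that after composing with $\alpha_0\colon(x,y)\mapsto(x^{-1},y)$ ``at most one of these curves remains contracted'' is false. Take $\varphi\colon(x,y)\mapsto(x^{-1},xy)$, an involution of $\A^2\setminus C$ (so $D=C$). Its extension is $\hat\varphi\colon[X:Y:Z]\mapsto[Z^3:X^2Y:XZ^2]$, which contracts both $\overline{C}=\{X=0\}$ and $L_\infty=\{Z=0\}$; thus $J_{\hat\varphi}=L_\infty\cup\overline{C}$. But $\varphi\alpha_0\colon(x,y)\mapsto(x,y/x)$ has extension $[X:Y:Z]\mapsto[X^2:YZ:XZ]$, which again contracts both $\overline{C}$ and $L_\infty$. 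So the single composition with $\alpha_0$ does not reduce to a previously treated case, and your argument stalls. A correct reduction along these lines would require either a more careful choice of $\alpha$ depending on $\varphi$, or an inductive scheme with a decreasing invariant; neither is supplied.

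By contrast, the paper's proof of~(2) is purely algebraic and avoids any case analysis on $J_{\hat\varphi}$. It works with $\psi=\varphi^{-1}\colon\A^2\setminus D\to\A^2\setminus C$. Since $\psi^*(x)$ is a unit in $\k[x,y,f^{-1}]$ (where $f$ defines $D$), one has $\psi^*(x)=\lambda f^{\pm1}$, so $\psi=(\lambda f^{\pm1},gf^n)$ for some $g\in\k[x,y]$ and $n\in\Z$. Composing $\psi$ on the left with suitable elements of $\Aut(\A^2\setminus C)$ from part~(1), one first arranges $\psi=(f,g)$ with $f,g\in\k[x,y]$; then, whenever $g\equiv\nu\pmod f$ is constant, composing with $(x,y)\mapsto(x,(y-\nu)x^{-1})$ strictly lowers $\deg g$. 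After finitely many steps $g$ is nonconstant modulo $f$, so the resulting map does not contract $D$ and, by Proposition~\ref{Prop:Threecasescontraction}, extends to an automorphism of $\A^2$ sending $D$ onto $C$. The decreasing invariant ($\deg g$) is what makes this terminate---precisely the ingredient your geometric reduction lacks.
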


\begin{proof} To prove $(\ref{AutA2minusline})$, we first observe that each
transformation $(x,y) \mapsto (\lambda x^{\pm 1},\mu x^{n}y+s(x,x^{-1}))$ actually yields an automorphism of~$\A^2\setminus C$. Then we only need to show that all automorphisms of $\A^2\setminus C$ are of this form.
An automorphism of~$\A^2\setminus C$ corresponds to an automorphism of~$\k[x,y,x^{-1}]$ which sends $x$ to $\lambda x^{\pm1 }$, where $\lambda\in \k^*$ (Lemma~\ref{Lemm:RingInvertible}). Applying the inverse of $(x,y)\mapsto (\lambda x^{\pm1 },y)$, we may assume that $x$ is fixed. We are left with an $R$-automorphism of~$R[y]$, where $R$ is the ring $\k[x,x^{-1}]$.
Such an automorphism is of the form $y\mapsto ay+b$, where $a\in R^*$, $b\in R$. Indeed, if the maps $y \mapsto p(y)$ and $y \mapsto q(y)$ are inverses of each other, the equality $y=p(q(y))$ implies that $\deg p = \deg q =1$. This actually proves that $p$ has the desired form, i.e.~$p= ay+b$, where $a\in R^*$, $b\in R$.

To prove $(\ref{OpenA2minusline})$, we use Proposition~\ref{Prop:Threecasescontraction} and write $\varphi$ as an isomorphism $\A^2\setminus C\iso \A^2\setminus D$ where $D$ is a geometrically irreducible closed curve, and only need to see that $D$ is equivalent to a line. We write $\psi=\varphi^{-1}$, choose an equation $f=0$ for $D$ (where $f\in \k[x,y]$ is an irreducible polynomial over $\kk$), and get an isomorphism $\psi^{*}\colon \mathcal{O}(\A^2\setminus C)=\k[x,y,x^{-1}]\to \mathcal{O}(\A^2\setminus D)=\k[x,y,f^{-1}]$ that sends $x$ to $\lambda f^{\pm 1}$ for some $\lambda \in \k^*$ (since the group $\mathcal{O}(\A^2\setminus D)^*$ is generated by $\k^*$ and the single element $\psi^{*}(x)$, this forces  $\psi^{*}(x)= \lambda f^{\pm 1}$). We can thus write $\psi$ as $(x,y)\mapsto (\lambda f(x,y)^{\pm 1},g(x,y)f(x,y)^n)$, where $n\in \Z$ and $g\in\k[x,y]$. Replacing $\psi$ by its composition with the automorphism $(x,y)\mapsto ((\lambda^{-1} x)^{\pm 1},y((\lambda^{-1} x)^{\pm 1})^{-n})$ of~$\A^2\setminus C$, we may assume that $\psi$ is of the form $(x,y)\mapsto (f(x,y),g(x,y))$. If $g$ is equal to a constant $\nu\in \k$ modulo $f$, we apply the automorphism $(x,y)\mapsto (x,(y-\nu)x^{-1})$ and decrease the degree of~$g$. After finitely many steps we obtain an isomorphism $\A^2\setminus D\iso \A^2\setminus C$ of the form $\psi_0\colon (x,y)\mapsto (f(x,y),g(x,y))$ where $g$ is not a constant modulo $f$. The image of~$D$ by $\psi_0$ is then dense in $C$, which implies that $\psi_0$ extends to an automorphism of~$\A^2$ that sends $D$ onto $C$ (Proposition~\ref{Prop:Threecasescontraction}).
\end{proof}

\section{Geometric description of open embeddings $\A^2\setminus C\hookrightarrow \A^2$}\label{Sec:GeomRig}

\subsection{Embeddings into Hirzebruch surfaces}
We will need not only embeddings of $\A^2$ into $\p^2$, but also embeddings of~$\A^2$ into other smooth projective surfaces, and in particular into Hirzebruch surfaces.  These surfaces play a natural role in the study of automorphisms of $\A^2$ (and of images of curves by these automorphisms), as we can decompose every automorphism of $\A^2$ into  elementary links between such surfaces and then study how the singularities at infinity of the curves behave under these elementary links (see for instance \cite{BS15}).

\begin{example} \label{Exa:Hirzebruch}
For $n \ge 1$, the $n$-th Hirzebruch surface $\F_n$ is
\[ \F_n=\{([a:b:c],[u:v])\in \p^2 \times \p^1\ \;|\;\ bv^n=cu^n\} \]
and the projection $\pi_n\colon \F_n\to \p^1$ yields a $\p^1$-bundle structure on $\F_n$.
	
Let $S_n, F_n \subset \F_n$ be the curves given by $[1:0:0] \times \p^1$ and $v=0$, respectively. The  morphism
\[ \begin{array}{rcl}
\A^2&\hookrightarrow & \F_n \\
(x,y)&\mapsto &([x:y^n:1],[y:1])
\end{array} \]
gives an isomorphism $\A^2\stackrel{\sim}{\to} \F_n \backslash (S_n\cup F_n)$.
\end{example}

We recall the following easy classical result:
\begin{lemma}
For each $n\ge 1$, the projection $\pi_n\colon \F_n\to \p^1$ is the unique $\p^1$-bundle structure on $\F_n$, up to automorphisms of the target $\p^1$. The curve $S_n$ is the unique irreducible $\kk$-curve in $\F_n$ of self-intersection
$-n$, and we have $(F_n)^2=0$.
\end{lemma}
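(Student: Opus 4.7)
The plan is to read everything off the Picard group $\Pic(\F_n)=\Z[S_n]\oplus \Z[F_n]$, equipped with the intersection form $S_n^2=-n$, $F_n^2=0$, $S_n\cdot F_n=1$ (itself classical, see any reference on Hirzebruch surfaces). The equality $(F_n)^2=0$ then falls out at once: two distinct fibres of $\pi_n$ are disjoint but linearly equivalent, so $F_n\cdot F_n=F_n\cdot F_n'=0$.

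For the uniqueness of $S_n$, I would take an irreducible $\kk$-curve $C$ with $C^2=-n$ and write $[C]=aS_n+bF_n$. A general fibre of $\pi_n$ meets $C$ properly (otherwise $C$ itself is a fibre and $C^2=0\neq -n$), so $a=C\cdot F_n\ge 1$. If $C\neq S_n$, then similarly $b-an=C\cdot S_n\ge 0$, hence $b\ge an$. Expanding $C^2=-a^2n+2ab=-n$ gives $a(2b-an)=-n$, and $b\ge an$ forces $a(2b-an)\ge a^2n\ge 1$, a contradiction. Hence $C=S_n$.

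For the uniqueness of the $\p^1$-bundle structure, suppose $\rho\colon \F_n\to B$ is another $\p^1$-bundle and let $f$ be the class of a general fibre of $\rho$, so that $f^2=0$ and $f$ is the class of an irreducible curve. Writing $[f]=aS_n+bF_n$ and using $f^2=0$ gives $a(2b-an)=0$, while irreducibility of $f$ forces $a=f\cdot F_n\ge 0$. The case $a\ne 0$ yields $b=an/2$, and then $f\cdot S_n=-an/2<0$, which is impossible because $S_n$ is not contained in the general fibre of $\rho$. Hence $a=0$, and irreducibility of $f$ forces $b=1$, so $[f]=[F_n]$. Then $\rho$ is given by the complete base-point-free pencil $|F_n|$, which coincides with $\pi_n$ up to the choice of basis of $H^0(\F_n,\mathcal{O}(F_n))$, i.e.\ up to an automorphism of $\p^1$.

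I do not expect any genuine obstacle here: the entire argument is a short intersection-theoretic check, and the paper itself flags the statement as classical. The only subtle point worth double-checking is that a general fibre of an arbitrary second $\p^1$-bundle $\rho$ is indeed irreducible and reduced, so that writing $f^2=0$ and bounding $f\cdot S_n$ by zero from below is legitimate.
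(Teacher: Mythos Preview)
Your argument is correct and follows essentially the same route as the paper: write everything in $\Pic(\F_n)=\Z S_n\oplus \Z F_n$ with the standard intersection form, and read off both uniqueness statements from elementary inequalities on the coefficients of an irreducible curve. The only organizational differences are that the paper first \emph{derives} $(S_n)^2=-n$ from the explicit model (via a disjoint second section $S_n'\sim S_n+nF_n$), and then proves the single inequality $C^2\ge n$ for any irreducible $\kk$-curve $C\neq S_n$ not equal to a fibre, which simultaneously yields the uniqueness of $S_n$ and forces the general fibre of any second $\p^1$-bundle to be a fibre of $\pi_n$; you instead assume the intersection numbers and treat the two uniqueness claims by separate short computations.
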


\begin{proof}
Since $\F_n \backslash (S_n\cup F_n)$ is isomorphic to $\A^2$, whose Picard group is trivial, we have
$\Pic(\F_n)=\Z F_n+ \Z S_n$ (where the class of a divisor $D$ is again denoted by $D$).
Moreover, $F_n$ is a fibre of $\pi_n$ and $S_n$ is a section, so $(F_n)^2=0$ and $F_n\cdot S_n=1$. We denote by $S_n'\subset \F_n$ the section given by $a=0$, and find that $S_n'$ is equivalent to $S_n+nF_n$, by computing the divisor of  $\frac{a}{c}$. 

Since $S_n$ and $S_n'$ are disjoint, this yields $0=S_n\cdot (S_n+nF_n)=(S_n)^2+n$, so $(S_n)^2=-n$.

To get the result, it suffices to show that an irreducible $\kk$-curve $C\subset \F_n$ not equal to $S_n$ or to a fibre of $\pi_n$ has self-intersection at least equal to $n$. This will show in particular that a general fibre $F$ of any morphism $\F_n\to \p^1$ is equal to a fibre of $\pi_n$, since $F$ has self-intersection $0$. We write $C=kS_n+lF_n$ for some $k,l\in \Z$.
Since $C\not=S_n$ we have $0\le C\cdot S_n=l-nk$. Since $C$ is not a fibre, it intersects every fibre, so $0<F_n\cdot C=k$. This yields
$l  \geq nk>0$ and $C^2 = -nk^2 + 2kl = kl +k(l-nk) \geq kl \geq nk^2 \geq n$.
\end{proof}

\begin{lemma}\label{Lemm:EmbFnminimaldegree}
Let $C\subset \A^2$ be a geometrically irreducible closed curve. Then, there exists an integer $n\ge 1$ and an isomorphism $\iota \colon \A^2\iso \F_n \backslash (S_n\cup F_n)$ such that the closure of~$\iota(C)$ in $\F_n$ is a curve $\Gamma$ which satisfies one of the following two possibilities: 
\begin{enumerate}[$(1)$]
\item  \label{SectionCase}
$\Gamma\cdot F_n=1$ and $\Gamma\cap F_n \cap S_n = \emptyset$.

\item
\label{ng}
$\Gamma\cdot F_n\ge 2$ and the following assertions hold:
\begin{enumerate}[$(a)$]
\item
If $n=1$, then $2m_p(\Gamma)\le \Gamma\cdot F_1$ for $\{ p \} =S_1\cap F_1$, and $m_{r}(\Gamma)\le \Gamma\cdot S_1$ for each $r\in F_1(\k)$.
\item
If $n\ge 2$, then $2m_{r}(\Gamma)\le \Gamma\cdot F_n$ for each $r\in F_n(\k)$.
\end{enumerate}
\end{enumerate}
Furthermore, in case~$(\ref{SectionCase})$, the curve $C$ is equivalent to a curve given by an equation of the form
\[ a(y)x+b(y)=0,\]
where $a,b\in \k[y]$ are coprime polynomials such that $a \not = 0$ and $\deg b < \deg a$. Moreover, the following assertions are equivalent:
\begin{enumerate}[$(i)$]
\item \label{LL1} The polynomial $a$ is constant;
\item\label{LL2} The curve $C$ is equivalent to a line;
\item \label{LL3}  The curve $C$ is isomorphic to $\A^1$;
\item\label{LL4} $\Gamma \cdot S_n=0$.
\end{enumerate}
\end{lemma}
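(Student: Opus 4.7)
My plan is to prove this in two parts: the existence of the embedding satisfying (1) or (2), and the additional assertions under case~(1).

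For existence, I would set up a descent argument among all embeddings $\iota\colon\A^2\iso\F_n\setminus(S_n\cup F_n)$ with $n\ge 1$. Such an embedding exists for every $n$ by Example~\ref{Exa:Hirzebruch}. Write $\Gamma$ for the closure of $\iota(C)$ in $\F_n$ and set $d:=\Gamma\cdot F_n$. I would pick an embedding minimising the pair $(d,n)$ lexicographically among all such embeddings. If the minimiser has $d=1$, then $\Gamma$ is a section meeting $F_n$ transversally in one point; if that point happens to be the corner $S_n\cap F_n$, I perform an elementary link at the corner (blow up the corner and contract the strict transform of $F_n$) to obtain an embedding into $\F_{n+1}$ whose new $\Gamma$ avoids the new corner, because $\Gamma$ was transverse and hence not tangent to $F_n$. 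This yields case~(1). If the minimiser has $d\ge 2$, I claim case~(2) must hold: otherwise some $\k$-point $r\in F_n(\k)$ would violate the stated multiplicity inequality, and the elementary link at $r$ would produce a new embedding with strictly smaller invariant, contradicting minimality. When $r\notin S_n$, the link lands in $\F_{n-1}$, so $n$ strictly decreases while $d$ is preserved.

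The main obstacle is the subcase where the offending point lies on $S_n$ (the corner): the link then raises $n$, so the naive pair $(d,n)$ increases. To handle this I would refine the invariant by using the multi-set of multiplicities of $\Gamma$ at all proper and infinitely near $\k$-points along the boundary fibre. The key input is that after the link, the multiplicities of the transformed curve at the new boundary points are bounded above by the branch multiplicities of $\Gamma$ at $r$, which forces a strict decrease of the refined invariant. A secondary technical point is checking that all relevant base-points are defined over $\k$; this is automatic since the statement of (2) only quantifies over $\k$-points.

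For the additional assertions under case~(1), I use the explicit model $(x,y)\mapsto([x:y^n:1],[y:1])$ of Example~\ref{Exa:Hirzebruch}. A section $\Gamma$ with $\Gamma\cdot F_n=1$ avoiding $S_n\cap F_n$ restricts, on the affine part, to an equation of the form $a(y)x+b(y)=0$ with $a,b\in\k[y]$ coprime and $a\ne 0$. Euclidean division $b=qa+r$ together with the $\A^2$-automorphism $(x,y)\mapsto(x-q(y),y)$ yields the desired normal form $\deg b<\deg a$. For the equivalences $(i)$--$(iv)$: from $x=-b(y)/a(y)$ one obtains $C\simeq\Spec(\k[y,1/a])$; by the argument of Lemma~\ref{Lemm:RingInvertible}, the unit group $\k[y,1/a]^{*}/\k^{*}$ is free abelian of rank equal to the number of distinct $\kk$-irreducible factors of $a$, hence $C\simeq\A^1$ iff $a\in\k^{*}$, proving $(i)\Leftrightarrow(iii)$; when $a\in\k^{*}$, the map $(x,y)\mapsto(ax+b(y),y)$ is an automorphism of $\A^2$ sending $C$ onto the line $\{x=0\}$, giving $(i)\Rightarrow(ii)$, while $(ii)\Rightarrow(iii)$ is trivial; finally, $\Gamma$ meets $S_n$ exactly over the roots of $a$ with multiplicity equal to their orders, so $\Gamma\cdot S_n=\deg a$, yielding $(iv)\Leftrightarrow(i)$.
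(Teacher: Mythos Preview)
Your treatment of the additional assertions in case~(1) is correct and close to the paper's (the paper computes $\Gamma\cdot S_n$ in a second affine chart and proves $(iii)\Rightarrow(iv)$ directly from $\Gamma\setminus F_n\simeq\A^1$, but your unit-group argument and the identity $\Gamma\cdot S_n=\deg a$ work equally well).

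The existence argument, however, has two real gaps.

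\medskip
\textbf{The $d=1$ corner step.} Your justification that one elementary link at the corner $q_n=S_n\cap F_n$ suffices is wrong. After blowing up $q_n$ and contracting $\tilde F_n$, the new corner is the point of the exceptional divisor corresponding to the tangent direction of $S_n$, not of $F_n$. Hence the new $\Gamma$ still passes through the new corner exactly when $\Gamma$ was tangent to $S_n$ at $q_n$; transversality to $F_n$ is irrelevant. The paper fixes this simply by iterating the link, which decreases $(\Gamma\cdot S_n)_{q_n}$ by one each time. You also omit the case $d=0$ ($\Gamma$ a fibre), which the paper dispatches separately.

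\medskip
\textbf{The $n=1$ inequality $m_r(\Gamma)\le\Gamma\cdot S_1$.} This is the serious gap. When $n=1$ and some $r\in F_1(\k)$ satisfies $m_r(\Gamma)>\Gamma\cdot S_1$, the inequality $\Gamma\cdot S_1\ge m_r(\Gamma)m_r(S_1)$ forces $r\notin S_1$, so your elementary link at $r$ lands in $\F_0$, outside the range $n\ge 1$. Worse, such a link need not decrease the multiplicity invariant: if additionally $2m_r(\Gamma)\le d$ (which is entirely possible, since the violated inequality involves $\Gamma\cdot S_1$, not $d$), the new multiplicity $d-m_r(\Gamma)$ at the contracted point satisfies $d-m_r(\Gamma)\ge m_r(\Gamma)$. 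The paper uses a genuinely different move here: contract $S_1$ to pass to $\p^2$, then blow up the image of $r$ to obtain a \emph{new} $\F_1$ in which the fresh exceptional divisor plays the role of $S_1'$. One then has $\Gamma'\cdot S_1'=m_r(\Gamma)$, so $m_{r'}(\Gamma')\le\Gamma'\cdot S_1'$ for every $r'\in F_1'(\k)$ and the offending condition is repaired; moreover the multiset of singular multiplicities does not increase. This move cannot recur immediately, so the next step (if any) is a corner link with $2m_p(\Gamma)>d$, which strictly decreases the multiplicities, and termination follows. Your proposal never introduces this $\p^2$-move, and your refined invariant (multiplicities along the boundary fibre) does not by itself force termination without it.
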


\begin{proof}

Let us take any fixed isomorphism  $\iota \colon \A^2 \iso \F_n \backslash (S_n\cup F_n)$ for some $n\ge 1$, and denote by $\Gamma$ the closure of~$\iota(C)$.

We first assume that $\Gamma\cdot F_n=1$. This is equivalent to saying that $\Gamma$ is a section of~$\pi_n$. We may furthermore assume that the $\k$-point
$q_n$ defined by $\{ q_n \} =F_n \cap S_n$ does not belong to $\Gamma$, as otherwise we could blow up the point $q_n$, contract the curve $F_n$, change the embedding to $\F_{n+1}$ and decrease by one unit the intersection number of~$\Gamma$ with $S_n$ at the point $q_n$. After finitely many steps we get $q_n\not\in \Gamma$, i.e.~we are in case~$(\ref{SectionCase})$.

If $\Gamma \cdot F_n=0$, then $\Gamma$ is a fibre of $\pi_n \colon \F_n \to \p^1$. Let $\psi$ be the unique automorphism of $\A^2$ such that $\iota \circ \psi$ is the standard embedding of~$\A^2$ into $\F_n$ of Example~$\ref{Exa:Hirzebruch}$. Then, the curve $C$ is equivalent to the curve $\psi^{-1}(C)$, which has equation $y=\lambda$, for some $\lambda\in \k$. This proves that $C$ is equivalent to the line $y=\lambda$, and thus to the line $x=\lambda$, sent by the standard embedding onto a curve  satisfying conditions~$(\ref{SectionCase})$.

It remains to consider the case where $\Gamma\cdot F_n\ge 2$.  If $\Gamma$ satisfies~$(\ref{ng})$, we are done. Otherwise, we have a $\k$-point $p\in F_n$ satisfying one of the following two possibilities:
\begin{center}\begin{enumerate}[$(a)$]
\item\label{F1P2F1}
$n=1$, $m_p(\Gamma)> \Gamma\cdot S_1$, and $p\in F_1$.
\item
\label{FnFnp}
$2m_p(\Gamma)> \Gamma\cdot F_n$ and either $n\ge 2$ or $n=1$ and $p\in S_1\cap F_1$.
\end{enumerate}\end{center}
We will replace the isomorphism $\A^2\iso \F_n \backslash (S_n\cup F_n)$ by another, where the singularities of the curve $\Gamma$ either decrease (all multiplicities are unchanged, except one which has decreased) or stay the same (as usual, the multiplicities taken into account concern not only the proper points of
$\F_n$, but also the infinitely near points). Moreover, the case where the multiplicities stay the same is only in $(\ref{F1P2F1})$, which cannot appear two consecutive times. Note that in all that process the intersection $\Gamma \cdot F_n$ remains unchanged. Then, after finitely many steps, the new curve $\Gamma$ satisfies the conditions~$(\ref{ng})$.

In case $(\ref{F1P2F1})$, we observe that the inequality $m_p(\Gamma)>\Gamma\cdot S_1$
combined
with the inequality $\Gamma\cdot S_1 \geq ( \Gamma\cdot S_1)_p \geq m_p (\Gamma)\cdot m_p(S_1)$ implies that $p\notin S_1$. We may then choose $p$ to be a $\k$-point of~$F_1\setminus S_1$ of maximal multiplicity and denote by $\tau\colon \F_1\to \p^2 $ the birational morphism contracting $S_1$ to a $\k$-point $q\in \p^2$, observe that $\tau(F_1)$ is a line through $q$, that $\tau(\Gamma)$ is a curve of multiplicity $\Gamma\cdot S_1$ at $q$ and of multiplicity $m_p(\Gamma)>\Gamma\cdot S_1$ at $p'=\tau(p)\in \tau(F_1)$. Moreover, $p'$ is a $\k$-point of~$\tau(F_1)$  of maximal multiplicity on that line. Denote by $\tau' \colon \F'_1 \to \p^2$ the birational morphism which is the blow-up at $p'$. Let $S'_1$ be the exceptional fibre of~$\tau'$, $F'_1$ the strict transform of~$\tau (F_1)$ and $\Gamma'$ the strict transform of~$\tau (\Gamma)$. We then replace the isomorphism $\A^2\iso \F_1\setminus (S_1\cup F_1)$ with the analogous isomorphism $\A^2\iso \F'_1\setminus (S'_1\cup F'_1)$ and get
\[ \forall \, r \in F'_1,\; m_r (\Gamma') \leq \Gamma' \cdot S'_1 = m_p(\Gamma).\]
Hence, $(\ref{F1P2F1})$ is no longer possible. Moreover, the singularities of the new curve $\Gamma'$ have either decreased or stayed the same: Indeed, the multiplicities of the singular points of $\tau(\Gamma)$ are the same as those of $\Gamma$, plus one point of multiplicity $\Gamma \cdot S_1$. Similarly, the multiplicities of the singular points of $\tau(\Gamma)$ are the same as those of $\Gamma'$, plus one point of multiplicity $m_p(\Gamma)$. Of course, we do not really get a singular point if the multiplicity is~$1$. Therefore, the singularities of the new curve remain the same if and only if $m_p(\Gamma)=1$ and $\Gamma\cdot S_1=0$. The situation is illustrated below in a simple example (which satisfies $m_p(\Gamma)=3>\Gamma\cdot S_1=2$).
\[\begin{tikzpicture}[xscale=0.6,yscale=0.3]
\draw (0,-1) -- (0, 5);
\draw (-1.5,0) -- (4, 0);
\draw (-1,2) ..controls +(0.25,1) and +(-0.25,-0.25).. (0,4);
\draw (0,4) ..controls +(2,0) and +(2, 2).. (0,4);
\draw (0,4) ..controls +(-2,0) and +(-2,2).. (0,4);
\draw (1,2) ..controls +(-0.25,1) and +(0.25,-0.25).. (0,4);
\draw (1,2) ..controls +(1,-4) and +(-1,-4).. (4,2);
\draw (-0.3, 1.7) node{\scriptsize$F_1$};
\draw (-0.9, 0.5) node{\scriptsize$S_1$};
\draw (1.45, 1.5) node{\scriptsize$\Gamma$};
\draw (0.2, 4.6) node{\scriptsize$p$};
\draw (5.5, 0.2) node{\scriptsize $\xrightarrow{\tau}$};
\begin{scope}[xshift=240] 
\draw (0,-1) -- (0, 5);
\draw (-1,2) ..controls +(0.25,1) and +(-0.25,-0.25).. (0,4);
\draw (0,4) ..controls +(2,0) and +(2, 2).. (0,4);
\draw (0,4) ..controls +(-2,0) and +(-2,2).. (0,4);
\draw (0.8,2.5) ..controls +(-0.25,1) and +(0.25,-0.25).. (0,4);
\draw (0.5, 0.7) ..controls +(0.125, 0.25) and +(0.25,-1).. (0.8,2.5);
\draw (0.5, -0.7) ..controls +(-0.125, 0.25) and +(0.2, -0.2).. (0,0);
\draw (0,0) ..controls +(-2,2) and +(-2,-2).. (0,0);
\draw (0.5, 0.7) ..controls +(-0.125, -0.25) and +(0.2, 0.2).. (0,0);
\draw (-0.65, 1.5) node{\scriptsize $\tau(F_1)$};
\draw (0.25, 4.75) node{\scriptsize $p'$};
\draw (1.4, 1.5) node{\scriptsize $\tau(\Gamma)$};
\draw (0.4, 0) node{\scriptsize $q$};
\end{scope}
\begin{scope}[xshift=420] 
\draw (-2.5, 4.4) node{\scriptsize $\xleftarrow{\tau'}$};
\draw (0,-1) -- (0, 5);
\draw (-1,4) -- (7, 4);
\draw (0.5, -0.7) ..controls +(-0.125, 0.25) and +(0.2, -0.2).. (0,0);
\draw (0,0) ..controls +(-2,2) and +(-2,-2).. (0,0);
\draw (0.5, 0.7) ..controls +(-0.125, -0.25) and +(0.2, 0.2).. (0,0);
\draw (0.5, 0.7) ..controls +(0.25, 0.5) and +(-1, -1).. (2,4);
\draw (2, 4) ..controls +(1, 1) and +(-1, 1).. (4,4);
\draw (4, 4) ..controls +(1, -1) and +(-1, -1).. (6,4);
\draw (6, 4) ..controls +(0.5, 0.5) and +(-0.5, -0.25).. (7, 4.7);
\draw (-0.3, 1.5) node{\scriptsize $F'_1$};
\draw (1.1, 4.6) node{\scriptsize $S'_1$};
\draw (1.2, 1.5) node{\scriptsize $\Gamma'$};
\end{scope}
\end{tikzpicture}\]
In case $(\ref{FnFnp})$, we denote by $\kappa\colon \F_n\dasharrow \F_{n'}$ the birational map that blows up the point $p$ and contracts the strict transform of~$F_n$. Call $q$ the point to which the strict transform of~$F_n$ is contracted. We have $\kappa = \pi_q \circ (\pi_p)^{-1}$, where $\pi_p$, resp.~$\pi_q$, are blow-ups of the point $p$ of~$\F_n$, resp.~the point $q$ of~$\F_{n'}$. The drawing below illustrates the situation in a case where $n' = n-1$. The composition of~$\iota$ with $\kappa$ provides a new isomorphism $\A^2\to \F_{n'}\setminus (S_{n'} \cup F_{n'} )$, where $S_{n'}$ is the image of~$S_n$ and $F_{n'}$ is the curve corresponding to the exceptional divisor of~$p$. Note that $F_{n'}$ is a fibre of the $\p^1$-bundle $\pi'\colon \F_{n'}\to \p^1$ corresponding to $\pi'=\pi_n\circ \kappa^{-1}$, and that $S_{n'}$ is a section, of self-intersection $-n'$, where $n'= n+1$ if $p\in S_n$ and $n'= n-1$ if $p\notin S_n$.
Hence, since $n\ge 2$ or $n=1$ and $\{ p \} = S_n \cap F_n$,
we get that $(S_{n'})^2=-n'<0$, and obtain a new isomorphism $\iota'\colon \A^2 \iso \F_{n'} \backslash (S_{n'}\cup F_{n'})$. The singularity of the new curve $\Gamma'$ at the point $q$ is equal to $\Gamma \cdot F_n-m_p(\Gamma)$, which is strictly smaller than $m_p(\Gamma)$ by assumption. Moreover $2m_p(\Gamma)> \Gamma\cdot F_n\ge 2$, which implies that $p$ was indeed a singular point of~$\Gamma$.
\[\begin{tikzpicture}[xscale=0.5,yscale=0.3]
\draw (0,-2) -- (0, 7);
\draw (-1.5,0) -- (3, 0);
\draw (-1,4) ..controls +(0, 1) and +(-0.25,-0.25).. (0,6);
\draw (0,6) ..controls +(2,0) and +(2, 2).. (0,6);
\draw (0,6) ..controls +(-2,0) and +(-2,2).. (0,6);
\draw (1,4) ..controls +(0, 1) and +(0.25,-0.25).. (0,6);
\draw (1,4) ..controls +(0, -1) and +(0,1).. (-1,2);
\draw (-1,2) ..controls +(0, -1) and +(-1, 0.3).. (1, 0.4);
\draw (1,0.4) ..controls +(1, -0.3) and +(-1, 0.3).. (3, -0.4);
\draw (-0.4, -1.7) node{\scriptsize $F_n$};
\draw (2.4, 0.6) node{\scriptsize $S_n$};
\draw (1.3, 4) node{\scriptsize $\Gamma$};
\draw (0.3, 6.6) node{\scriptsize $p$};
\draw (4, 4) node{\scriptsize $\xleftarrow{\pi_p}$};
\begin{scope}[xshift=230] 
\draw (-1.5,0) -- (3, 0);
\draw (2,-1) -- (-2, 4);
\draw (-2,3) -- (2, 7);
\draw (0.5,0.6) ..controls +(0.5, 0.2) and +(-1, 0.3).. (3, -0.4);
\draw (0.5,0.6) ..controls +(-0.5, -0.2) and +(-0.2, -1).. (-1.5,2.2);
\draw (-1.5,2.2) ..controls +(0.2, 1) and +(1, -0.5).. (-0.5,4.5);
\draw (-0.5,4.5) ..controls +(-1, 0.5) and +(-1, -0.5).. (0.5,5.5);
\draw (0.5,5.5) ..controls +(1, 0.5) and +(1, -0.5).. (1.5,6.5);
\draw (1.5,6.5) ..controls +(-1,0.5) and +(0, 0).. (0.75, 6.75);
\draw (-0.2, 6.2) node{\scriptsize $\pi_p^{-1}(p)$};
\draw (1, 1.8) node{\scriptsize $\pi_q^{-1}(q)$};
\draw (8, 4) node{\scriptsize $\xrightarrow{\pi_q}$};
\end{scope}
\begin{scope}[xshift=540] 
\draw (0,-2) -- (0, 7);
\draw (3.5,0) -- (-3, 0);
\draw (1,0.4) ..controls +(0.5, 0.2) and +(-1, 0.3).. (3, -0.4);
\draw (1, 0.4) ..controls +(-0.5, -0.2) and +(0.5, 0.125).. (0,0);
\draw (0,0) ..controls +(-2, -0.5) and +(-1,-2).. (0,0);
\draw (0,0) ..controls +(0.5, 1) and +(1, -1).. (0,2);
\draw (0,2) ..controls +(-1, 1) and +(-1, -1).. (0,4);
\draw (0,4) ..controls +(1, 1) and +(1, -1).. (0,6);
\draw (0,6) ..controls +(-1, 1) and +(0.5, -0.5).. (-0.5, 6.5);
\draw (0.6, -1.7) node{\scriptsize $F_{n'}$};
\draw (-2.4, 0.6) node{\scriptsize $S_{n'}$};
\draw (-1.2, 3) node{\scriptsize $\Gamma'$};
\draw (0.3, -0.45) node{\scriptsize $q$};
\end{scope}
\end{tikzpicture}\]

Finally, we must now prove the last statement of our lemma, which concerns case~$(\ref{SectionCase})$. Let $\psi$ be the unique automorphism of $\A^2$ such that $\iota \circ \psi$ is the standard embedding of~$\A^2$ into $\F_n$ of Example~$\ref{Exa:Hirzebruch}$. Then, by replacing $\iota$ by $\iota \circ \psi$ and $C$ by the equivalent curve $\psi^{-1} (C)$, we may assume that $\iota \colon \A^2\iso \F_n \backslash (S_n\cup F_n)$ is the standard embedding.  This being done, the restriction of $\pi_n \colon \F_n \to \p^1$ to $\A^2$ is $(x,y)\to [y:1]$.  The fibres of $\pi_n$, equivalent to $F_n$ being given by $y=\mathrm{cst}$, the degree in $x$ of the equation of $C$ is equal to $\Gamma\cdot F_n$ (this can be done for instance by extending the scalars to $\kk$ and taking a general fibre). Since $\Gamma\cdot F_n=1$, the equation is of the form $xa(y)+b(y)$ for some polynomials $a,b\in \k[y]$, $a\not=0$. Since $C$ is geometrically irreducible, the polynomials $a$ and $b$ are coprime. 
There exist (unique) polynomials $q,\tilde{b} \in \k [x]$ such that $b =aq +\tilde{b}$ with $\deg \tilde{b} < \deg a$. Then, changing the coordinates by applying $(x,y) \mapsto(x+q(y),y)$, we may furthermore assume that $\deg b<\deg a$. 

Let us prove that points $(\ref{LL1})$-$(\ref{LL4})$ are equivalent. The implications $(\ref{LL1})$ $\Rightarrow$ $(\ref{LL2}) \Rightarrow (\ref{LL3})$ are obvious. We then prove $(\ref{LL3}) \Rightarrow (\ref{LL4})\Rightarrow (\ref{LL1})$.

$(\ref{LL3}) \Rightarrow (\ref{LL4})$: We recall that $\Gamma$ is a section of $\pi_n\colon \F_n\to \p^1$, so that
we have isomorphisms
$\Gamma\simeq \p^1$ and $\Gamma\setminus F_n\simeq \A^1$. The fact that $C=\Gamma \setminus (F_n\cup S_n)\simeq \A^1$ implies that $C\cap (S_n\setminus F_n)$ is empty. Since $\Gamma\cap F_n \cap S_n=\emptyset$ by assumption, we get $\Gamma\cdot S_n=0$.

$(\ref{LL4}) \Rightarrow  (\ref{LL1})$: We use the open embedding
\[ \begin{array}{rcl}
\A^2&\hookrightarrow & \F_n \\
(u,v)&\mapsto &([1:uv^n:u],[v:1]).
\end{array} \]
The preimages of $\Gamma$ and $S_n$ by this embedding are the curves of equations $a(v) + b(v) u =0$ and $u=0$. Hence $\Gamma \cdot S_n=0$ implies that $a$ has no $\kk$-root and thus is a constant.
\end{proof}

\subsection{Extension to regular morphisms on $\A^2$}
The following proposition is the principal tool in the proof of Proposition~\ref{Prop:XBCD}, Corollary~\ref{Cor:Th1a} and Proposition~\ref{Prp:Th1b}, which themselves give the main part of Theorem~\ref{RigidityThm}.

\begin{proposition}   \label{Prop:NoBasePtA2}
Let $C\subset \A^2$ be a geometrically irreducible closed curve, not equivalent to a line, and let $\varphi\colon \A^2\setminus C\hookrightarrow \A^2$ be an open embedding. Then, there exists an open embedding $\iota \colon \A^2 \hookrightarrow \F_n$, for some $n\ge 1$, such that the rational map $\iota \circ \varphi$ extends to a regular morphism $\A^2\to \F_n$, and such that $\iota(\A^2)=\F_n\setminus (S_n\cup F_n)$ $($where $S_n$ and $F_n$ are as in Example~$\ref{Exa:Hirzebruch})$.
\end{proposition}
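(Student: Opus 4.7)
The plan is to apply Proposition~\ref{Prop:Threecasescontraction} to reduce to the case where $\hat\varphi$ contracts $\overline{C}$ to a point, and then construct $\iota$ by modifying the target of $\hat\varphi$ through elementary birational transformations between Hirzebruch surfaces. Since $C$ is not equivalent to a line, Proposition~\ref{Prop:Threecasescontraction} excludes its case~$(\ref{Case2:Dline})$. In case~$(\ref{Case1:AutA2})$, $\varphi$ extends to an automorphism of $\A^2$, and composing with the standard embedding $\A^2\hookrightarrow \F_1$ of Example~\ref{Exa:Hirzebruch} yields the desired morphism $\A^2\to \F_1$, so we may take $n=1$. The substantive case is~$(\ref{Case3:Ccontracted})$: the birational map $\hat\varphi\colon \p^2\dasharrow \p^2$ induced by $\varphi$ via the standard embeddings contracts $\overline{C}$ to a $\k$-point $q\in\p^2$.

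In case~$(\ref{Case3:Ccontracted})$, I consider the minimal resolution
\[\xymatrix@R=4mm@C=2cm{& Z \ar[ld]_{\eta}\ar[rd]^{\pi} \\ \p^2 \ar@{-->}[rr]^{\hat\varphi} && \p^2}\]
of $\hat\varphi$ given by Lemma~\ref{Lem:IsoComplRat}, set $W=\eta^{-1}(\A^2\setminus C)=\pi^{-1}(\A^2\setminus D)$, and let $\tilde{C}\subset Z$ be the strict transform of $\overline{C}$ under $\eta$ (which is contracted by $\pi$ to $q$). The key step is to construct a birational morphism $\pi'\colon Z\to \F_n$, for some $n\ge 1$, which restricts to an isomorphism from $W$ onto $\F_n\setminus (S_n\cup F_n)$ and such that every irreducible curve of $Z$ lying above $\A^2$ under $\eta$ and not contained in $W$ (that is, the strict transform $\tilde{C}$ together with all exceptional divisors of $\eta$ above $\k$-base-points of $\hat\varphi$ lying on $C$) is contracted by $\pi'$ to a point of $S_n\cup F_n$. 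Granted such $\pi'$, the birational map $\pi'\circ\pi^{-1}\colon \p^2\dasharrow \F_n$ restricts to an open embedding $\iota\colon \A^2\hookrightarrow\F_n$ with image $\F_n\setminus (S_n\cup F_n)$, and the rational map $\pi'\circ\eta^{-1}\colon \A^2\dasharrow \F_n$ has no base-points on $\A^2$ (any such point would lie on $C$ and is resolved by the contraction property of $\pi'$), so it extends to the required morphism $\A^2\to\F_n$ agreeing with $\iota\circ\varphi$ on $\A^2\setminus C$.

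The morphism $\pi'$ is constructed iteratively, starting from $\pi$: at each step, one either contracts a $(-1)$-curve in the current target (via Castelnuovo's criterion) that lies in the image of the boundary, or performs an elementary transformation of the ambient Hirzebruch surface (blow up a suitable $\k$-point and contract the fibre through it) to make a further contraction possible. The procedure terminates because a complexity measure --- for example, the number of irreducible components of $Z\setminus W$ lying over $\A^2$ under $\eta$ that are not yet contracted by the current target morphism --- decreases strictly at each step. The main obstacle is ensuring that this process remains within the category of smooth projective surfaces carrying a $\p^1$-bundle structure and terminates with a Hirzebruch $\F_n$ for some $n\ge 1$: this requires careful tracking of self-intersections, uses the negative-definiteness of the intersection form on the boundary configuration, the classification of smooth minimal projective rational surfaces as Hirzebruch surfaces (Noether--Enriques), and possibly a final elementary transformation replacing $\F_0$ by $\F_1$ to ensure $n\ge 1$.
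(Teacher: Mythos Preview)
Your reduction to case~$(\ref{Case3:Ccontracted})$ is fine, but the heart of your argument --- the iterative construction of $\pi'$ --- is not a proof as written, and the strategy differs substantially from the paper's.

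The paper does not try to modify $\pi$. Instead, it fixes the embedding $\iota$ \emph{first}, by applying Lemma~\ref{Lemm:EmbFnminimaldegree} to the curve $D$ on the target side. This yields an $\iota\colon \A^2\iso \F_n\setminus(S_n\cup F_n)$ for which the closure $\Gamma=\overline{\iota(D)}$ satisfies the multiplicity constraints of that lemma (either $\Gamma\cdot F_n=1$ with $\Gamma\cap F_n\cap S_n=\emptyset$, or $\Gamma\cdot F_n\ge 2$ with controlled multiplicities along $F_n$). Then one considers the induced birational map $\psi\colon \p^2\dasharrow \F_n$ and argues \emph{by contradiction} that $\psi$ cannot have a base-point in $\A^2$: if there were a $(-1)$-curve $E_q$ over such a point, one analyses which of $\Gamma$, $F_n$, $S_n$ it maps to under the resolution, and each possibility is ruled out using the specific inequalities on $m_r(\Gamma)$ supplied by Lemma~\ref{Lemm:EmbFnminimaldegree} together with the hypothesis that $D$ is not equivalent to a line. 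The argument is a finite case analysis, not an iterative process.

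Your approach, by contrast, leaves the essential difficulty unresolved. You propose to alternate contractions of $(-1)$-curves in the boundary with elementary transformations of the target, but you do not specify which point to blow up in an elementary transformation, nor why such a point exists with the needed properties (it must lie outside $\pi'(W)$ so that the isomorphism $W\iso \F_n\setminus(S_n\cup F_n)$ is preserved, and the new composed map $Z\to \text{new target}$ must still be a \emph{morphism}, which is not automatic after blowing up the target). Your complexity measure --- the number of components of $Z\setminus W$ over $\A^2$ under $\eta$ not yet contracted --- is not shown to decrease under an elementary transformation; indeed an elementary transformation does not contract any curve of $Z$, it only changes the target. So termination is not established. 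The invocation of Noether--Enriques and ``negative-definiteness of the intersection form on the boundary'' is too vague to pin down why one ends at an $\F_n$ with precisely $S_n\cup F_n$ as boundary. The missing idea is exactly what Lemma~\ref{Lemm:EmbFnminimaldegree} provides: a canonical way to choose $\iota$ so that no iterative search is needed.
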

\begin{proof}
By Proposition~\ref{Prop:Threecasescontraction}, $\varphi(\A^2\setminus C)=\A^2\setminus D$ for some geometrically irreducible closed curve $D$. 
If $\varphi$ extends to an automorphism of~$\A^2$ sending $C$ onto $D$, the result is obvious, by taking any isomorphism $\iota\colon \A^2\iso \F_n\setminus (F_n\cup S_n)$, so we may assume that $\varphi$ does not extend to an automorphism of~$\A^2$. 
Lemma~\ref{Lemm:Lines} implies, since $C$ is not equivalent to a line,  that the same holds for $D$. Moreover, Proposition~\ref{Prop:Threecasescontraction} implies that the extension of~$\varphi^{-1}$ to a birational map $\p^2\dasharrow \p^2$, via the standard embedding $\A^2\hookrightarrow \p^2$, contracts the curve $\overline{D}$  to a $\k$-point of~$\p^2$. In particular, it does not send $\overline{D}$ birationally onto $\overline{C}$ or onto $L_{\infty}.$

We choose an open embedding $\iota\colon \A^2\hookrightarrow\F_n$ given by Lemma~\ref{Lemm:EmbFnminimaldegree}, which comes from an isomorphism $\iota\colon \A^2\iso \F_n \backslash (S_n\cup F_n)$, such that the closure
of $\iota(D)$ in $\F_n$ is a curve $\Gamma$ which satisfies one of the two possibilities $(\ref{SectionCase})$-$(\ref{ng})$ of Lemma~\ref{Lemm:EmbFnminimaldegree}.

We want to show that the open embedding $\iota\circ \varphi\colon \A^2\setminus C\hookrightarrow \F_n$ extends to a regular morphism on $\A^2$. Using the standard embedding of  $\A^2$ into $\p^2$ (Definition~\ref{Def:Plane}), we get a birational map $\psi\colon \p^2\dasharrow \F_n$ and need to show that all $\kk$-base-points of this map are contained in $L_{\infty}$. Note that $\psi$ restricts to an isomorphism  $\p^2 \setminus (L_{\infty} \cup \overline{C})\iso\F_n \setminus ( F_n \cup S_n \cup \Gamma)$. This implies that all $\kk$-base-points of $\psi,\psi^{-1}$ are defined over $\k$ (Lemma~\ref{Lem:IsoComplRat}\eqref{BasePtsKRat}) and gives the following commutative diagram
\[\xymatrix@R=2mm@C=2cm{&& X\ar[ld]_{\eta}\ar[rd]^{\pi} \\
\A^2\ar@{^{(}->}^{\mathrm{std}}[r] & \p^2\ar@{-->}[rr]^{\psi} && \F_n & \A^2\ar@{_{(}->}_{\iota}[l]\\
& \A^2 \setminus C\ar@{_{(}->}[lu] \ar^{\varphi}_{\simeq}[rr]&&  \A^2\setminus D,\ar@{^{(}->}[ur]} \]
where $\eta,\pi$ are blow-ups of the base-points of $\psi$ and $\psi^{-1}$ respectively, and where 
$\eta^{-1} (L_{\infty} \cup \overline{C}) =\pi^{-1}( F_n \cup S_n \cup \Gamma)$ (Lemma~\ref{Lem:IsoComplRat}\eqref{Resolution}-\eqref{inverse images of U and V are equal}).

We assume by contradiction that $\psi$ has a base-point $q$ in $\A^2=\p^2\setminus L_{\infty}$, which means that one $(-1)$-curve $E_q\subset X$ is contracted by $\eta$ to $q$. This curve is the exceptional divisor of a base-point infinitely near to
$q$, but
not necessarily of~$q$. The minimality of the resolution implies that $\pi$ does not contract $E_q$, so $\pi(E_q)$ is a curve of~$\F_n$ contracted by $\psi^{-1}$ to $q$, which belongs to $\{\Gamma,F_n,S_n\}$.
 
We first study the case where $\psi$ has no base-point in $L_{\infty}$. The strict transform of~$L_{\infty}$ has then self-intersection $1$ on $X$. Hence, it is not contracted by $\pi$, and thus sent onto a curve of self-intersection $\ge 1$, which belongs to $\{\Gamma,F_n,S_n\}$ by Lemma \ref{Lem:IsoComplRat}\eqref{ComplementsOutside}. As $(F_n)^2=0$ and $(S_n)^2=-n\le -1$, $L_{\infty}$ is sent onto $\Gamma$ by $\psi$. This contradicts the fact that $\Gamma$ is not sent birationally onto $L_{\infty}$ by $\psi^{-1}$.

We can now reduce to the case where $\psi$ also has a base-point $p$ in $L_{\infty}$. There is thus a $(-1)$-curve $E_p\subset X$  contracted by $\eta$ to $p$ and not contracted by $\pi$. As above, this curve is the exceptional divisor of a base-point infinitely near to $p$, but not necessarily of~$p$. Again, $\pi(E_p)$ belongs to $\{\Gamma,F_n,S_n\}$.
 
 We thus have at least two of the curves $\Gamma$, $F_n$, $S_n$ that correspond to $(-1)$-curves of~$X$ contracted by $\eta$. 
 
 We suppose first that $S_n$ corresponds to a $(-1)$-curve of~$X$ contracted by $\eta$. The fact that $(S_n)^2=-n\le -1$ implies that $n=1$ and that $\pi$ does not blow up any point of~$S_n$. As there is another $(-1)$-curve of~$X$ contracted by $\eta$, the two curves are disjoint on $X$, and thus also disjoint on $\F_1$, since $\pi$ does not blow up any point of~$S_1$. The other curve is then $\Gamma$ (since $F_1\cdot S_1=1$), and $\Gamma\cdot S_1=0$. If moreover $\Gamma\cdot F_1=1$ (condition $(\ref{SectionCase})$ of Lemma~\ref{Lemm:EmbFnminimaldegree}), then the contraction $\F_1\to \p^2$ of~$S_1$ sends $\Gamma$ onto a line of~$\p^2$, which contradicts the fact that $D\subset \A^2$ is not equivalent to a line.  If $\Gamma\cdot F_1\ge 2$, then condition $(\ref{ng})$ of Lemma~\ref{Lemm:EmbFnminimaldegree} implies that $m_r(\Gamma)\le \Gamma\cdot S_1=0$ for each $r\in F_1(\k)$. Hence, the intersection of~$\Gamma$ with $F_1$ (which is not empty since $\Gamma\cdot F_1\ge 2$) consists only of points not defined over $\k$, which are therefore not blown up by $\pi$. The strict transforms $\tilde{\Gamma}$ and $\tilde{F_1}$ on $X$ then satisfy  $\tilde\Gamma\cdot \tilde{F_1}=\Gamma\cdot F_1\ge 2$. As $\tilde\Gamma$ is contracted by $\eta$, the image $\eta(\tilde{F_1})$ is a singular curve and is then equal to $\overline{C}$. This contradicts the fact that $\psi$ contracts $\overline{C}$ to a point. 
 
 There remains the case  where $S_n$ does not correspond to a $(-1)$-curve of~$X$ contracted by $\eta$, which implies that $\{\pi(E_p),\pi(E_q)\}=\{F_n,\Gamma\}$, or equivalently that $\{E_p,E_q\}=\{\tilde{F_n},\tilde{\Gamma}\}$, where $\tilde{F_n}$ and $\tilde{\Gamma}$ denote the strict transforms of~$F_n$ and $\Gamma$ on $X$. Since $(F_n)^2=0$ and $(\tilde{F_n})^2=-1$, there exists exactly one $\kk$-point $r\in F_n$ (and no infinitely near points) blown up by $\pi$, which is then a $\k$-point (as all base-points of~$\pi$ are defined over $\k$). We obtain
\begin{center}$m_r(\Gamma)=\Gamma\cdot F_n\ge 1$ and $\Gamma\cap F_n=\{r\}$,\end{center} since $\tilde{F_n}$ and $\tilde{\Gamma}$ are disjoint on $X$ (and because $\Gamma\cdot F_n\ge 1$, as $\Gamma$ satisfies one of the two conditions $(\ref{SectionCase})$-$(\ref{ng})$ of Lemma~\ref{Lemm:EmbFnminimaldegree}).

We now prove that $\pi^{-1}(r)$ and $\pi^{-1}(S_n)$ are two disjoint connected sets of rational curves which intersect the two curves $\tilde{F_n}$ and $\tilde{\Gamma}$, i.e.~the two curves $E_p$ and $E_q$. For this, it suffices to prove that $r\notin S_n$ and that $S_n\cdot \Gamma \ge 1$. Suppose first that $\Gamma\cdot F_n=1$ (condition $(\ref{SectionCase})$ of Lemma~\ref{Lemm:EmbFnminimaldegree}). Since
$\Gamma \cap F_n\cap S_n=\emptyset$,
we get $r\in F_n\setminus S_n$. The inequality $\Gamma\cdot S_n>0$ is provided by the fact that $D$ is not equivalent to a line (see again condition  $(\ref{SectionCase})$ of Lemma~\ref{Lemm:EmbFnminimaldegree} and the equivalence between $(\ref{LL2})$ and $(\ref{LL4})$ given in that case). Suppose now
that $\Gamma\cdot F_n\ge 2$.
As $m_r(\Gamma)=\Gamma\cdot F_n\ge 2$, we have $2m_r(\Gamma)> \Gamma\cdot F_n$, which implies that $n=1$, $r\in F_n\setminus S_n$ and $2\le m_r(\Gamma)\le \Gamma\cdot S_n$ (see again possibility $(\ref{ng})$ of Lemma~\ref{Lemm:EmbFnminimaldegree}).

We conclude by observing that, since $\eta(E_q)=q\in \p^2\setminus L_{\infty}$ and $\eta(E_p)=p\in L_{\infty}$, any connected set of curves
of~$\eta^{-1}(L_{\infty}\cup \overline{C})$
which intersects the two curves $E_q$ and $E_p$ must contain the strict transform $\tilde{C}$ of~$\overline{C}$.
Since $\pi^{-1}(r)$ and $\pi^{-1}(S_n)$ are included in $\pi^{-1}( F_n \cup S_n \cup \Gamma)=\eta^{-1} (L_{\infty} \cup \overline{C})$, this contradicts the fact that $\pi^{-1}(r)$ and $\pi^{-1}(S_n)$ are two disjoint connected sets of rational curves which intersect the two curves $\tilde{F_n}$ and~$\tilde{\Gamma}$.
\end{proof}

A direct consequence of Proposition~\ref{Prop:NoBasePtA2} is the following corollary, which shows that only smooth curves $C\subset \A^2$ are interesting to study. This also follows from Proposition~\ref{Prop:XBCD} below. Since the proof of Proposition~\ref{Prop:XBCD} is more involved, we prefer first to explain the simpler argument that shows how the smoothness follows from Proposition~\ref{Prop:NoBasePtA2}.

\begin{corollary} \label{corollary: in the singular case, the embedding extends to an automorphism of the plane}
Let $C\subset \A^2$ be a geometrically irreducible closed curve. If $C$ is not smooth, then every open embedding $\varphi \colon \A^2\setminus C\hookrightarrow \A^2$ extends to an automorphism of~$\A^2$.
\end{corollary}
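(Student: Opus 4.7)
I would argue by contradiction. Suppose $\varphi$ does not extend to an automorphism of $\A^2$. Because $C$ is singular it is in particular not equivalent to a line, so Proposition~\ref{Prop:NoBasePtA2} yields an integer $n\ge 1$, an open embedding $\iota\colon \A^2\hookrightarrow \F_n$ with $\iota(\A^2)=\F_n\setminus (S_n\cup F_n)$, and a morphism $\psi\colon \A^2\to \F_n$ extending $\iota\circ \varphi$. I would then extend $\psi$ to $\hat{\psi}\colon \p^2\dasharrow \F_n$ via the standard embedding, take its minimal resolution $\eta\colon X\to \p^2$, $\pi\colon X\to \F_n$, and write $\Gamma$ for the closure of $\iota(D)$ in $\F_n$. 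By Proposition~\ref{Prop:NoBasePtA2} all base-points of $\hat{\psi}$ lie on $L_\infty$, so $\eta$ restricts to an isomorphism above $\A^2$, and Lemma~\ref{Lem:IsoComplRat}\eqref{inverse images of U and V are equal} provides the key identification $\eta^{-1}(L_\infty\cup \overline{C})=\pi^{-1}(S_n\cup F_n\cup \Gamma)$.

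Next, I would follow the singularity through the resolution. Pick a $\kk$-point $p$ at which $C$ is singular and let $\tilde{p}=\eta^{-1}(p)\in X$ and $\tilde{C}\subset X$ denote the strict transform of $\overline{C}$; since $\eta$ is an isomorphism above $\A^2$, the curve $\tilde{C}$ is singular at $\tilde{p}$. Each irreducible component of the exceptional divisor of a birational morphism between smooth projective surfaces is a smooth rational curve, so the singular curve $\tilde{C}$ cannot be $\pi$-exceptional. Moreover every $\pi$-exceptional curve lies in $\pi^{-1}(S_n\cup F_n\cup \Gamma)=\eta^{-1}(L_\infty\cup \overline{C})$, whose irreducible components are $\tilde{C}$ together with curves contained in $\eta^{-1}(L_\infty)$ (namely $\tilde{L}_\infty$ and the $\eta$-exceptional divisors). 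Since $\tilde{p}\in \tilde{C}$ but $\tilde{p}\notin \eta^{-1}(L_\infty)$, and $\tilde{C}$ itself is not $\pi$-exceptional, no $\pi$-exceptional curve passes through $\tilde{p}$. Hence $\pi$ is a local isomorphism at $\tilde{p}$, and $\pi(\tilde{C})$ inherits the singularity of $\tilde{C}$ at $\pi(\tilde{p})$.

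To finish, the inclusion $\tilde{C}\subseteq \pi^{-1}(S_n\cup F_n\cup \Gamma)$ together with the irreducibility of $\pi(\tilde{C})$ forces $\pi(\tilde{C})\in\{S_n,F_n,\Gamma\}$. As $S_n$ and $F_n$ are isomorphic to $\p^1$ and therefore smooth, the singularity of $\pi(\tilde{C})$ at $\pi(\tilde{p})$ excludes these options, leaving $\pi(\tilde{C})=\Gamma$. Then $\hat{\psi}(\overline{C})=\Gamma$, and applying $\hat{\iota}^{-1}$, which identifies $\Gamma\setminus (S_n\cup F_n)$ with $D$ and so sends $\Gamma$ birationally onto $\overline{D}$, yields $\hat{\varphi}(\overline{C})=\overline{D}$. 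Proposition~\ref{Prop:Threecasescontraction}\eqref{Case1:AutA2} then implies that $\varphi$ extends to an automorphism of $\A^2$, contradicting our assumption. The main obstacle is the local-isomorphism step: it requires careful bookkeeping of the $\pi$-exceptional curves on $X$, which rests entirely on the identification $\pi^{-1}(S_n\cup F_n\cup \Gamma)=\eta^{-1}(L_\infty\cup \overline{C})$ provided by Lemma~\ref{Lem:IsoComplRat}.
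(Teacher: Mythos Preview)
Your proof is correct and follows essentially the same route as the paper: apply Proposition~\ref{Prop:NoBasePtA2} (using that a singular curve cannot be equivalent to a line), pass to the minimal resolution of the induced map $\p^2\dasharrow \F_n$, observe that the singularity of $\overline{C}$ survives because no base-point of $\hat\psi$ lies in $\A^2$, and conclude via Lemma~\ref{Lem:IsoComplRat} and Proposition~\ref{Prop:Threecasescontraction} that $\hat\varphi(\overline{C})=\overline{D}$. Two small remarks: you use $D$ without introducing it (it comes from Proposition~\ref{Prop:Threecasescontraction}, which gives $\varphi(\A^2\setminus C)=\A^2\setminus D$), and your local-isomorphism argument at $\tilde p$, while correct, is more than is strictly needed: once you know the singular curve $\tilde C$ is not $\pi$-exceptional, the restriction $\pi|_{\tilde C}\colon \tilde C\to \pi(\tilde C)$ is a birational morphism of curves, and a singular curve cannot map birationally onto a smooth one (the inverse would extend to a morphism from the smooth target), so $\pi(\tilde C)$ is automatically singular.
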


\begin{proof}
By Proposition~\ref{Prop:Threecasescontraction}, $\varphi(\A^2\setminus C)=\A^2\setminus D$ for some geometrically irreducible closed curve $D$. We apply Proposition~$\ref{Prop:NoBasePtA2}$ and obtain an open embedding $\iota \colon \A^2 \hookrightarrow \F_n$, for some $n\ge 1$, such that the rational map $\iota \circ \varphi$ extends to a regular morphism $\A^2\to \F_n$. Embedding $\A^2$ into $\p^2$, we get a birational map
$\psi \colon \p^2\dasharrow \F_n$ which is regular on $\A^2$. In particular, the singular $\kk$-points of~$C$ are not blown up in the minimal resolution of $\psi$. Hence, the curve $\overline{C}$ is not contracted by $\psi$ and is thus sent onto a singular curve $\psi(\overline{C})\subset \F_n$. Since $\psi$ restricts to an isomorphism  $\p^2 \setminus (L_{\infty} \cup \overline{C})\iso\F_n \setminus ( F_n \cup S_n \cup \overline{D} )$, Lemma~\ref{Lem:IsoComplRat}\eqref{ComplementsOutside} shows that the singular curve $\psi(\overline{C})$ must be $F_n$, $S_n$ or $\overline{D}$. As $F_n$ and $S_n$ are smooth, we find that $\psi(\overline{C})=\overline{D}$. Proposition~\ref{Prop:Threecasescontraction} then shows that $\varphi$ extends to an automorphism of~$\A^2$. 
\end{proof}

Another direct consequence of Proposition~\ref{Prop:NoBasePtA2} is the following result, which shows that in case~$(\ref{Case3:Ccontracted})$ of Proposition~\ref{Prop:Threecasescontraction}, the point to which $\overline{C}$ is contracted  lies in $\A^2$ only in a very special situation: 

\begin{corollary} \label{Coro:CurveContractedPtA2}
Let $C\subset \A^2$ be a geometrically irreducible closed curve and let $\varphi\colon \A^2\setminus C \hookrightarrow \A^2$ be an open embedding. If the extension of $\varphi$ to $\p^2$ contracts the curve $C$ $($or equivalently its closure$)$ to a point of $\A^2$, then there exist automorphisms $\alpha,\beta$ of~$\A^2$ and an endomorphism  $\psi\colon \A^2\to \A^2$ of the form  $(x,y)\mapsto (x,x^ny)$, where $n \geq 1$ is an integer, such that $\varphi = \alpha \psi \beta$. In particular, $C\subset \A^2$ is equivalent to a line, via $\beta$.
\end{corollary}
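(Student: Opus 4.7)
The plan is to first extend $\varphi$ to a regular birational morphism $\theta \colon \A^2 \to \A^2$, and then to exploit the explicit description of open embeddings from the complement of a line provided by Lemma~\ref{Lemm:Lines} to write $\theta$ in the required form $\alpha \circ \psi \circ \beta$.

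To obtain the extension, assuming $C$ is not equivalent to a line, I apply Proposition~\ref{Prop:NoBasePtA2} to obtain an isomorphism $\iota \colon \A^2 \iso \F_n \setminus (S_n \cup F_n)$, for some $n \geq 1$, such that $\iota \circ \varphi$ extends to a regular morphism $\mu \colon \A^2 \to \F_n$. The key observation is that, since $\hat{\varphi}$ contracts $\overline{C}$ to a $\k$-point $q \in \A^2$ and $\iota$ is regular on $\A^2$, the extension satisfies $\mu(C) = \{\iota(q)\} \subset \iota(\A^2)$; combined with $\mu(\A^2 \setminus C) = \iota(\A^2 \setminus D) \subset \iota(\A^2)$, this gives $\mu(\A^2) \subset \iota(\A^2)$, so $\theta := \iota^{-1} \circ \mu \colon \A^2 \to \A^2$ is a regular birational morphism extending $\varphi$.

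To show that $C$ is equivalent to a line, I apply Proposition~\ref{Prop:Threecasescontraction} to $\theta^{-1} \colon \A^2 \setminus D \hookrightarrow \A^2$. Case $(1)$ is excluded, as it would make $\theta$ an automorphism of $\A^2$, contradicting the contraction of $\overline{C}$. In case $(2)$, the curve $D$ is equivalent to a line, and then so is $C$ by Lemma~\ref{Lemm:Lines} applied to $\varphi^{-1}$. The main obstacle is case $(3)$, where $\hat{\theta^{-1}}$ contracts $\overline{D}$ to a $\k$-point $q'$; since $\hat{\theta}$ has no base points in $\A^2$, such a $q'$ must lie on $L_{\infty}$. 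I expect to eliminate this subcase by applying Proposition~\ref{Prop:NoBasePtA2} to $\theta^{-1}$ when $D$ is not equivalent to a line, analysing the resulting morphism $\mu' \colon \A^2 \to \F_m$ whose preimage of $S_m \cup F_m$ is precisely $D$, and deriving a contradiction from the intersection conditions of Lemma~\ref{Lemm:EmbFnminimaldegree}.

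Once $C$ is known to be equivalent to a line, I choose $\beta_0 \in \Aut(\A^2)$ with $\beta_0(C) = \{x = 0\}$ and apply Lemma~\ref{Lemm:Lines}\eqref{OpenA2minusline} to $\varphi \circ \beta_0^{-1}$, writing it as $\chi \circ \alpha_0$ where $\alpha_0 \in \Aut(\A^2 \setminus \{x = 0\})$ has the form $(x, y) \mapsto (\lambda x^{\pm 1}, \mu x^n y + s(x, x^{-1}))$ and $\chi$ extends to an automorphism of $\A^2$. A direct computation on the projective extension shows that the hypothesis forces the exponent of $x$ to be $+1$, the Laurent polynomial $s$ to lie in $\k[x]$, and $n \geq 1$; the map $\alpha_0 = (\lambda x, \mu x^n y + s(x))$ then factors as $\alpha_1 \circ \psi \circ \beta_1$ with $\psi(x,y) = (x, x^n y)$, $\beta_1(x,y) = (\lambda x, y)$, and $\alpha_1(u,v) = (u, \mu \lambda^{-n} v + s(\lambda^{-1} u))$, both $\alpha_1$ and $\beta_1$ lying in $\Aut(\A^2)$. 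Absorbing $\chi$ and $\beta_0$ into $\alpha$ and $\beta$ respectively yields the required factorization $\varphi = \alpha \circ \psi \circ \beta$.
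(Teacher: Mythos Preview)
Your overall strategy is sound, but it takes a long detour where the paper's argument is direct. The paper applies Proposition~\ref{Prop:NoBasePtA2} to $\varphi^{-1}$ rather than to $\varphi$: since $\hat\varphi$ contracts $\overline{C}$ to a point $q\in\A^2$, this $q$ is a base-point of $\varphi^{-1}$ lying in $\A^2$, so no open embedding $\iota\colon\A^2\hookrightarrow\F_n$ can make $\iota\circ\varphi^{-1}$ regular (as $\iota$ is an isomorphism onto its image). The contrapositive of Proposition~\ref{Prop:NoBasePtA2} then forces $D$ to be equivalent to a line, and Lemma~\ref{Lemm:Lines} gives the same for $C$. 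Your construction of the regular extension $\theta$ and the subsequent case analysis via Proposition~\ref{Prop:Threecasescontraction} are unnecessary.

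Moreover, your handling of case~(3) has a gap. You propose to apply Proposition~\ref{Prop:NoBasePtA2} to $\theta^{-1}$ and then invoke ``intersection conditions of Lemma~\ref{Lemm:EmbFnminimaldegree}'', but that is not where the contradiction lies. The point is simply that $\theta$ contracts $C$ to $q\in\A^2$, so $q$ is a base-point of $\theta^{-1}$ in $\A^2$; hence $\iota'\circ\theta^{-1}$ cannot be regular for any $\iota'$, directly contradicting the conclusion of Proposition~\ref{Prop:NoBasePtA2}. This is exactly the paper's one-line argument, just applied to $\theta^{-1}$ instead of $\varphi^{-1}$---which is why the detour through $\theta$ gains nothing.

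For the factorisation, the paper also streamlines: once $C$ and $D$ are both known to be lines, conjugate by automorphisms at source and target so that both equal $\{x=0\}$; then $\varphi$ itself lies in $\Aut(\A^2\setminus\{x=0\})$, and the contraction hypothesis forces $\varphi(x,y)=(\lambda x,\mu x^n y+s(x))$ with $s\in\k[x]$ and $n\ge 1$, which visibly equals $\alpha\circ\psi$ with $\alpha(x,y)=(\lambda x,\mu y+s(x))$. Your route via Lemma~\ref{Lemm:Lines}$(\ref{OpenA2minusline})$ is correct but slightly longer.
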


\begin{proof}
By Proposition~\ref{Prop:Threecasescontraction}, $\varphi(\A^2\setminus C)=\A^2\setminus D$ for some geometrically irreducible closed curve $D$. Denote by $\varphi^{-1} \colon \A^2 \dasharrow \A^2$ the birational transformation which is the inverse of $\varphi$.
Since $C$ is contracted by $\varphi$ to a point of~$\A^2$, it is not possible to find an open embedding $\iota \colon \A^2 \hookrightarrow \F_n$, for some $n\ge 1$, such that the birational map $\iota \circ \varphi ^{-1}$
actually defines a regular morphism $\A^2\to \F_n$.
 By Proposition~\ref{Prop:NoBasePtA2}, this implies that $D$ is equivalent to a line. Hence, the same holds for $C$, by Lemma~\ref{Lemm:Lines}. Applying automorphisms of~$\A^2$ at the source and the target, we may then assume that $C$ and $D$ are equal to the line $x=0$.
By Lemma~\ref{Lemm:Lines}$(\ref{AutA2minusline})$, the map $\varphi$ is of the form $(x,y)\mapsto (\lambda x,\mu x^ny+s(x))$, where $\lambda,\mu\in\k^*$, $n\ge 1$ and $s\in \k[x]$ is a polynomial.
We then observe that
$\varphi =  \alpha \psi $, where $\alpha$ is the automorphism of~$\A^2$ given by  $(x,y) \mapsto (\lambda x, \mu y+s(x) )$ and $\psi$ is the endomorphism of $\A^2$ given by   $(x,y)\mapsto (x,x^ny)$.
\end{proof}

Corollary~\ref{Coro:CurveContractedPtA2} also gives a simple proof of the following characterisation of birational endomorphisms of~$\A^2$ that contract only one geometrically irreducible closed curve. This result has already been obtained by Daniel Daigle in \cite[Theorem~4.11]{Daigle1991b}.

\begin{corollary}  \label{Cor:BirMor}
Let $C\subset \A^2$ be a geometrically irreducible closed curve and let $\varphi$ be a birational endomorphism of $\A^2$ which restricts to an open embedding $ \A^2\setminus C\hookrightarrow \A^2$. Then, the following assertions are equivalent:

\begin{enumerate}[$(i)$]
\item \label{ContractsinA2}
The endomorphism $\varphi$ contracts the curve $C$.
\item \label{EndoA2}
The endomorphism $\varphi$ is not an automorphism.
\item \label{EndoA2special}
There exist automorphisms $\alpha,\beta$ of~$\A^2$ and an endomorphism  $\psi\colon \A^2\to \A^2$ of the form  $(x,y)\mapsto (x,x^ny)$, where $n \geq 1$ is an integer, such that $\varphi = \alpha \psi \beta$.
\end{enumerate}
\end{corollary}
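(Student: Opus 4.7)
The plan is to establish the cyclic chain of implications $(\ref{ContractsinA2}) \Rightarrow (\ref{EndoA2}) \Rightarrow (\ref{EndoA2special}) \Rightarrow (\ref{ContractsinA2})$, the essential content sitting in the middle step, which reduces the statement to tools already developed.

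The first implication is immediate: if $\varphi$ contracts the positive-dimensional curve $C$ to a point, then $\varphi$ fails to be injective on $\A^2$, so it cannot be an automorphism.

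For $(\ref{EndoA2}) \Rightarrow (\ref{EndoA2special})$, I would apply Proposition~\ref{Prop:Threecasescontraction} to the open embedding $\A^2 \setminus C \hookrightarrow \A^2$ obtained by restricting $\varphi$. Case~$(\ref{Case1:AutA2})$ is incompatible with our assumption: any extension to an automorphism of $\A^2$ would agree with $\varphi$ on the dense open $\A^2 \setminus C$, hence equal $\varphi$, contradicting $(\ref{EndoA2})$. Case~$(\ref{Case2:Dline})$ is excluded using that $\varphi$ is a morphism $\A^2 \to \A^2$: one would have $\hat\varphi(\overline C) = L_\infty$, so $\varphi(C) \subseteq L_\infty \cap \A^2 = \emptyset$, which is impossible since $C$ is non-empty. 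We are therefore in case~$(\ref{Case3:Ccontracted})$: $\hat\varphi$ contracts $\overline C$ to a $\k$-point of $\p^2$. Because $\varphi$ is regular on $\A^2 \supseteq C$, the value $\varphi(c)$ for $c \in C$ lies in $\A^2$, so the contraction point is in $\A^2$. Corollary~\ref{Coro:CurveContractedPtA2} then furnishes the desired factorization $\varphi = \alpha \psi \beta$.

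For $(\ref{EndoA2special}) \Rightarrow (\ref{ContractsinA2})$, observe that $\psi \colon (x,y) \mapsto (x, x^n y)$ is injective on $\{x \neq 0\}$ and contracts the line $\{x=0\}$ to $(0,0)$. Since $\alpha$ and $\beta$ are automorphisms, the non-injectivity locus of $\varphi$ is precisely $\beta^{-1}(\{x=0\})$. Because $\varphi$ restricts to an open embedding on $\A^2 \setminus C$, this locus must be contained in $C$; being a geometrically irreducible closed curve of the same dimension as $C$, it must coincide with $C$. Thus $\varphi$ contracts $C$ to the point $\alpha(0,0)$.

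The only genuinely delicate point is the exclusion of case~$(\ref{Case2:Dline})$ in the middle step: it relies on the fact that $\varphi$ is defined on all of $\A^2$, and not merely on $\A^2 \setminus C$. Everything else is bookkeeping around Proposition~\ref{Prop:Threecasescontraction} and Corollary~\ref{Coro:CurveContractedPtA2}.
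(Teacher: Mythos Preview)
Your proof is correct and uses essentially the same approach as the paper: both arguments hinge on Proposition~\ref{Prop:Threecasescontraction} to rule out the non-contracting cases and on Corollary~\ref{Coro:CurveContractedPtA2} to obtain the factorization $\varphi=\alpha\psi\beta$. The only cosmetic difference is the direction of the cycle: the paper proves $(\ref{EndoA2special})\Rightarrow(\ref{EndoA2})\Rightarrow(\ref{ContractsinA2})\Rightarrow(\ref{EndoA2special})$, so it never needs your argument identifying $C$ with $\beta^{-1}(\{x=0\})$; conversely, your ordering folds the paper's two steps $(\ref{EndoA2})\Rightarrow(\ref{ContractsinA2})\Rightarrow(\ref{EndoA2special})$ into a single step $(\ref{EndoA2})\Rightarrow(\ref{EndoA2special})$.
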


\begin{proof}
 $(\ref{EndoA2special})\Rightarrow(\ref{EndoA2})$: This follows from the fact that, for each $n\ge 1$, the map $\psi\colon (x,y)\mapsto (x,x^ny)$ is a birational endomorphism of~$\A^2$ which is not an automorphism, as its inverse $\psi^{-1}\colon (x,y)\mapsto (x,x^{-n}y)$ is not regular.

$(\ref{EndoA2})\Rightarrow(\ref{ContractsinA2})$:
Denote by  $\hat{\varphi} \colon \p^2 \dasharrow \p^2$ the birational map induced by $\varphi$.
Since $\varphi$ is an endomorphism of $\A^2$ which is not an automorphism, cases~$(\ref{Case1:AutA2})$-$(\ref{Case2:Dline})$ of Proposition~\ref{Prop:Threecasescontraction} are not possible.
Hence, we are in case~$(\ref{Case3:Ccontracted})$: $C$ is contracted by $\hat{\varphi}$ to a point of~$\p^2$, which is necessarily in $\A^2$ since $\varphi(\A^2)\subset \A^2$. 

$(\ref{ContractsinA2})\Rightarrow(\ref{EndoA2special})$: This follows from Corollary~\ref{Coro:CurveContractedPtA2}.
\end{proof}

\subsection{Completion with two curves and a boundary}
The following technical Proposition~\ref{Prop:XBCD} is used to prove Corollary~\ref{Cor:Th1a} and Proposition~\ref{Prp:Th1b}, which yield almost all statements of Theorem~\ref{RigidityThm}.

\begin{definition}\label{Def:kForest}
Let $X$ be a smooth projective surface. A reduced closed curve $C\subset X$ is a \emph{$\k$-forest of $X$} if $C$ is a finite union of closed curves $C_1,\dots,C_n$, all isomorphic (over $\k$) to $\p^1$ and if each singular $\kk$-point of $C$ is a $\k$-point lying on exactly two components $C_i,C_j$ intersecting transversally. We moreover ask that $C$ does not contain any loop. If $C$ is connected, we say that $C$ is a \emph{$\k$-tree}.
\end{definition}

\begin{remark}\label{Rem:Preimage}
If $\eta\colon X\to Y$ is a birational morphism between smooth projective surfaces such that all $\kk$-base-points of $\eta^{-1}$ are defined over $\k$, then the exceptional curve of $\eta$ (the union of the contracted curves) is a $\k$-forest $E\subset X$. Moreover, the strict transform and the preimage of any $\k$-forest of $Y$ is a $\k$-forest of $X$. The preimage of a $\k$-tree is a $\k$-tree.
\end{remark}

\begin{proposition}   \label{Prop:XBCD}
Let $C,D\subset \A^2$ be geometrically irreducible closed curves, not equivalent to lines, and let $\varphi\colon \A^2\setminus C\iso\A^2\setminus D$ be an isomorphism which does not extend to an automorphism of~$\A^2$. Then there is a smooth projective surface $X$ and two open embeddings $\rho_1,\rho_2\colon \A^2\hookrightarrow X$
which make the following diagram commutative
\[\xymatrix{& X  \\
\A^2  \ar@{^{(}->}[ru]^{\rho_1}   &     & \A^2  \ar@{_{(}->}[lu]_{\rho_2}   \\
\A^2 \setminus C  \ar@{^{(}->}[u]  \ar^{\varphi}_{\simeq}[rr] & &   \A^2 \setminus D  \ar@{_{(}->}[u]  }  \]
and such that the following holds:
\begin{enumerate}[$(i)$]
\item\label{It1}
The curves $\Gamma=\overline{\rho_1(C)}\subset X$, $\Delta=\overline{\rho_2(D)}\subset X$ are isomorphic to $\p^1$.
\item\label{It2}
For $i=1,2$, we have $\rho_i(\A^2)=X\setminus B_i$ for some $\k$-tree $B_i$.
\item\label{It3}
Writing $B=B_1\cap B_2$, we have $B_1=B\cup \Delta$ and $B_2=B\cup \Gamma$.
\item\label{It4}
There is no birational morphism $X\to Y$, where $Y$ is a smooth projective surface, which contracts one connected component of~$B$, and no other $\kk$-curve.
\item\label{It5}
The number of connected components of~$B$ is equal to the number of $\kk$-points of~$B\cap \Gamma$ and to the number of $\kk$-points of  $B\cap \Delta$, and is at most $2$.
\end{enumerate}
\end{proposition}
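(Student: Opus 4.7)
The plan is to combine the two Hirzebruch embeddings provided by Proposition~\ref{Prop:NoBasePtA2} into a common smooth projective surface, and then trim the boundary to obtain~$X$ satisfying the required properties.

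First, I would apply Proposition~\ref{Prop:NoBasePtA2} to $\varphi$ and to $\varphi^{-1}$ to obtain open embeddings $\iota_2 \colon \A^2 \hookrightarrow \F_{n_2}$ and $\iota_1 \colon \A^2 \hookrightarrow \F_{n_1}$, with images $\F_{n_2} \setminus (S_{n_2} \cup F_{n_2})$ and $\F_{n_1} \setminus (S_{n_1} \cup F_{n_1})$, such that $\iota_2 \circ \varphi$ and $\iota_1 \circ \varphi^{-1}$ extend to regular morphisms on their respective sources. Writing $\Gamma := \iota_1(\overline{C})$ and $\Delta := \iota_2(\overline{D})$, the induced birational map $\bar\varphi \colon \F_{n_1} \dashrightarrow \F_{n_2}$ then has all of its $\kk$-base-points on $S_{n_1} \cup F_{n_1}$ and those of $\bar\varphi^{-1}$ on $S_{n_2} \cup F_{n_2}$, and it restricts to an isomorphism between $\F_{n_1} \setminus (S_{n_1} \cup F_{n_1} \cup \Gamma)$ and $\F_{n_2} \setminus (S_{n_2} \cup F_{n_2} \cup \Delta)$.

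Next I would pass to the minimal resolution $\eta \colon Y \to \F_{n_1}$, $\pi \colon Y \to \F_{n_2}$ of $\bar\varphi$. By Lemma~\ref{Lem:IsoComplRat}\eqref{BasePtsKRat}--\eqref{Resolution}, the morphisms $\eta$ and $\pi$ are compositions of blow-ups of $\k$-rational points, so by Remark~\ref{Rem:Preimage} the exceptional divisors are $\k$-forests. Performing further $\k$-rational blow-ups inside the boundary if needed, I may assume that the reduced total preimage
\[
T \;:=\; \eta^{-1}(S_{n_1} \cup F_{n_1} \cup \Gamma) \;=\; \pi^{-1}(S_{n_2} \cup F_{n_2} \cup \Delta)
\]
(the equality coming from Lemma~\ref{Lem:IsoComplRat}\eqref{inverse images of U and V are equal}) is itself a $\k$-forest on $Y$, whose components include the smooth rational strict transforms $\tilde\Gamma$ and $\tilde\Delta$. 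I would then pass from $Y$ to $X$ by iteratively contracting $(-1)$-curves $E \subset T$ with $E \neq \tilde\Gamma, \tilde\Delta$, subject to preserving the $\k$-forest structure of $T$ and the smoothness of $\tilde\Gamma, \tilde\Delta$, and refusing any contraction that would collapse an entire connected component of $T \setminus (\tilde\Gamma \cup \tilde\Delta)$. Because these contractions take place inside the boundary, the two open embeddings $\rho_1, \rho_2 \colon \A^2 \hookrightarrow X$ with images $X \setminus (T \setminus \tilde\Gamma)$ and $X \setminus (T \setminus \tilde\Delta)$ persist, and $\tilde\Gamma, \tilde\Delta$ descend to curves $\Gamma, \Delta \subset X$ still isomorphic to $\p^1$.

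Setting $B_1 = X \setminus \rho_1(\A^2)$, $B_2 = X \setminus \rho_2(\A^2)$, and $B = B_1 \cap B_2$, conditions~\eqref{It1}--\eqref{It3} then fall out of the construction, condition~\eqref{It4} is precisely the maximality of the last contraction step, and the equality in~\eqref{It5} between the number of connected components of $B$ and the cardinalities of $B \cap \Gamma, B \cap \Delta$ follows from $B_1, B_2$ being connected $\k$-trees (so each component of $B$ must attach to $\Delta$ and to $\Gamma$ at exactly one point). The hardest part will be establishing the upper bound of two components of $B$ in~\eqref{It5}: this should require a careful analysis of how $\tilde\Gamma$ is connected to $\tilde\Delta$ inside $T$ via chains of exceptional and Hirzebruch-boundary curves, exploiting that $\bar\varphi$ contracts $\Gamma$ to a single $\k$-point (Proposition~\ref{Prop:Threecasescontraction}\eqref{Case3:Ccontracted}) together with a case split between the alternatives~\eqref{SectionCase} and~\eqref{ng} of Lemma~\ref{Lemm:EmbFnminimaldegree}, in order to show that any hypothetical third connecting chain would have been contractible away in the previous step.
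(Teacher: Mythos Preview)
Your setup through the minimal resolution is correct and matches the paper: apply Proposition~\ref{Prop:NoBasePtA2} in both directions, form $\bar\varphi\colon\F_{n_1}\dasharrow\F_{n_2}$ with all base-points on the two boundaries, and take the minimal resolution. From there one immediately gets $B_1=\eta^{-1}(S_{n_1}\cup F_{n_1})$ and $B_2=\pi^{-1}(S_{n_2}\cup F_{n_2})$ as $\k$-trees (Remark~\ref{Rem:Preimage}), and the equality $B_1\cup\Gamma=B_2\cup\Delta$ forces $\Gamma,\Delta\simeq\p^1$ and $B_1=B\cup\Delta$, $B_2=B\cup\Gamma$. The additional blow-ups you propose to make the full $T$ a forest are unnecessary: the paper never needs $B_1\cup\Gamma$ to be a tree, only $B_1$ and $B_2$ separately, and that is automatic.

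The genuine gap is in your treatment of~\eqref{It4} and the bound in~\eqref{It5}. Your contraction procedure explicitly \emph{refuses} to collapse any connected component of $B$, but condition~\eqref{It4} demands precisely that no component of $B$ be contractible. After your procedure you may well be left with components consisting of a single $(-1)$-curve (touching $\Gamma$ and $\Delta$ once each), and then~\eqref{It4} fails. The paper does the opposite: whenever a whole component of $B$ is contractible, it contracts that component (this preserves \eqref{It1}--\eqref{It3} since the component meets each of $\Gamma,\Delta$ transversally in a single $\k$-point). Condition~\eqref{It4} is then tautologically the terminal state.

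The bound of $2$ in~\eqref{It5} is not obtained by the kind of chain analysis you sketch, nor via Proposition~\ref{Prop:Threecasescontraction} (which concerns $\p^2$) or a case split on Lemma~\ref{Lemm:EmbFnminimaldegree}. The paper's argument is the following: $\Gamma$ is an irreducible component of the $\k$-tree $B_2=\pi^{-1}(S_n\cup F_n)$, and $S_n\cup F_n$ has only two components. Factor $\pi=\pi_2\circ\pi_1$ so that $\pi_1(\Gamma)$ already meets exactly two other components of $(\pi_2)^{-1}(S_n\cup F_n)$, with all centres of $\pi_1$ on $\pi_1(\Gamma)$ away from those two. If $\Gamma$ meets $r\ge 3$ components of $B$, then at least $r-2$ of them lie over those extra centres and are therefore contractible. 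Thus after enforcing~\eqref{It4} by contracting all contractible components, at most two remain. This is the step your proposal is missing.
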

\begin{proof}By Proposition~\ref{Prop:NoBasePtA2}, there exist integers $m,n \geq 1$, and isomorphisms
\[\iota_1 \colon \A^2 \iso \F_m\setminus (S_m\cup F_m), \; \iota_2 \colon \A^2 \iso \F_n\setminus (S_n\cup F_n)\]
 such that both open embeddings $\iota_1\varphi^{-1}\colon \A^2\setminus D\to \F_m$ and $\iota_2\varphi\colon \A^2\setminus C\to \F_n$ extend to regular morphisms $u_1\colon \A^2\to \F_m$ and $u_2\colon \A^2\to \F_n$. Denoting by $\psi\colon \F_m\dasharrow \F_n$ the corresponding birational map, equal to $\iota_2(u_1)^{-1}=u_2(\iota_1)^{-1}$, 
the restriction of $\psi$ gives an isomorphism $\F_m \setminus ( S_m \cup F_m \cup \iota_1(C) )\iso\F_n \setminus ( S_n \cup F_n \cup \iota_2(D) )$ (which corresponds to $\varphi$). We then  have the following commutative diagram
\[\xymatrix@R=4mm@C=2cm{& X\ar[ld]_{\eta}\ar[rd]^{\pi} \\
\F_m \ar@{-->}[rr]^{\psi} && \F_n \\
\A^2  \ar@{^{(}->}[u]^{\iota_1}   \ar[rru]_(0.8){u_2}|\hole &   & \A^2 \ar@{^{(}->}[u]_{\iota_2}    \ar[llu]^(0.8){u_1}  \\
\A^2 \setminus C  \ar@{^{(}->}[u]  \ar^{\varphi}_{\simeq}[rr]& &   \A^2 \setminus D  \ar@{^{(}->}[u]  }  \]
where $\eta$ and $\pi$ are birational morphisms, which are sequences of blow-ups of~$\k$-points, being the base-points of~$\psi$ and $\psi^{-1}$ respectively (Lemma~$\ref{Lem:IsoComplRat}$).

Since $u_1,u_2$ are regular on $\A^2$, the $\kk$-base-points of~$\psi$ (which are $\k$-points), resp.~$\psi^{-1}$, are infinitely near to $\k$-points of $F_m \cup S_m \subset \F_m$, resp.~$F_n \cup S_n \subset \F_n$. In particular, we get two open embeddings 
\[\rho_1=\eta^{-1}\iota_1\colon \A^2\hookrightarrow X, \; \rho_2=\pi^{-1}\iota_2\colon \A^2\hookrightarrow X\]
such that $\rho_2\varphi=\rho_1$ (or more precisely $\rho_2\varphi=\rho_1|_{\A^2\setminus C}$).
We have $\rho_1(\A^2)=X\setminus B_1$ and $\rho_2(\A^2)=X\setminus B_2$, where $B_1\, := \, \eta^{-1}(S_m\cup F_m)$ and
$B_2  \, := \,  \pi^{-1}(S_n\cup F_n)$ are $\k$-trees (see Remark~\ref{Rem:Preimage}).

By Lemma~\ref{Lem:IsoComplRat}, the following equality holds:
\[ \eta^{-1} ( S_m \cup F_m \cup \iota_1(C) ) = \pi^{-1}  ( S_n \cup F_n \cup \iota_2(D) ).\]
The left-hand side is equal to $B_1 \cup \Gamma$, where $ \Gamma=\overline{\rho_1(C)} \subset X$ is the strict transform of $\overline{ \iota_1(C) } \subset \F_m$ by $\eta$ and the right-hand side is equal to  $B_2 \cup \Delta$, where $ \Delta =\overline{\rho_2(D)}\subset X$ is the strict transform of $\overline{ \iota_2(D) } \subset \F_n$ by $\pi$. The fact that $\varphi$ does not extend to an automorphism of~$\A^2$ implies that $B_1 \not=B_2$, whence $\Delta \not=\Gamma$. Writing $B:=B_1\cap B_2$, the equality $B_1 \cup \Gamma = B_2 \cup \Delta$ yields:
\[B_2=B \cup \Gamma \text{ and } B_1=B \cup \Delta \text{ (with } \Gamma=\overline{\rho_1(C)}, \Delta =\overline{\rho_2(D)} \subset X).\]
In particular, since $B_1,B_2$ are two $\k$-trees, $\Gamma$ and $\Delta$ are isomorphic to $\p^1$ (over $\k$) and intersect transversally $B$ in a finite number of~$\k$-points.
We have now found the surface $X$ together with the embeddings $\rho_1,\rho_2$, satisfying conditions $(\ref{It1})$--$(\ref{It2})$--$(\ref{It3})$. We will then modify $X$ if needed, in order to get also $(\ref{It4})$--$(\ref{It5})$.

The number of connected components of~$B$ is equal to the number of $\kk$-points of~$B\cap \Gamma$, and of~$B\cap \Delta$:
This follows from the fact that $B \cup \Gamma$ and $B \cup \Delta$ are $\k$-trees.
Remember also that each $\kk$-point of~$B\cap \Gamma$, or of~$B\cap \Delta$, is a $\k$-point, as mentioned earlier.

Suppose that the number of connected components of~$B$ is $r\ge 3$, and let us show that at least $r-2$ connected components of~$B$ are contractible (in the sense that there is a birational morphism $X\to Y$, where $Y$ is a smooth projective rational surface, which contracts one component of~$B$ and no other $\kk$-curve). To show this, we first observe that $\Gamma$ intersects $r$ distinct curves of~$B$. Since $\Gamma$ is one of the irreducible components of~$B_2=\pi^{-1}(S_n\cup F_n)$, we can decompose $\pi$ as $\pi_2\circ \pi_1$ where $\pi_1(\Gamma)$ is an irreducible component of~$(\pi_2)^{-1}(S_n\cup F_n)$ intersecting exactly two other irreducible components $R_1,R_2$, and such that all $\kk$-points blown up by $\pi_1$ are infinitely near points of~$\pi_1(\Gamma)\setminus (R_1\cup R_2)$. This proves that we can contract at least $r-2$ connected components of~$B$.

If one connected component of~$B$ is contractible, there exists a morphism $X\to Y$, where $Y$ is a smooth projective rational surface, which contracts this component of~$B$, and no other curve. Since the component intersects $\Delta$ transversally in one point, and also $\Gamma$ in one point, we can replace $X$ by $Y$, $\rho_1,\rho_2$ by their compositions with the morphism $X \to Y$ and still fulfill conditions $(\ref{It1})$--$(\ref{It2})$--$(\ref{It3})$. After finitely many steps, condition $(\ref{It4})$ is satisfied. By the observation made earlier, the number of connected components of~$B$, after this is done, is at most $2$, giving then $(\ref{It5})$.
\end{proof}

\begin{corollary}\label{Cor:Th1a}
Let $C,D\subset \A^2$ be geometrically irreducible closed curves and let $\varphi\colon \A^2\setminus C\iso\A^2\setminus D$ be an isomorphism which does not extend to an automorphism of~$\A^2$. 

Then, the curves $C,D$ are isomorphic to open subsets of~$\A^1$: there exist polynomials $P,Q\in \k[t]$ without square factors, such that $C\simeq \Spec(\k[t,\frac{1}{P}])$ and $D\simeq \Spec(\k[t,\frac{1}{Q}])$. Moreover, the numbers of~$\kk$-roots of~$P$ and $Q$ are the same $($i.e.~extending the scalars to $\kk$, the curves $C$ and $D$ become isomorphic to $\A^1$ minus some finite number of points, the same number for both curves$)$. The numbers of~$\k$-roots of~$P$ and $Q$ are also the same.

\end{corollary}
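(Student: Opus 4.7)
The plan is to reduce to the geometric setup of Proposition~\ref{Prop:XBCD} and then read off the structure of $C$ and $D$ by combinatorial bookkeeping on the intersection pattern it provides. First I would dispose of the case where $C$ is equivalent to a line: Lemma~\ref{Lemm:Lines}$(\ref{OpenA2minusline})$ then forces $D$ also to be equivalent to a line (otherwise $\varphi$ would extend to an automorphism of $\A^2$), giving $C\simeq D\simeq \A^1 = \Spec(\k[t])$, so the conclusion holds with $P=Q=1$. From now on, I would assume that neither curve is equivalent to a line, so that Proposition~\ref{Prop:XBCD} applies.

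Next, I would extract from the proposition a smooth projective surface $X$, open embeddings $\rho_1,\rho_2\colon\A^2\hookrightarrow X$ with $\rho_i(\A^2)=X\setminus B_i$, and irreducible curves $\Gamma=\overline{\rho_1(C)}$, $\Delta=\overline{\rho_2(D)}$, both isomorphic to $\p^1$ over $\k$, with $B=B_1\cap B_2$ satisfying $B_1=B\cup\Delta$ and $B_2=B\cup\Gamma$. The restriction of $\rho_1$ yields an isomorphism $C\iso\Gamma\setminus(\Gamma\cap B_1)$, hence
\[ C\simeq\Gamma\setminus S_\Gamma,\qquad S_\Gamma:=(\Gamma\cap B)\cup(\Gamma\cap\Delta),\]
and symmetrically $D\simeq\Delta\setminus S_\Delta$ with $S_\Delta:=(\Delta\cap B)\cup(\Delta\cap\Gamma)$. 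By condition~$(\ref{It5})$ of the proposition, $|B\cap\Gamma|_{\kk}=|B\cap\Delta|_{\kk}=r$, where $r$ is the number of connected components of $B$; moreover, these points are $\k$-rational, being singular points of the $\k$-trees $B_2$ and $B_1$. I would also note that $r\ge 1$, because $B_1$ is (in the resolution used in the proof of Proposition~\ref{Prop:XBCD}) the preimage of the reducible curve $S_m\cup F_m$ under a birational morphism, hence has at least two irreducible components, only one of which is $\Delta$.

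I would then pick $\k$-rational points $\infty_\Gamma\in B\cap\Gamma$ and $\infty_\Delta\in B\cap\Delta$; the resulting isomorphisms $\Gamma\setminus\{\infty_\Gamma\}\simeq\A^1\simeq\Delta\setminus\{\infty_\Delta\}$ identify the reduced finite $\k$-subschemes $S_\Gamma\setminus\{\infty_\Gamma\}$ and $S_\Delta\setminus\{\infty_\Delta\}$ of $\A^1$ with the vanishing loci of unique square-free polynomials $P,Q\in\k[t]$, yielding $C\simeq\Spec(\k[t,\frac{1}{P}])$ and $D\simeq\Spec(\k[t,\frac{1}{Q}])$. To match the root counts, inclusion-exclusion gives
\[ |S_\Gamma|_{\kk}=|B\cap\Gamma|_{\kk}+|\Gamma\cap\Delta|_{\kk}-|B\cap\Gamma\cap\Delta|_{\kk},\]
and analogously for $|S_\Delta|_{\kk}$; since $|B\cap\Gamma|_{\kk}=|B\cap\Delta|_{\kk}$ and the triple intersection $B\cap\Gamma\cap\Delta$ is symmetric in $\Gamma$ and $\Delta$, these are equal, so $P$ and $Q$ have the same number of $\kk$-roots. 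The identical computation restricted to $\k$-rational points yields equality of $\k$-root counts. All the real work is packaged inside Proposition~\ref{Prop:XBCD}; what remains, and what this plan carries out, is the intersection-counting bookkeeping on the configuration it provides.
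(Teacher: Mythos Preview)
Your proposal is correct and follows essentially the same route as the paper: dispose of the line case via Lemma~\ref{Lemm:Lines}, apply Proposition~\ref{Prop:XBCD}, identify $C\simeq\Gamma\setminus((\Gamma\cap B)\cup(\Gamma\cap\Delta))$ and $D\simeq\Delta\setminus((\Delta\cap B)\cup(\Delta\cap\Gamma))$, and compare cardinalities using $|\Gamma\cap B|=|\Delta\cap B|$ together with the symmetry of $\Gamma\cap\Delta$ and of the triple overlap. One small caveat: your justification of $r\ge 1$ appeals to the initial resolution in the proof of Proposition~\ref{Prop:XBCD}, but that proposition subsequently contracts connected components of $B$, so the cleaner argument is that $r=0$ would force $B_1=\Delta\simeq\p^1$ with $X\setminus\Delta\simeq\A^2$, making $C$ equivalent to a line, contrary to hypothesis.
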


\begin{remark}
When $\k=\C$, this follows from the fact that $C$ and $D$ are isomorphic to open subsets of $\A^1$, since the curves are rational (Corollary~\ref{Cor:NotRat}) and smooth (Corollary~\ref{corollary: in the singular case, the embedding extends to an automorphism of the plane}). Indeed, since $\A^2 \setminus C$ and $\A^2 \setminus D$ are isomorphic, they have the same Euler characteristic, so $C$ and $D$ also have the same Euler characteristic. 
\end{remark}

\begin{proof}
If $C$ or $D$ is equivalent to a line, so are both curves (Lemma~\ref{Lemm:Lines}), and the result holds. Otherwise, we apply Proposition~\ref{Prop:XBCD} and get a smooth projective surface $X$ and two open embeddings $\rho_1,\rho_2\colon \A^2\hookrightarrow X$ such that $\rho_2\varphi=\rho_1$ and satisfying the conditions $(\ref{It1})$-$(\ref{It2})$-$(\ref{It3})$-$(\ref{It4})$-$(\ref{It5})$. In particular, $C$ is isomorphic to $\Gamma\setminus B_1=\Gamma\setminus ((\Gamma \cap B)\cup (\Gamma\cup \Delta))$. Since $\Gamma$ is isomorphic to $\p^1$ and $\Gamma\cap B$ consists of one or two $\k$-points, this shows that $\Gamma$ is isomorphic to an open subset of~$\A^1$. Proceeding similarly for $D$, we get isomorphisms $C\simeq \Spec(\k[t,\frac{1}{P}])$ and $D\simeq \Spec(\k[t,\frac{1}{Q}])$ where $P,Q\in \k[t]$ are polynomials, which we may assume without square factors.

The number of~$\kk$-roots of~$P$ is equal to the number of  $\kk$-points of~$\Gamma\cap B_1$ minus $1$. Similarly, the number of~$\kk$-roots of~$Q$ is equal to the number of  $\kk$-points of~$\Delta \cap B_2$ minus~$1$. To see that these numbers are equal, we observe that $\Gamma\cap B_1=(\Gamma\cap B)\cup (\Gamma\cap \Delta)$, that $\Delta \cap B_2=(\Delta \cap B)\cup (\Delta\cap \Gamma)$, and that the number of $\kk$-points of~$\Gamma \cap B$ is the same as the number of $\kk$-points of~$\Delta\cap B$ (this follows from $(\ref{It5})$). As each point of~$\Gamma\cap B$ that is contained in $\Gamma\cap \Delta$ is also contained in $\Delta \cap B$, this shows that $P$ and $Q$ have the same number of~$\kk$-roots. As each $\kk$-point of~$\Gamma\cap B_1$ or $\Delta\cap B_2$ which is not a $\k$-point is contained in $\Gamma\cap \Delta$, the polynomials $P$ and $Q$ have the same number of $\k$-roots.
\end{proof}

\begin{proposition}\label{Prp:Th1b}
Let $C,D,D'\subset \A^2$ be geometrically irreducible closed curves, not equivalent to lines, and let $\varphi\colon \A^2\setminus C\iso\A^2\setminus D$, $\varphi'\colon \A^2\setminus C\iso\A^2\setminus D'$ be isomorphisms which do not extend to automorphisms of~$\A^2$. Then, one of the following holds:
\begin{enumerate}[$(a)$]
\item
The map $\varphi' (\varphi)^{-1}$ extends to an automorphism of~$\A^2$ $($sending $D$ to $D')$;
\item \label{a1remov}
The  curves $C,D,D'$ are isomorphic to $\A^1$;
\item 
The  curves $C,D,D'$ are isomorphic to $\A^1\setminus \{0\}$.\end{enumerate}
\end{proposition}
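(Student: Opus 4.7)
The plan is to assume that none of (a)--(c) holds and derive a contradiction, by showing that $\varphi'\varphi^{-1}$ nevertheless extends to an automorphism of $\A^2$.

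First, I reduce to the key case. Since $C$ is not equivalent to a line, Lemma~\ref{Lemm:Lines}$(\ref{OpenA2minusline})$ applied to $\varphi^{-1}$ and $(\varphi')^{-1}$ shows that $D$ and $D'$ are not equivalent to lines either. Corollary~\ref{Cor:Th1a} then yields square-free polynomials $P,Q,R\in\k[t]$ with the same number $n$ of $\kk$-roots such that $C\simeq\Spec(\k[t,\frac{1}{P}])$, $D\simeq\Spec(\k[t,\frac{1}{Q}])$ and $D'\simeq\Spec(\k[t,\frac{1}{R}])$. A square-free polynomial in $\k[t]$ with at most one $\kk$-root is necessarily a constant or a linear polynomial with a $\k$-root, so $n=0$ gives case~$(b)$ and $n=1$ gives case~$(c)$. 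Hence I may assume $n\geq 2$, and the goal becomes to prove that $\varphi'\varphi^{-1}$ extends to an automorphism of $\A^2$.

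I apply Proposition~\ref{Prop:XBCD} twice, to $\varphi$ and to $\varphi'$, obtaining smooth projective surfaces $X,X'$ with open embeddings $\rho_1,\rho_2\colon\A^2\hookrightarrow X$ and $\rho_1',\rho_2'\colon\A^2\hookrightarrow X'$, boundary trees $B\subset X$, $B'\subset X'$ with $B_1=B\cup\Delta$, $B_2=B\cup\Gamma$ and $B_1'=B'\cup\Delta'$, $B_2'=B'\cup\Gamma'$, together with rational curves $\Gamma,\Delta\subset X$ and $\Gamma',\Delta'\subset X'$ (all isomorphic to $\p^1$). From $\rho_1=\rho_2\varphi$ and $\rho_1'=\rho_2'\varphi'$ on $\A^2\setminus C$, the single birational map $\alpha:=\rho_1'\rho_1^{-1}\colon X\dasharrow X'$ coincides with $\rho_2'(\varphi'\varphi^{-1})\rho_2^{-1}$; it restricts to an isomorphism $X\setminus B_1\iso X'\setminus B_1'$ carrying $\Gamma$ onto $\Gamma'$. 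It therefore suffices to prove that $\alpha$ also restricts to an isomorphism $X\setminus B_2\iso X'\setminus B_2'$: then $(\rho_2')^{-1}\alpha\rho_2$ is the sought automorphism of $\A^2$ extending $\varphi'\varphi^{-1}$, contradicting the assumption.

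The crux is a rigidity statement. Since $\Gamma\simeq\p^1$ satisfies $|\Gamma\cap B_1|=n+1\geq 3$ while Proposition~\ref{Prop:XBCD}$(\ref{It5})$ bounds $|\Gamma\cap B|\leq 2$, we get $|\Gamma\cap\Delta|\geq n-1\geq 1$, and analogously $|\Gamma'\cap\Delta'|\geq 1$. The plan is to show that, given the completion $(X,B_1)$ of $\A^2$ and the curve $\Gamma$ compactifying $C$, the decomposition $B_1=B\cup\Delta$ is rigidly determined by the requirements that $X\setminus(B\cup\Gamma)\simeq\A^2$ and that the minimality condition~$(\ref{It4})$ hold; transporting this description to $X'$ via $\alpha$ then forces $\alpha(\Delta)=\Delta'$ and $\alpha(B)=B'$, hence $\alpha(B_2)=B_2'$. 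The main obstacle is proving this rigidity: one must analyze the combinatorial tree $B\cup\Gamma\cup\Delta$ coming from the resolution of the birational map between Hirzebruch surfaces in the proof of Proposition~\ref{Prop:XBCD}, controlling self-intersections and branching to rule out alternative choices of a $(-1)$-curve inside $B_1$ that could play the role of $\Delta$. The hypothesis $n\geq 2$ enters here essentially: it ensures that at least one (and for $n\geq 3$, at least two) of the points of $\Gamma\cap B_1$ lies on $\Delta$ rather than on $B$, creating enough rigid attachments to pin down $\Delta$ uniquely; for $n\leq 1$ no such extra attachment is forced, and the genuine freedom available is precisely what produces the counterexamples of Theorem~\ref{NegativityThm}.
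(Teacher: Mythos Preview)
Your setup through the second paragraph is correct and matches the paper's, but the final paragraph is not a proof: you identify a needed ``rigidity'' statement and then describe what one would have to analyse without actually doing it. More seriously, the transport step is conceptually flawed. Even if $\Delta$ were uniquely characterised inside $B_1$ by intrinsic conditions on $(X,B_1,\Gamma)$, you cannot simply ``transport this description to $X'$ via $\alpha$'': the birational map $\alpha$ is only known to be an isomorphism on the open part $X\setminus B_1\iso X'\setminus B_1'$, and it may well have base-points on $B_1$ and contract components of $B_1$. So there is no a~priori correspondence $B_1\leftrightarrow B_1'$ through which a characterisation of $\Delta$ could be carried to a characterisation of $\Delta'$. (There is also a minor slip in your reduction: a square-free polynomial over a non-perfect field can have a single $\kk$-root without being linear, e.g.\ $t^p-a$ over $\F_p(a)$; the paper avoids this by arguing directly that $\lvert\Gamma\cap B_1\rvert\ge 3$ when $C\not\simeq\A^1,\A^1\setminus\{0\}$.)

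The paper's argument supplies exactly what is missing. One takes a minimal resolution $\eta\colon Z\to X$, $\pi\colon Z\to X'$ of $\kappa=\alpha$ and works with the full boundary $B_Z=\eta^{-1}(B\cup\Gamma\cup\Delta)=\pi^{-1}(B'\cup\Gamma'\cup\Delta')$ inside $Z$. Call a component of $B_Z$ \emph{separating} if removing it leaves a $\k$-forest. Since $B\cup\Delta$, $B\cup\Gamma$, $B'\cup\Delta'$, $B'\cup\Gamma'$ are all $\k$-trees, the strict transforms $\tilde\Gamma$, $\tilde\Delta$, $\tilde\Delta'$ are separating. The key point---and this is where $\lvert\Gamma\cap B_1\rvert\ge 3$ is used---is that \emph{no other} component $E$ of $B_Z$ is separating: whether $\eta(E)$ is a point or a component of $B$, one produces a loop in the remaining curves using the at least three attachment points of $\Gamma$ to $B_1$. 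Since $\tilde\Delta'\ne\tilde\Gamma$, this forces $\tilde\Delta'=\tilde\Delta$, hence $\kappa$ sends $\Delta$ birationally to $\Delta'$, and Proposition~\ref{Prop:Threecasescontraction} finishes. The passage to the resolution $Z$ is not a technicality you can bypass: it is precisely what creates a single ambient object in which $\tilde\Delta$ and $\tilde\Delta'$ can be compared. (The paper also handles separately the case where $\Gamma\cap B_1$ contains a $\kk$-point not defined over $\k$; such a point must lie on $\Gamma\cap\Delta$, and since $\pi^{-1}(B_2')$ is a $\k$-tree this immediately forces $\tilde\Delta=\tilde\Delta'$.)
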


\begin{remark}
Case~$(\ref{a1remov})$ never occurs, as we will show later. Indeed, since $C$ is not equivalent to a line, the existence of~$\varphi,\varphi'$ is excluded (Proposition~\ref{Prop:CurvesA1} below).
\end{remark}

\begin{proof}
If $C\simeq \A^1$ or $C\simeq \A^1\setminus \{0\}$, then $D\simeq C\simeq D'$ by Corollary~\ref{Cor:Th1a}. We may thus assume that $C$ is not isomorphic to $\A^1$ or $\A^1\setminus \{0\}$.
We apply Proposition~\ref{Prop:XBCD} with $\varphi$ and $\varphi'$ and get smooth projective surfaces $X,X'$ and open embeddings $\rho_1,\rho_2,\rho_1',\rho_2'\colon \A^2\hookrightarrow X$ such that $\rho_2\varphi=\rho_1$, $\rho_2'\varphi'=\rho_1'$ and satisfying the conditions $(\ref{It1})$-$(\ref{It2})$-$(\ref{It3})$-$(\ref{It4})$-$(\ref{It5})$. In particular, we obtain an isomorphism $\kappa \colon X\setminus (B\cup \Gamma\cup \Delta)\iso X'\setminus (B'\cup \Gamma'\cup \Delta')$ (where 
 $\Gamma=\overline{\rho_1(C)}\subset X$, $\Delta=\overline{\rho_2(D)}\subset X$,  $\Gamma'=\overline{\rho_1'(C)}\subset X'$, $\Delta'=\overline{\rho_2'(D')}\subset X'$) and a commutative diagram
\[\xymatrix@R=4mm@C=2cm{
&X\ar@{-->}[rr]^{\kappa}& &X'\\
\A^2  \ar@{^{(}->}[ur]^{\rho_2}    &   & \A^2 \ar@{^{(}->}[ul]_{\rho_1}\ar@{^{(}->}[ur]^{\rho_1'}    &   & \A^2 \ar@{^{(}->}[ul]_{\rho_2'}  \\
\A^2 \setminus D  \ar@{^{(}->}[u]  & &   \A^2 \setminus C\ar_{\varphi}^{\simeq}[ll]\ar^{\varphi'}_{\simeq}[rr]   \ar@{^{(}->}[u]  & &   \A^2 \setminus D' \ar@{^{(}->}[u]  }\]
By construction, $\kappa$ sends birationally $\Gamma=\overline{\rho_1(C)}$ onto $\Gamma'=\overline{\rho_1'(C)}$. If $\kappa$ also sends $\Delta$ birationally onto $\Delta'$, then $\varphi'\varphi^{-1}$ extends to a birational map that sends birationally $D$ onto $D'$ and then extends to an automorphism of~$\A^2$ (Proposition~\ref{Prop:Threecasescontraction}). It remains then to show that this is the case.

Using Lemma~\ref{Lem:IsoComplRat}, we take a minimal resolution of the indeterminacies of~$\kappa$:
\[\xymatrix@R=2mm@C=2cm{ & Z\ar[ld]_{\eta}\ar[rd]^{\pi} \\
X\ar@{-->}[rr]^{\kappa}& &X'}\]
where $\eta$ and $\pi$ are the blow-ups of the $\kk$-base-points of $\kappa$ and $\kappa^{-1}$, all being $\k$-rational. We want to show that the strict transforms $\tilde\Delta$ and $\tilde\Delta'$ of $\Delta\subset X$, $\Delta'\subset X'$ are equal. We will do this by studying the strict transform $\tilde{\Gamma}=\tilde{\Gamma}'$ of $\Gamma$ and $\Gamma'$ and its intersection with $\tilde\Delta$ and $\tilde\Delta'$ and with the other components of $B_Z=\eta^{-1}(B\cup \Gamma \cup \Delta)=\pi^{-1}(B'\cup \Gamma'\cup \Delta')$.

Recall that $B_1=B\cup \Delta$, $B_2=B\cup \Gamma$, $B_1'=B'\cup \Delta'$, $B_2'=B'\cup \Gamma'$ are $\k$-trees  and that $C$ is isomorphic to $\Gamma\setminus B_1$ and $\Gamma'\setminus B_1'$ (Proposition~\ref{Prop:XBCD}).

$(i)$ Suppose first that $\Gamma\cap B_1$ contains some $\kk$-points which are not defined over $\k$. None of these points is thus a base-point of~$\kappa$ and each of these points belongs to $\Gamma \cap \Delta$, so $\tilde\Gamma\cap \tilde\Delta$ contains $\kk$-points not defined over $\k$. Since $B_2'$ is a $\k$-tree, $\pi^{-1}(B_2')$ is a $\k$-tree, so $\tilde\Gamma=\tilde{\Gamma}'$ intersects all irreducible components of $B_Z$ into $\k$-points, except maybe $\tilde\Delta'$. This yields $\tilde\Delta=\tilde\Delta'$ as we wanted.

$(ii)$ We can now assume that all $\kk$-points of $\Gamma\cap B_1$ are defined over $\k$, which implies that all intersections of irreducible components of $B_Z$ are defined over $\k$. We will say that an irreducible component of $B_Z$ is \emph{separating} if the union of all other irreducible components is a $\k$-forest (see Definition~\ref{Def:kForest}).

Since $B_1=B\cup \Delta$ is a $\k$-tree, its preimage on $B_Z$ is a $\k$-tree. The union of all components of $B_Z$ distinct from $\tilde\Gamma$ being equal to the disjoint union of $\eta^{-1}(B_1)$ with some $\k$-forest contracted to points of $\Gamma\setminus B_1$, we find that $\tilde\Gamma$ is separating. The same argument shows that $\tilde\Delta$ and $\tilde\Delta'$ are also separating.

It remains then to show that any irreducible component $E\subset B_Z$ which is not equal to $\tilde\Delta$ or $\tilde\Gamma$ is not separating. We use for this the fact that $C\simeq \Gamma\setminus B_1$ is not isomorphic to $\A^1$ or $\A^1\setminus \{0\}$, so the set $\Gamma\cap B_1$ contains at least $3$ points. If $\eta(E)$ is a point $q$, then the complement of $\eta^{-1}(q)$ in $B_Z$ contains a loop, since $\Gamma$ intersects the $\k$-tree $B_1$ into at least two points distinct from $q$. If $\eta(E)$ is not a point, it is one of the components of $B$. We denote by $F$ the union of all irreducible components of $B\cup \Gamma\cup \Delta$ not equal to $\eta(E)$, and prove that $F$ is not a $\k$-forest, since it contains a loop. This is true if $\Delta\cap \Gamma$ contains at least $2$ points. If $\Delta\cap \Gamma$ contains one or less points, then $\Delta\cap B$ contains at least two points, so contains exactly two points, on the two connected components of $B$  which both intersect $\Gamma$ and $\Delta$ (see Proposition~\ref{Prop:XBCD}$(\ref{It5})$). We again get a loop on the union of $\Gamma$, $\Delta$ and of the connected component of $B$ not containing $\eta(E)$. The fact that $F$ contains a loop implies that $\eta^{-1}(F)$ contains a loop, and achieves to prove that $E$ is not separating.
\end{proof}

\subsection{The case of curves isomorphic to $\A^1$ and the proof of Theorem~\ref{RigidityThm}}

To finish the proof of Theorem~\ref{RigidityThm}, we still need to handle the case of curves isomorphic to $\A^1$. The case of lines has
already been
treated in Lemma~\ref{Lemm:Lines}. In characteristic zero, this finishes the study by the Abyhankar-Moh-Suzuki theorem, but in positive characteristic, there are many closed curves of $\A^2$ which are isomorphic to
 $\A^1$, but are not equivalent to lines (these curves are sometimes called ``bad lines'' in the literature). We will show that an open embedding $\A^2\setminus C\hookrightarrow \A^2$ always extends to $\A^2$ if $C$ is isomorphic to 
$\A^1$, but not equivalent to a line.

\begin{lemma}\label{Lemm:Multiplicitesatleast}
Let $n\ge 1$ and let $\Gamma\subset \F_n$ be a  geometrically irreducible closed curve such that $\Gamma\cdot F_n\ge 2$. If there exists a birational map $\F_n\dasharrow \p^2$ that contracts $\Gamma$ to a point $($and perhaps contracts some other curves$)$, then $\Gamma$ is geometrically rational and singular. Moreover, one of the following occurs:
\begin{enumerate}[$(a)$]
\item \label{pt2mFn}
There exists a point $p\in \F_n(\kk)$ such that $2m_p(\Gamma)>\Gamma \cdot F_n$.
\item\label{mtS1F1}
We have $n=1$ and there exists a point $p\in \F_1(\kk)\setminus S_1$ such that $m_p(\Gamma)>\Gamma\cdot S_1$.
\end{enumerate}
\end{lemma}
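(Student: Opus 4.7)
My plan is to take a minimal resolution $\eta \colon Z \to \F_n$, $\pi \colon Z \to \p^2$ of the contracting birational map $\psi \colon \F_n \dasharrow \p^2$ (passing to $\kk$, since the conclusion is geometric, so that this resolution exists as in the classical case). The strict transform $\tilde\Gamma \subset Z$ of $\Gamma$ is contracted by $\pi$, hence it lies in the exceptional locus of~$\pi$. Since $\pi$ is a composition of blow-downs of $(-1)$-curves starting from the smooth surface~$Z$, every irreducible component of its exceptional locus is isomorphic to~$\p^1$ (strict transforms of the successively contracted $(-1)$-curves remain abstractly $\p^1$ under blow-ups). Hence $\tilde\Gamma \simeq \p^1$ and $\Gamma$ is geometrically rational; furthermore, $\tilde\Gamma^2<0$, since any irreducible component of the exceptional locus of a birational morphism between smooth surfaces has negative self-intersection.

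For the singularity statement and the dichotomy (a)/(b), I would set $d=\Gamma\cdot F_n$ and $s=\Gamma\cdot S_n$. Writing $\Gamma \sim dS_n+(nd+s)F_n$ in $\Pic(\F_n)$ and using $K_{\F_n}\sim -2S_n-(n+2)F_n$, I obtain
\[\Gamma^2=nd^2+2ds,\quad K_{\F_n}\cdot\Gamma=-(n+2)d-2s,\quad p_a(\Gamma)=\frac{(d-1)(nd+2s-2)}{2}.\]
Denoting by $m_i$ the multiplicities of the successive strict transforms of $\Gamma$ at the $\eta$-base-points, the blow-up formulas give $\tilde\Gamma^2=\Gamma^2-\sum m_i^2$ and, since $p_a(\tilde\Gamma)=0$, the identity $\sum\binom{m_i}{2}=p_a(\Gamma)$. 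Contractibility translates to the strict inequality $\sum m_i^2>\Gamma^2$.

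I would then argue by contradiction, assuming neither (a) nor (b) holds. Under the hypothesis, every proper point $p\in\F_n(\kk)$ satisfies $m_p(\Gamma)\le d/2$, and if $n=1$ every $p\in\F_1(\kk)\setminus S_1$ additionally satisfies $m_p(\Gamma)\le s$. The multiplicity of the strict transform at an infinitely near point~$q$ lying on the exceptional divisor~$E$ of a blow-up of~$p$ is bounded by $(\tilde\Gamma_p\cdot E)_q\le \tilde\Gamma_p\cdot E=m_p(\Gamma)$, so propagating this bound yields $m_i\le d/2$ for every base-point. If $\Gamma$ were smooth, then $p_a(\Gamma)=0$ would force $(d-1)(nd+2s-2)=0$; since $d\ge 2$, only $(n,d,s)=(1,2,0)$ is possible, and the negation of (b) would then force $\Gamma\subset S_1$, contradicting $\Gamma\cdot F_1=2\neq 1=S_1\cdot F_1$. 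Thus $\Gamma$ is singular. To complete the argument, I would combine $\sum m_i^2\le(d/2)\sum m_i$ (from $m_i\le d/2$) with $\sum m_i^2>\Gamma^2=d(nd+2s)$ to obtain $\sum m_i>2(nd+2s)$, and contradict this using upper bounds coming from the $\p^1$-bundle structure: for each fibre $F$ of $\pi_n$ the inequality $\tilde F\cdot\tilde\Gamma\ge 0$ on $Z$ yields $\sum_{p_i\text{ on tower of }F}m_i\le d$, and similarly $\tilde S_n\cdot\tilde\Gamma\ge 0$ yields $\sum_{p_i\text{ on tower of }S_n}m_i\le s$. The main obstacle will be assembling these local per-tower bounds into a global upper bound on $\sum m_i$ strong enough to contradict the lower bound $2(nd+2s)$; this requires carefully handling base-points that lie on successive exceptional divisors outside any original fibre-tower or $S_n$-tower, and for $n=1$ the negation of (b) is crucial to force the multiplicities on such off-section branches to be small.
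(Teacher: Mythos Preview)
Your setup is correct and your reductions are sound up to the point where you need an \emph{upper} bound on $\sum m_i$. However, the gap you acknowledge is genuine and cannot be closed along the lines you sketch: there is no control on the number of fibres carrying base-points, so the local bounds $\sum_{\text{tower of }F} m_i\le d$ do not assemble into any global upper bound on $\sum m_i$. Numerically, the three constraints you isolate---$\sum m_i^2>\Gamma^2$, $\sum m_i(m_i-1)=2p_a(\Gamma)$, and $m_i\le d/2$---are simultaneously satisfiable for many triples $(n,d,s)$: for instance with $n=2$, $d=4$, $s=0$ one can take nine base-points each of multiplicity~$2$ and all three constraints hold. So your approach cannot conclude without a further ingredient.

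The paper supplies exactly that ingredient, and it is different in kind from yours. Rather than bounding $\sum m_i$, it exploits the fact that $\tilde\Gamma$ is $\pi$-exceptional to show that the divisor $2\tilde\Gamma+dK_Z$ is \emph{not effective}: indeed $(2\tilde\Gamma+dK_Z)\cdot\pi^*L=dK_{\p^2}\cdot L=-3d<0$ for a general line $L\subset\p^2$. The key observation is that on the $\eta$-side this divisor decomposes as
\[
2\tilde\Gamma+dK_Z\;=\;\bigl(nd+2s-2d\bigr)\,\eta^*F_n\;+\;\sum_i (d-2m_i)\,E_i,
\]
with \emph{no} $\eta^*S_n$ term (the choice of coefficient $d=\Gamma\cdot F_n$ makes the $S_n$-component cancel). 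If every $d-2m_i\ge 0$, non-effectiveness forces $nd+2s<2d$, hence $n=1$ and $2s<d$. One then runs the same trick with $3\tilde\Gamma+(d+s)K_Z$, which for $n=1$ has \emph{no} $\eta^*F_1$ term and equals $(d-2s)\eta^*S_1+\sum_i(d+s-3m_i)E_i$; since $d-2s>0$, non-effectiveness forces some $3m_i>d+s$, and combining with $2m_i\le d$ gives $m_i>(d+s)-2m_i\ge s=\Gamma\cdot S_1$, which is~(b). The deliberate cancellation of the $S_n$-coefficient (resp.\ $F_1$-coefficient) in these tailored combinations is precisely the idea your argument is missing.
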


\begin{proof}
We may assume that $\k=\kk$. Denote by $\psi\colon \F_n\dasharrow \p^2$ the birational map that contracts $C$ to a point (and maybe some other curves). 
The minimal resolution of this map yields a commutative diagram
\[\xymatrix@R=2mm@C=2cm{& X\ar[ld]_{\eta}\ar[rd]^{\pi} \\
\F_n\ar@{-->}[rr]^{\varphi} && \p^2} \]

In $\Pic(\F_n)=\Z F_n\bigoplus \Z S_n$ we write
\[\begin{array}{rcl} \Gamma&=& aS_n+bF_n\\
 -K_{\F_n}&=&2S_n+(2+n)F_n\end{array}\]
for some integers $a,b$. Note that $a=\Gamma\cdot F_n\ge 2$ and that $b-an=\Gamma\cdot S_n\ge 0$. By hypothesis, the strict transform $\tilde{\Gamma}$ of~$\Gamma$ on $X$ is a smooth curve contracted by $\pi$. In particular, $\Gamma$ is rational and the divisor $2\tilde{\Gamma}+aK_X$ is not effective, since \[(2\tilde{\Gamma}+aK_X)\cdot \pi^{*}(L)=aK_{X}\cdot \pi^{*}(L)=a\pi^*(K_{\p^2})\cdot \pi^{*}(L)=aK_{\p^2}\cdot L=-3a<0\] for a general line $L\subset \p^2$.

Denoting by $E_1,\dots,E_r\in \Pic(X)$ the pull-backs of the exceptional divisors blown up by $\eta$ (which satisfy $(E_i)^2=-1$ for each $i$ and $E_i\cdot E_j=0$ for $i\not=j$) we have 
\[\begin{array}{rcllllll} \tilde\Gamma&=& a\eta^{*}(S_n)&+&b\eta^{*}(F_n)&-&\sum_{i=1}^r m_i E_i\\
 -K_{X}&=&2\eta^{*}(S_n)&+&(2+n)\eta^{*}(F_n)&-&\sum_{i=1}^r E_i\\
 2\tilde\Gamma +aK_{X}&=&&&(2b-a(2+n))\eta^{*}(F_n)&+&\sum_{i=1}^r (a-2m_i)E_i\end{array}\]
which implies, since $2\tilde\Gamma +aK_{X}$ is not effective, that either $2b<a(2+n)$ or $2m_i>a$ for some $i$. If $2m_i>a$ for some $i$, we get $(\ref{pt2mFn})$, since the $m_i$ are the multiplicities of~$\tilde{\Gamma}$ at the points blown up by $\eta$.

It remains to study the case where $2m_i\le a$ for each $i$, and where $2b<a(2+n)$. Remembering that $b-an=\Gamma\cdot S_n\ge 0$,
we find
$n\le \frac{b}{a}<\frac{2+n}{2}$, whence $n=1$ and thus $2b<3a$. We then compute
\[\begin{array}{rcllllll} 
 3\tilde\Gamma +bK_{X}&=&(3a-2b)\eta^{*}(S_n)&+&\sum_{i=1}^r (b-3m_i)E_i\end{array}\]
 which is again not effective, since $(3\tilde{\Gamma}+bK_X)\cdot \pi^{*}(L) =b K_X \cdot \pi^{*}(L) = - 3b  <0$ for a general line $L\subset \p^2$, because $b\ge an=a\ge 2$. This implies that there exists an integer $i$ such that $3m_i>b$. Since $2m_i\le a$,
we find
$m_i>b-a=\Gamma\cdot S_1$, which implies $(\ref{mtS1F1})$.
\end{proof}

\begin{proposition}\label{Prop:CurvesA1}
Let $C\subset \A^2$ be a closed curve, isomorphic to $\A^1$ $($over $\k)$. The following are equivalent:
\begin{enumerate}[$(a)$]
\item  \label{1Ceqline}
The curve $C$ is equivalent to a line.
\item\label{2contractA2}
There exists an open embedding $\A^2\setminus C \hookrightarrow \A^2$ which does not extend to an automorphism of~$\A^2$.
\item\label{3contractP2}
There exists a birational map $\p^2 \dasharrow \p^2$ that contracts the curve $C$ $($or its closure$)$ to a $\kk$-point $($and perhaps contracts some other curves$)$. In this statement $\A^2$ is identified with an open subset of $\p^2$ via the standard embedding $\A^2 \hookrightarrow \p^2$.
\end{enumerate}
\end{proposition}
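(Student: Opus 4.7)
The plan is to establish the cycle $(a)\Rightarrow(b)\Rightarrow(c)\Rightarrow(a)$, with the content concentrated in $(c)\Rightarrow(a)$. For $(a)\Rightarrow(b)$, after composing with an automorphism of $\A^2$ I may assume $C=\{x=0\}$; then the morphism $(x,y)\mapsto(x,xy)$ restricts to an injective open embedding $\A^2\setminus C\hookrightarrow \A^2$ whose image is again $\A^2\setminus C$ but which does not extend to an automorphism of $\A^2$ (its inverse $(x,y)\mapsto(x,y/x)$ on $\A^2\setminus C$ is not regular). For $(a)\Rightarrow(c)$, if $C$ is equivalent to a line then $\overline{C}\subset\p^2$ is a line; a projective change of coordinates taking it to $\{z=0\}$ followed by the standard Cremona $[x{:}y{:}z]\mapsto[yz{:}xz{:}xy]$ contracts $\overline{C}$ to a $\k$-point. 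For $(b)\Rightarrow(c)$, Proposition~\ref{Prop:Threecasescontraction} applied to $\varphi$ yields either case~$(\ref{Case3:Ccontracted})$, where $\hat{\varphi}$ itself contracts $\overline{C}$, or case~$(\ref{Case2:Dline})$, where $C$ is equivalent to a line and $(c)$ follows from $(a)\Rightarrow(c)$.

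For $(c)\Rightarrow(a)$, I would begin by applying Lemma~\ref{Lemm:EmbFnminimaldegree} to obtain an isomorphism $\iota\colon \A^2\iso \F_n\setminus(F_n\cup S_n)$ for some $n\ge 1$ such that $\Gamma:=\overline{\iota(C)}$ either lies in case~$(\ref{SectionCase})$ with $\Gamma\cdot F_n=1$, or in case~$(\ref{ng})$ with $\Gamma\cdot F_n\ge 2$ and the stated multiplicity restrictions. If case~$(\ref{SectionCase})$ occurs, the equivalences $(\ref{LL2})\Leftrightarrow(\ref{LL3})$ in Lemma~\ref{Lemm:EmbFnminimaldegree}, combined with the hypothesis $C\iso \A^1$, directly yield that $C$ is equivalent to a line. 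The task thus reduces to ruling out case~$(\ref{ng})$. The given birational map $\p^2\dasharrow \p^2$ contracting $\overline{C}$, composed with the birational map $\p^2\dasharrow \F_n$ induced by the two embeddings of $\A^2$, produces a birational map $\F_n\dasharrow \p^2$ contracting $\Gamma$. Lemma~\ref{Lemm:Multiplicitesatleast} then applies: $\Gamma$ is singular, and either $(\ref{pt2mFn})$ there is $p\in \F_n(\kk)$ with $2m_p(\Gamma)>\Gamma\cdot F_n$, or $(\ref{mtS1F1})$ $n=1$ and there is $p\in F_1(\kk)\setminus S_1$ with $m_p(\Gamma)>\Gamma\cdot S_1$.

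The main obstacle is a mismatch: Lemma~\ref{Lemm:EmbFnminimaldegree}$(\ref{ng})$ restricts multiplicities only at $\k$-points of $F_n$, while Lemma~\ref{Lemm:Multiplicitesatleast} produces an a priori arbitrary $\kk$-point. The key observation is that the hypothesis $C\iso \A^1_\k$ forces a strong geometric constraint on $\Gamma$ near the boundary. Indeed, $C$ is smooth, so every singular $\kk$-point of $\Gamma$ must lie in $F_n\cup S_n$. Moreover the normalization $\tilde\Gamma\to\Gamma$ is an isomorphism over $C\iso \A^1_\k$, so $\tilde\Gamma$ is a smooth complete geometrically rational curve containing $\A^1_\k$ as a $\k$-open subset, hence $\tilde\Gamma\iso \p^1_\k$ with the preimage of $F_n\cup S_n$ a single $\k$-point. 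Pushing down, the set $\Gamma\cap(F_n\cup S_n)$ consists of a single $\kk$-point $p_0$, which must be $\k$-rational; and since $\Gamma\cdot F_n\ge 2$, we have $p_0\in F_n(\k)$.

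The case analysis is now forced. In case~$(\ref{pt2mFn})$, $2m_p>\Gamma\cdot F_n\ge 2$ forces $m_p\ge 2$, so $p$ is singular and hence $p=p_0$; depending on whether $n\ge 2$, or $n=1$ with $p_0=S_1\cap F_1$, or $n=1$ with $p_0\notin S_1$ (in which case $\Gamma\cap S_1=\emptyset$ and so $\Gamma\cdot S_1=0$), one of the two inequalities in Lemma~\ref{Lemm:EmbFnminimaldegree}$(\ref{ng})$ directly contradicts $2m_{p_0}>\Gamma\cdot F_n$ (or, in the last subcase, contradicts $m_{p_0}\ge 2$ against $m_{p_0}\le \Gamma\cdot S_1=0$). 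In case~$(\ref{mtS1F1})$, $m_p>\Gamma\cdot S_1\ge 0$ gives $m_p\ge 1$, so $p\in \Gamma\cap F_1\subset\{p_0\}$ yields $p=p_0\notin S_1$; then $\Gamma\cdot S_1=0$, and the bound $m_{p_0}\le \Gamma\cdot S_1=0$ from Lemma~\ref{Lemm:EmbFnminimaldegree}$(\ref{ng})$ contradicts $m_{p_0}\ge 1$. Thus case~$(\ref{ng})$ is impossible, case~$(\ref{SectionCase})$ holds, and $C$ is equivalent to a line.
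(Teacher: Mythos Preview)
Your approach is essentially the paper's: reduce via Lemma~\ref{Lemm:EmbFnminimaldegree} to case~$(\ref{ng})$, produce a contracting map $\F_n\dasharrow\p^2$, invoke Lemma~\ref{Lemm:Multiplicitesatleast}, and use $C\simeq\A^1$ to pin the boundary intersection to a single $\k$-point $p_0$, contradicting the multiplicity bounds of Lemma~\ref{Lemm:EmbFnminimaldegree}$(\ref{ng})$.

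There is one slip. In Lemma~\ref{Lemm:Multiplicitesatleast}$(\ref{mtS1F1})$ the point lies in $\F_1(\kk)\setminus S_1$ (the \emph{surface}), not in the fibre $F_1$; so your inference ``$p\in\Gamma\cap F_1\subset\{p_0\}$'' is not yet justified. The repair is immediate with the tools you already set up: if $p\notin F_1$, then $p\in\Gamma\setminus(F_1\cup S_1)\simeq C\simeq\A^1$ is a smooth point, so $m_p(\Gamma)=1$ and hence $\Gamma\cdot S_1=0$; it follows that $p_0\notin S_1$, and since $p_0\in F_1(\k)$ with $m_{p_0}(\Gamma)\ge 1>0=\Gamma\cdot S_1$, the bound $m_r(\Gamma)\le\Gamma\cdot S_1$ for $r\in F_1(\k)$ from Lemma~\ref{Lemm:EmbFnminimaldegree}$(\ref{ng})$ again gives the contradiction. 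With this adjustment the argument is complete and matches the paper's.
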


\begin{proof}
The implications  $(\ref{1Ceqline})\Rightarrow(\ref{2contractA2})$ and $(\ref{1Ceqline})\Rightarrow(\ref{3contractP2})$ can be observed, for example, by taking the map $(x,y)\mapsto (x,xy)$, which is an open embedding of~$\A^2\setminus \{x=0\}$ into $\A^2$, which does not extend to an automorphism of~$\A^2$, and whose extension to $\p^2$ contracts the line $x=0$ to a point.

To prove $(\ref{2contractA2})\Rightarrow(\ref{3contractP2})$, we take an open embedding $\varphi\colon\A^2\setminus C\hookrightarrow \A^2$ which does not extend to an automorphism of~$\A^2$ and look at the extension to $\p^2$. By Proposition~\ref{Prop:Threecasescontraction}, either this  contracts $C$, or $C$ is equivalent to a line, in which case $(\ref{3contractP2})$ is true as was shown earlier. 

It remains to prove $(\ref{3contractP2})\Rightarrow(\ref{1Ceqline})$. We apply Lemma~\ref{Lemm:EmbFnminimaldegree}, and obtain an isomorphism $\iota \colon \A^2\iso \F_n \backslash (S_n\cup F_n)$ such that the closure of~$\iota(C)$ in $\F_n$ is a curve $\Gamma$ which satisfies one of the two cases 
$(\ref{SectionCase})$-$(\ref{ng})$ of Lemma~\ref{Lemm:EmbFnminimaldegree}. In case $(\ref{SectionCase})$, the curve is equivalent to a line as it is isomorphic to~$\A^1$ (equivalence $(\ref{LL2})-(\ref{LL3}$) of Lemma~\ref{Lemm:EmbFnminimaldegree}). It remains to study the case where $\Gamma$ satisfies conditions $(\ref{ng})$ of Lemma~\ref{Lemm:EmbFnminimaldegree} (in particular $\Gamma\cdot F_n\ge 2$), and to show that these, together with $(\ref{3contractP2})$, yield a contradiction. We prove that there is no point $p\in \F_n(\kk)$ such that $2m_p(\Gamma)>\Gamma\cdot F_n$. Indeed, since $\Gamma\cdot F_n\ge 2$, such a point would be a singular point of~$\Gamma$, and since
$\Gamma \setminus (S_n\cup F_n)= \iota(C) \simeq C$
is  isomorphic to $\A^1$, $p$ would be a $\k$-point and the unique $\kk$-point of~$\Gamma \cap  (S_n\cup F_n)$.
Moreover, as $\Gamma\cdot F_n\ge 2$, we would find that $p\in F_n$. Since $2m_p(\Gamma)>\Gamma\cdot F_n$ and because $\Gamma$ satisfies conditions $(\ref{ng})$ of Lemma~\ref{Lemm:EmbFnminimaldegree}, the only possibility would be that $n=1$, $p\in F_1\setminus S_1$ and $0<m_p(\Gamma)\le \Gamma\cdot S_1$. This contradicts the fact that $\Gamma \cap (S_1 \cup F_1)$ contains only one $\kk$-point.

Denote by $\psi_0\colon \p^2\dasharrow \p^2$ the birational map that contracts $C$ (and maybe some other curves) to a $\kk$-point. Observe that $\psi_0\circ \iota^{-1}$ yields a birational map $\psi\colon \F_n\dasharrow \p^2$ which contracts $\Gamma$ to a $\kk$-point. As there is no point $p\in \F_n(\kk)$ such that $2m_p(\Gamma)>\Gamma\cdot F_n$, Lemma~\ref{Lemm:Multiplicitesatleast} implies that $n=1$ and that there exists a point $p\in \F_1(\kk)\setminus S_1$ such that $m_p(\Gamma)>\Gamma\cdot S_1$. Again, this point is a $\k$-point, since $C$ is isomorphic to $\A^1$. This contradicts the conditions $(\ref{ng})$ of Lemma~\ref{Lemm:EmbFnminimaldegree}.
\end{proof}

\begin{remark}
If $\k$ is algebraically closed, the equivalence between conditions $(\ref{1Ceqline})$ and $(\ref{3contractP2})$ of Proposition~\ref{Prop:CurvesA1} can also be proved using Kodaira dimension. We introduce the following conditions:
\begin{enumerate}
\item[$(\ref{1Ceqline})'$]  The Kodaira dimension $\kappa (C, \A^2)$ of $C$ is equal to $- \infty$. 
\item[$(\ref{3contractP2})'$]  There exists a birational transformation of $\p^2$ that sends $C$ onto a line.\end{enumerate}
The equivalence between $(\ref{1Ceqline})$ and $(\ref{1Ceqline})'$ follows from \cite[Theorem~2.4.(1)]{Ganong} and the equivalence between $(\ref{1Ceqline})' $ and $ (\ref{3contractP2})'$ is Coolidge's theorem (see e.g. \cite[Theorem~2.6]{KumarMurthy}).
We now recall how the classical equivalence between $(\ref{3contractP2})$ and $(\ref{3contractP2})'$ can be proven. Every simple quadratic birational transformation of $\p^2$ contracts three lines. This proves $(\ref{3contractP2})' \Rightarrow(\ref{3contractP2})$. To get $(\ref{3contractP2}) \Rightarrow(\ref{3contractP2})'$, we take a birational transformation $\varphi$ of $\p^2$ that contracts $C$ to a point and decompose $\varphi$ as $\varphi = \varphi_r \circ \cdots \circ \varphi_1$, where each $\varphi_i$ is a simple quadratic transformation (using the Castelnuovo-Noether factorisation theorem). If $i \geq 1$ is the smallest integer such that $(\varphi_i \circ \cdots \circ \varphi_1) ( C)$ is a $\kk$-point, the curve $(\varphi_{i-1} \circ \cdots \circ \varphi_1) ( C)$ is contracted by $\varphi_i$ and is thus a line.
\end{remark}

\begin{remark}
If the field $\k$ is perfect, then every curve that is geometrically isomorphic to $\A^1$ (i.e.~over $\kk$) is also isomorphic to $\A^1$. This can be seen by embedding the curve in $\p^1$ and considering the complement point, necessarily defined over $\k$. For non-perfect fields, there exist closed curves $C\subset \A^2$ geometrically isomorphic to $\A^1$, but
not isomorphic to $\A^1$ (see \cite{Russell}). Corollary~\ref{Cor:Th1a} shows that every open embedding $\A^2\setminus C\hookrightarrow \A^2$ extends to an automorphism of $\A^2$ for all such curves.
\end{remark}

We can now conclude this section by proving Theorem~\ref{RigidityThm}:

\begin{proof}[Proof of Theorem~$\ref{RigidityThm}$]
We recall the hypotheses of the theorem: we have a geometrically irreducible closed curve  $C\subset \A^2$ and an isomorphism $\varphi\colon \A^2\setminus C\iso \A^2\setminus D$ for some closed curve $C\subset \A^2$. Moreover, $\varphi$ does not extend to an automorphism of $\A^2$. We consider the following three cases:

If $C$ is isomorphic to $\A^1$, then
the implication $(\ref{2contractA2})\Rightarrow (\ref{1Ceqline})$ of Proposition~\ref{Prop:CurvesA1}
shows that $C$ is equivalent to a line and Lemma~\ref{Lemm:Lines}$(\ref{OpenA2minusline})$ implies that the same holds for $D$. In particular, the curves $C$ and $D$ are isomorphic. This achieves the proof of the theorem in this case.

If $C$ is isomorphic to $\A^1\setminus \{0\}$ then so is $D$ by Corollary~\ref{Cor:Th1a}. This also gives the result in this case.

It remains to assume that $C$ is not isomorphic to $\A^1$ or to $\A^1\setminus \{0\}$. Proposition~\ref{Prp:Th1b} shows that the isomorphism $\varphi\colon \A^2\setminus C\iso \A^2\setminus D$ (not extending to an automorphism of $\A^2$) is uniquely determined by $C$, up to left composition by an automorphism of $\A^2$. In particular, there are at most two equivalence classes of curves of~$\A^2$ that have complements isomorphic to $\A^2\setminus C$. Corollary~\ref{Cor:Th1a} gives the existence of isomorphisms $C\simeq \Spec(\k[t,\frac{1}{P}])$ and $D\simeq \Spec(\k[t,\frac{1}{Q}])$ for some square-free polynomials $P,Q\in \k[t]$ that have the same number of roots in $\k$, and also the same number of roots in the algebraic closure of~$\k$. By replacing $\k$ with any field $\k'$ containing $\k$ we obtain the result.
\end{proof}

Corollaries~\ref{Coro:AtMostTwoEq},~\ref{Coro:AtMostOnecounterEx} and~\ref{Coro:Sing} are then direct consequences of Theorem~\ref{RigidityThm}.

\subsection{Automorphisms of complements of curves}   \label{Automorphisms-of-complements-of-curves.subsec}
Another consequence of Theorem~\ref{RigidityThm} is Corollary~\ref{Coro:Index12}, which we now prove:
\begin{proof}[Proof of Corollary~$\ref{Coro:Index12}$]
Recall the hypothesis of the corollary: we start with a geometrically irreducible closed curve $C\subset \A^2$ not isomorphic to $\A^1$ or $\A^1\setminus \{0\}$. We want to show that $\Aut(\A^2,C)$ has index at most $2$ in $\Aut(\A^2 \setminus C)$. If $\varphi_1, \varphi_2$ are automorphisms of $\A^2 \setminus C$ which do not extend to automorphisms of $\A^2$, it is enough to show that $(\varphi_2)^{-1} \varphi_1$ extends to an automorphism of $\A^2$. This follows from Theorem~$\ref{RigidityThm}(\ref{RigidityThmUniqueUp})$.
\end{proof}

\begin{remark}
With the assumptions of Corollary~$\ref{Coro:Index12}$, the group $\Aut(\A^2 \setminus C)$ is a semidirect product of the form $\Aut(\A^2,C) \rtimes \Z / 2\Z$ if and only if there exists an involutive automorphism of $\A^2 \setminus C$ which does not extend to an automorphism of $\A^2$.
\end{remark}

\begin{corollary}\label{FiniteComplement}
If $\k$ is a perfect field and $C \subset \A^2$ is a geometrically irreducible closed curve that is
\begin{enumerate}[$(i)$]
\item\label{not equivalent to a line} not equivalent to a line,
\item \label{not equivalent to a cuspidal curve} not equivalent to a cuspidal curve with equation $x^m-y^n=0$, where $m,n \geq 2$ are coprime integers,
\item \label{not geometrically isomorphic to a punctured line} not geometrically isomorphic to $\A^1 \setminus \{0\}$,
\end{enumerate}
then $\Aut(\A^2 \setminus C)$ is a zero dimensional algebraic group, hence is finite.
\end{corollary}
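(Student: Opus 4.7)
The plan is to reduce the finiteness of $\Aut(\A^2\setminus C)$ to that of the stabilizer $\Aut(\A^2,C)$, and then to invoke the classification of plane curves with infinite stabilizer established in \cite{BS15}. For the reduction, I split into two cases. If $C$ is isomorphic to $\A^1$ (equivalently, since $\k$ is perfect, geometrically isomorphic to $\A^1$, as noted in the remark following Proposition~\ref{Prop:CurvesA1}), then hypothesis~\ref{not equivalent to a line} together with the contrapositive of the implication $(\ref{2contractA2})\Rightarrow(\ref{1Ceqline})$ in Proposition~\ref{Prop:CurvesA1} shows that every open embedding $\A^2\setminus C\hookrightarrow \A^2$ extends to an automorphism of $\A^2$, hence $\Aut(\A^2\setminus C)=\Aut(\A^2,C)$. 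Otherwise, $C$ is not isomorphic to $\A^1$, and hypothesis~\ref{not geometrically isomorphic to a punctured line} moreover forces $C\not\simeq \A^1\setminus\{0\}$; Corollary~\ref{Coro:Index12} then gives that $\Aut(\A^2,C)$ has index at most $2$ in $\Aut(\A^2\setminus C)$. Either way, finiteness of one group is equivalent to finiteness of the other.

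To show that $\Aut(\A^2,C)$ is finite, I would invoke \cite[Theorem~2]{BS15}, which classifies the geometrically irreducible closed curves $C\subset \A^2$ with infinite stabilizer in $\Aut(\A^2)$: up to equivalence, these are precisely lines, cuspidal curves $\{x^m-y^n=0\}$ with coprime $m,n\geq 2$, and curves geometrically isomorphic to $\A^1\setminus\{0\}$ (typified by $\{x^ay^b=c\}$ with $c\neq 0$ and $\gcd(a,b)=1$, which is $\k$-parameterized by $\A^1\setminus\{0\}$ through a Bezout identity $au+bv=1$ as $t\mapsto(c^{u}t^b,c^{v}t^{-a})$). The three hypotheses~\ref{not equivalent to a line},~\ref{not equivalent to a cuspidal curve}, and~\ref{not geometrically isomorphic to a punctured line} rule out these three families respectively, so $\Aut(\A^2,C)$ is finite, which is the same as being a reduced zero-dimensional algebraic group.

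The main substantive input is thus the classification from \cite{BS15}. Its underlying idea is that, via the Jung--van der Kulk structure of $\Aut(\A^2)$ as an amalgamated product of the affine and de Jonqui\`eres groups, every positive-dimensional connected algebraic subgroup of $\Aut(\A^2)$ is conjugate into one of these two factors and hence contains a one-parameter subgroup isomorphic to $\mathbb{G}_a$ or $\mathbb{G}_m$. The irreducible invariant curves of such one-parameter subgroups are elementary to list: a $\mathbb{G}_a$-action has standard form $(x,y)\mapsto(x,y+tp(x))$ and its invariant irreducible curves are vertical lines, hence equivalent to lines, while a diagonal $\mathbb{G}_m$-action $(x,y)\mapsto(t^ax,t^by)$ has as its invariant irreducible curves the coordinate axes, the cuspidal curves $\{x^m-cy^n=0\}$ (all $\k$-equivalent to $\{x^m-y^n=0\}$ since by Bezout any $c\in\k^*$ can be written as $\alpha^m/\beta^n$ with $\alpha,\beta\in\k^*$), and the curves $\{x^ay^b=c\}$ with $c\neq 0$. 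Matching these three families to hypotheses~\ref{not equivalent to a line}--\ref{not geometrically isomorphic to a punctured line} is the hard part of the argument.
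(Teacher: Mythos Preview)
Your argument is correct and follows essentially the same route as the paper: cite \cite[Theorem~2]{BS15} to get that $\Aut(\A^2,C)$ is zero-dimensional under hypotheses~(\ref{not equivalent to a line})--(\ref{not geometrically isomorphic to a punctured line}), then use Proposition~\ref{Prop:CurvesA1} (when $C\simeq\A^1$) or Corollary~\ref{Coro:Index12} (otherwise) to pass to $\Aut(\A^2\setminus C)$. Your third paragraph, sketching the content of \cite{BS15}, is extraneous to the proof and slightly imprecise over non-closed perfect fields: one-parameter subgroups need not be $\mathbb{G}_a$ or the split torus $\mathbb{G}_m$ (e.g.\ $\mathrm{SO}_2$ over $\R$, whose invariant circle $x^2+y^2=1$ is geometrically isomorphic to $\A^1\setminus\{0\}$ but not of the form $x^ay^b=c$), so your ``elementary list'' of invariant curves is incomplete---but this does not affect the validity of your actual proof, which simply invokes the reference.
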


\begin{proof}
Conditions $(\ref{not equivalent to a line})$-$(\ref{not equivalent to a cuspidal curve})$-$(\ref{not geometrically isomorphic to a punctured line})$ imply that $\Aut(\A^2,C)$ is a zero dimensional algebraic group \cite[Theorem~2]{BS15}. If moreover $C$ is not isomorphic to $\A^1$, then $\Aut(\A^2 \setminus C)$ is also zero dimensional by Corollary~$\ref{Coro:Index12}$. If $C$ is isomorphic to $\A^1$ (but not equivalent to a line by  $(\ref{not equivalent to a line})$), then $\Aut(\A^2 \setminus C) = \Aut(\A^2,C)$ by Proposition~$\ref{Prop:CurvesA1}$.
\end{proof}

\begin{remark}
Let us make a few comments on the group $\Aut(\A^2\setminus C)$ when $C\subset \A^2$ is a
geometrically irreducible closed curve not satisfying the conditions of Corollary~\ref{FiniteComplement}.

\begin{enumerate}
\item[\eqref{not equivalent to a line}]
If $C$ is equivalent to a line, we may assume without loss of generality that $C$ is the line $x=0$. Then, $\Aut(\A^2\setminus C)$ is described in Lemma~$\ref{Lemm:Lines}$.
\item[\eqref{not equivalent to a cuspidal curve}]
 If $C$ does not satisfy \eqref{not equivalent to a cuspidal curve}, we may assume that $C$ has equation $x^m- y^n=0$, where $m,n \geq 2$ are coprime integers. Since the curve $C$ is singular, we have $\Aut(\A^2\setminus C)=\Aut(\A^2,C)$ by Corollary~$\ref{corollary: in the singular case, the embedding extends to an automorphism of the plane}$. Moreover, we have $\Aut(\A^2,C)=\{(x,y) \mapsto (t^n x,t^my)\mid t \in \k^*\}$ by \cite[Theorem~2(ii)]{BS15}.
\item[\eqref{not geometrically isomorphic to a punctured line}(a)]
If $C$ is geometrically isomorphic to $\A^1\setminus \{0\}$, but not isomorphic to $\A^1\setminus \{0\}$, then $\Aut(\A^2,C)$ has index $1$ or $2$ in $\Aut(\A^2\setminus C)$ by Corollary~$\ref{Coro:Index12}$. The group $\Aut(\A^2,C)$ is then an algebraic group of dimension $\le 1$ by \cite[Theorem~2]{BS15}, so the same holds for $\Aut(\A^2\setminus C)$. An example of dimension $1$ is given by the curve of equation $x^2+y^2=1$, in the case where $\k=\R$ (see \cite[Theorem~2(iv)]{BS15}).
\item[\eqref{not geometrically isomorphic to a punctured line}(b)]
If $C$ is isomorphic to $\A^1 \setminus \{0\}$, we do not have a complete description of $\Aut(\A^2\setminus C)$. The simplest cases where $C$ has equation $x^my^n-1$, where $m,n\ge 1$ are coprime, can be completely described. In particular, $\Aut(\A^2\setminus C)$ contains elements of arbitrarily large degree.
\end{enumerate}
\end{remark}

\section{Families of non-equivalent embeddings} \label{SecFamilies}

In this section, we study mainly the curves of $\A^2$ given by an equation of the form
\[a(y)x+b(y)=0\] 
where $a,b\in \k[y]$ are coprime polynomials such that $\deg b < \deg a$. This will lead us to the proof of  Theorem~\ref{NegativityThm}.

These curves already appeared in Lemma~\ref{Lemm:EmbFnminimaldegree}, where we proved in particular that they are isomorphic to $\A^1$ if and only if $a(y)$ is a constant (Lemma~\ref{Lemm:EmbFnminimaldegree}$(\ref{LL1})$-$(\ref{LL3})$). Actually, we have the following obvious and stronger result:

\begin{lemma} \label{lemma: algebra of regular functions of the curve a(y)x+ b(y) = 0}
Let $C \subset \A^2$ be the irreducible curve given by the equation
\[ a(y) x + b(y) =0,\]
where $a,b \in \k [y]$ are coprime polynomials and $a$ is nonzero. Then, the algebra of regular functions on $C$ is isomorphic to
$\k [y, 1/ a(y) ]$.
\end{lemma}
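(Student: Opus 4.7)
The plan is to construct an explicit isomorphism of varieties between $C$ and the affine open $U = \Spec ( \k [y, 1/a(y)] ) \subset \A^1$, induced by the projection $(x,y) \mapsto y$. First I would observe that since $a, b \in \k [y]$ are coprime, there exist $u, v \in \k [y]$ with $u(y) a(y) + v(y) b(y) = 1$; in particular, $a$ and $b$ have no common root over $\kk$. Consequently, the restriction $\pi \colon C \to \A^1$ of the projection takes its values in $U$: if $(x_0, y_0) \in C(\kk)$ satisfied $a(y_0) = 0$, then the defining equation would force $b(y_0) = 0$ as well, a contradiction.

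Next I would exhibit the inverse morphism $\sigma \colon U \to \A^2$ defined by $y \mapsto (-b(y) / a(y), y )$. This is well-defined because $1/a$ is regular on $U$, and it lands in $C$ because $a(y) \cdot ( -b(y)/a(y) ) + b(y) = 0$. The identity $\pi \circ \sigma = \id_U$ is immediate from the formulas.

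For $\sigma \circ \pi = \id_C$, I would argue at the level of coordinate rings. In the quotient ring $\k [x,y] / ( a(y) x + b(y) )$, the Bezout relation $ua + vb = 1$ combined with $b = -ax$ yields $a \cdot (u - vx) = ua + vb = 1$, so $a(y)$ is invertible in $\mathcal{O}(C)$ with inverse $u(y) - v(y) x$, and the relation $ax + b = 0$ gives $x = -b(y)/a(y)$. Hence $\mathcal{O}(C)$ is generated as a $\k$-algebra by $y$ and $1/a(y)$, confirming that $\sigma^* \colon \mathcal{O}(C) \to \k [y, 1/a(y) ]$ and $\pi^*$ are mutually inverse.

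I do not expect any serious obstacle: the whole argument is formal once one writes down the two explicit mutually inverse morphisms, with the only mild input being the Bezout identity that is automatic from coprimality of $a$ and $b$ in the principal ideal domain $\k[y]$.
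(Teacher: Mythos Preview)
Your proof is correct and follows essentially the same approach as the paper: both identify $\k[C]=\k[x,y]/(a(y)x+b(y))$ with $\k[y,-b(y)/a(y)]$ via $x\mapsto -b/a$, and then use a B\'ezout relation in $\k[y]$ to show this ring equals $\k[y,1/a(y)]$. The paper compresses this into a one-line chain of equalities, while you spell out the mutually inverse morphisms $\pi$ and $\sigma$ explicitly, but the content is identical.
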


\begin{proof}
The algebra of regular functions on $C$ satisfies
\[ \k [ C ] = \k [x,y] / (a(y) x + b(y)) \simeq \k [ y, -b(y)/a(y)] = \k [ y, 1/ a(y) ] ,\]
where the last equality comes from the fact that there exist $c,d\in \k[y]$ with $ad-bc=1$, which implies that $\frac{1}{a}=\frac{ad-bc}{a}=d-c\cdot \frac{b}{a}\in \k[y,\frac{b}{a}]$.
\end{proof}

\subsection{A construction using elements of $\SL_2(\k[y])$}

\begin{lemma} \label{lemma:SL2}
For each matrix  $\left( \begin{array}{cc} a(y) & b(y) \\  c(y) & d(y) \end{array} \right) \in \SL_2 ( \k [ y] )$, we have an isomorphism
\[\begin{array}{rrcl}
\varphi \colon & \A^2\setminus C& \iso &\A^2\setminus D\\
&(x,y) &\mapsto &\big( \frac{c(y) x + d(y)}{a(y) x +b(y)}, y \big)\end{array}\]
where $C,D\subset \A^2$ are given by $a(y) x + b(y) = 0$ and $a(y) x - c(y) = 0$ respectively.
\end{lemma}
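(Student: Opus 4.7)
The plan is to exhibit an explicit inverse $\psi$ coming from the inverse matrix in $\SL_2(\k[y])$ and to verify directly, using the determinant relation $ad-bc=1$, that $\varphi$ and $\psi$ are well-defined morphisms on the required open sets and are mutually inverse. Since $M=\left(\begin{smallmatrix} a & b \\ c & d\end{smallmatrix}\right)$ has determinant $1$, the polynomials $a,b$ are coprime (any common divisor would divide $ad-bc=1$), so $a(y)x+b(y)$ is irreducible in $\k[x,y]$ and $C$ is a geometrically irreducible closed curve; the same argument with $a,c$ shows that $D$ is also a geometrically irreducible closed curve. I would take
\[
\psi\colon (x,y)\longmapsto \Big(\tfrac{d(y)-b(y)x}{a(y)x-c(y)},\, y\Big)
\]
as the candidate inverse; it is the map associated in the same fashion to $M^{-1}=\left(\begin{smallmatrix} d & -b \\ -c & a\end{smallmatrix}\right)\in\SL_2(\k[y])$.

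Well-definedness and the target control will then follow from two short calculations that exploit $ad-bc=1$. First, for any point $(x,y)\in \A^2\setminus C$, the denominator $a(y)x+b(y)$ is nonzero, so $\varphi(x,y)$ makes sense, and
\[
a(y)\cdot\tfrac{c(y)x+d(y)}{a(y)x+b(y)}-c(y) \;=\; \tfrac{a(y)d(y)-b(y)c(y)}{a(y)x+b(y)}\;=\;\tfrac{1}{a(y)x+b(y)},
\]
which is nowhere zero; hence $\varphi(x,y)\in \A^2\setminus D$. The symmetric computation with $M^{-1}$ shows that $\psi$ is well-defined on $\A^2\setminus D$ and maps into $\A^2\setminus C$.

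Finally I would check $\psi\circ\varphi=\id$ (the other composition being identical by symmetry). Setting $u=\tfrac{c(y)x+d(y)}{a(y)x+b(y)}$, a direct computation gives
\[
d(y)-b(y)u \;=\; \tfrac{(ad-bc)\,x}{a(y)x+b(y)} \;=\; \tfrac{x}{a(y)x+b(y)},
\qquad
a(y)u-c(y) \;=\; \tfrac{ad-bc}{a(y)x+b(y)} \;=\; \tfrac{1}{a(y)x+b(y)},
\]
so their ratio is $x$, and the second coordinate is unchanged. Hence $\psi\circ\varphi=\id_{\A^2\setminus C}$, completing the proof. There is no conceptual obstacle here; the only thing one must not lose track of is the systematic use of the identity $ad-bc=1$, which makes each denominator collapse to the constant $1$ at the crucial moments.
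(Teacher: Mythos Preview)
Your proof is correct and follows essentially the same approach as the paper: write down the explicit inverse rational map and verify directly, using $ad-bc=1$, that the two maps are mutually inverse morphisms between $\A^2\setminus C$ and $\A^2\setminus D$. The paper phrases the verification in terms of coordinate rings (checking that $\varphi^*$ carries $\k[x,y,\frac{1}{ax-c}]$ isomorphically onto $\k[x,y,\frac{1}{ax+b}]$), while you work pointwise, but the key identity --- that $a\,\varphi^*(x)-c=\frac{1}{ax+b}$ --- is the same in both.
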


\begin{proof}
Note first that $\varphi$ is a birational transformation of $\A^2$, with inverse $\psi\colon (x,y)\mapsto (\frac{-b(y)x+d(y)}{a(y)x-c(y)},y)$. It remains to prove that the isomorphism $\varphi ^* \colon \k (x,y) \to \k (x,y)$, $x \mapsto \frac{c x + d}{a x +b}$, $y \mapsto y$ induces an isomorphism $\k[ x, y, \frac{1}{ax-c} ] \to \k[ x, y,\frac{1}{ax+b} ].$
This follows from the equalities:
\[ 
\begin{array}{lllrrrr}
\varphi^{*}(x) = \frac{c x + d}{a x +b},  & \varphi^{*}(y) = y, & \varphi^{*} \big( \frac{1}{ax-c} \big) = ax+b & \text{ and} \\
\\
\psi^{*}(x) = \frac{-b x + d}{a x - c},       & \psi^{*}(y) = y,       & \psi^* \big( \frac{1}{ax+b} \big) = a x -c.
\end{array}
\]
\par 
\vspace{-6mm}
\end{proof}
\par

The curves $C$ and $D$ of Lemma~\ref{lemma:SL2} are always isomorphic thanks to Lemma~\ref{lemma: algebra of regular functions of the curve a(y)x+ b(y) = 0}. We now prove that they are in general not equivalent.

\begin{lemma}  \label{Lemm:IsoCDEqui}
Let $C_1,C_2 \subset \A^2$ be two geometrically irreducible closed curves given by 
\[a_1(y)x+b_1(y)=0\mbox{ and } a_2(y)x+b_2 (y)=0\]
respectively, for some polynomials $a_1,a_2,b_1,b_2 \in \k [y]$ such that $\deg a_1 > \deg b_1 \ge 0$ and $\deg a_2 >\deg b_2 \ge 0$. Then, the curves $C_1$ and $C_2$ are equivalent if and only if there exist constants $\alpha,\lambda,\mu \in \k^*$ and $\beta\in \k$ such that 
\[ a_2 (y)=\lambda \cdot  a_1 (\alpha y+\beta), \quad b_2 (y)=\mu \cdot b_1(\alpha y+\beta).\]
\end{lemma}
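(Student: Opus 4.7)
For the ``if'' direction of the lemma, I would exhibit an explicit automorphism of $\A^2$ witnessing the equivalence: given $\alpha, \lambda, \mu \in \k^*$ and $\beta \in \k$ satisfying the stated relations, the map
\[ \psi\colon (x,y) \mapsto \Bigl( \tfrac{\mu}{\lambda}\, x,\; \tfrac{1}{\alpha}\, y - \tfrac{\beta}{\alpha}\Bigr) \]
belongs to $\Aut(\A^2)$, and a routine substitution into the defining equations verifies $\psi(C_1)=C_2$.

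For the converse, suppose $\psi \in \Aut(\A^2)$ satisfies $\psi(C_1)=C_2$, and write $\psi = (P,Q)$ with $P,Q \in \k[x,y]$. The condition $\psi(C_1)=C_2$ is equivalent to the polynomial identity
\[ a_2 ( Q )\, P + b_2 ( Q ) \;=\; \lambda\, \bigl( a_1 (y)\, x + b_1 (y) \bigr) \qquad \text{for some } \lambda \in \k^*.  \]
The key step will be a degree-in-$x$ analysis. Setting $d_P = \deg_x P$ and $d_Q = \deg_x Q$, the normalization $\deg b_2 < n_2 = \deg a_2$ rules out any cancellation between the top $x$-terms of $a_2(Q) P$ and $b_2(Q)$; the left-hand side thus has $x$-degree exactly $n_2 d_Q + d_P$. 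Matching with the $x$-degree $1$ of the right-hand side leaves only two cases: either $d_Q = 0$, or $(n_2,d_Q,d_P)=(1,1,0)$.

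In the first case $Q$ depends only on $y$, and the invertibility of the Jacobian of $\psi$ forces $\psi$ to be a de Jonqui\`eres automorphism $\psi(x,y)=(\gamma x + f(y), \alpha' y + \beta')$. Identifying the coefficients of $x^1$ and $x^0$ in the fundamental identity yields
\[ a_2 ( \alpha' y + \beta' ) = \tfrac{\lambda}{\gamma}\, a_1 (y), \qquad b_2 ( \alpha' y + \beta' ) = \lambda\, b_1 ( y ) - \tfrac{\lambda}{\gamma}\, a_1 (y)\, f(y). \]
The normalization $\deg b_2 < \deg a_2$ combined with the second equation then forces $f = 0$ (otherwise the right-hand side would have $y$-degree at least $n_2$). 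Setting $\alpha = 1/\alpha'$ and $\beta = -\beta'/\alpha'$ yields the claimed relations with $\lambda/\gamma$ and $\lambda$ playing the roles of the lemma's $\lambda$ and $\mu$.

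In the second case, $P$ depends only on $y$ and $Q$ is linear in $x$, and the Jacobian constraint forces $\psi(x,y) = (c_1 y + c_0,\, c_2 x + q_0(y))$ with $c_1, c_2 \in \k^*$. The $x$-coefficient in the fundamental identity then shows that $a_1$ is proportional to $c_1 y + c_0$, so that $n_1 = 1$ and $b_1$ is constant; the $x^0$-coefficient, together with $b_1$ being constant, forces $q_0$ to also be a constant. Both $C_1$ and $C_2$ are then ``hyperbolas'' with $a_i$ linear and $b_i$ constant, and a direct computation (choose $\alpha$ freely and then solve for $\beta, \lambda, \mu$) verifies the relations of the lemma. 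The main obstacle I expect is precisely this second case: combining the Jacobian invertibility with the normalization $\deg b_i < \deg a_i$ to force the two curves to be simultaneously of degree one in $y$ is a somewhat delicate accounting of coefficient equations; once past this, the verification of the relations becomes routine.
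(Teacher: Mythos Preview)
Your proof is correct and follows essentially the same route as the paper's. Both argue the ``if'' direction by an explicit affine automorphism, and both handle the converse by a degree-in-$x$ analysis that splits into the de Jonqui\`eres case (where the normalization $\deg b_i<\deg a_i$ kills the shear term) and the residual case forcing $\deg a_1=\deg a_2=1$; the only cosmetic differences are that the paper works with the inverse automorphism (sending $C_2$ to $C_1$) and invokes $\k[f,g]=\k[x,y]$ where you invoke the Jacobian.
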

\begin{proof}
We first observe that if $a_2 (y)=\lambda\cdot  a_1 (\alpha y+\beta)$ and $b_2 (y)=\mu \cdot b_1 (\alpha y+\beta)$ for some $\alpha,\lambda,\mu \in \k^*$, $\beta\in \k$, then the automorphism $(x,y)\mapsto (\frac{\lambda}{\mu} x,\alpha y+\beta)$ of $\A^2$ sends $C_2$ onto~$C_1$.

Conversely, we assume the existence of $\varphi\in \Aut(\A^2)$ that sends $C_2$ onto $C_1$ and want to find $\alpha,\lambda,\mu \in \k^*$, $\beta\in \k$ as above. Writing $\varphi$ as $(x,y)\mapsto (f(x,y),g(x,y))$ for some polynomials $f,g\in \k[x,y]$,
we get
\begin{equation}\label{eqmuafg} \mu \big( a_1(g) f+b_1(g) \big) =  a_2(y) x+b_2(y)  \end{equation}
for some $\mu\in \k^*$.

$(i)$
If $g\in \k[y]$, the fact that $\k[f,g]=\k[x,y]$ implies that 
$g=\alpha y+\beta, f=\gamma x + s(y)$ for some $\alpha,\gamma\in \k^{*},\beta\in \k$ and $s(y) \in \k[y]$. This yields $a_1 (g) f+b_1 (g)=a_1(g) (\gamma x+s(y) )+b_1 (g)$, so that equation~(\ref{eqmuafg}) gives:
\[ a_2 = \mu \gamma \cdot a_1 (g), \quad b_2 = \mu \cdot \big( a_1 (g) s(y) +b_1(g) \big). \]
 This shows in particular that $\deg a_1 =\deg a_2$, whence $\deg b_2  < \deg a_1 (g)$. Since $\deg b_1(g) < \deg a_1(g)$, we find that $s=0$, and thus that $b_2 =\mu \cdot  b_1(g)$, as desired. This concludes the proof, by choosing $\lambda=\mu\gamma$.

$(ii)$ It remains to consider the case where  $g\notin \k[y]$, which corresponds to $\deg_x(g)\ge 1$. We have $\deg_x a_1(g) =\deg a_1 \cdot \deg_x(g)>\deg b_1\cdot \deg_x(g)=\deg_x b_1(g)$, which implies that $\deg_x  \big( a_1 (g)f+b_1(g) \big) = \deg(a_1)\cdot \deg_x(g)+\deg_x(f)$. Equation~(\ref{eqmuafg}) shows that this degree is~$1$, and since $\deg a_1 \ge 1$, we find $\deg a_1 =1$. Similarly, the automorphism sending $C_1$ onto $C_2$ satisfies the same condition, so $\deg a_2 =1$. This implies that $b_1, b_2 \in \k^*$. There thus exist some $\alpha,\lambda,\mu \in \k^*$, $\beta\in \k$ such that $a_2 (y)=\lambda \cdot  a_1 (\alpha y+\beta)$ and $b_2 (y)=\mu \cdot b_1(\alpha y+\beta)$.
\end{proof}

\begin{proposition}  \label{Prop:Negativity3}
For each polynomial $f\in \k[t]$ of degree $\ge 1$, there exist two closed curves $C,D\subset \A^2$, both isomorphic to $\Spec(\k[t,\frac{1}{f}])$, that are non-equivalent and have isomorphic complements. Moreover, the set of equivalence classes of the curves $C$ appearing in such pairs $(C,D)$ is infinite.
\end{proposition}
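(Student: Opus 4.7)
The plan is to exhibit the pair $(C,D)$ as coming from Lemma~\ref{lemma:SL2} applied to a carefully chosen matrix in $\SL_2(\k[y])$, and then to verify non-equivalence through a degree count via Lemma~\ref{Lemm:IsoCDEqui}. Since $\k[t,1/f]=\k[t,1/\operatorname{rad}(f)]$, I may replace $f$ by its squarefree radical, and after an affine translation $y\mapsto y+y_0$ with $f(y_0)\ne 0$, I may assume $f(0)\ne 0$ whenever such a $y_0$ exists in $\k$. (In the pathological case of a finite field in which every element is a root of $f$, one would replace the linear $b$ chosen below by an irreducible polynomial of $\k[y]$ of suitable degree coprime to $f$ and adjust $k$ correspondingly; the same degree argument persists.)

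Setting $a(y):=f(y)^k$ with $k\ge 1$ chosen large enough that $kn\ge 3$ (where $n=\deg f$), and $b(y):=y$, the coprimality $\gcd(y,f^k)=1$ yields a unique $c\in\k[y]$ with $\deg c<kn$ and $bc\equiv -1\pmod{a}$, together with $d:=(bc+1)/a\in\k[y]$. A straightforward degree comparison in the relation $bc+1=ad$, using $\deg b=1$ and the fact that $d$ must be nonzero (else $bc=-1$ would force $b$ constant), pins down $\deg d=0$ and $\deg c=kn-1\ge 2$. The matrix $\bigl(\begin{smallmatrix}a&b\\ c&d\end{smallmatrix}\bigr)$ then lies in $\SL_2(\k[y])$, so Lemma~\ref{lemma:SL2} produces an isomorphism $\A^2\setminus C\iso\A^2\setminus D$ between $C\colon ax+b=0$ and $D\colon ax-c=0$, and Lemma~\ref{lemma: algebra of regular functions of the curve a(y)x+ b(y) = 0} shows $C\cong D\cong\Spec(\k[y,1/f^k])=\Spec(\k[t,1/f])$. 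To see $C\not\equiv D$, I apply Lemma~\ref{Lemm:IsoCDEqui} with $a_1=a_2=a$, $b_1=b$ and $b_2=-c$: an equivalence would yield $\alpha,\lambda,\mu\in\k^*$ and $\beta\in\k$ with $a(y)=\lambda a(\alpha y+\beta)$ and $-c(y)=\mu b(\alpha y+\beta)=\mu(\alpha y+\beta)$, but the two sides of the second relation have degrees $kn-1\ge 2$ and at most $1$ respectively, a contradiction. This degree mismatch, made possible by the choice $kn\ge 3$, is the one technical point of the argument.

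Finally, for the ``moreover'' assertion, I would run the above construction for a sequence $(f_m)_{m\ge 1}$ in which $f_m$ has exactly $m$ distinct $\kk$-roots (for instance, an irreducible polynomial of $\k[t]$ of degree $m$). Each $f_m$ produces a pair $(C_m,D_m)$ with $C_m\cong\Spec(\k[t,1/f_m])$, and after base change to $\kk$ the curve $C_m$ becomes $\A^1_{\kk}\setminus\{m\text{ points}\}$. The integer $m$ distinguishes these $\kk$-curves up to isomorphism, hence \emph{a fortiori} distinguishes the $C_m$ up to equivalence, so the construction yields infinitely many equivalence classes of $C$ appearing in such pairs.
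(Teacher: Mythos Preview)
Your main construction is correct and follows the paper's own approach: apply Lemma~\ref{lemma:SL2} to a matrix $\bigl(\begin{smallmatrix}a&b\\c&d\end{smallmatrix}\bigr)\in\SL_2(\k[y])$ with $a=f^k$, use Lemma~\ref{lemma: algebra of regular functions of the curve a(y)x+ b(y) = 0} to identify the isomorphism type of both curves, and invoke Lemma~\ref{Lemm:IsoCDEqui} with a degree mismatch between $b$ and $c$ to rule out equivalence. The paper makes the same moves, choosing for $b$ any irreducible polynomial not dividing $f$ (which removes the need for your translation step and the parenthetical patch for small finite fields). Your degree computation $\deg c=kn-1$ is correct.

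The gap is in the ``moreover'' clause. The statement is made \emph{for a fixed $f$}: the pairs $(C,D)$ under discussion are those in which both curves are isomorphic to $\Spec(\k[t,\tfrac{1}{f}])$ for that particular $f$, and the claim is that infinitely many equivalence classes of such $C$ arise. Your argument instead replaces $f$ by a varying sequence $f_m$, so your $C_m\cong\Spec(\k[t,\tfrac{1}{f_m}])$ are not of the required isomorphism type and do not address the assertion. The paper's proof (and, in fact, your own construction) gives the correct fix immediately: keep $f$ fixed and let the exponent $k$ vary. The curves $C_k\colon f(y)^k x+b(y)=0$ are all isomorphic to $\Spec(\k[t,\tfrac{1}{f}])$, and for $k\ne k'$ they are non-equivalent by Lemma~\ref{Lemm:IsoCDEqui} since $\deg f^k\ne\deg f^{k'}$ forces the relation $a_2(y)=\lambda\,a_1(\alpha y+\beta)$ to fail.
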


\begin{proof}
We choose an irreducible polynomial $b\in \k[t]$ which does not divide $f$. For each $n\ge 1$ such that $\deg(f^n)>2\deg (b)$, we then choose two polynomials $c,d\in \k[t]$ such that $f^nd-bc=1$ (this is possible since $\gcd(f^n,b)=1$). Replacing $c,d$ by $c+\alpha f^n,d+\alpha b$, we may moreover assume that $\deg c  < \deg f^n$.
The curves $C_n,D_n\subset \A^2$ given by $f(y)^nx+b(y)=0$ and $f(y)^nx-c(y)=0$ are both isomorphic to $\Spec(\k[t,\frac{1}{f^n}])=\Spec(\k[t,\frac{1}{f}])$ by Lemma~\ref{lemma: algebra of regular functions of the curve a(y)x+ b(y) = 0} and have isomorphic complements by Lemma~\ref{lemma:SL2}. Moreover, as $\deg bc=\deg(f^nd-1)\ge \deg(f^n)>2\deg(b)$, we find that $\deg c>\deg b$, which implies by Lemma~\ref{Lemm:IsoCDEqui} that $C_n$ and $D_n$ are not equivalent. Moreover, the curves $C_n$ are all non-equivalent, again by Lemma~\ref{Lemm:IsoCDEqui}.
\end{proof}

\subsection{Curves isomorphic to $\A^1\setminus \{0\}$}\label{SubSec:A1minus0}

We consider now families of curves in $\A^2$ of the form $xy^{d}+b(y)=0$, for some $d\ge 1$ and some polynomial $b(y) \in \k[y]$ satisfying $b(0) \neq 0$. Note that all these curves are isomorphic to $\Spec(\k[y, \frac{1}{y^d}])=\Spec(\k[y,\frac{1}{y}])\simeq \A^1\setminus \{0\}$ by Lemma~\ref{lemma: algebra of regular functions of the curve a(y)x+ b(y) = 0}.

\begin{lemma}  \label{Lem: b and c}
Let $d\ge 1$ be an integer and $b(y) \in \k[y]$ be a polynomial satisfying $b(0) \neq 0$. We define $D_b \subset \A^2$ to be the curve given by the equation
\[xy^{d}+b(y) =0\]
and $\varphi_b$ to be the birational endomorphism of $\A^2$ given by
\[ \varphi_b (x,y) =(xy^{d}+b(y), y). \]
Denote by $L_x$, resp.~$L_y$, the line in $\A^2$ given by the equation $x=0$, resp.~$y=0$.

\begin{enumerate}[$(1)$]
\item \label{The birational map varphi_b}
The transformation $\varphi_b$ induces an automorphism of $\A^2 \setminus L_y$ and an isomorphism  \[\A^2 \setminus (L_y \cup D_b)\iso\A^2 \setminus ( L_y \cup L_x).\]

\item  \label{definition of c}
Assume now that $b$ has degree $\le d-1$ and fix an integer $m\ge 1$.
Then, there exists a unique polynomial $c \in \k[y]$ of degree $\le d-1$  satisfying
\begin{equation} \label{Equation relating b and c}  b(y)  \equiv c(yb(y)^m)  \pmod {y^{d}}.\end{equation}
Furthermore, we have $c(0) \neq 0$.

\item \label{the complements of D_b and D_c are isomorphic}
Define the birational transformations $\tau$ and $\psi_{b,m}$ of $\A^2$ by
\[ \tau (x,y) = (x,xy) \text{ and } \psi_{b,m} = (\varphi_{c})^{-1}\tau^m \varphi_b.\]
Then, $\psi_{b,m}$ induces an isomorphism  $\A^2 \setminus D_b\iso\A^2 \setminus D_c$ whose expression is
\[ \psi_{b,m} (x,y)
= \left( {\dis \frac{x+\lambda +y f(x,y)} { \left(\vphantom{\Big )}  xy^{d}+b(y) \right)^{md}} }, \, y \left(\vphantom{\Big )}xy^{d}+b(y)\right)^m\right), \]
for some constant $\lambda \in \k$ and some polynomial $f \in \k[x,y]$ $($depending on $b$ and $m)$.

\item\label{psi_m and psi_n are not equivalent when m and n are different}
Fixing the polynomial $b$, all open embeddings $\A^2 \setminus D_b \hookrightarrow \A^2$ given by $\psi_{b,m}$, $m \geq 1$, are non-equivalent.
\end{enumerate}
\end{lemma}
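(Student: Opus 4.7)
Part~(1) is a direct verification: $\varphi_b$ is a birational endomorphism of $\A^2$ whose inverse is $(u,v) \mapsto ((u - b(v))/v^d, v)$, regular on $\A^2 \setminus L_y$; and $\varphi_b$ sends $D_b$ onto $L_x$ away from $L_y$.

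For (2), consider the $\k$-algebra endomorphism $\Phi$ of $\k[y]/(y^d)$ sending $y$ to $yb(y)^m \bmod y^d$. Since $b(0)^m \in \k^*$, $\Phi$ induces multiplication by the unit $b(0)^{mk}$ on each graded piece $(y^k)/(y^{k+1})$, so $\Phi$ is an automorphism of the finite-dimensional $\k$-algebra $\k[y]/(y^d)$. Hence there is a unique $c \in \k[y]$ of degree $\le d-1$ with $c(yb(y)^m) \equiv b(y) \pmod{y^d}$, and setting $y = 0$ yields $c(0) = b(0) \ne 0$.

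For (3), I would first combine (1), the analog for $\varphi_c$, and the fact that $\tau^m(x,y) = (x, x^m y)$ is an automorphism of $\A^2 \setminus L_x$ preserving $L_y$ setwise, to see that $\psi_{b,m}$ restricts to an isomorphism $\A^2 \setminus (L_y \cup D_b) \iso \A^2 \setminus (L_y \cup D_c)$. The key step is then an explicit expansion. Writing $V = y(xy^d+b)^m$, the difference $V - yb(y)^m$ is divisible by $xy^{d+1}$, so Taylor expansion gives $c(V) \equiv c(yb(y)^m) \pmod{xy^{d+1}}$; combining with the defining relation $c(yb(y)^m) = b(y) - y^d g(y)$ for the unique $g \in \k[y]$, I find that $xy^d + b(y) - c(V)$ is divisible by $y^d$. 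Dividing by $V^d = y^d(xy^d+b)^{md}$ puts the first coordinate of $\psi_{b,m}$ in the claimed form, with $\lambda = g(0)$ and an explicit $f \in \k[x,y]$; the resulting formula is manifestly regular on $\A^2 \setminus D_b$, and a direct substitution yields the clean identity
\[ \psi_{b,m}^{*}\bigl( uv^d + c(v) \bigr) \;=\; xy^d + b(y).\]
To upgrade $\psi_{b,m}$ to a genuine isomorphism onto $\A^2 \setminus D_c$, the plan is to derive the symmetric relation $c(y) \equiv b(yc(y)^{-m}) \pmod{y^d}$ by passing to $\k[[y]]$: the formal inverse of $y \mapsto yb(y)^m$ agrees with $y \mapsto yc(y)^{-m}$ modulo $y^d$, which follows from $c \equiv b \circ \Phi^{-1} \pmod{y^d}$ together with the invertibility of the constant terms $b(0)$ and $c(0)$. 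The same expansion applied to $\varphi_b^{-1} \tau^{-m} \varphi_c$ then produces the regular inverse of $\psi_{b,m}$ on $\A^2 \setminus D_c$.

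For (4), suppose $\psi_{b,m}$ and $\psi_{b,n}$ were equivalent via some $\alpha \in \Aut(\A^2)$; without loss of generality $n > m \ge 1$. Writing $\alpha(u,v) = (A(u,v), B(u,v))$ in the target coordinates, the identity $\alpha \circ \psi_{b,m} = \psi_{b,n}$ together with $\psi_{b,m}^*(v) = y(xy^d+b)^m$ and the identity $\psi_{b,m}^*(uv^d+c(v)) = xy^d + b$ from (3) forces
\[ B(u,v) \;=\; v\bigl( uv^d + c(v) \bigr)^{n-m} \]
as an identity in $\k[u,v]$ (using that $\psi_{b,m}$ is birational, so $\psi_{b,m}^*(u)$ and $\psi_{b,m}^*(v)$ are algebraically independent over $\k$). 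But $uv^d + c(v)$ is irreducible in $\k[u,v]$ (since $c(0) \ne 0$ makes $v^d$ and $c(v)$ coprime in $\k[v]$) and is not associate to $v$, so $B$ factors into two distinct prime factors when $n - m \ge 1$. This contradicts the fact that $B = \alpha^*(v)$ must be irreducible, since $\k[u,v]/(B) \simeq \k[u,v]/(v) \simeq \k[u]$ is a domain. The technical heart of this plan is the formal-inverse step in (3); everything else amounts to bookkeeping with the defining congruence for $c$.
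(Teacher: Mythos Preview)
Your proof is correct throughout. Parts~(1) and~(2) match the paper's argument essentially verbatim.

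In part~(3), your explicit computation of the form of $\psi_{b,m}$ is the same as the paper's, and the identity $\psi_{b,m}^{*}(uv^d+c(v))=xy^d+b(y)$ you derive is a nice byproduct (the paper does not state it explicitly, though it is implicit). Where you diverge is in upgrading from an isomorphism off $L_y$ to an isomorphism on all of $\A^2\setminus D_b$: you construct the inverse $\varphi_b^{-1}\tau^{-m}\varphi_c$ explicitly and prove it is regular on $\A^2\setminus D_c$ via the symmetric congruence $c(y)\equiv b(yc(y)^{-m})\pmod{y^d}$, which you correctly justify by passing to $\k[[y]]$ and using $\Phi^{-1}(y)=y\tilde c(y)^{-m}$ exactly, hence $\equiv yc(y)^{-m}\pmod{y^{d+1}}$. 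The paper instead simply reads off from the explicit formula that $\psi_{b,m}$ restricts to an affine automorphism of $L_y$ (namely $x\mapsto (x+\lambda)/b(0)^{md}$); since $L_y$ is disjoint from both $D_b$ and $D_c$, this together with the open isomorphism on the complement of $L_y$ forces $\psi_{b,m}$ to be an isomorphism $\A^2\setminus D_b\iso\A^2\setminus D_c$. The paper's route is shorter; yours is more self-contained and yields the useful pullback identity you then exploit in~(4).

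In part~(4), the two approaches are genuinely different. The paper directly computes $\theta=\psi_{b,n}\circ\psi_{b,m}^{-1}=(\varphi_{c_n})^{-1}\tau^{-(m-n)}\varphi_{c_m}$ (taking $m>n$) and observes that its second coordinate $y/(xy^d+c_m(y))^{m-n}$ is not a polynomial, so $\theta\notin\Aut(\A^2)$. Your argument instead assumes $\alpha\psi_{b,m}=\psi_{b,n}$ with $n>m$, pulls back the second coordinate, and uses your identity to obtain $\alpha^{*}(v)=v\,(uv^d+c_m(v))^{n-m}$, which is reducible since $v$ and $uv^d+c_m(v)$ are non-associate primes; this contradicts the irreducibility of any coordinate of an automorphism of $\A^2$. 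Both arguments are short and valid; yours reuses the identity from~(3), while the paper's avoids any irreducibility considerations by exhibiting a non-polynomial component directly. One small remark: keep track of the dependence of $c$ on $m$ (write $c_m$), since in~(4) two different $c$'s are in play.
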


\begin{proof}

$(\ref{The birational map varphi_b})$: 
The automorphism $(\varphi_b)^*$ of $\k(x,y)$ satisfies
\[ (\varphi_b)^*(x) = xy^d + b(y) \text{ and } (\varphi_b)^*(y) =y.  \]
The result follows from the following two equalities:
\[ \begin{array}{lll}
(\varphi_b)^* ( \k[x,y,\frac{1}{y}] ) &=& \k [ xy^d + b(y), y, \frac{1}{y}] = \k [x,y,\frac{1}{y}] \quad \text{and}\\
\vphantom{\Big)} (\varphi_b)^* ( \k[x,y,\frac{1}{x}, \frac{1}{y}] )& =& \k [ xy^d + b(y),\frac{1}{xy^d + b(y)},  y, \frac{1}{y}] = \k [x,y, \frac{1}{y}, \frac{1}{xy^{d}+b(y) }] .\end{array} \]

$(\ref{definition of c})$:
Since $b(0) \neq 0$, the endomorphism of the algebra $\k[y]/(y^{d})$ defined by  $y \mapsto y b(y)^m$ is an automorphism. If the inverse automorphism is given by $y \mapsto u(y)$, note that \eqref{Equation relating b and c} is equivalent to $c(y) \equiv b ( u(y)) \pmod {y^{d}}$. This determines uniquely the polynomial $c$. Finally, replacing $x$ by zero in \eqref{Equation relating b and c}, we get $c(0) = b(0) \neq 0$.

$(\ref{the complements of D_b and D_c are isomorphic})$: Since $\tau$ induces an automorphism of $\A^2 \setminus ( L_y \cup L_x)$, assertion~$(\ref{The birational map varphi_b})$ implies that $\psi$ induces an isomorphism $\A^2 \setminus (L_y \cup D_b) \iso \A^2\setminus (L_y \cup D_c)$ (this would be true for any choice of $c$). It remains to see that the choice of $c$ which we have made implies that $\psi$ extends to an isomorphism $\A^2\setminus D_b \iso \A^2\setminus D_c$ of the desired form.

Since $( \varphi_c)^{-1} (x,y) = \left( \frac{x-c(y)}{y^d}, y \right)$, $\tau^m (x,y) = (x,x^my)$, and $\psi_{b,m}=(\varphi_{c})^{-1}\tau^m \varphi_b$, we get:
\begin{equation} \label{General expression of psi} \begin{array}{rcl}
\psi_{b,m} (x,y) &=& (\varphi_{c})^{-1}\tau^m(xy^{d}+b(y), y)\\
&=&\left( \frac{xy^d + b(y) - c( y  \Delta)}{ y^d  \Delta^d } , y \Delta  \right), \text{ with }\Delta=( xy^d + b(y) )^m.
\end{array}
\end{equation}
To show that $\psi_{b,m}$ has the desired form, we use $b(y)  \equiv c(yb(y)^m)  \pmod {y^{d}}$ (equation \eqref{Equation relating b and c}), which yields $\lambda\in \k$ such that $b(y)\equiv  c(yb(y)^m)+ \lambda y^d \pmod{y^{d+1}}$. Since $y\Delta\equiv yb(y)^m\pmod{y^{d+1}}$, we get $b(y)\equiv c(y\Delta)+\lambda y^d\pmod{y^{d+1}}$. There is thus $f\in \k[x,y]$ such that
\[xy^d + b(y) - c( y  \Delta)=y^d(x+\lambda +yf(x,y)).\]
This yields the desired form for $\psi_{b,m}$ and shows that $\psi_{b,m}$ restricts to the automorphism $x\mapsto x+\lambda$ on $L_y$ and then restricts to an isomorphism $\A^2\setminus D_b \iso \A^2\setminus D_c$.

$(\ref{psi_m and psi_n are not equivalent when m and n are different})$:
It suffices to check that for $m > n \geq 1$ the birational transformation $\theta=\psi_{b,n} \circ (\psi_{b,m})^{-1}$ of $\A^2$ does not correspond to an automorphism of $\A^2$. Setting $l= m- n  \geq 1$ and denoting by $c_m$ and $c_n$ the elements of $\k[y]$ associated to $b$ and to the integers $m$ and $n$ respectively, we get
\[ \theta= \left((\varphi_{c_n})^{-1} \tau^n \varphi_b\right) \circ  \Big( (\varphi_{c_m} )^{-1} \tau^m \varphi_b \Big) ^{-1} = (\varphi_{c_n })^{-1} \tau^{-l} \varphi_{c_m }. \]
The second component of $\theta(x,y)$ is thus equal to the second component of $\tau^{-l}\varphi_{c_{m}}(x,y)$ which is $\frac{y}{(xy^d+c_m(y))^l}\in \k(x,y)\setminus \k[x,y]$. This shows that $\theta$ is not an automorphism of $\A^2$ (and not even an endomorphism) and completes the proof.
\end{proof}

\begin{remark} \label{remark: easy counterexamples to the complement problem in the reducible case}
Note that Lemma~\ref{Lem: b and c}(\ref{The birational map varphi_b}) provides an isomorphism $\A^2 \setminus (L_y \cup D_b)\iso\A^2 \setminus ( L_y \cup L_x)$ where the reducible curves $(L_y \cup D_b)$ and $(L_y \cup L_x)$ are not isomorphic. Indeed, the reducible curve $(L_y \cup D_b)$ has two connected components (since $L_y \cap D_b= \emptyset$), while the reducible curve $ (L_y \cup L_x)$ is connected (since $ L_y \cap L_x \neq \emptyset$). As noted in \cite{Kraft96}, this kind of easy example explains why the complement problem in $\A^n$ has only been formulated for irreducible hypersurfaces.
\end{remark}

\begin{remark}
Geometrically, the construction of Lemma~\ref{Lem: b and c}(\ref{the complements of D_b and D_c are isomorphic}) can be interpreted as follows: the birational morphism $\varphi_b \colon (x,y)\mapsto (xy^{d}+b(y) ,y)$ contracts the line $y=0$ to the point $(b(0),0)$. If $d=1$ then $\varphi_b$ just sends the line onto the exceptional divisor of $(b(0),0)$. If $d \ge 2$, it sends the line onto the exceptional divisor of a point in the $(d-1)$-st neighbourhood of $(b(0),0)$. The coordinates of these points are determined by the polynomial~$b$. The fact that $\tau^m \colon (x,y)\mapsto (x,x^my)$ contracts the line $x=0$ implies that $\psi_{b,m}$ contracts the curve $D_b$ given by $xy^{d}+b(y)=0$. Moreover, $\tau^m$ fixes the point $(b(0),0)$ and induces a local isomorphism around it, hence acts on the set of infinitely near points. This action changes the polynomial $b$ and replaces it by another one, which is the polynomial $c=c_{b,m}$ provided by Lemma~\ref{Lem: b and c}(\ref{definition of c}).
\end{remark}

\begin{proposition} \label{Prop: infinitely many cuves whose complements admit infinitely many open embeddings in the plane}
 There exists an infinite sequence of curves $C_i\subset \A^2$, $i\in \N$, all pairwise non-equivalent, all isomorphic to $\A^1\setminus \{0\}$ and such that for each $i$ there are infinitely many open embeddings $\A^2\setminus C_i\hookrightarrow \A^2$, up to automorphisms of $\A^2$.
\end{proposition}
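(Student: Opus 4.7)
The plan is to exhibit the sequence explicitly as $C_i \colon xy^i + 1 = 0$ for $i \geq 1$ and then read off everything from Lemma~\ref{Lem: b and c} and the surrounding results, which have already been set up precisely for this purpose. So the proof should be short once these lemmas are in place.

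First I would check the two global properties of the family $(C_i)_{i \geq 1}$. That each $C_i$ is isomorphic to $\A^1 \setminus \{0\}$ is immediate from Lemma~\ref{lemma: algebra of regular functions of the curve a(y)x+ b(y) = 0}: the coordinate ring of $C_i$ is $\k[y, 1/y^i] = \k[y, 1/y]$. For pairwise non-equivalence, I would apply Lemma~\ref{Lemm:IsoCDEqui} with $a_i(y) = y^i$ and $b_i(y) = 1$: an equivalence between $C_i$ and $C_j$ would give $\alpha, \lambda \in \k^{*}$ and $\beta \in \k$ with $y^j = \lambda (\alpha y + \beta)^i$; comparing constant terms (both $i,j \geq 1$) forces $\beta = 0$, and then comparing degrees forces $i = j$.

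Next, for each fixed $i \geq 1$, I would produce infinitely many inequivalent open embeddings $\A^2 \setminus C_i \hookrightarrow \A^2$ by applying Lemma~\ref{Lem: b and c} with $d = i$ and the constant polynomial $b = 1$ (which satisfies $\deg b = 0 \leq d-1$ and $b(0) = 1 \neq 0$, so that $D_b = C_i$). Parts~(\ref{definition of c}) and~(\ref{the complements of D_b and D_c are isomorphic}) yield, for every $m \geq 1$, an open embedding $\psi_{1,m} \colon \A^2 \setminus C_i \hookrightarrow \A^2$, and part~(\ref{psi_m and psi_n are not equivalent when m and n are different}) states that the embeddings $\{\psi_{1,m}\}_{m \geq 1}$ are pairwise inequivalent, i.e.\ no $\psi_{1,n} \circ \psi_{1,m}^{-1}$ extends to an automorphism of $\A^2$ when $m \neq n$.

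There is no real obstacle here, since the hard content has already been packaged in Lemma~\ref{Lem: b and c}; the only design choice is picking a family $(C_i)$ that is simultaneously (a) covered by Lemma~\ref{Lem: b and c} and (b) patently an infinite set of inequivalence classes, and the curves $xy^i + 1 = 0$ achieve both with the cleanest possible verification via Lemma~\ref{Lemm:IsoCDEqui}.
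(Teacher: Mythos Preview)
Your proof is correct and follows essentially the same approach as the paper: both invoke Lemma~\ref{lemma: algebra of regular functions of the curve a(y)x+ b(y) = 0} for the isomorphism type, Lemma~\ref{Lemm:IsoCDEqui} for pairwise non-equivalence, and Lemma~\ref{Lem: b and c}(\ref{psi_m and psi_n are not equivalent when m and n are different}) for the infinitely many inequivalent embeddings. The only difference is cosmetic: the paper takes $C_i\colon xy^{i+2}+y+1=0$ for $i\ge 2$ (so $b=y+1$), whereas your choice $C_i\colon xy^i+1=0$ for $i\ge 1$ (so $b=1$) is slightly simpler and works equally well, since $\deg b=0\le d-1$ and $b(0)\ne 0$ are satisfied for every $d=i\ge 1$.
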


\begin{proof}
It suffices to choose the curve $C_i$ given by $xy^{i+2}+y+1$, for each $i\ge 2$. These curves are all isomorphic to $\A^1\setminus \{0\}$ by Lemma~\ref{lemma: algebra of regular functions of the curve a(y)x+ b(y) = 0} and are pairwise non-equivalent by Lemma~\ref{Lemm:IsoCDEqui}. The existence of infinitely many open embeddings $\A^2\setminus C_i \hookrightarrow \A^2$, up to automorphisms of $\A^2$, is then ensured by Lemma~\ref{Lem: b and c}(\ref{psi_m and psi_n are not equivalent when m and n are different}).
\end{proof}

One can compute the polynomial $c= c_{b,m}$ provided by Lemma~\ref{Lem: b and c}(\ref{definition of c}), in terms of $b$ and $m$, and find explicit formulas. We obtain in particular the following result:

\begin{lemma} \label{lemma: infinitely many non-equivalent curves having the same complement in characteristic 0}
For each $\mu\in \k$ define the curve $C_\mu\subset \A^2$ by \[xy^3+\mu y^2+y+1=0.\] 
Then, there exists an isomorphism $\A^2\setminus C_\mu \iso \A^2\setminus C_{\mu-1}$.
In particular, if $\mathrm{char}(\k)=0$, we obtain infinitely many closed curves of $\A^2$, pairwise non-equivalent, which have isomorphic complements.
\end{lemma}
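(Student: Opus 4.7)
The plan is to apply Lemma~\ref{Lem: b and c} with $b(y) = \mu y^2 + y + 1$, $d = 3$ and $m = 1$, so that $C_\mu = D_b$ in the notation of that lemma (noting that $b(0) = 1 \neq 0$ and $\deg b = 2 \leq d - 1$). The first step is to identify explicitly the polynomial $c \in \k[y]$ of degree $\le 2$ furnished by Lemma~\ref{Lem: b and c}\eqref{definition of c}, namely the unique such polynomial satisfying $b(y) \equiv c(y b(y)) \pmod{y^3}$. Writing $u = y b(y) \equiv y + y^2 \pmod{y^3}$ and $c(t) = \alpha + \beta t + \gamma t^2$, a direct expansion gives
\[ c(u) \equiv \alpha + \beta y + (\beta + \gamma) y^2 \pmod{y^3}, \]
so comparing with $\mu y^2 + y + 1$ forces $\alpha = 1$, $\beta = 1$, $\gamma = \mu - 1$. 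Hence $c(y) = (\mu - 1) y^2 + y + 1$, which is exactly the polynomial $b_{\mu - 1}$ defining $C_{\mu - 1}$. Lemma~\ref{Lem: b and c}\eqref{the complements of D_b and D_c are isomorphic} then produces, via $\psi_{b,1} = (\varphi_c)^{-1} \tau \varphi_b$, the desired isomorphism $\A^2 \setminus C_\mu \iso \A^2 \setminus C_{\mu - 1}$.

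For the concluding statement, iterating this isomorphism yields $\A^2 \setminus C_\mu \iso \A^2 \setminus C_{\mu - n}$ for every $n \in \N$. It remains to see that in characteristic $0$ the curves $C_\mu$, for $\mu \in \k$, are pairwise non-equivalent. This follows from Lemma~\ref{Lemm:IsoCDEqui}: with $a_\mu(y) = y^3$ and $b_\mu(y) = \mu y^2 + y + 1$, an equivalence between $C_\mu$ and $C_{\mu'}$ requires constants $\alpha, \lambda, \mu_1 \in \k^*$ and $\beta \in \k$ with $y^3 = \lambda (\alpha y + \beta)^3$ and $b_{\mu'}(y) = \mu_1 b_\mu(\alpha y + \beta)$. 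The first equality forces $\beta = 0$ and $\lambda \alpha^3 = 1$; substituting into the second and comparing the constant, linear and quadratic coefficients of
\[ \mu' y^2 + y + 1 = \mu_1 \mu \alpha^2 y^2 + \mu_1 \alpha y + \mu_1 \]
gives $\mu_1 = 1$, $\alpha = 1$, and finally $\mu' = \mu$. Since in characteristic $0$ the values $\mu, \mu - 1, \mu - 2, \dots$ are pairwise distinct, we obtain an infinite sequence of pairwise non-equivalent curves with isomorphic complements.

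There is no real obstacle here; the only thing to check with care is the congruence identifying $c$ with $b_{\mu - 1}$, and the rest is a bookkeeping application of the earlier lemmas.
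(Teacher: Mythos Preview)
Your proof is correct and follows essentially the same route as the paper: apply Lemma~\ref{Lem: b and c} with $d=3$, $m=1$, $b=\mu y^2+y+1$ to obtain $c=(\mu-1)y^2+y+1$, and then use Lemma~\ref{Lemm:IsoCDEqui} for the non-equivalence. You supply a bit more detail in verifying the congruence that pins down $c$, but the argument is otherwise identical.
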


\begin{proof}
The isomorphism between $\A^2\setminus C_\mu$ and $\A^2\setminus C_{\mu-1}$ follows from Lemma~\ref{Lem: b and c} with $d=3$, $m=1$,  $b= \mu y^2 + y +1$ and $c= (\mu -1) y^2 + y +1$.

To get the last statement,
we assume
that $\mathrm{char}(\k)=0$ and
observe
that the affine surfaces $\A^2\setminus C_n$ are all isomorphic for each $n\in \Z$. To show that the curves $C_n$, $n\in \Z$ are pairwise non-equivalent, we apply Lemma~\ref{Lemm:IsoCDEqui}: for $m,n\in \Z$, the curves $C_m$ and $C_{n}$ are equivalent only if there exist $\alpha,\lambda,\mu \in \k^*$, $\beta\in \k$ such that 
\[ y^3=\lambda\cdot  (\alpha y+\beta)^3,\ m y^2+y+1=\mu \cdot \big( n(\alpha y+\beta)^2+(\alpha y+\beta)+1 \big) .\]
The first equality gives $\beta=0$, so that the second one becomes $m y^2+y+1=\mu \cdot (n\alpha^2 y^2+\alpha y+1)$. We finally obtain  $\mu=1$, $\alpha=1$ and thus $m=n$, as we wanted.
\end{proof}

If $\mathrm{char}(\k)=p>0$, Lemma~\ref{lemma: infinitely many non-equivalent curves having the same complement in characteristic 0} only gives $p$ non-equivalent curves that have isomorphic complements. We can get more curves by applying Lemma~\ref{Lemm:IsoCDEqui} to polynomials of higher degree:

\begin{lemma} \label{lemma: unboundeness of non-equivalent curves having the same complements in positive characteristic}
 For each integer $n\ge 1$ there exist curves $C_1,\dots,C_n\subset \A^2$, all isomorphic to $\A^1\setminus \{0\}$, pairwise non-equivalent, such that all surfaces $\A^2\setminus C_1$, $\dots$, $\A^2\setminus C_n$ are isomorphic.
\end{lemma}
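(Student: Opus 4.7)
The plan is to construct the desired $n$ curves by applying Lemma~\ref{Lem: b and c} to a common polynomial $b_0(y)=1+y$ with various exponents $m$, in a sufficiently large degree~$d$. Since Lemma~\ref{lemma: infinitely many non-equivalent curves having the same complement in characteristic 0} already handles $\mathrm{char}(\k)=0$, assume $\mathrm{char}(\k)=p>0$, choose $k\ge 0$ with $p^k\ge n$, and set $d=p^k+1$. For each $j\in\{1,\ldots,n-1\}$, let $b_j\in\k[y]$ of degree less than~$d$ be the polynomial provided by Lemma~\ref{Lem: b and c}(\ref{definition of c}) applied to $b_0$ with $m=j$, so $b_0(y)\equiv b_j(y\,b_0(y)^j)\pmod{y^d}$.

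By Lemma~\ref{Lem: b and c}(\ref{the complements of D_b and D_c are isomorphic}), one has isomorphisms $\A^2\setminus D_{b_0}\iso \A^2\setminus D_{b_j}$ for every~$j$, so all complements $\A^2\setminus D_{b_0},\ldots,\A^2\setminus D_{b_{n-1}}$ are mutually isomorphic. Since $b_j(0)=1\neq 0$ for every~$j$ (which follows at once by evaluating the defining congruence at $y=0$), Lemma~\ref{lemma: algebra of regular functions of the curve a(y)x+ b(y) = 0} gives $D_{b_j}\simeq\Spec(\k[y,1/y])\simeq\A^1\setminus\{0\}$. It remains to establish pairwise non-equivalence.

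Comparing the coefficients of $y^0$ and $y^1$ in the defining congruence forces each $b_j$ to be of the form $1+y+O(y^2)$. For polynomials in this normalized form, Lemma~\ref{Lemm:IsoCDEqui} makes $D_b$ and $D_{b'}$ equivalent only if $b=b'$, so non-equivalence of the $D_{b_j}$'s reduces to distinctness of the polynomials $b_j$. Combining the two congruences $b_0\equiv b_i(y\,b_0^i)\equiv b_j(y\,b_0^j)\pmod{y^d}$, substituting $u=y\,b_0^j$, and using that the $y$-coefficient of $b_i=b_j$ is a unit, a routine expansion shows that $b_i=b_j$ implies $b_0^{|i-j|}\equiv 1\pmod{y^{d-1}}$, i.e., $(1+y)^r\equiv 1\pmod{y^{p^k}}$ with $r=|i-j|$.

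The main obstacle is this final congruence. Writing $r=\sum_{l\ge 0}r_l p^l$ in base~$p$ with $0\le r_l<p$ and applying Frobenius $(1+y)^{p^l}=1+y^{p^l}$ yields $(1+y)^r=\prod_l(1+y^{p^l})^{r_l}$; the lowest nonconstant term of this product is $r_{l_0}y^{p^{l_0}}$, where $l_0$ is the smallest index with $r_{l_0}\neq 0$. This term vanishes modulo $y^{p^k}$ only when $p^{l_0}\ge p^k$, which is incompatible with $0<r<p^k$. Hence $b_i\neq b_j$ for all $0\le i\neq j<p^k$, and since $n\le p^k$ the curves $D_{b_0},\ldots,D_{b_{n-1}}$ are $n$ pairwise non-equivalent curves, all isomorphic to $\A^1\setminus\{0\}$, with pairwise isomorphic complements.
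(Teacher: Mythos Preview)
Your proof is correct. Both your argument and the paper's use Lemma~\ref{Lem: b and c} with $b_0=1+y$, but the implementations differ in an interesting way.

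\textbf{Parameter choices.} The paper takes $d=p^{\,n}+2$ and exponents $m=p^{\,i}$ for $i=1,\dots,n$, whereas you take $d=p^{\,k}+1$ with $p^{\,k}\ge n$ and consecutive exponents $m=0,1,\dots,n-1$. Both choices ensure that the relevant low-order coefficients of the resulting polynomials are visible modulo $y^{d}$.

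\textbf{Distinguishing the polynomials.} The paper computes each $c_i$ explicitly as $1+y-y^{p^{\,i}+1}+\cdots$; since $p^{\,i}+1<d$, these initial segments are pairwise different, and then Lemma~\ref{Lemm:IsoCDEqui} (after forcing $\alpha=\mu=1$) gives non-equivalence. Your route avoids explicit computation: from $c(u)-c(v)=(u-v)\cdot(\text{unit})$ (valid because the linear coefficient of $c=b_i=b_j$ equals~$1$) you deduce $y\,b_0^{\,i}\equiv y\,b_0^{\,j}\pmod{y^{d}}$, hence $(1+y)^{|i-j|}\equiv 1\pmod{y^{p^{\,k}}}$, and finish with a Frobenius/base-$p$ expansion. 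This is a clean alternative and works for any $b_0$ with $b_0(0)\in\k^*$ and unit linear coefficient, while the paper's explicit computation exploits the very special shape $(1+y)^{p^{\,i}}=1+y^{p^{\,i}}$.

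One small point worth making explicit in your write-up: when invoking Lemma~\ref{Lemm:IsoCDEqui}, the conclusion $\beta=0$ uses that $(\alpha y+\beta)^{d}$ cannot be proportional to $y^{d}$ unless $\beta=0$; with $d=p^{\,k}+1$ this is immediate from the constant term $\beta^{d}$, but since you are in positive characteristic it is good hygiene to say so.
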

\begin{proof}
The case where $\mathrm{char}(\k)=0$ is settled by Lemma~\ref{lemma: infinitely many non-equivalent curves having the same complement in characteristic 0} so we may assume that  $\mathrm{char}(\k)=p \geq 2$. Set $b(y) = 1+ y$ and $d = p ^n +2$. For each integer $i$ with $1 \leq i \leq n$, we apply Lemma~\ref{Lem: b and c}(\ref{definition of c}) with $m =p^i$. Hence, there exists a unique polynomial $c_i \in \k[y]$ of degree $\le d-1$  satisfying
\begin{equation} \label{Definition of c_i}  b(y)  \equiv c_i(yb(y)^{p^i})  \pmod {y^{d}}.\end{equation}
Let $C_i\subset \A^2$ be the curve given by the equation
\[xy^d+c_i(y)=0.\]
By Lemma~\ref{Lem: b and c}(\ref{the complements of D_b and D_c are isomorphic}), all surfaces $\A^2\setminus C_1$, $\dots$, $\A^2\setminus C_n$ are isomorphic to $\A^2 \setminus D$, where $D \subset \A^2$ is given by
\[xy^d+b(y) =0.\]
It remains to check that $C_1, \ldots, C_n$ are pairwise non-equivalent. Assume therefore that $C_i$ and $C_j$ are equivalent. By Lemma~\ref{Lemm:IsoCDEqui}, there exist $\alpha,\lambda,\mu \in \k^*$, $\beta\in \k$ such that 
\[ y^d = \lambda \cdot  (\alpha y+\beta)^d,\quad c_j(y) = \mu \cdot   c_i(\alpha y+\beta). \]
The first equality gives $\beta=0$, so that we get:
\begin{equation} \label{c_i and c_j} c_j(y) = \mu \cdot c_i(\alpha y).\end{equation}
However, by equation \eqref{Definition of c_i} we have
\[ 1+ y \equiv c_i ( y + y^{p^i + 1} )  \pmod {y^{p^i + 2}} \]
and this equation admits the unique solution
\[ c_i  =  1 + y -  y^{p^i + 1}  + \text{(terms of higher order)}. \]
(Unicity follows for example again from Lemma~\ref{Lem: b and c}(\ref{definition of c})). Hence, looking at equation \eqref{c_i and c_j} modulo $y^2$,  we obtain $1+y= \mu (1 + \alpha y)$, so that $\alpha = \mu =1$. Equation \eqref{c_i and c_j} finally yields $c_i = c_j$, so that the above (partial) computation of $c_i$ gives us $i=j$.
\end{proof}

The proof of Theorem~\ref{NegativityThm} is now complete:
\begin{proof}[Proof of Theorem~$\ref{NegativityThm}$]
Part~$(\ref{Negativity1})$ corresponds to Proposition~\ref{Prop: infinitely many cuves whose complements admit infinitely many open embeddings in the plane}. Part~$(\ref{Negativity2})$ is given by 
Lemma~\ref{lemma: infinitely many non-equivalent curves having the same complement in characteristic 0} 
($\mathrm{char}(\k)=0$)  and Lemma~\ref{lemma: unboundeness of non-equivalent curves having the same complements in positive characteristic} ($\mathrm{char}(\k)>0$).
Part~$(\ref{Negativity3})$ corresponds to Proposition~\ref{Prop:Negativity3}.
\end{proof}

\section{Non-isomorphic curves with isomorphic complements}\label{Sec:NonIsoCurves}
\subsection{A geometric construction}

We begin with the following fundamental construction:

\begin{proposition}   \label{proposition: existence of two curves with isomorphic complements and prescribed rings of functions}
For each polynomial $P\in \k[t]$ of degree $d \geq 3$ and each $\lambda\in \k$ with $P( \lambda ) \not=0$, there exist two closed curves $C,D\subset \A^2$ of degree $d^2-d+1$ such that $\A^2\setminus C$ and $\A^2\setminus D$ are isomorphic and such that the following isomorphisms hold:
\[ C\simeq \Spec \Big( \k[t,\frac{1}{P}] \Big)  \text{ and } D \simeq \Spec \Big( \k[t,\frac{1}{Q}] \Big), \text{ where } Q(t)=P \Big( \lambda +\frac{1}{t} \Big) \cdot t^d.  \]
\end{proposition}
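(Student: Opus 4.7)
Let $\alpha_1, \ldots, \alpha_d$ denote the roots of $P$ in $\kk$. The plan is to construct a smooth projective rational surface $X$ together with two open embeddings $\rho_1, \rho_2 \colon \A^2 \hookrightarrow X$ satisfying Proposition~\ref{Prop:XBCD}, so that the induced isomorphism $\A^2 \setminus C \iso \A^2 \setminus D$ realizes the M\"obius reparametrization $s = 1/(t - \lambda)$ on the common normalization $\p^1$ of the strict transform $\Gamma \subset X$ of $\overline{C}$.

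First, I would construct $\overline{C} \subset \p^2$ as the image of a birational morphism $\phi \colon \p^1 \to \p^2$, $t \mapsto [A(t) : B(t) : P(t)]$, where $A, B \in \k[t]$ are polynomials of degree $d^2 - d + 1$ with $\gcd(A, B, P) = 1$, chosen so that $\phi$ is a closed embedding on $\p^1 \setminus \{\alpha_1, \ldots, \alpha_d, \infty\}$ and so that the singular configuration of $\overline{C}$ on the line $L_\infty = \{Z=0\}$ has multiplicity sequences compatible with a second contraction (specified below). Then $\overline{C}$ is a rational curve of degree $d^2 - d + 1$, meeting $L_\infty$ at exactly the $d+1$ points $\phi(\alpha_i), \phi(\infty)$, and the affine part $C := \overline{C} \cap \A^2$ is isomorphic to $\Spec(\k[t, 1/P])$ via the parametrization.

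Next, I would build $X$ by successively blowing up the points $\phi(\alpha_i), \phi(\infty)$ and their infinitely near singular points of $\overline{C}$ on $L_\infty$, so that the strict transform $\Gamma \subset X$ of $\overline{C}$ is smooth (hence $\Gamma \cong \p^1$) and the preimage $B_1 := \pi_1^{-1}(L_\infty)$ is a $\k$-tree; this produces the first embedding $\rho_1$. The polynomials $A, B$ are tuned so that $X$ admits a second iterated contraction of $(-1)$-curves to a new $\p^2$, yielding a second morphism $\pi_2 \colon X \to \p^2$ and hence the second embedding $\rho_2$. Under $\pi_2$, the strict transform $\Delta$ of the original $L_\infty$ maps to a rational curve $\overline{D}$ of degree $d^2 - d + 1$; this degree is verified by a self-intersection computation in the Picard group of $X$. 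The affine part $D := \Delta \setminus B_2 = \overline{D} \cap \A^2_{\mathrm{new}}$ is then $\p^1$ minus $d+1$ boundary points, which by tracking the parametrizations through the birational surgery are identified via the M\"obius substitution $s = 1/(t - \lambda)$ with $\{1/(\alpha_i - \lambda) : 1 \le i \le d\} \cup \{\infty\}$, i.e., the roots of $Q(s) = P(\lambda + 1/s) \cdot s^d$ together with $\infty$. Hence $D \cong \Spec(\k[s, 1/Q])$, as required.

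The main obstacle is the explicit determination of the singularity configuration of $\overline{C}$ at its $d+1$ points on $L_\infty$: the multiplicity sequences there (depending on both $P$ and $\lambda$) must be tuned so that $X$ admits the desired second contraction to $\p^2$ (rather than to some Hirzebruch surface), and so that the image $\overline{D}$ has degree exactly $d^2 - d + 1$. This is a delicate numerical verification in the Picard group $\Pic(X)$, tracking the classes of strict transforms through both the blow-ups and the contractions, and it constitutes the computational heart of the construction.
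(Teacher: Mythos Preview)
Your proposal identifies the right \emph{shape} of construction (build a surface $X$ with two contractions to $\p^2$, track parametrizations through the birational surgery), but it has a genuine gap: you have not specified the polynomials $A, B$ nor the singularity configuration of $\overline{C}$ along $L_\infty$, and you explicitly flag this determination as ``the main obstacle'' and ``the computational heart of the construction.'' Without it, there is no construction --- only a statement of what would need to be checked. The difficulty you name (tuning multiplicity sequences at $d+1$ boundary points so that a second contraction to $\p^2$ exists, and so that the image has the correct degree) is not a routine bookkeeping step; it is the entire content of the proposition. As written, nothing guarantees that such $A,B$ exist.

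The paper sidesteps this obstacle by reversing the direction of the construction. Rather than starting from the complicated degree-$(d^2-d+1)$ curve $\overline{C}$ and attempting to discover a hidden second contraction, the paper starts from a \emph{simple} configuration in $\p^2$: the degree-$d$ cuspidal curve $\Gamma \colon y^{d-1}z = P_d(x,y)$ (where $P_d(x,y) = P(x/y)\,y^d$), together with the lines $\Delta \colon z=0$ and $L \colon x = \lambda y$. One then blows up $d-1$ points infinitely near the smooth point $p_1 = [\lambda:1:P(\lambda)] \in \Gamma \cap L$ and $d$ points infinitely near the cusp $q_1 = [0:0:1]$, all lying on the strict transform of $\Gamma$, to obtain $X$. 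The resulting dual graph of the boundary is manifestly \emph{symmetric} under swapping $\tilde\Gamma \leftrightarrow \tilde\Delta$, and this symmetry is what furnishes the two contractions $\psi_1, \psi_2$ to $\A^2$ with no guesswork. The degree $d^2-d+1$ then drops out of a single self-intersection computation, and the identifications $C \simeq \Gamma \setminus (\Delta \cup \{q_1\}) \simeq \Spec(\k[t,1/P])$ and $D \simeq \Delta \setminus (\Gamma \cup L) \simeq \Spec(\k[t,1/Q])$ are read off from explicit parametrizations of $\Gamma$ and $\Delta$ by $\A^1$. In short: the paper builds \emph{up} from a symmetric low-degree picture, whereas you propose to build \emph{down} from the answer, which forces you to reverse-engineer singularity data that you have no independent means of producing.
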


\begin{proof}
The polynomial $P_d(x,y):=P(\frac{x}{y})y^d\in \k[x,y]$ is a homogeneous polynomial of degree $d$ such that $P_d(x,1)=P(x)$. Let then $\Gamma,\Delta,L,R\subset \p^2$ be the curves given by the equations
\[\Gamma: y^{d-1}z=P_d(x,y),\ \ \Delta: z=0,\ \  L: x=\lambda y,\ \ R:y=0.\]
By construction, $P_d$ is not divisible by $y$. Moreover, the two lines $L$ and $\Delta$ satisfy $L\cap \Gamma=\{p_1,q_1 \}$ where $p_1=[\lambda:1:P(\lambda)]$, $q_1=[0:0:1]$ 
and $\Delta$ does not pass through $p_1$ or $q_1$.

Note that $\Gamma\subset \p^2$ is a cuspidal rational curve, that the point $q_1= [0:0:1] \in \p^2(\k)$ has multiplicity $d-1$ on
$\Gamma$, and is therefore the unique singular point of this curve $($this follows for example from the genus formula of a plane curve$)$.
The situation is then as follows.
\[
\begin{tikzpicture}
\draw [line width=0.5mm, gray ] (0,0) ..controls +(0,2) and +( -1,-0.1).. (4,2);
\draw [line width=0.5mm, gray ] (0,0) ..controls +(0,2) and +( 1,-0.1).. (-4,2);
\draw (-0.3,-0.3) -- ( 2,2);
\draw (0,-0.3) -- ( 0,2);
\draw (-3.5,-0.3) -- (3.5,2);
\draw ( -1,1.75) node{$\Gamma$};
\draw ( 0.2,1.75) node{$R$};
\draw ( 0.9,0.6) node{$L$};
\draw ( -1.6,0.2) node{$\Delta$};
\draw ( 0.2,-0.1) node{$q_1$};
\draw (1.5,1.1 ) node{$p_1$};
\end{tikzpicture}
\]

Denote by $\pi\colon X\to \p^2$ the birational morphism given by the blow-up of~$p_1$, $q_1$, followed by the blow-up of the points $p_2$,\dots,$p_{d-1}$ and $q_2$,\dots,$q_{d}$ infinitely near $p_1$ and $q_1$ respectively and all belonging to the strict transform of~$\Gamma$. Denote by $\tilde{\Gamma}$, $\tilde{\Delta}$, $\tilde{L}$, $\tilde{R}$, $\mathcal{E}_1$,\dots, $ \mathcal{E}_{d-1}$, $\mathcal{F}_1$,\dots,$\mathcal{F}_{d} \subset X$ the strict transforms of~$\Gamma$, $\Delta,L, R$ and of the exceptional divisors above~$p_1$,\dots,$p_{d-1}$, $q_1$, \dots,$q_{d}$. Consider the tree $($which is in fact a chain$)$
\[ B=\tilde{L}\cup \bigcup\limits_{i=1}^{d-2}\mathcal{E}_i\cup \bigcup\limits_{i=1}^{d}\mathcal{F}_i.\]

We now prove that the situation on $X$ is as in 
the symmetric diagram \eqref{SymmetricD},
\begin{equation}  \label{SymmetricD}
\begin{tikzpicture}[scale=0.76]
\draw (-1.5,0) -- (1.5,0);
\draw (0, 0.25) node{$-d$};
\draw (-0.9,0.15) -- (-2.7,-0.75);
\draw (-1.95, -0.1) node{$-1$};
\draw (-3.9,0.15) -- (-2.1,-0.75);
\draw (-3, -0.1) node{$-2$};
\draw (0.9,0.15) -- (2.7,-0.75);
\draw (1.8, -0.1) node{$-1$};
\draw (3.9,0.15) -- (2.1,-0.75);
\draw (2.85, -0.1) node{$-2$};
\draw (4.9,0.15) -- (6.7,-0.75);
\draw (5.8, -0.1) node{$-2$};
\draw (7.9,0.15) -- (6.1,-0.75);
\draw (6.85, -0.1) node{$-2$};
\draw (-4.9,0.15) -- (-6.7,-0.75);
\draw (-5.95, -0.1) node{$-2$};
\draw (-7.9,0.15) -- (-6.1,-0.75);
\draw (-7, -0.1) node{$-2$};
\draw (-4.4, -0.1) node{$\dots$};
\draw (4.4, -0.1) node{$\dots$};
\draw (-7.4,0.15) -- (-9.2,-0.75);
\draw (-8.45, -0.1) node{$-1$};
\draw (7.4,0.15) -- (9.2,-0.75);
\draw (8.45, -0.1) node{$-1$};
\draw  plot [smooth] coordinates {(-9.2,-0.6)(-5,-1.3) (-0.6,-1.5) (0.5,-1.2) (-0.3,-0.9) (2,-0.35) (2.5,-0.1)};
\draw  plot [smooth] coordinates {( 9.2,-0.6)(5,-1.3)(0.6,-1.5) (-0.5,-1.2) (0.3,-0.9) (-2,-0.35) (-2.5,-0.1)};
\draw (-1, -0.85) node{\scriptsize$\tilde\Delta$};
\draw (1, -0.85) node{\scriptsize$\tilde\Gamma$};
\draw (-1, -0.4) node{\scriptsize$1$};
\draw (1, -0.4) node{\scriptsize$1$};
\draw (-3, -0.5) node{\scriptsize$\mathcal{E}_1$};
\draw (-5.6, -0.55) node{\scriptsize$\mathcal{E}_{d-3}$};
\draw (-8.1, -0.55) node{\scriptsize$\mathcal{E}_{d-1}$};
\draw (-7, -0.55) node{\scriptsize$\mathcal{E}_{d-2}$};
\draw (-1.9, -0.6) node{\scriptsize$\tilde{L}$};
\draw (0, -0.2) node{\scriptsize$\mathcal{F}_1$};
\draw (1.9, -0.6) node{\scriptsize$\mathcal{F}_d$};
\draw (3.3, -0.5) node{\scriptsize$\mathcal{F}_{d-1}$};
\draw (5.6, -0.55) node{\scriptsize$\mathcal{F}_{3}$};
\draw (8.15, -0.5) node{\scriptsize$\tilde{R}$};
\draw (7, -0.55) node{\scriptsize$\mathcal{F}_{2}$};
\end{tikzpicture}\end{equation}
where all curves are isomorphic to $\p^1$, all intersections indicated are transversal and consist in exactly one $\k$-point, except for $\tilde{\Gamma}\cap \tilde{\Delta}$, which can be more complicated $($the picture shows only the case where we get $3$ points with transversal intersection$)$.

Blowing up once the singular point $q_1$ of $\Gamma$, the strict transform of $\Gamma$ becomes a smooth rational curve having $(d-1)$-th order contact with the exceptional divisor. The unique point of intersection between the strict transform and the exceptional divisor corresponds to the direction of the tangent line $R$. Hence, all points $q_2$, \dots, $q_{d}$ belong to the strict transform of the exceptional divisor of $q_1$. This gives the self-intersections of $\mathcal{F}_1$, \dots, $\mathcal{F}_d$ and their configurations, as shown
in diagram \eqref{SymmetricD}. As $p_1$ is a smooth point of $\Gamma$, the curves $\mathcal{E}_1$, \dots, $\mathcal{E}_{d-1}$ form a chain of curves, as shown
in diagram \eqref{SymmetricD}. The rest of the diagram is checked by looking at the definitions of the curves $\Gamma$, $\Delta$, $L$, $R$.

We now show the existence of isomorphisms 
\[\psi_1\colon X\setminus (B\cup \tilde\Delta)\iso \A^2\mbox{ and }\psi_2\colon X\setminus (B\cup \tilde{\Gamma})\iso\A^2\]
such that $C=\psi_1(\tilde\Gamma \setminus (B\cup \tilde\Delta))$ and $D=\psi_2(\tilde\Delta\setminus (B\cup \tilde\Gamma))$ are of degree $d^2-d+1$.

We first show that $\psi_1$ exists (the case of $\psi_2$ is similar, as diagram \eqref{SymmetricD} is symmetric). We observe that since $\pi$ is the blow-up of $2d-1$ points defined over $\k$, the Picard group of $X$ is of rank $2d$, over $\k$ and over its algebraic closure $\kk$. We contract the curves $\mathcal{F}_d$, \dots, $\mathcal{F}_1$ and obtain a smooth projective surface $Y$ of Picard rank $d$ (again over $\k$ and $\kk$). The configuration of the image of the curves $\mathcal{E}_1$, \dots, $\mathcal{E}_{d-1}$, $\tilde{L}$, $\tilde{\Gamma}$ is then depicted in diagram \eqref{Situation after the contraction of F_1,...,F_d} (we omit the curve $\tilde{R}$ as we will not need it):

\begin{equation}  \label{Situation after the contraction of F_1,...,F_d}
\begin{tikzpicture}[scale=1]
\draw (-0.9,0.15) -- (-2.7,-0.75);
\draw (-1.9, -0.1) node{$0$};
\draw (-1.9, -0.55) node{\scriptsize$\tilde{L}$};
\draw (-3.9,0.15) -- (-2.1,-0.75);
\draw (-3, -0.1) node{$-2$};
\draw (-3, -0.5) node{\scriptsize$\mathcal{E}_1$};
\draw (-4.9,0.15) -- (-6.7,-0.75);
\draw (-5.95, -0.1) node{$-2$};
\draw (-5.6, -0.5) node{\scriptsize$\mathcal{E}_{d-3}$};
\draw (-7.4,0.15) -- (-9.2,-0.75);
\draw (-8.45, -0.1) node{$-1$};
\draw (-8.1, -0.5) node{\scriptsize$\mathcal{E}_{d-1}$};
\draw (-7.9,0.15) -- (-6.1,-0.75);
\draw (-7, -0.1) node{$-2$};
\draw (-7, -0.5) node{\scriptsize$\mathcal{E}_{d-2}$};
\draw (-4.4, -0.1) node{$\dots$};
\draw  plot [smooth] coordinates {(-9.5,-0.6) (-1.5,-1.3) (0.3,-1.15) (-0.3,-0.85) (1.2,-0.45)(1.2,-0.15) (-0.5,-0.02)(-1.2,0)};
\draw  plot [smooth] coordinates {(1.2,0.3) (-0.8,0.03)(-1.2,0)};
\draw  plot [smooth] coordinates {(0.6,-1.4) (-0.5,-1.15) (0.3,-0.85) (-1,-0.35) (-1.6,0)};
\draw (-0.7, -0.27) node{$1$};
\draw (-0.7, -0.7) node{\scriptsize$\tilde\Delta$};
\draw (1.05, -0.27) node{\scriptsize$d^2-d+1$};
\draw (1, -0.7) node{\scriptsize$\tilde\Gamma$};
\end{tikzpicture}
\end{equation}
In fact, $Y$ is just the blow-up of the points $p_1,\dots, p_{d-1}$ starting from $\p^2$.

In order to show that $X\setminus (B\cup \tilde{\Delta})\simeq Y\setminus(\tilde{\Delta}\cup \tilde{L}\cup\mathcal{E}_1\cup \dots \cup \mathcal{E}_{d-2})$ is isomorphic to $\A^2$, we will construct a birational map $\hat\psi_1\colon Y\dasharrow \p^2$ which restricts to an isomorphism $Y\setminus(\tilde{\Delta}\cup \tilde{L}\cup\mathcal{E}_1\cup \dots \cup \mathcal{E}_{d-2})\iso \p^2\setminus \mathcal{L}$ for some line $\mathcal{L}$. Let us now describe this map. Denote by $r_1$ the unique point of $Y$ such that $\{ r_1 \} =\tilde\Delta\cap \tilde{L}$ in $Y$. We blow up $r_1$ and then the point $r_2$ lying on the intersection of the exceptional curve of $r_1$ and of the strict transform of $\tilde \Delta$. For $i=3, \dots, d$, denoting by $r_i$ the point lying on the intersection of the exceptional curve of $r_{i-1}$ and on the strict transform of the exceptional curve of $r_1$, we successively blow up $r_i$. We thus obtain a birational morphism $\theta\colon Z\to Y$. The configuration of curves on $Z$ is depicted in diagram~\eqref{Situation after the blow-up of r_1,...,r_d} (we again use the same name for a curve on $Y$ and its strict transform on $Z$; we also denote by $\mathcal{G}_i \subset Z$ the strict transform of the exceptional divisor of $r_i$):
\begin{equation}  \label{Situation after the blow-up of r_1,...,r_d}
\begin{tikzpicture}[scale=1]
\draw (-0.9,1.15) -- (-2.7,0.25);
\draw (-2, 0.9) node{$-1$};
\draw (-1.9, 0.45) node{\scriptsize$\mathcal{G}_d$};
\draw (-3.4,1.15) -- (-5.2,0.25);
\draw (-4.5, 0.9) node{$-2$};
\draw (-4.2, 0.45) node{\scriptsize$\mathcal{G}_{d-2}$};
\draw (-3.9,1.15) -- (-2.1,0.25);
\draw (-3, 0.9) node{$-2$};
\draw (-3, 0.5) node{\scriptsize$\mathcal{G}_{d-1}$};
\draw (-7.4,1.15) -- (-9.2,0.25);
\draw (-8.45, 1) node{$-1$};
\draw (-8.1, 0.6) node{\scriptsize$\tilde\Delta$};
\draw (-7.9,1.15) -- (-6.1,0.25);
\draw (-7, 0.9) node{$-2$};
\draw (-7, 0.5) node{\scriptsize$\mathcal{G}_{2}$};
\draw (-5.6, 0.9) node{$\dots$};
\draw (-0.9,0.15) -- (-2.7,-0.75);
\draw (-1.9, -0.1) node{$-1$};
\draw (-1.9, -0.55) node{\scriptsize$\tilde{L}$};
\draw (-1,0) -- (-1,1.2);
\draw (-0.7, 0.8) node{$-d$};
\draw (-0.8, 0.5) node{\scriptsize$\mathcal{G}_1$};
\draw (-3.4,0.15) -- (-5.2,-0.75);
\draw (-4.5, -0.1) node{$-2$};
\draw (-4.2, -0.55) node{\scriptsize$\mathcal{E}_{2}$};
\draw (-3.9,0.15) -- (-2.1,-0.75);
\draw (-3, -0.1) node{$-2$};
\draw (-3, -0.5) node{\scriptsize$\mathcal{E}_1$};
\draw (-7.4,0.15) -- (-9.2,-0.75);
\draw (-8.45, -0.1) node{$-1$};
\draw (-8.1, -0.5) node{\scriptsize$\mathcal{E}_{d-1}$};
\draw (-7.9,0.15) -- (-6.1,-0.75);
\draw (-7, -0.1) node{$-2$};
\draw (-7, -0.5) node{\scriptsize$\mathcal{E}_{d-2}$};
\draw (-5.6, -0.1) node{$\dots$};
\draw  plot [smooth] coordinates {(-7.6,0.8)(-8.3,0.8)(-8.5,0.3)(-8.8,0.5)(-8.8,-1)(-5,-1.3)  (-0.3,-1) (0.4,-0.15) (-0.8,-0.17)(-1.5,-0.15)};
\draw  plot [smooth] coordinates {(1.2,0.2) (-1.1,-0.12)(-1.5,-0.15)};
\draw (1.25, -0.3) node{\scriptsize$d^2-d+1$};
\draw (1, -0.8) node{\scriptsize$\tilde\Gamma$};
\end{tikzpicture}
\end{equation}
We can then contract the curves $\tilde\Delta,\mathcal{G}_2,\dots,\mathcal{G}_{d-1},\tilde{L},\mathcal{E}_1,\dots,\mathcal{E}_{d-2},\mathcal{G}_1$ and obtain a birational morphism $\rho\colon Z\to \p^2$. The image of the target is $\p^2$, because
it has Picard rank~$1$; note also that the image $\mathcal{L}$ of $\mathcal{G}_d$ is actually a line of $\p^2$ since it has self-intersection $1$.
 The birational map $\hat\psi_1\colon Y\dasharrow \p^2$ given by $\hat\psi_1=\rho\theta^{-1}$ is the desired birational map. 
The closure $\overline{C}$ of~$C\subset \A^2$ in $\p^2$ is then equal to the image of~$\tilde \Gamma$ by $\rho$.

For each contracted curve above, the multiplicity (on $\overline{C}$) at the point where it is contracted,
is equal  to $d$ for $\tilde\Delta,\mathcal{G}_2,\dots,\mathcal{G}_{d-1}$, to $d-1$ for $\tilde{L},\mathcal{E}_1,\dots,\mathcal{E}_{d-2}$, and is equal to $(d-1)^2$ for $\mathcal{G}_1$. Adding the singular point of multiplicity $d-1$ of $\tilde{\Gamma}$, we obtain the two sequences of multiplicities $(\underbrace{d, \ldots, d}_{d-1})$ and $( (d-1)^2, \underbrace{d-1, \ldots, d-1}_{d})$. The self-intersection of $\overline{C}$ is then
\[ (d^2-d+1)+(d-1)\cdot d^2+(d-1)\cdot (d-1)^2+((d-1)^2)^2=(d^2-d+1)^2,\]
which implies that the curve has degree $d^2-d+1$.

The case of $\psi_2$ is similar, since the diagram \eqref{SymmetricD} is symmetric.

In particular, this construction provides an isomorphism $\A^2\setminus C\simeq \A^2\setminus D$, where $C,D\subset \A^2$ are closed curves isomorphic to $\tilde\Gamma \setminus (B\cup \tilde\Delta) \simeq \Gamma \setminus (\Delta\cup \{ q_1 \} )$ and $\tilde\Delta\setminus (B\cup \tilde\Gamma) \simeq \Delta\setminus (\Gamma\cup L)$ respectively, both of degree $d^2-d+1$. 

Since $\Gamma\setminus \{q _1 \}$ is isomorphic to $\A^1$ via 
$t\mapsto [t:1:P_d(t,1)]=[t:1:P(t)]$, we obtain that $C\simeq \Gamma \setminus (\Delta\cup \{ q_1 \})$ is isomorphic to $\Spec(\k[t,\frac{1}{P}])$.

We then take the isomorphism $\A^1\iso \Delta\setminus L=\Delta\setminus \{[\lambda:1:0]\}$ given by $t\mapsto [\lambda t+1:t:0]$. The pull-back of~$\Delta\cap \Gamma$ corresponds to the zeros of~$P_d(\lambda t+1,t)=t^dP_d(\lambda+\frac{1}{t},1)=Q(t)$. Hence, $D$ is isomorphic to  $\Spec(\k[t,\frac{1}{Q}])$ as desired.
\end{proof}

\begin{corollary} \label{Coro:Points}
For each $d\ge 0$ and every choice of distinct points $a_1,\dots,a_d,b_1, b_2 \in \p^1(\k)$, there are two closed curves $C,D\subset \A^2$ such that $\A^2\setminus C$ and $\A^2\setminus D$ are isomorphic and such that $C\simeq \p^1 \setminus \{a_1,\dots,a_d, b_1\}$ and $D \simeq \p^1 \setminus \{a_1,\dots,a_d, b_2 \}$.
\end{corollary}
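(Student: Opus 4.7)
The plan is to reduce Corollary~\ref{Coro:Points} to Proposition~\ref{proposition: existence of two curves with isomorphic complements and prescribed rings of functions} via a preliminary change of coordinates on $\p^1$. The crucial observation is that the rings $\Spec(\k[t,1/P])$ and $\Spec(\k[t,1/Q])$ produced by that proposition correspond to $\p^1$ minus two $(d+1)$-tuples of points that share $d$ points in common and differ only in the last one (namely $\infty$ on one side and the finite point $\lambda$ on the other, up to a Möbius transformation).

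For $d\le 2$ the statement is essentially trivial: since $\PGL_2(\k)$ is $3$-transitive on distinct $\k$-points of $\p^1$, the abstract curves $\p^1\setminus\{a_1,\dots,a_d,b_1\}$ and $\p^1\setminus\{a_1,\dots,a_d,b_2\}$ are $\k$-isomorphic, and one may take $C=D$ to be any closed embedding in $\A^2$ of this common abstract curve (e.g.~a line for $d=0$, the hyperbola $xy=1$ for $d=1$, or $y\cdot x(x-1)=1$ for $d=2$). For $d\ge 3$, I would first choose $\mu\in \PGL_2(\k)$ with $\mu(b_1)=\infty$ and all of $\mu(a_1),\dots,\mu(a_d),\mu(b_2)$ lying in $\A^1(\k)$ (take $\mu(t)=1/(t-b_1)$ if $b_1\ne\infty$, otherwise $\mu=\mathrm{id}$). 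Set $\alpha_i:=\mu(a_i)$ and $\lambda:=\mu(b_2)$, which are $d+1$ pairwise distinct elements of $\k$, and let $P(t):=\prod_{i=1}^{d}(t-\alpha_i)\in\k[t]$. Then $P$ has degree $d\ge 3$ and $P(\lambda)\ne 0$, so Proposition~\ref{proposition: existence of two curves with isomorphic complements and prescribed rings of functions} produces closed curves $C,D\subset\A^2$ with $\A^2\setminus C\simeq\A^2\setminus D$, $C\simeq\Spec(\k[t,1/P])$, and $D\simeq\Spec(\k[t,1/Q])$ where $Q(t)=P(\lambda+1/t)\cdot t^d$.

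The last step is to identify the isomorphism types. One has $C\simeq \A^1\setminus\{\alpha_1,\dots,\alpha_d\}\simeq \p^1\setminus\{\alpha_1,\dots,\alpha_d,\infty\}$, and applying $\mu^{-1}$ yields $C\simeq \p^1\setminus\{a_1,\dots,a_d,b_1\}$. For $D$, the roots of $Q$ are precisely the distinct elements $1/(\alpha_i-\lambda)\in\k$, so $D\simeq\p^1\setminus\{1/(\alpha_1-\lambda),\dots,1/(\alpha_d-\lambda),\infty\}$; the Möbius transformation $s\mapsto \lambda+s^{-1}$ sends $1/(\alpha_i-\lambda)\mapsto\alpha_i$ and $\infty\mapsto\lambda$, and composing with $\mu^{-1}$ gives $D\simeq \p^1\setminus\{a_1,\dots,a_d,b_2\}$, as desired. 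The argument is thus essentially bookkeeping once the proposition is in hand; the only genuine ``obstacle'' is the combinatorial recognition that the assignment $P\mapsto Q$ amounts to swapping a single point of the complement in $\p^1$, and the separate (trivial) treatment of the range $d\le 2$ where the proposition does not apply.
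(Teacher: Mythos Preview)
Your proof is correct and follows essentially the same approach as the paper: treat $d\le 2$ by $3$-transitivity of $\PGL_2(\k)$ and take $C=D$, and for $d\ge 3$ move $b_1$ to $\infty$, apply Proposition~\ref{proposition: existence of two curves with isomorphic complements and prescribed rings of functions} with $P=\prod_i(t-\alpha_i)$ and $\lambda=\mu(b_2)$, then identify the resulting curves via the M\"obius map $t\mapsto \lambda+1/t$ (equivalently $t\mapsto[\lambda t+1:t]$). The bookkeeping you describe matches the paper's argument almost line for line.
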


\begin{proof}
The case where $d \leq 2$ is obvious: Since $\PGL_2( \k)$ acts $3$-transitively on $\p^1(\k)$,
we may take $C=D$ given by the equation $x=0$, resp.~$xy= 1$, resp.~$x(x-1) y=1$, if $d=0$, resp.~$d=1$, resp.~$d=2$. Let us now assume that $d \geq 3$.
Since $\PGL_2( \k)$ acts transitively on $\p^1(\k)$, we may assume without restriction that $b_1$ is the point at infinity $[1:0]$. Therefore, there exist distinct constants $\mu_1, \ldots,\mu_d, \lambda \in \k$ such that $a_1 = [ \mu_1 : 1], \dots, a_d = [ \mu_d : 1 ]$ and $b_2 = [ \lambda : 1]$. We now apply Proposition~\ref{proposition: existence of two curves with isomorphic complements and prescribed rings of functions} with $P=\prod_{i=1}^d (t-\mu_i)$. We get two closed curves $C,D\subset \A^2$ such that $\A^2\setminus C$ and $\A^2\setminus D$ are isomorphic and such that $C\simeq \Spec(\k[t,\frac{1}{P}])\simeq \A^1 \setminus \{\mu_1,\dots,\mu_d\} \simeq \p^1 \setminus \{ a_1, \dots, a_d, b_1 \}$ and $D\simeq \Spec(\k[t,\frac{1}{Q}]) \simeq \A^1 \setminus \{ \frac{1}{\mu_1 - \lambda} , \dots, \frac{1}{\mu_d- \lambda} \}
$, where $Q(t)=P(\lambda +\frac{1}{t})\cdot t^d$. It remains to observe that  $D$ is isomorphic to $\p^1\setminus \{[\mu_1:1],\dots,[\mu_d:1],[\lambda :1]\}$ via $t\mapsto [\lambda t+1:t]$.
\end{proof}

\begin{corollary} \label{Cor:AllcurvesInfinitefield}
If $\k$ is infinite and $P\in \k[t]$ is a polynomial with at least $3$ roots in $\overline{\k}$, we can find two curves $C,D\subset \A^2$ that have isomorphic complements, such that $C$ is isomorphic to
$\Spec(\k[t,\frac{1}{P}])$, but $D$ is not. 
\end{corollary}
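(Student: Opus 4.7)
The plan is to apply Proposition~\ref{proposition: existence of two curves with isomorphic complements and prescribed rings of functions} to the radical $P_0$ of $P$: since $\k[t,1/P_0]=\k[t,1/P]$ and $\deg P_0=d$ equals the number of distinct $\kk$-roots of $P$, the hypothesis $d\ge 3$ of the proposition is satisfied. For each $\lambda\in\k$ with $P(\lambda)\neq 0$ this produces curves $C_\lambda,D_\lambda\subset\A^2$ with isomorphic complements, with $C_\lambda\simeq\Spec(\k[t,1/P])$ and $D_\lambda\simeq\Spec(\k[t,1/Q_\lambda])$, where $Q_\lambda(t)=P_0(\lambda+1/t)\cdot t^d$. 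It will thus suffice to exhibit a single value of $\lambda$ for which $D_\lambda$ is not isomorphic to $C_\lambda$.

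Since a $\k$-isomorphism remains an isomorphism after base change to $\kk$, I would instead search for $\lambda$ for which $C_\lambda$ and $D_\lambda$ are already non-isomorphic over $\kk$. Write $\mu_1,\dots,\mu_d\in\kk$ for the roots of $P_0$ and set $S=\{\mu_1,\dots,\mu_d,\infty\}\subset\p^1(\kk)$. Using the standard embedding $\A^1\hookrightarrow\p^1$, the $\kk$-base change of $C_\lambda$ is $\p^1\setminus S$, while the roots of $Q_\lambda$ in $\kk$ are the $1/(\mu_i-\lambda)$, so the M\"obius substitution $t\mapsto 1/(t-\lambda)$ identifies the $\kk$-base change of $D_\lambda$ with $\p^1\setminus S_\lambda$, where $S_\lambda=\{\mu_1,\dots,\mu_d,\lambda\}$. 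Two punctured projective lines $\p^1\setminus T$ and $\p^1\setminus T'$ are $\kk$-isomorphic if and only if some element of $\PGL_2(\kk)$ sends $T$ onto $T'$; hence my task reduces to finding $\lambda\in\k$ with $P(\lambda)\neq 0$ such that no element of $\PGL_2(\kk)$ sends $S$ onto $S_\lambda$.

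The key step, and the main obstacle, will be to show that the set of bad $\lambda\in\kk$ (those for which such a $g\in\PGL_2(\kk)$ exists) is finite. Any such $g$ sends exactly one element $s\in S$ outside $\{\mu_1,\dots,\mu_d\}$ and restricts to a bijection $S\setminus\{s\}\to\{\mu_1,\dots,\mu_d\}$. There are only $(d+1)\cdot d!$ combinatorial data of this kind, and because $|S\setminus\{s\}|=d\ge 3$ a M\"obius transformation is uniquely determined by any three of its values; consequently each datum yields at most one $g$, and this $g$ in turn determines $\lambda=g(s)$. Only finitely many $\lambda$ are therefore bad, and since $\k$ is infinite I can choose $\lambda\in\k$ outside this finite set while keeping $P(\lambda)\neq 0$. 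The resulting pair $(C,D)=(C_\lambda,D_\lambda)$ satisfies the claims of the corollary.
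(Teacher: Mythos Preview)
Your proof is correct and follows essentially the same approach as the paper. The paper separates the counting argument into a standalone Lemma~\ref{lemma: P and Q are not isomorphic for a general lambda} (showing that for general $\lambda$ the rings $\k[t,1/P]$ and $\k[t,1/Q]$ are non-isomorphic via the same $\PGL_2$-on-punctures reasoning you give) and then applies Proposition~\ref{proposition: existence of two curves with isomorphic complements and prescribed rings of functions}; you have inlined that lemma, passed to the radical $P_0$ for tidiness, and organised the finiteness count slightly differently, but the substance is identical.
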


\begin{proof}
By Lemma~\ref{lemma: P and Q are not isomorphic for a general lambda} below, there exists a constant $\lambda$ in $\k$ such that $P( \lambda) \neq 0$ and such that the curves $\Spec(\k[t,\frac{1}{P}])$ and $\Spec(\k[t,\frac{1}{Q}])$ are not isomorphic.  The result now follows from Proposition~\ref{proposition: existence of two curves with isomorphic complements and prescribed rings of functions}.
\end{proof}

\begin{lemma}   \label{lemma: P and Q are not isomorphic for a general lambda}
If $\k$ is infinite and $P\in \k[t]$ is a polynomial with at least $3$ roots in $\overline{\k}$, then for a general $\lambda \in \k$, the polynomial $Q(t)=P(\lambda +\frac{1}{t})\cdot t^{\deg(P)}$ has the property that the curves $\Spec(\k[t,\frac{1}{P}])$ and $\Spec(\k[t,\frac{1}{Q}])$ are not isomorphic.
\end{lemma}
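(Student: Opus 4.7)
The plan is to translate the question into one about finite subsets of $\p^1(\kk)$ modulo the action of $\PGL_2(\k)$. Write $\alpha_1,\dots,\alpha_N \in \kk$ for the distinct roots of $P$, so that $N \ge 3$ by hypothesis. Via the standard embedding $\A^1 \hookrightarrow \p^1_{\k}$, the variety $\Spec(\k[t,\frac{1}{P}])$ is identified with $\p^1_\k \setminus S_P$, where $S_P \subset \p^1_\k$ is the reduced closed subscheme whose $\kk$-points are $\{\alpha_1,\dots,\alpha_N,\infty\}$. A direct computation gives $Q(t)=c\prod_{i}(t(\lambda-\alpha_i)+1)$, where $c$ is the leading coefficient of $P$; thus (since $P(\lambda) \neq 0$) the distinct roots of $Q$ are $1/(\alpha_i - \lambda)$, and $\Spec(\k[t,\frac{1}{Q}]) = \p^1_\k \setminus S_Q$ with $S_Q$ having $\kk$-points $\{1/(\alpha_i-\lambda)\}_i \cup \{\infty\}$.

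Since $\p^1_\k$ is the unique smooth projective $\k$-model of the function field $\k(t)$, every isomorphism $\Spec(\k[t,\frac{1}{P}]) \iso \Spec(\k[t,\frac{1}{Q}])$ extends uniquely to an element of $\Aut(\p^1_\k) = \PGL_2(\k)$ carrying $S_P$ onto $S_Q$. Next I apply the Möbius transformation $\sigma(t)=\lambda+\frac{1}{t}$, which lies in $\PGL_2(\k)$ because $\lambda \in \k$, and which sends $1/(\alpha_i-\lambda)\mapsto \alpha_i$ and $\infty \mapsto \lambda$. Composing with $\sigma$ replaces $S_Q$ by $T_\lambda := \{\alpha_1,\dots,\alpha_N,\lambda\}$, so existence of an isomorphism is equivalent to the existence of $\psi \in \PGL_2(\k)$ with $\psi(S_P)=T_\lambda$ as subsets of $\p^1(\kk)$.

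The key step is then to bound the candidate set of such $\psi$ independently of $\lambda$, exploiting the simple $3$-transitivity of $\PGL_2$ on $\p^1$. Since $N \ge 3$, any $\psi \in \PGL_2(\kk)$ satisfying $\{\alpha_1,\dots,\alpha_N\} \subseteq \psi(S_P)$ is uniquely determined by its values on $\alpha_1,\alpha_2,\alpha_3$, which must be three distinct elements of the $(N+1)$-element set $S_P$; so there are at most $(N+1)N(N-1)$ such $\psi$. Each one determines a single point $\mu_\psi \in \p^1(\kk)$ with $\psi(S_P)=\{\alpha_1,\dots,\alpha_N,\mu_\psi\}$, so the set of bad parameters $\lambda$ is contained in the finite set $\{\mu_\psi : \psi\} \cap \k$. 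Together with the finitely many $\lambda \in \k$ where $P(\lambda)=0$, this excludes only finitely many values; since $\k$ is infinite, a general $\lambda \in \k$ works, as claimed. The only genuine subtlety is the initial translation (the identification of the two affine curves as complements in the common projective model $\p^1_\k$ and the extension of their isomorphisms to $\PGL_2(\k)$); after this is in place, the finiteness argument via $3$-transitivity is routine.
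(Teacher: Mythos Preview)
Your proof is correct and follows essentially the same route as the paper: both reduce to the question of whether some $\psi\in\PGL_2$ sends $\{\alpha_1,\dots,\alpha_N,\infty\}$ to $\{\alpha_1,\dots,\alpha_N,\lambda\}$, and both bound the candidate $\psi$'s (hence the bad $\lambda$'s) finitely via simple $3$-transitivity. One small slip in your write-up: when you say $\psi$ ``is uniquely determined by its values on $\alpha_1,\alpha_2,\alpha_3$, which must be three distinct elements of $S_P$'', you mean the values of $\psi^{-1}$ on $\alpha_1,\alpha_2,\alpha_3$ (equivalently, the preimages under $\psi$); the bound $(N+1)N(N-1)$ is correct once this is read that way.
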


\begin{proof}
Let $\lambda_1, \ldots, \lambda_d\in \kk$ be the single roots of $P$. It suffices to check that for a general $\lambda$ there is no automorphism of $\p^1$ that sends $\{ \lambda_1, \ldots, \lambda_d, \infty \}$ to $\{ \frac{1}{\lambda_1- \lambda}, \ldots, \frac{1}{\lambda_d- \lambda}, \infty \}$, or equivalently that there is no automorphism that sends $\{ \lambda_1, \ldots, \lambda_d, \infty \}$ to  $\{ \lambda_1, \ldots, \lambda_d, \lambda \}$. But if an automorphism sends $\{ \lambda_1, \ldots, \lambda_d, \infty \}$ to  $\{ \lambda_1, \ldots, \lambda_d, \lambda \}$, it necessarily belongs to the set  ${\mathcal A}$ of automorphisms $\varphi$ such that $\varphi^{-1} ( \{ \lambda_1, \lambda_2, \lambda_3 \} ) \subset \{ \lambda_1, \ldots, \lambda_d, \infty \}$. Since an automorphism of $\p^1$ is determined by the image of 3 points, the set ${\mathcal A}$ has at most $6 {d+1 \choose 3} = (d+1)d(d-1)$ elements. In conclusion, if $\lambda$ is not of the form $\varphi (\mu) $ for some $\varphi \in {\mathcal A}$ and some $\mu \in \{ \lambda_1, \ldots, \lambda_d, \infty \}$, then no automorphism of $\p^1$ sends  $\{ \lambda_1, \ldots, \lambda_d, \infty \}$ to  $\{ \lambda_1, \ldots, \lambda_d, \lambda \}$.
\end{proof}

\begin{remark}
If $\k$ is a finite field (with at least $3$ elements), then the conclusion of Corollary~\ref{Cor:AllcurvesInfinitefield} is false for the polynomial  $P=\prod\limits_{\alpha\in \k}(x-\alpha)$. Indeed, if $C,D\subset \A^2$ are two curves such that $C$ is isomorphic to $\Spec(\k[t,\frac{1}{P}])$ and $\A^2\setminus C$ is isomorphic to $\A^2\setminus D$, then $D$ is isomorphic to $\Spec(\k[t,\frac{1}{Q}])$ for some polynomial $Q$ that has no square factors and the same number of roots in $\k$ and in $\kk$ as $P$ (Theorem~\ref{RigidityThm}$(\ref{RigidityThmPQ})$). This implies that $Q$ is equal to $\mu P$ for some $\mu\in\k^*$ and thus that $C$ and $D$ are isomorphic. 

A similar argument holds for $P=\prod\limits_{\alpha\in \k^*}(x-\alpha)$ and $P=\prod\limits_{\alpha\in \k\setminus \{0,1\}}(x-\alpha)$ (when the field has at least $4$, respectively $5$ elements) since $\PGL_2(\k)$ acts $3$-transitively on $\p^1 (\k)$.
\end{remark}

\begin{corollary}  \label{Coro:Field}
For each ground field $\k$ with more than $27$ elements, there exist two geometrically irreducible closed curves $C,D\subset \A^2$ of degree $7$ which are not
isomorphic, but such that $\A^2\setminus C$ and $\A^2\setminus D$ are isomorphic.
\end{corollary}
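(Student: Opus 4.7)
The plan is to apply Proposition~\ref{proposition: existence of two curves with isomorphic complements and prescribed rings of functions} with $d=3$, since then $d^2-d+1=7$. It suffices to exhibit a degree $3$ polynomial $P\in\k[t]$ with three distinct roots in $\overline{\k}$, together with a constant $\lambda\in\k$ satisfying $P(\lambda)\neq 0$, such that the curves $C\simeq\Spec(\k[t,\tfrac{1}{P}])$ and $D\simeq\Spec(\k[t,\tfrac{1}{Q}])$, with $Q(t)=P(\lambda+\tfrac{1}{t})\cdot t^3$, provided by the proposition are non-isomorphic. Both $C$ and $D$ are automatically geometrically irreducible of degree $7$, being open subsets of $\A^1$.

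First I would choose $P$. Since $|\k|>27\geq 3$, I can pick three distinct elements $a,b,c\in\k$ and set $P(t)=(t-a)(t-b)(t-c)$, whose roots $\lambda_1=a$, $\lambda_2=b$, $\lambda_3=c$ lie in $\k\subseteq\overline{\k}$ and are distinct.

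Next I would find a suitable $\lambda$ by replaying the counting argument in the proof of Lemma~\ref{lemma: P and Q are not isomorphic for a general lambda}. Call $\lambda\in\k$ \emph{forbidden} if either $P(\lambda)=0$ (at most $3$ such values) or there is an automorphism of $\p^1$ sending $\{\lambda_1,\lambda_2,\lambda_3,\infty\}$ to $\{\lambda_1,\lambda_2,\lambda_3,\lambda\}$. In the latter case, $\lambda$ must lie in $\varphi(\{\lambda_1,\lambda_2,\lambda_3,\infty\})$ for some $\varphi$ in the set $\mathcal{A}$ defined in that proof, which has cardinality at most $6\binom{4}{3}=24$. Each $\varphi\in\mathcal{A}$ is determined by a bijection from a three-element subset $S\subseteq\{\lambda_1,\lambda_2,\lambda_3,\infty\}$ onto $\{\lambda_1,\lambda_2,\lambda_3\}$; the three values $\varphi(\mu)$ for $\mu\in S$ therefore lie in $\{\lambda_1,\lambda_2,\lambda_3\}$ and contribute nothing new, so only the image of the remaining fourth point can produce a fresh forbidden value. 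This bounds the total number of forbidden values in $\k$ by $3+24=27$.

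Since $|\k|>27$, a good $\lambda$ exists, and applying Proposition~\ref{proposition: existence of two curves with isomorphic complements and prescribed rings of functions} with this $P$ and $\lambda$ produces the required pair $C,D\subset\A^2$. The main obstacle is the sharpness of the counting: the bound works out to exactly $27$ only because three of the four images $\varphi(\mu)$ for each $\varphi\in\mathcal{A}$ automatically land in $\{\lambda_1,\lambda_2,\lambda_3\}$ by design of $\mathcal{A}$; without this refinement the crude bound $4\cdot 24=96$ would require a substantially larger field.
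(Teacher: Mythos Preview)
Your proof is correct and follows essentially the same approach as the paper. The paper routes the argument through Corollary~\ref{Coro:Points} (a direct repackaging of Proposition~\ref{proposition: existence of two curves with isomorphic complements and prescribed rings of functions}) and phrases the counting slightly differently---fixing the three source points and noting that their images in the four-element target set give at most $4\cdot 3\cdot 2=24$ automorphisms---but the substance and the final bound of $27$ forbidden values are identical to yours.
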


\begin{proof}
We fix some element $\zeta \in \k \setminus \{0,1\}$. For each $\lambda\in \k\setminus \{0,1,\zeta\}$,
we apply Corollary~\ref{Coro:Points} with $d=3$, $a_1= [ 0:1 ]$, $a_2 = [ 1 : 1]$, $a_3= [ \zeta : 1 ]$, $b_1 = [1:0 ]$, $b_2 = [ \lambda : 1 ]$ and get two closed curves $C,D\subset \A^2$ such that $\A^2\setminus C$ and $\A^2\setminus D$ are isomorphic and such that $C\simeq \A^1 \setminus \{0,1,\zeta\}=\p^1\setminus \{[0:1],[1:1],[\zeta:1],[1:0]\}$ and $D\simeq \p^1\setminus \{[0:1],[1:1],[\zeta:1],[\lambda:1]\}$. It remains to see that we
can find at least one $\lambda$ such that $C$ and $D$ are not isomorphic. Note that $C$ and $D$ are isomorphic if and only if there is an element of~$\Aut(\p^1)=\PGL_2(\k)$ that sends $\{[0:1],[1:1],[\zeta:1],[\lambda:1]\}$ onto $\{[0:1],[1:1],[\zeta:1],[1:0]\}$. The image of this element is determined by the image of~$[0:1],[1:1],[\zeta:1]$, so
we have at most $24$ automorphisms to avoid, hence at most $24$ elements of~$\k\setminus \{0,1,\zeta\}$ to avoid. Since the field $\k$ has at least $28$ elements, we find at least one $\lambda$ with the desired property.
\end{proof}

We can now prove Theorem~$\ref{Theorem:ComplementProblemAllFields}$.
\begin{proof}[Proof of Theorem~$\ref{Theorem:ComplementProblemAllFields}$]
If the field is infinite (or simply has more than $27$ elements), the theorem follows from Corollary~\ref{Coro:Field}. Let us therefore assume that $\k$ is a finite field. We again apply Proposition~\ref{proposition: existence of two curves with isomorphic complements and prescribed rings of functions} (with $\lambda = 0$). Therefore, if $\lvert \k \rvert > 2$ (resp.~$\lvert \k \rvert = 2$), it suffices to give a polynomial $P \in \k [ t]$ of degree $3$ (resp.~$4$) such that $P(0) \neq 0$ and such that if we set $Q:= P(\frac{1}{t} ) t^{\deg P}$, then the $\k$-algebras $\k [ t, \frac{1}{P} ]$ and $\k [ t, \frac{1}{Q} ]$ are not isomorphic.

We begin with the case where the characteristic of $\k$ is odd. Then, the kernel of the morphism of groups $\k^* \to \k^*$, $x \mapsto x^2$ is equal to $\{ -1, 1 \}$, so that this map is not surjective. Let us pick an element $\alpha \in \k^* \setminus (\k^*)^2$. Let us check that we can take $P= (t-1) ((t-1)^2 - \alpha)$. Indeed, up to a multiplicative constant, we have $Q= (t-1) ((t-1)^2 - \alpha t^2)$. Let us assume by contradiction that the algebras $\k [ t, \frac{1}{P} ]$ and $\k [ t, \frac{1}{Q} ]$ are isomorphic. Then, these algebras would still be isomorphic if we replaced $P$ and $Q$ by
\[ \tilde{P} = P(t+1) = t \big( t^2 - \alpha \big) \; \text{and} \; \tilde{Q} = Q(t+1) = t \big( t^2 - \alpha(t+1)^2\big).\] 
This would produce an automorphism of $\p^1$, via the embedding $t\mapsto [t:1]$, which sends the polynomial $uv(u^2-\alpha v^2)$ onto a multiple of $uv(u^2-\alpha (u+v)^2)$. This automorphism preserves the set of $\k$-roots: $\{ [0:1], [1:0] \}$, and is of the form either $[u:v] \mapsto [\mu u:v]$ or $[u:v] \mapsto [\mu v:u]$ where $\mu \in \k^*$.  The polynomial $u^2-\alpha v^2$ must be sent to a multiple of $u^2-\alpha (u+v)^2$, which is not
possible, because of the term $uv$.

We now treat the case where $\k$ has characteristic $2$. We divide it into three cases, depending on whether the cube homomorphism of groups $\k^* \to \k^*$, $x \mapsto x^3$ is surjective or not (which corresponds to asking that $\lvert \k\rvert$ not be a power of $4$) and setting aside the field with two elements.

If the cube homomorphism is not surjective, we can pick an element $\alpha \in \k^* \setminus (\k^*)^3$. We may take the irreducible polynomial $P= t^3 - \alpha\in \k[t]$. Indeed, up to a multiplicative constant, we have $Q= t^3 - \alpha^{-1}$. Assume by contradiction that the algebras $\k [ t, \frac{1}{P} ]$ and $\k [ t, \frac{1}{Q} ]$ are isomorphic. Then, there should exist constants $\lambda, \mu, c \in \k$ with $\lambda c \neq 0$ such that
\[ c (t^3 - \alpha^{-1}) = (\lambda t + \mu)^3 - \alpha.\]
This gives us $\mu = 0$ and $\lambda^3 = c = \alpha^2$.  Since the square homomorphism of groups $\k^* \to \k^*$, $x \mapsto x^2$ is bijective, there is a unique square root for each element of $ \k^*$. Taking the square root of the equality $\alpha^2 = \lambda^3$, we obtain $\alpha = ( \nu)^3$, where $\nu$ is the square root of $\lambda$. This is impossible since $\alpha$ was chosen not to be a cube.

If the cube homomorphism is surjective, then $1$ is the only root of $t^3-1=(t-1)(t^2+t+1)$, so $t^2+t+1\in \k[t]$ is irrreducible. If moreover $\k$ has more than $2$ elements, we can choose $\alpha\in \k\setminus \{0,1\}$ and take $P=(t - \alpha) (t^2+t+1)$. Up to a multiplicative constant, we have $Q= (t - \alpha^{-1}) (t^2+t+1)$. Let us assume by contradiction that the algebras $\k [ t, \frac{1}{P} ]$ and $\k [ t, \frac{1}{Q} ]$ are isomorphic.  Then, these algebras would still be isomorphic if we replaced $P$ and $Q$ by
\[ \tilde{P} = P(t+\alpha) = t ( t^2+t+  \alpha^2 + \alpha +1 ) \; \text{and} \; \tilde{Q} = Q(t+ \alpha^{-1}) = t ( t^2 +t+  \alpha^{-2} + \alpha^{-1} +1 ).\] 
This would yield an automorphism of $\p^1$, via the embedding $t\mapsto [t:1]$, which sends the polynomial $uv(u^2+uv + (\alpha^2 + \alpha +1) v^2)$ onto a multiple of $uv(u^2+uv + (\alpha^{-2} + \alpha^{-1} +1) v^2)$. The same argument as before gives $\alpha^2 + \alpha +1 = \alpha^{-2} + \alpha^{-1} +1$, i.e.~$\alpha^2 + \alpha +1  = \alpha^{-2}(\alpha^2 + \alpha +1)$. This is impossible since $\alpha^2 + \alpha +1 \neq 0$ and $\alpha^2 \neq 1$.

The last case is that in which $\k=\{0,1\}$ is the field with two elements. Here the construction does not work with polynomials of degree $3$: the only ones which are not symmetric and do not vanish at $0$ are $t^3+t^2+1$ and $t^3+t+1$, and they are equivalent via $t\mapsto t+1$. We then choose for $P$ the irreducible polynomial $P=t^4+t+1$ (it has no root and is not equal to $(t^2+t+1)^2=t^4+t^2+1$). This gives $Q=t^4+t^3+1$. Let us assume by contradiction that the algebras $\k [ t, \frac{1}{P} ]$ and $\k [ t, \frac{1}{Q} ]$ are isomorphic. Then, there would exist constants $\lambda, \mu, c \in \k$  such that  $\lambda c \neq 0$ and
\[c(t^4+t^3+1)= (\lambda t+\mu)^4+(\lambda t+\mu)+1.\]
This is impossible since $(\lambda t+\mu)^4+(\lambda t+\mu)+1=\lambda^4t^4+\lambda t+(\mu^4+\mu+1)$.
\end{proof}

\subsection{Finding explicit formulas}
To obtain the equations of the curves $C,D$ and the isomorphism $\A^2\setminus C\iso \A^2\setminus D$ given by Proposition~\ref{proposition: existence of two curves with isomorphic complements and prescribed rings of functions}, we could follow the construction and explicitly compute the birational maps described: The proposition establishes the existence of isomorphisms 
\[\psi_1\colon X\setminus (B\cup \tilde\Delta)\iso \A^2\mbox{ and }\psi_2\colon X\setminus (B\cup \tilde{\Gamma})\iso\A^2\]
such that $C=\psi_1(\tilde\Gamma \setminus (B\cup \tilde\Delta))$ and $D=\psi_2(\tilde\Delta\setminus (B\cup \tilde\Gamma))$ are of degree $d^2-d+1$, where $B=\tilde{L}\cup \bigcup\limits_{i=1}^{d-2}\mathcal{E}_i\cup \bigcup\limits_{i=1}^{d}\mathcal{F}_i$, and $\psi_1,\psi_2$ are given by blow-ups and blow-downs, so it is possible to compute $\psi_i \pi^{-1}\colon \p^2\dasharrow \p^2$ with formulas (looking at the linear systems), and then to get the isomorphism $\psi_2\pi^{-1}\circ (\psi_1 \pi^{-1})^{-1}\colon \A^2\setminus C\iso \A^2\setminus D$. However, the formulas for $\psi_1 \pi^{-1},\psi_2 \pi^{-1}$ are complicated.

Another possibility is the following: we choose a birational morphism $X\to W$ that contracts $\tilde{L},\mathcal{E}_1,\dots,\mathcal{E}_{d-2}$ and $\mathcal{F}_d,\dots,\mathcal{F}_2$ to two smooth points of $W$, passing through the image of $\mathcal{F}_1$ (this is possible, see diagram~(\ref{SymmetricD})). The situation of the image of the curves $\tilde{R},\mathcal{E}_{d-1},\mathcal{F}_1,\tilde\Gamma,\tilde\Delta$ (which we again denote by the same name) in $W$ is as follows:

\[
\begin{tikzpicture}[scale=1]
\draw (-3.5,0) -- (3.5,0);
\draw (0, 0.25) node{$d-2$};
\draw (-2.5,0.15) -- (-2.5,-1.35);
\draw (-2.35, -0.5) node{$0$};
\draw (2.5,0.15) -- (2.5,-1.35);
\draw (2.3, -0.5) node{$0$};
\draw  plot [smooth] coordinates {(-3.5,-1)  (-0.5,-0.7) (0.5,-1.1)(1.5,-0.2)  (2.5,0)(3.5,-0.1)};
\draw  plot [smooth] coordinates {(3.5,-1) (0.5,-0.7) (-0.5,-1.1)(-1.5,-0.2)  (-2.5,0)(-3.5,-0.1)};
\draw (-1, -1.55) node{\scriptsize$\tilde\Delta$};
\draw (1, -1.55) node{\scriptsize$\tilde\Gamma$};
\draw (-1, -1.1) node{$d$};
\draw (1, -1.1) node{$d$};
\draw (-2.85, -0.55) node{\scriptsize$\mathcal{E}_{d-1}$};
\draw (0, -0.2) node{\scriptsize$\mathcal{F}_1$};
\draw (2.7, -0.5) node{\scriptsize$\tilde{R}$};
\end{tikzpicture}\]
Computing the dimension of the Picard group, we find that $W$ is a Hirzebruch surface. Hence, the curves $\mathcal{E}_{d-1},\tilde{R}$ are fibres of a $\p^1$-bundle $W\to \p^1$ and  $\mathcal{F}_1,\tilde\Delta,\tilde\Gamma$ are sections of self-intersection $d-2,d,d$. We can then find many examples in $\mathbb{F}_1$ and $\mathbb{F}_0$
(depending on the parity of $d$), but also in $\mathbb{F}_m$ for $m\ge 2$ if the polynomial chosen at the outset is special enough.

The case where $d=3$ corresponds to curves of degree $7$ in $\A^2$ (Proposition~\ref{proposition: existence of two curves with isomorphic complements and prescribed rings of functions}), which is the first interesting case, as it gives non-isomorphic curves for almost every field (Theorem~\ref{Theorem:ComplementProblemAllFields}). When $d=3$, we find that $\mathcal{F}_1$ is a section of self-intersection $1$ in $W=\mathbb{F}_1$, so $\mathbb{F}_1\setminus \mathcal{F}_1$ is isomorphic to the blow-up of $\A^2$ at one point, and $\tilde\Gamma,\tilde\Delta$ are sections of self-intersection~$3$ and are thus strict transforms of parabolas passing through the point blown up. This explains how the following result is derived from Proposition~\ref{proposition: existence of two curves with isomorphic complements and prescribed rings of functions}. 
However, the statement and the proof that we give are independent of the
latter proposition:

\begin{proposition}\label{Prop:ExplicitDegree7}
Let us fix some constants $a_0,a_1,a_2,a_3 \in \k$ with $a_0a_3 \neq 0$ and consider the two irreducible polynomials $P,Q\in \k[x,y]$ of degree $2$ given by
\[P=x^2-a_2x-a_3y \quad \text{and} \quad Q=y^2+a_0x+a_1y. \]

\begin{enumerate}[$(1)$]
\item\label{IsoPhiPQ}
Denoting by $\eta\colon \hat\A^2\to \A^2$  the blow-up of the origin and by  $\tilde\Gamma,\tilde\Delta \subset \hat\A^2$  the strict transforms of the curves $\Gamma,\Delta\subset \A^2$ given by $P=0$ and $Q=0$ respectively, the rational maps
\[\begin{array}[t]{rccc} \varphi_P\colon &\A^2 & \dasharrow & \A^2\\
&(x,y)&\mapsto &\left( - {\dis \frac{x}{P(x,y)} },P(x,y)\right)\end{array}
 \text{and } \;
\begin{array}[t]{rccc} \varphi_Q\colon &\A^2 & \dasharrow & \A^2\\
&(x,y)&\mapsto &\left(  {\dis \frac{y}{Q(x,y)} } ,Q(x,y) \right)\end{array}\]
are birational maps that induce isomorphisms
\[\begin{array}{rccc}
\psi_P=(\varphi_P\eta)|_{\hat{\A}^2\setminus \tilde{\Gamma}} \colon & \hat{\A}^2\setminus \tilde{\Gamma} & \iso & \A^2\end{array}
\quad \text{and} \quad
\begin{array}{rccc} \psi_Q=(\varphi_Q\eta)|_{\hat{\A}^2\setminus \tilde{\Delta}}\colon & \hat{\A}^2\setminus \tilde{\Delta} & \iso & \A^2.\end{array}\]

\item\label{IsoA2CA2Dexplicit}
Define the curves $C,D \subset \A^2$ by $C=\psi_Q(\tilde{\Gamma}\setminus \tilde{\Delta})$, $D=\psi_P(\tilde{\Delta}\setminus \tilde{\Gamma})$ and denote by $\psi\colon \A^2\setminus C\iso \A^2\setminus D$ the isomorphism induced by the birational transformation $\psi_P (\psi_Q)^{-1} \colon \A^2 \dasharrow \A^2$. Then, the curves $C,D \subset \A^2$ are given by $f=0$ and $g=0$ respectively, where the polynomials $f,g\in \k[x,y]$ are defined by:
\[\begin{array}{rcl}
f&=&  \left(\vphantom{\Big)} 1-x(xy+a_1) \right) \left(y \left(\vphantom{\Big)} 1-x(xy+a_1) \right)-a_0a_2\right)-x(a_0)^2a_3,\vspace{0.2cm}\\
g&=& \left(\vphantom{\Big)} 1-x(xy+a_2) \right) \left(y \left(\vphantom{\Big)} 1-x(xy+a_2) \right)-a_1a_3\right)-xa_0(a_3)^2.\end{array}\]
The following isomorphisms hold:
\[ C \simeq \Spec\left(\k[t,\frac{1}{\sum_{i=0}^3 a_i t^i}]\right) \quad \text{and} \quad D \simeq \Spec\left(\k[t,\frac{1}{\sum_{i=0}^3 a_{3-i} t^i}]\right).\] 
Moreover, $\psi$ and $\psi^{-1}$ are given by  
\[\begin{array}{rccc}
\psi\colon & (x,y)&\mapsto & {\dis \left(\frac{a_0 \left(\vphantom{\Big)} x(xy+a_1)-1 \right) }{f(x,y)},\frac{y \, f(x,y)}{(a_0)^2}\right) } \\
& {\dis \left(\frac{a_3 \left(\vphantom{\Big)} x(xy+a_2) -1 \right) }{g(x,y)},\frac{y \, g(x,y)}{(a_3)^2}\right) } & \mapsfrom & (x,y).
\end{array}
\]
\end{enumerate}
\end{proposition}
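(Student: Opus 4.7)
My plan is to verify Part $(\ref{IsoPhiPQ})$ by exhibiting explicit polynomial inverses and working in local charts of the blow-up, and Part $(\ref{IsoA2CA2Dexplicit})$ by composing these inverses to extract the key algebraic identity $a_0^2 P = v f$, then identifying $C = \{f = 0\}$ through a rational parametrisation and a degree count.

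For Part $(\ref{IsoPhiPQ})$, one checks directly that $\varphi_P$ and $\varphi_Q$ are birational with polynomial inverses
\[\varphi_P^{-1}(u, v) = \Bigl(-uv,\; \tfrac{v(u^2 v + a_2 u - 1)}{a_3}\Bigr), \qquad \varphi_Q^{-1}(u, v) = \Bigl(\tfrac{v(1 - u^2 v - a_1 u)}{a_0},\; uv\Bigr),\]
and that the polynomial identities $P \circ \varphi_P^{-1} = v$ and $Q \circ \varphi_Q^{-1} = v$ hold. In the standard chart of $\hat{\A}^2$ with coordinates $(x, t)$ in which $\eta(x, t) = (x, xt)$, one has $P \circ \eta = x(x - a_2 - a_3 t)$, so $\tilde\Gamma$ is cut out by $x = a_2 + a_3 t$, and $\psi_P$ equals $(x, t) \mapsto (-1/(x - a_2 - a_3 t),\; x(x - a_2 - a_3 t))$, a regular morphism off $\tilde\Gamma$; the other chart $(s, y) \mapsto (sy, y)$ is handled symmetrically. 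Computing $\eta^{-1} \circ \varphi_P^{-1}$ chart-by-chart and patching the resulting two regular expressions over $\A^2$ produces the inverse $\A^2 \iso \hat{\A}^2 \setminus \tilde\Gamma$, establishing the isomorphism $\psi_P$; the argument for $\psi_Q$ is parallel.

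For Part $(\ref{IsoA2CA2Dexplicit})$, I would first compute $\psi = \psi_P (\psi_Q)^{-1}$ by direct composition. For $(u, v) \in \A^2$, set $(x, y) = \varphi_Q^{-1}(u, v)$ and introduce $W := 1 - u(uv + a_1) = 1 - u^2 v - a_1 u$, so that $x = vW/a_0$ and $y = uv$. A short expansion yields the central identity
\[a_0^2 P(x, y) = v^2 W^2 - a_0 a_2 v W - a_0^2 a_3 u v = v\bigl(vW^2 - a_0 a_2 W - a_0^2 a_3 u\bigr) = v \cdot f(u, v),\]
from which $\psi(u, v) = (-x/P,\; P) = \bigl(a_0(u(uv + a_1) - 1)/f,\; v f/a_0^2\bigr)$ follows at once; the formula for $\psi^{-1}$ is obtained from the evident symmetry exchanging $(P, a_0, a_1)$ with $(Q, a_3, a_2)$. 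To identify $C = \{f = 0\}$, parametrise $\Gamma \simeq \A^1$ by $t \mapsto (t, t(t - a_2)/a_3)$; since $\Gamma$ is smooth at the origin ($a_3 \neq 0$), $\eta$ restricts to an isomorphism $\tilde\Gamma \iso \Gamma$. A computation gives $Q|_\Gamma = tR(t)/a_3^2$ with
\[R(t) = t^3 - 2a_2 t^2 + (a_2^2 + a_1 a_3) t + a_0 a_3^2 - a_1 a_2 a_3,\]
and the image of $\tilde\Gamma$ under $\psi_Q$ admits the rational parametrisation $t \mapsto (a_3(t - a_2)/R(t),\; tR(t)/a_3^2)$ (extended through $t = 0$ by the blow-up chart computation). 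Substituting into $f$ and using $R(t) - (t - a_2)(t^2 - a_2 t + a_1 a_3) = a_0 a_3^2$ verifies $f \equiv 0$ along $C$; pulling back a generic line $\alpha u + \beta v = \gamma$ via this parametrisation produces a polynomial equation of degree $7$ in $t$, so $\deg C = 7 = \deg f$ forces $C = \{f = 0\}$, and the case of $D$ is symmetric.

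Finally, since $\psi_Q$ restricts to an isomorphism $\tilde\Gamma \setminus \tilde\Delta \iso C$ and $\tilde\Gamma \simeq \A^1$ via the parameter $t$, we get $C \simeq \A^1$ minus the values of $t$ corresponding to points of $\tilde\Delta$. A short case analysis depending on whether $R(0) = a_3(a_0 a_3 - a_1 a_2)$ vanishes (i.e.~on whether the tangents of $\Gamma$ and $\Delta$ at the origin coincide) identifies this set with the zero locus of $R$, giving $C \simeq \Spec(\k[t, 1/R(t)])$. The change of variable $t = a_2 + a_3 r$ gives $R(a_2 + a_3 r) = a_3^2 \sum_{i=0}^3 a_i r^i$, hence $C \simeq \Spec(\k[r, 1/\sum_i a_i r^i])$, and the symmetric argument handles $D$. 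The only genuine obstacle is the line-pullback computation establishing $\deg C = 7$; once the central identity $a_0^2 P = vf$ is in hand, the remaining verifications reduce to routine polynomial manipulation.
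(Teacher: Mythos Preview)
Your proposal is correct, and the computational core---the inverse formulas for $\varphi_P,\varphi_Q$ and the identity $a_0^2\,(P\circ\varphi_Q^{-1})=v\,f$---coincides with the paper's. The difference is purely in how much work you extract from that identity.

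The paper observes that the factorisation $a_0^2\,(P\circ\varphi_Q^{-1})=v\,f$ \emph{already} identifies $C=\{f=0\}$, with no degree count needed: since $\psi_Q^{-1}\colon\A^2\iso\hat\A^2\setminus\tilde\Delta$ is an isomorphism and $\varphi_Q^{-1}=\eta\circ\psi_Q^{-1}$, pulling back the divisor $\Gamma=\{P=0\}$ gives $\operatorname{div}(vf)=(\psi_Q^{-1})^*\bigl(\eta^*(\Gamma)\bigr)=(\psi_Q^{-1})^*(\tilde\Gamma+E)$ (using that $\Gamma$ is smooth at the origin). The line $\{v=0\}$ is exactly the preimage of the exceptional divisor $E$ under $\psi_Q^{-1}$, so the remaining factor $f$ cuts out $C=\psi_Q(\tilde\Gamma\setminus\tilde\Delta)$ on the nose. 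Your separate parametrisation, substitution, and degree-$7$ line-pullback are thus redundant---you have already proved what you need, one line earlier.

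Secondly, the paper parametrises $\Gamma$ by $\rho(t)=(ta_3+a_2,\,t(ta_3+a_2))$ rather than your $t\mapsto(t,t(t-a_2)/a_3)$. With this choice $Q\circ\rho=(ta_3+a_2)\cdot\sum_{i=0}^3 a_it^i$ factors so that the linear factor vanishes precisely at the preimage of the origin (the point blown up), giving $C\simeq\Spec\bigl(\k[t,1/\sum a_it^i]\bigr)$ immediately. This avoids both your case analysis on whether $R(0)=a_3(a_0a_3-a_1a_2)$ vanishes and the final change of variable $t=a_2+a_3r$.

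In short: same engine, but the paper lets the central identity do double duty (equation of $C$ \emph{and} formula for $\psi$), and picks the parametrisation that makes the cubic $\sum a_it^i$ appear without post-processing.
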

\begin{proof}
$(\ref{IsoPhiPQ})$: Let us first prove that $\varphi_P$ is birational and that $\varphi_P\eta$ induces an isomorphism $\hat{\A}^2\setminus \tilde{\Gamma}\iso \A^2$. We observe that $\kappa\colon (x,y)\mapsto (x,x^2-a_2x-a_3y)$ is an automorphism of $\A^2$ that sends $\Gamma$ onto the line $L_y \subset \A^2$ of equation $y=0$. Moreover $\tilde\varphi_P=   \varphi_P \kappa^{-1} 
\colon (x,y)\mapsto (-\frac{x}{y},y)$  is birational, so $\varphi_P$ is birational. Since $\kappa$ fixes the origin, $\eta^{-1}\kappa\eta$ is an automorphism of $\hat{\A}^2$ that sends $\tilde{\Gamma}$ onto the strict transform $\tilde{L}_y \subset \hat{\A}^2$ of $L_y$.
The fact that $\tilde{\varphi}_P\eta$ induces an isomorphism $\hat{\A}^2\setminus \tilde{L}_y \iso  \A^2$ is straightforward using the classical description of the blow-up $\hat{\A}^2$ in which
\[ \hat{\A}^2  = \{ ((x,y), [u:v]) \;|\; xv=yu \} \subset \A^2 \times \p^1 \]
and $\eta \colon \hat{\A}^2 \to \A^2$ is the first projection. Actually, with this description $\tilde{L}_y = L_y \times [1:0]$ is given by the equation $v=0$ and the following morphisms are inverses of each other:
\[  \begin{array}{ll} \hat{\A}^2\setminus \tilde{L}_y \to  \A^2,  & ( (x,y), [u:v])  \mapsto  (-\frac{u}{v}, y) \\
\A^2 \to \hat{\A}^2 \setminus \tilde{L}_y,   &  (x,y) \mapsto ( (-xy,y), [-x:1]). \end{array} \]
It follows that $(\tilde{\varphi}_P \eta) (\eta^{-1}\kappa \eta) = \varphi_P \eta$ induces an isomorphism $\hat{\A}^2\setminus \tilde{\Gamma}\iso \A^2$. The case of $\varphi_Q$ and $\varphi_Q\eta$ would be treated similarly, using the automorphism of $\A^2$ given by $(x,y)\mapsto (y^2+a_0x+a_1y,y)$. This proves~$(\ref{IsoPhiPQ})$.

$(\ref{IsoA2CA2Dexplicit})$: Now that $(\ref{IsoPhiPQ})$ is proven, we get two isomorphisms
\[\psi_P|_U\colon U\iso \A^2 \setminus D,  \quad \psi_Q|_U\colon U\iso \A^2 \setminus C,\]
where $U=\hat{\A}^2 \setminus (\tilde{\Gamma}\cup \tilde{\Delta})$.
Remembering that $\Gamma\subset \A^2$ is given by $x(x-a_2)=a_3y$, we have an isomorphism
\[\begin{array}{rccc}
\rho\colon & \A^1 & \iso &\Gamma\\
&t & \mapsto &\left(t a_3+a_2, t(t a_3+a_2)\right)\\
&\frac{1}{a_3}(x-a_2) & \mapsfrom& (x,y). \end{array}\]
Replacing $\rho(t)$ in the polynomial $Q(x,y)=x a_0+ya_1+y^2$ used to define $\Delta$, we find 
\[Q(t a_3+a_2,t(t a_3+a_2))=(ta_3+a_2)(t^3a_3+t^2a_2+ta_1+a_0).\]
The root of $ta_3+a_2$ is sent by $\rho$ to the origin, which is itself blown up by $\eta$. Hence, the map $\eta^{-1} \rho$ induces an isomorphism from $V=\Spec(\k[t,\frac{1}{\sum_{i=0}^3 t^ia_i}])\subset \A^1$ to $\tilde\Gamma\setminus \tilde\Delta$.
Applying $\psi_Q=(\varphi_Q\eta)|_{\hat{\A}^2\setminus \tilde{\Delta}}$, we get an isomorphism $\theta=(\varphi_Q\rho)|_V\colon V \iso C$. Since $(\varphi_Q)^{-1}$ is given by
\[(\varphi_Q)^{-1}\colon (x,y) \mapsto \left( \frac{y \left(\vphantom{\Big)} 1-x(xy+a_1) \right)  }{a_0},xy \right) , \]
we can explicitly give $\theta$ and its inverse:
\[\begin{array}{rccc}
\theta\colon & \Spec(\k[t,\frac{1}{\sum_{i=0}^3 t^ia_i}]) & \iso &C\\
&t & \mapsto &\left( {\dis \frac{t}{ \sum_{i=0}^3 t^ia_i} },(ta_3+a_2) ({ \sum_{i=0}^3 t^ia_i})  \right)\\
&{\dis \frac{1}{a_3} \left( \frac{ y \left(\vphantom{\Big)} 1-x(xy-a_1) \right) }  {a_0}-a_2 \right)  }  & \mapsfrom& (x,y). \end{array}\]
Computing the extension of $\theta$ to a morphism $\p^1\to \p^2$, we see that the curve $C\subset \A^2$ has degree $7$. To find its equation, we can compute $((\varphi_Q)^{-1})^*(P)$: since $(a_0)^2P(x,y)=(a_0x)(a_0x-a_0a_2)-(a_0)^2a_3y$, we get 
\[\begin{array}{rcl}
(a_0)^2((\varphi_Q)^{-1})^*(P)&=&(a_0)^2 P \left(\frac{ y \left( 1-x(xy+a_1) \right) }{a_0}, xy\right)\\
&=&y \left( 1-x(xy+a_1) \right) (y \left( 1-x(xy+a_1) \right) -a_0a_2)-xy(a_0)^2a_3\\
&=&yf(x,y),\end{array}\]
where  
\[f= \left( 1-x(xy+a_1) \right) (y \left( 1-x(xy+a_1) \right) -a_0a_2)-x(a_0)^2a_3\in \k[x,y]\]
is the equation of $C$ (note that the polynomial $y=0$ appears here, because
it corresponds to the line contracted by $(\psi_Q)^{-1}$, corresponding to the exceptional divisor of $\hat{\A}^2\to \A^2$ via the isomorphism $\A^2\to \hat{\A}^2\setminus \hat\Delta$). The linear involution of $\A^2$ given by $(x,y)\mapsto (-y,-x)$ exchanges the polynomials $P$ and $Q$ and the maps $\varphi_P$ and $\varphi_Q$, by replacing $a_0,a_1,a_2,a_3$ by $a_3,a_2,a_1,a_0$ respectively. This shows that $D\subset \A^2$ has equation $g=0$, where $g$ is obtained from $f$ on replacing $a_0,a_1,a_2,a_3$ by $a_3,a_2,a_1,a_0$, i.e.
\[g= \left( 1-x(xy+a_2) \right) (y \left( 1-x(xy+a_2) \right) -a_1a_3)-xa_0(a_3)^2 \in \k[x,y]. \]
Therefore, $D$ is isomorphic to $\Spec(\k[t,\frac{1}{\sum_{i=0}^3 \alpha_{3-i} t^i}])$. It remains to compute the isomorphism $\psi\colon \A^2\setminus C\to \A^2\setminus D$, which is by construction equal to the birational maps $\psi_P(\psi_Q)^{-1}=\varphi_P(\varphi_Q)^{-1}$. Using the equation $(a_0)^2P \left( \frac{y \left( 1-x(xy+a_1) \right)  }{a_0},xy \right) =yf(x,y)$, we get:
\[\begin{array}{rll}
 \psi (x,y) &=& {\dis \varphi_P \left( \frac{ y \left( 1-x(xy+a_1) \right) }{a_0},xy \right) } \vspace{0.2cm} \\
& =& {\dis \left(-\frac{ y \left( 1-x(xy+a_1) \right) }{a_0 P \left( \frac{y \left( 1-x(xy+a_1) \right) }{a_0},xy \right) },P \left( \frac{ y \left( 1-x(xy+a_1) \right) } {a_0},xy \right)\right) } \vspace{0.2cm}\\
& = & {\dis  \left(\frac{a_0 \left( x(xy+a_1)-1\right) }{f(x,y)}, \frac{y \, f(x,y)}{(a_0)^2}\right). } \end{array}\]
By symmetry, the expression of $\psi^{-1}$ is obtained from that of $\psi$ by  replacing $a_0,a_1,a_2,a_3$ by $a_3,a_2,a_1,a_0$, i.e.~it is given by ${\dis \psi^{-1}(x,y) = \left(\frac{a_3 \left( x(xy+a_2)-1\right) }{g(x,y)},\frac{y \, g(x,y)}{(a_3)^2}\right) }$.\end{proof}

\begin{remark}
Proposition~\ref{Prop:ExplicitDegree7} yields an isomorphism $\psi^{*}\colon \k[x,y,\frac{1}{g}]\iso \k[x,y,\frac{1}{f}]$ which sends the invertible elements onto the invertible elements and thus sends $g$ onto $\lambda f^{\pm 1}$ for some $\lambda\in \k^*$  (see Lemma~\ref{Lemm:RingInvertible}). This corresponds to saying that $\psi$ induces an isomorphism between the two fibrations
\[\A^2\setminus C\stackrel{f}{\to} \A^1\setminus \{0\} \quad \text{and} \quad \A^2\setminus D\stackrel{g}{\to} \A^1\setminus \{0\},\]
possibly exchanging the fibres. To study these fibrations, we use the equalities
\begin{equation}\label{phiQP}
(\varphi_Q)^*(f)=\frac{(a_0)^2P}{Q},\quad (\varphi_P)^{*}(g)=\frac{(a_3)^2Q}{P},\end{equation}
which can either be checked directly, or deduced as follows: the first equality follows from $((\varphi_Q)^{-1})^*(P)=\frac{yf(x,y)}{(a_0)^2}$, applying $(\varphi_Q)^*$, and the second is obtained by symmetry.

Note that equation~(\ref{phiQP}) provides $\psi^{*}(g)=\frac{(a_0a_3)^2}{f}$, since  $\psi=\varphi_P(\varphi_Q)^{-1}$.

For each $\mu\in \k$, the fibre $C_\mu\subset \A^2$ given by $f(x,y)=\mu$ is an algebraic curve isomorphic to its preimage by the isomorphism $\psi_Q=(\varphi_Q\eta)|_{\hat{\A}^2\setminus \tilde{\Delta}}\colon  \hat{\A}^2\setminus \tilde{\Delta}  \iso  \A^2$ of Proposition~\ref{Prop:ExplicitDegree7}$(\ref{IsoPhiPQ})$. By construction, $(\psi_Q)^{-1}(C_\mu)$ is equal to $\tilde{\Gamma}_\mu\setminus \tilde{\Delta}$, where $\tilde{\Gamma}_\mu\subset \hat{\A}^2$ is the strict transform of the curve $\Gamma_\mu\subset \A^2$ given by $(a_0)^2P-\mu Q=0$ (follows from equation~(\ref{phiQP})). The closure of $\Gamma_\mu$ in $\p^2$ is the conic given by
\[(a_0)^2x^2-\mu y^2-z \left(\vphantom{\Big)}a_0 ( \mu+a_0a_2 ) x- ( \mu a_1+(a_0)^2a_3 ) y \right)=0, \]
which passes through $[0:0:1]$ and is irreducible for a general $\mu$. Projecting from the point $[0:0:1]$ we obtain an isomorphism with $\p^1$ (still for a general $\mu$). The curve $\tilde{\Gamma}_\mu\setminus \tilde{\Delta}$ is then isomorphic to $\p^1$ minus three $\kk$-points of $\tilde{\Delta}$, which are fixed and do not depend on $\mu$, and minus the two points at infinity, which correspond to $(a_0)^2x^2-\mu y^2=0$.

When the field is algebraically closed, we thus find that the general fibres of $f$ are isomorphic to $\p^1$ minus $5$  points, whereas the zero fibre is isomorphic to $\p^1$ minus $4$  points (if $\sum_{i=0}^3 a_i t^i$ is chosen to have three distinct roots). Moreover, the two points of intersection with the line at infinity say that this curve is a \emph{horizontal curve of degree~$2$}, or a \emph{horizontal curve which is not a section} (in the usual notation of polynomials and components on boundary, see \cite{Neuman,AC96,CD17}), so the polynomials $f$ and $g$ are
rational, but
not of simple type (see \cite{Neuman,CD17}). When $\k=\C$, this implies that the polynomial has non-trivial monodromy  \cite[Corollary 2, page 320]{ACD98}.
\end{remark}
\section{Related questions}\label{Related}
\subsection{Higher dimensional counterexamples}\label{SubSec:HigherDim}
The negative answer to the Complement Problem for $n=2$
also furnishes a negative answer for any $n\ge 3$.
This relies mainly on the cancellation property for curves, as explained in the following result:

\begin{proposition}   \label{Prop:Products}
Let $C,D\subset \A^2$ be two closed geometrically irreducible curves that have isomorphic complements. Then for each $m\ge 1$, the varieties $H_C=C\times \A^m$ and $H_D=D\times \A^m$ are closed hypersurfaces of $\A^2\times \A^m=\A^{m+2}$ that have isomorphic complements. Moreover, $C$ and $D$ are isomorphic if and only if  $C\times \A^m$ and $D\times \A^m$ are.
\end{proposition}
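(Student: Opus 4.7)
The plan is to split the statement into two easy directions and one nontrivial direction, the nontrivial one being the cancellation for affine curves.

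First I would verify the geometric setup. Since $C\subset\A^2$ is a geometrically irreducible closed curve, it is the zero locus of an irreducible polynomial $f\in\k[x,y]$, whose image in $\kk[x,y]$ is still irreducible (up to a unit). Writing $\A^{m+2}=\Spec\k[x,y,z_1,\dots,z_m]$, the same $f$ defines $H_C=C\times\A^m$ as a closed subscheme; it is a hypersurface, and it is still geometrically irreducible because $f$ remains irreducible in $\kk[x,y,z_1,\dots,z_m]$ (an irreducible element of a UFD stays irreducible after adjoining independent variables). The same applies to $D$ and $H_D$.

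Next, I would handle the easy direction of the complement statement. Any isomorphism $\varphi\colon\A^2\setminus C\iso\A^2\setminus D$ gives, after taking the product with $\mathrm{id}_{\A^m}$, an isomorphism
\[
(\A^2\setminus C)\times\A^m \;\iso\; (\A^2\setminus D)\times\A^m,
\]
and the left side (resp.\ right side) is naturally identified with $\A^{m+2}\setminus H_C$ (resp.\ $\A^{m+2}\setminus H_D$). Similarly, an isomorphism $C\simeq D$ trivially gives $C\times\A^m\simeq D\times\A^m$.

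The only real content is the converse of the last statement: if $C\times\A^m\simeq D\times\A^m$, then $C\simeq D$. This is exactly the cancellation theorem of Abhyankar--Eakin--Heinzer, which asserts that affine $\k$-algebras of Krull dimension one are cancellative: if $A$ and $B$ are such domains and $A[T_1,\dots,T_m]\simeq B[T_1,\dots,T_m]$ as $\k$-algebras, then $A\simeq B$. Applying this with $A=\mathcal{O}(C)$ and $B=\mathcal{O}(D)$, which are one-dimensional affine $\k$-domains (and integral over $\k$ since $C,D$ are geometrically irreducible closed curves in $\A^2$), we conclude $C\simeq D$.

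The main obstacle is the invocation of cancellation for curves; I would simply cite the Abhyankar--Eakin--Heinzer theorem rather than reproving it. Everything else is formal.
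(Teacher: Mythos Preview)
Your proof is correct and follows essentially the same approach as the paper: verify that $H_C,H_D$ are geometrically irreducible hypersurfaces, take the product of the given isomorphism with $\mathrm{id}_{\A^m}$ to get the isomorphism of complements, and invoke the Abhyankar--Eakin--Heinzer cancellation theorem for the nontrivial direction of the last equivalence. One minor wording slip: the parenthetical ``integral over $\k$'' is not what you mean (that would say $\mathcal{O}(C)$ is algebraic over $\k$); you simply need that $\mathcal{O}(C)$ and $\mathcal{O}(D)$ are one-dimensional affine $\k$-domains, which you already said.
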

\begin{proof}
Denoting by $f,g\in \k[x,y]$ the geometrically irreducible polynomials that define the curves $C,D$, the varieties $H_C,H_D\subset \A^2\times \A^m=\A^{m+2}$ are given by the same polynomials and are thus again geometrically irreducible closed
hypersurfaces. The isomorphism $\A^2\setminus C\iso \A^2\setminus D$  then  extends naturally to an isomorphism $\A^{m+2}\setminus H_C\iso \A^{m+2}\setminus H_D$.

The last equivalence is the well-known cancellation property for curves, proven in \cite[Corollary (3.4)]{AHE72}.
\end{proof}
\begin{corollary}\label{Coro:HighDimExp}
For each ground field $\k$ and each integer $n\ge 3$, there exist two geometrically irreducible smooth closed hypersurfaces $E,F \subset \A^n$ which are not
isomorphic, but whose complements $\A^n\setminus E$ and $\A^n\setminus F$ are isomorphic. Furthermore, the hypersurfaces can be given by polynomials $f,g\in \k[x_1,x_2]\subset \k[x_1,\dots,x_n]$ of degree $7$ if the field admits more than $2$ elements and of degree $13$ if the field has $2$ elements. The hypersurfaces $E,F$ are
isomorphic to $C\times \A^{n-2}$ and $D\times \A^{n-2}$ for some smooth closed curves $C,D\subset \A^2$ of the same degree.
\end{corollary}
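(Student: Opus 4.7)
The plan is to combine Theorem~\ref{Theorem:ComplementProblemAllFields} with Proposition~\ref{Prop:Products} and to deduce the smoothness from Corollary~\ref{Coro:Sing}. First, apply Theorem~\ref{Theorem:ComplementProblemAllFields} to the field $\k$ to produce two geometrically irreducible closed curves $C,D\subset \A^2$ which are not isomorphic but whose complements $\A^2\setminus C$ and $\A^2\setminus D$ are isomorphic; the theorem moreover guarantees that these curves can be chosen of degree $7$ if $|\k|>2$ and of degree $13$ if $|\k|=2$. Writing $C,D$ as the zero loci of geometrically irreducible polynomials $f,g\in \k[x_1,x_2]$ of the stated degree, we view $f,g$ as polynomials in $\k[x_1,\dots,x_n]$ and define $E,F\subset \A^n$ to be the corresponding hypersurfaces. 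By construction $E\simeq C\times \A^{n-2}$ and $F\simeq D\times \A^{n-2}$, both geometrically irreducible.

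Next, to transport the isomorphism, invoke Proposition~\ref{Prop:Products}: the given isomorphism $\A^2\setminus C\iso \A^2\setminus D$ extends trivially, via the product with $\id_{\A^{n-2}}$, to an isomorphism $\A^n\setminus E\iso \A^n\setminus F$. The same proposition (using the cancellation property of curves of \cite[Corollary~(3.4)]{AHE72}) yields that $E$ and $F$ are isomorphic if and only if $C$ and $D$ are isomorphic; since $C\not\simeq D$, we conclude $E\not\simeq F$.

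The remaining issue is smoothness. Here the key point is that, because $C$ and $D$ are not isomorphic, they are \emph{a fortiori} not equivalent under $\Aut(\A^2)$, so the isomorphism $\A^2\setminus C\iso \A^2\setminus D$ cannot extend to an automorphism of $\A^2$. Then Corollary~\ref{Coro:Sing} forces $C$ to be smooth (otherwise the isomorphism would extend), and running the same argument with $C$ and $D$ exchanged shows that $D$ is smooth as well. Smoothness of $C$ and $D$ immediately implies smoothness of the products $E\simeq C\times \A^{n-2}$ and $F\simeq D\times \A^{n-2}$.

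The only mildly subtle step is the smoothness argument, since one must remember that non-isomorphy of the curves is a strictly stronger condition than non-equivalence under $\Aut(\A^2)$, which is precisely what Corollary~\ref{Coro:Sing} requires as input; everything else is a direct concatenation of already-established results, with no new computation needed.
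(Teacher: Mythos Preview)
Your proof is correct and follows the same route as the paper: invoke Theorem~\ref{Theorem:ComplementProblemAllFields} for the curves $C,D$ and then Proposition~\ref{Prop:Products} to pass to $\A^n$. The one addition you make is the explicit smoothness argument via Corollary~\ref{Coro:Sing}; the paper leaves this implicit (the curves produced by Theorem~\ref{Theorem:ComplementProblemAllFields} are open subsets of $\A^1$, hence smooth), but your deduction from Corollary~\ref{Coro:Sing} is a perfectly valid alternative.
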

\begin{proof}
It suffices to choose for $f,g$ the equations of the curves $C,D\subset \A^2$ given by Theorem~\ref{Theorem:ComplementProblemAllFields}. The result then follows from Proposition~\ref{Prop:Products}.
\end{proof}

\subsection{The holomorphic case}

\begin{proposition} \label{Prop:Curvesholomorphic}
For every choice of $d+1$ distinct points $a_1,\dots,a_d,a_{d+1}\in \C$, with $d\ge 3$, there exist two closed algebraic curves $C,D\subset \C^2$ of degree $d^2-d+1$ such that $C$ and $D$ are algebraically isomorphic to $\C\setminus \{a_1,\dots,a_{d-1},a_d\}$ and $\C\setminus \{a_1,\dots,a_{d-1},a_{d+1}\}$ respectively, and such that $\C^2\setminus C$ and $ \C^2\setminus D$ are algebraically isomorphic.

In particular, if we choose the points in general position,
the curves $C$ and $D$ are not biholomorphic, but their complements are. 
\end{proposition}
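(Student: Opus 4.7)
The plan is to derive the statement directly from Corollary~\ref{Coro:Points}, applied over $\k=\C$. Given distinct points $a_1,\dots,a_d,a_{d+1}\in\C$ with $d\ge 3$, I would invoke that corollary with its parameter $d$ equal to ours, taking its $d$ ``common'' points to be $\{a_1,\dots,a_{d-1},\infty\}\subset\p^1(\C)$ and choosing $b_1=a_d$, $b_2=a_{d+1}$. This immediately produces closed algebraic curves $C,D\subset\C^2$ with $\C^2\setminus C$ algebraically isomorphic to $\C^2\setminus D$, and with
\[ C\simeq \p^1\setminus\{a_1,\dots,a_{d-1},\infty,a_d\}=\C\setminus\{a_1,\dots,a_{d-1},a_d\}, \]
and analogously for $D$. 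The degree $d^2-d+1$ is inherited from Proposition~\ref{proposition: existence of two curves with isomorphic complements and prescribed rings of functions}, on which the corollary rests. This settles the algebraic part of the statement.

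For the holomorphic consequence, I would simply remark that an algebraic isomorphism between complex algebraic varieties is in particular a biholomorphism, so that $\C^2\setminus C$ and $\C^2\setminus D$ are biholomorphic at once.

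It remains to prove that, for $a_{d+1}$ chosen in general position relative to $a_1,\dots,a_d$, the curves $C$ and $D$ are not biholomorphic. My approach is to show first that any biholomorphism $\phi\colon C\to D$ between these two $(d+1)$-times punctured Riemann spheres extends to an automorphism of $\p^1$: since $d+1\ge 4$, the target omits at least three points of $\p^1$, so the Great Picard theorem forbids $\phi$ from having an essential singularity at any puncture of $C$; hence $\phi$ extends to a holomorphic map $\p^1\to\p^1$, and applying the same argument to $\phi^{-1}$ shows that this extension is a M\"obius transformation. Such a transformation must map the punctures of $C$ bijectively onto those of $D$, i.e.\ send $\{a_1,\dots,a_{d-1},a_d,\infty\}$ onto $\{a_1,\dots,a_{d-1},a_{d+1},\infty\}$ as subsets of $\p^1$. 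Exactly as in the proof of Lemma~\ref{lemma: P and Q are not isomorphic for a general lambda}, the set of $a_{d+1}\in\C$ for which such a M\"obius transformation exists is finite, since a M\"obius transformation is determined by three point images and only finitely many admissible triples are available. Choosing $a_{d+1}$ outside this finite set (and outside $\{a_1,\dots,a_d\}$) yields curves $C$ and $D$ that are not biholomorphic, completing the proof.

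The only genuine work is the Picard-style extension argument, which is classical; the algebraic content is packaged entirely in Corollary~\ref{Coro:Points}, and the main care needed is in formulating the correct ``general position'' hypothesis so that the forbidden locus for $a_{d+1}$ is visibly finite.
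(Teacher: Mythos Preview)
Your proposal is correct and follows essentially the same route as the paper: the algebraic existence is drawn from Proposition~\ref{proposition: existence of two curves with isomorphic complements and prescribed rings of functions} (you go through its repackaging Corollary~\ref{Coro:Points}, which is harmless), and the holomorphic step is the same Picard-type extension to an element of $\PGL_2(\C)$ followed by the observation that only finitely many values of $a_{d+1}$ can be hit. The one cosmetic point is that Corollary~\ref{Coro:Points} does not state the degree, so you are right to trace it back through its proof to Proposition~\ref{proposition: existence of two curves with isomorphic complements and prescribed rings of functions}; alternatively you could invoke that proposition directly, as the paper does.
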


\begin{proof}
The existence of $C,D$ follows directly from Proposition~\ref{proposition: existence of two curves with isomorphic complements and prescribed rings of functions}. It remains to observe that $C$ and $D$ are not biholomorphic if the points are in general position.
If $f\colon C\to D$ is a biholomorphism, then $f$ extends to a holomorphic map $\C\p^1 \to \C\p^1$, as it cannot have essential singularities. The same holds for $f^{-1}$, so $f$ is just an element of $\PGL_2(\C)$, hence an algebraic automorphism of the projective complex line. Removing at least $4$ points of $\C\p^1$ (this is the case since $d\ge 3$) and moving one of them produces infinitely many curves with isomorphic complements, up to biholomorphism.
\end{proof}

\begin{corollary}\label{Coro:Holhighdimension}
For each $n\ge 2$, there exist algebraic hypersurfaces $E,F\subset \C^n$ which are complex manifolds that are not biholomorphic, but
have biholomorphic complements.
\end{corollary}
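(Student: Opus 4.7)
The plan is to treat the case $n=2$ as an immediate consequence of Proposition~\ref{Prop:Curvesholomorphic}, and then to bootstrap to $n\geq 3$ by taking products with $\C^{n-2}$ and using Kobayashi hyperbolicity to cancel the extra factor on the holomorphic side.

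For $n=2$, it suffices to take $E=C$ and $F=D$ furnished by Proposition~\ref{Prop:Curvesholomorphic} for a generic choice of points $a_1,\ldots,a_{d+1}$ with $d\geq 3$: they are not biholomorphic, but their complements are algebraically isomorphic and therefore biholomorphic as complex manifolds. For $n\geq 3$, I would set $E=C\times \C^{n-2}$ and $F=D\times \C^{n-2}$, which are smooth algebraic hypersurfaces of $\C^n$. Proposition~\ref{Prop:Products} gives an algebraic isomorphism $\C^n\setminus E\iso\C^n\setminus F$, which is in particular a biholomorphism.

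The main obstacle is to prove that $E$ and $F$ are not biholomorphic as complex manifolds, since Proposition~\ref{Prop:Products} supplies only an algebraic cancellation (valid for curves) which does not obviously extend to the holomorphic setting. To overcome this, I would invoke Kobayashi hyperbolicity. Since $C$ and $D$ are each biholomorphic to $\p^1_{\C}$ minus at least four points, they are Kobayashi hyperbolic, while the Kobayashi pseudodistance on $\C^{n-2}$ vanishes identically. Using the product formula for the Kobayashi pseudodistance,
\[
d_E\bigl((c_1,z_1),(c_2,z_2)\bigr)=\max\bigl(d_C(c_1,c_2),0\bigr)=d_C(c_1,c_2),
\]
and analogously for $F$. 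Hence the relation $p\sim q\Longleftrightarrow d_E(p,q)=0$ has as equivalence classes precisely the fibers of the projection $\pi_C\colon E\to C$ (and similarly the classes on $F$ are the fibers of $\pi_D\colon F\to D$).

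Any biholomorphism $f\colon E\to F$ preserves the Kobayashi pseudodistance, so it sends $\pi_C$-fibers to $\pi_D$-fibers and therefore descends to a bijection $\bar f\colon C\to D$ characterized by $\pi_D\circ f=\bar f\circ \pi_C$. To see that $\bar f$ is holomorphic, I would fix any base point $z_0\in \C^{n-2}$ and observe that $\bar f(c)=\pi_D(f(c,z_0))$ is a composition of holomorphic maps. Applying the same argument to $f^{-1}$ shows that $\bar f$ is a biholomorphism between $C$ and $D$, contradicting our choice. This yields the desired contradiction and completes the proof.
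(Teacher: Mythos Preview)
Your proof is correct, and the overall architecture matches the paper's: both handle $n=2$ directly via Proposition~\ref{Prop:Curvesholomorphic} and then pass to $E=C\times\C^{n-2}$, $F=D\times\C^{n-2}$ for $n\ge 3$, with the only nontrivial point being holomorphic cancellation of the $\C^{n-2}$ factor.

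The difference lies in how this cancellation is proved. The paper argues via Picard's theorem: for each fixed $c\in C$, the composite $p_D\circ\psi|_{\{c\}\times\C^{n-2}}\colon \C^{n-2}\to D$ is a holomorphic map to a curve omitting at least two points of $\C$, hence constant, so $p_D\circ\psi$ factors through a holomorphic map $C\to D$ (and symmetrically). You instead invoke Kobayashi hyperbolicity and the product formula $d_{X\times Y}=\max(d_X,d_Y)$, identifying the fibers of $\pi_C$ and $\pi_D$ as the degeneracy classes of the Kobayashi pseudodistance. The two arguments are closely related---Kobayashi hyperbolicity of $\p^1$ minus three or more points is essentially a reformulation of Picard's theorem---but they are packaged differently. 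The paper's route is slightly more elementary (it needs only the classical Picard theorem and no product formula), while yours is more structural and would adapt verbatim to any hyperbolic target in place of $C$ and $D$.
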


\begin{proof}
It suffices to take polynomials $f,g\in \C[x_1,x_2]$ provided by Proposition~\ref{Prop:Curvesholomorphic}, whose zero sets are smooth algebraic curves $C,D\subset \C^2$ that are not
biholomorphic, but
have holomorphic complements. We then use the same polynomials to define $E,F\subset \C^n$, which are smooth complex manifolds that have biholomorphic complements and are biholomorphic to $C\times \C^{n-2}$ and $D\times \C^{n-2}$ respectively. It remains to observe that $C\times \C^{n-2}$ and $D\times \C^{n-2}$ are not biholomorphic. Denote by $p_C\colon C\times \C^{n-2}\to C$ and $p_D\colon D\times \C^{n-2}\to D$ the projections on the first factor. If $\psi\colon \C^{n-2}\times C\to\C^{n-2}\times D $ is a biholomorphism, then $p_D\circ \psi\colon \C^{n-2}\times C\to D$ induces,  for each $c \in C$, a holomorphic map $\C^{n-2} \to D$ which must be constant by Picard's theorem (since it avoids at least two values of $\C$). Therefore, the map $p_D \circ \psi$ factors through a holomorphic map $\chi \colon C \to D$: we have $p_D \circ \varphi = \chi \circ p_C$. We analogously get a holomorphic map $\theta \colon D \to C$, which is by construction the inverse of $\chi$, so $C$ and $D$ are biholomorphic, a contradiction.
\end{proof}

\section*{Appendix: The case of $\p^2$}

\addtocounter{section}{1} 
\renewcommand{\thesection}{A}
\setcounter{lemma}{0}

In this appendix, we describe some results on the question of complements of curves in $\p^2$ explained in the introduction. These are not directly related to the rest of the text and serve only as comparison with the affine case.

We recall the following simple argument, known to specialists, for lack of reference:

\begin{proposition} \label{Prop:InP2Counterexamplessingular}
Let $C,D\subset \p^2$ be two geometrically irreducible closed curves such that $\p^2\setminus C$ and $\p^2\setminus D$ are isomorphic. If $C$ and $D$ are not equivalent, up to automorphism of $\p^2$, then $C$ and $D$ are singular rational curves.
\end{proposition}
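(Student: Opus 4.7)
The plan is to extend the isomorphism of complements to a birational self-map of $\p^2$, deduce via Proposition~\ref{Prop:Jac} that $C$ and $D$ are both contracted (hence rational), and then rule out smoothness by adjunction together with a separate treatment of the resulting low-degree cases.

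First, I would extend $\varphi\colon\p^2\setminus C\iso\p^2\setminus D$ to a birational map $\hat\varphi\colon\p^2\dashrightarrow\p^2$. If $\hat\varphi$ is an automorphism of $\p^2$, it sends $C$ onto $D$ and the two curves are equivalent, contradicting the hypothesis. Otherwise $J_{\hat\varphi}$ is nonempty; by Proposition~\ref{Prop:Jac} it is contained in $C$, and geometric irreducibility of $C$ forces $J_{\hat\varphi}=C$. Symmetrically $J_{\hat\varphi^{-1}}=D$, so $\hat\varphi$ contracts $C$ to a point and $\hat\varphi^{-1}$ contracts $D$ to a point. By Lemma~\ref{Lem:IsoComplRat}\eqref{ComplementsOutside} (or directly, since any irreducible curve contracted by a birational map between smooth projective surfaces lies in a tree of rational curves on the minimal resolution), both $C$ and $D$ are rational.

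Suppose now, for contradiction, that $C$ is smooth. Adjunction in $\p^2$ yields $g(C)=\binom{d-1}{2}$ for $d=\deg C$, and rationality forces $d\in\{1,2\}$. The isomorphism $\Pic(\p^2\setminus C)\simeq\Z/d\Z\simeq\Pic(\p^2\setminus D)$ gives $\deg D=d$, and geometric irreducibility of $D$ then makes $D$ smooth of the same degree. When $d=1$, both $C$ and $D$ are lines and hence $\PGL_3(\k)$-equivalent in $\p^2$, contradicting the hypothesis. When $d=2$, both are smooth conics; this is the principal obstacle, since $\PGL_3(\k)$-equivalence of smooth conics is not automatic over a non-algebraically closed field. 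I would treat this subcase by a minimal-resolution analysis of $\hat\varphi$: all base points of $\hat\varphi$ lie on $C$ or infinitely near it (Lemma~\ref{Lem:IsoComplRat}\eqref{BasePtsKRat}), so using $C^2=4$ together with the identification $\eta^{-1}(C)=\pi^{-1}(D)$ on the common resolution, one analyses the configuration of exceptional curves common to $\eta$ and $\pi$ and shows that it forces $\hat\varphi$ to extend to an automorphism of $\p^2$. Alternatively one appeals to the uniqueness of the smooth projective log-compactification with irreducible boundary for $\p^2\setminus C$, whence the isomorphism of complements extends to an isomorphism of pairs $(\p^2,C)\simeq(\p^2,D)$. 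Either route contradicts the non-equivalence of $C$ and $D$, so $C$ must be singular; by symmetry $D$ is also singular.
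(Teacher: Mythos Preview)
Your argument tracks the paper's closely until the conic case, but there both proposed routes fail. The first---showing via a resolution analysis that $\hat\varphi$ must extend to an automorphism of $\p^2$---asserts something false: there exist genuine Cremona transformations whose only contracted curve is a smooth conic. For instance, the maps $\psi_P$ of Lemma~\ref{Lemm:Costa}\eqref{psiPautoQ} (for $d\ge 1$) are birational self-maps of $\p^2$, not in $\PGL_3(\k)$, that restrict to automorphisms of $\p^2\setminus\mathcal{Q}$ for the smooth conic $\mathcal{Q}$; hence $J_{\psi_P}=\mathcal{Q}$, and no resolution analysis can force such a map to be linear. The second route is circular: once you show that a smooth projective compactification $(X,B)$ of $\p^2\setminus C$ with irreducible boundary has $X\simeq\p^2$ and $B$ a smooth conic, proving that $(X,B)\simeq(\p^2,C)$ \emph{as a pair} is precisely the equivalence of two smooth conics with isomorphic complements---the statement you are trying to establish.

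The paper's fix is shorter and uses an ingredient you already have in hand. You showed (via Lemma~\ref{Lem:IsoComplRat}\eqref{ComplementsOutside}) that $C$ is rational \emph{over $\k$}, hence has a $\k$-point. A smooth plane conic with a $\k$-point can be brought to the form $xy+z^2=0$ by an element of $\PGL_3(\k)$: move the $\k$-point to $[1:0:0]$, move its tangent line to $y=0$, then absorb the remaining terms by an affine substitution in $x$ and a scaling. The same applies to $D$, so $C$ and $D$ are projectively equivalent, contradicting the hypothesis. You correctly identified that equivalence of smooth conics is the crux over non-closed fields; the point you missed is that the $\k$-rationality you already proved supplies the needed $\k$-point.
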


\begin{proof}
Denote by $\varphi\colon \p^2\dasharrow \p^2$ a birational map which restricts to an isomorphism $\p^2\setminus C\iso\p^2\setminus D$. If $\varphi$ is an automorphism of $\p^2$, then $C$ and $D$ are equivalent. Otherwise, the same argument as in Proposition~\ref{Prop:Threecasescontraction} shows that both $C$ and $D$ are rational (this also follows from \cite[Lemma 2.2]{Bla09}). If $C$ and $D$ are singular, we are done, so we may assume that one of them is smooth, and then has degree $1$ or $2$. Since the Picard group of $\p^2 \setminus C$ is $\Z/\deg(C)\Z$,
we find
that $C$ and $D$ have the same degree. This implies that $C$ and $D$ are equivalent under automorphisms of $\p^2$. The case of lines is obvious. For conics, it is enough to check that a rational conic over any field is necessarily equivalent to the conic of equation $xy+ z^2 =0$. Actually,
we may always assume that the rational conic contains the point $[1:0:0]$, since it contains a rational point.
We may furthermore assume that the tangent at this point has equation $y=0$. This means that the equation of the conic is of the form $xy + u (y,z)$, where $u$ is a homogenous polynomial of degree $2$. Using a change of variables of the form $(x,y,z) \mapsto (x + a y + bz, y,z)$, where $a,b\in\k$,
we may assume that the equation is of the form $xy + c z^2 = 0$, where $c\in \k^*$. Then, using the change of variables $(x,y,z) \mapsto (cx, y,z)$,
we finally get, as announced, the equation $xy + z^2 = 0$.
\end{proof}

In order to get families of (singular) curves in $\p^2$ that have isomorphic complements, we here give explicit equations from the construction of Paolo Costa~\cite{Cos12}. We thus obtain unicuspidal curves in $\p^2$ which have isomorphic
complements, but which are non-equivalent under the action of $\Aut(\p^2)$.
We give the details of the proof for self-containedness, and also because the results below are not explicitly stated in~\cite{Cos12}.

\begin{lemma}\label{Lemm:Costa}
Let $\k$ be a field. 
Let $d\ge 1$ be an integer and $P \in \k [x,y]$ a homogenous polynomial of degree $d$, not a multiple of $y$. We define the homogeneous polynomial $f_P\in \k[x,y,z]$ of degree $4d+1$ by the following formula, where $w:=xz-y^2$:
\[f_P=zw^{2d}+2y w^d P(x^2,w)+x P^2 (x^2,w). \]
Denote by $C_P, \mathcal{L}, \mathcal{Q} \subset \p^2$ the curves of equations $f_P= 0$, resp.~$z=0$, resp.~$w=0$, and by
$V_P, V_\mathcal{L}, V_\mathcal{Q} \subset \A^3$ their corresponding cones $($given by the same equations$)$. Then:

\begin{enumerate}[$(1)$]
\item\label{f_P is irreducible}
The polynomial $f_P$ is geometrically irreducible $($i.e.~irreducible in $\kk[x,y,z])$.
\item\label{psiPautoQ}
The rational map $\psi_P \colon \A^3 \dasharrow \A^3$ which sends $(x,y,z)$ to
\[ \Big(x,y+xP \left( x^2 w^{-1},1 \right) ,z+2yP \left( x^2w^{-1},1 \right) +x P^2 \left( x^2w^{-1},1\right) \Big) \]
is a birational map of $\A^3$ that restricts to isomorphisms 
\[\A^3\setminus V_{\mathcal{Q}}\iso\A^3\setminus V_{\mathcal{Q}},\ V_P\setminus V_\mathcal{Q}\iso V_\mathcal{L}\setminus V_\mathcal{Q}\text{ and }\A^3\setminus (V_\mathcal{Q}\cup V_P)\iso \A^3\setminus (V_\mathcal{Q}\cup V_\mathcal{L}).\]

Since $\psi_P$ is homogeneous, the same formula induces a birational map of $\p^2$ that restricts to isomorphisms 
\[\p^2\setminus\mathcal{Q}\iso \p^2\setminus\mathcal{Q},\  C_P\setminus \mathcal{Q}\iso \mathcal{L}\setminus \mathcal{Q}\text{ and }
\p^2\setminus (\mathcal{Q}\cup C_P)\iso \p^2  \setminus (\mathcal{Q}\cup \mathcal{L}).\]
Since the point $[0:0:1]$ is the unique intersection point between $C_P$ and $\mathcal{Q}$, it is also the unique singular point of $C_P$.

\item\label{varphilambdatheta}
Let $\lambda$ be a nonzero element of $\k$. Then, the rational map
\[\varphi_{\lambda}\colon (x,y,z)\mapsto \left(x+(\lambda-1) wz^{-1},  y, z\right) = (\lambda x - (\lambda-1) y^2 z^{-1}, y,z) \]
is a birational map of $\A^3$ that restricts to automorphisms of $\A^3\setminus V_\mathcal{L}$, $V_{\mathcal{Q}}\setminus V_\mathcal{L}$ and $\A^3\setminus (V_\mathcal{L}\cup V_\mathcal{Q})$. The same formula then gives automorphisms of $\p^2\setminus \mathcal{L}$, $\mathcal{Q}\setminus \mathcal{L}$ and $\p^2\setminus (\mathcal{L}\cup \mathcal{Q})$.

\item\label{PQphi}
Set $\tilde{P}(x,y)=P(\lambda x,y)$ and $\kappa=(\psi_{\tilde{P}})^{-1}\varphi_\lambda \psi_P$. Then, the rational map $\kappa$ restricts to an isomorphism $\A^3 \setminus V_{P}\iso \A^3\setminus V_{\tilde{P}}$.
In particular, $\kappa$ also induces an isomorphism $\p^2\setminus C_P\iso \p^2\setminus C_{\tilde{P}}$.
\item\label{LinPPprime}
For each homogeneous polynomial $\tilde{P} \in \k [x,y]$ of degree $d$ which is not divisible by $y$,
the curves $C_P$ and $C_{\tilde{P}}$ are equivalent up to automorphisms of $\p^2$, if and only if there exist some constants $\rho \in \k^*,\mu \in \k$ such that
\[ \tilde{P}(x,y)=\rho P( \rho^2 x,y)+\mu y^d.\]
\end{enumerate}
\end{lemma}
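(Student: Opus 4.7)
The proof splits into a constructive ``if'' direction and a more subtle ``only if'' direction, whose core is an intrinsic characterisation of $\mathcal{Q}$ in terms of the curve $C_P$ alone.

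For the ``if'' direction, I produce two families in $\Aut(\p^2)$ fixing $[0:0:1]$ and preserving $\mathcal{Q}$: the torus part $\beta_\rho \colon [x:y:z] \mapsto [\rho^2 x : \rho y : z]$ for $\rho \in \k^*$, and the unipotent part $\alpha_\mu \colon [x:y:z] \mapsto [x : \mu x + y : \mu^2 x + 2 \mu y + z]$ for $\mu \in \k$. A direct check gives $w \circ \beta_\rho = \rho^2 w$ and $w \circ \alpha_\mu = w$, so both preserve $\mathcal{Q}$. Expanding $f_P$ and using the homogeneity of $P$, one verifies $f_P \circ \beta_\rho = \rho^{-1} f_{P_\rho}$ where $P_\rho(x,y) = \rho P(\rho^2 x, y)$, and $f_P \circ \alpha_\mu = f_{P + \mu y^d}$ (the latter following from the identity $(P(x^2,w) + \mu w^d)^2 = P(x^2,w)^2 + 2\mu w^d P(x^2,w) + \mu^2 w^{2d}$). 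Hence $(\beta_\rho \alpha_\mu)^{-1}$ takes $C_P$ onto $C_{\tilde P}$ for $\tilde P(x,y) = \rho P(\rho^2 x, y) + \mu y^d$.

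For the ``only if'' direction, let $\alpha \in \Aut(\p^2)$ with $\alpha(C_P) = C_{\tilde P}$. By item~(2), $[0:0:1]$ is the unique singular point of each curve, so $\alpha$ fixes $[0:0:1]$. The crux is to show $\alpha(\mathcal{Q}) = \mathcal{Q}$. My plan is to characterise $\mathcal{Q}$ intrinsically as the unique conic $Q' \subset \p^2$ with $(Q' \cdot C_P)_{[0:0:1]} = 2 \deg C_P = 8d + 2$, i.e.~$C_P \cap Q' = \{[0:0:1]\}$ as a set. Existence follows by substituting $x = y^2$ into $f_P(x,y,1)$, which yields $c_{d,0}^2 \, y^{8d+2}$, where $c_{d,0}$ is the coefficient of $x^d$ in $P$, nonzero by hypothesis. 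For uniqueness, I use the rational parametrization $u \mapsto \psi_P^{-1}([1:u:0])$ of $C_P$ provided by item~(2): a routine computation expands it locally as $x(u) = u^{4d}/q_0^2 + O(u^{4d+2})$, $y(u) = -u^{2d}/q_0 + O(u^{2d+2})$ with $q_0 = (-1)^d c_{d,0} \neq 0$, and yields the sharp identity $x(u) - y(u)^2 = -u^{8d+2}/D(u)^2$ with $D(0) = q_0^2$. Since the tangent cone of $C_P$ at $[0:0:1]$ is $x^{2d}$ (coming from $(x - y^2)^{2d}$ in $f_P(x,y,1)$), any competing conic $Q'$ has affine equation $x + c_{20} x^2 + c_{11} xy + c_{02} y^2 = 0$ after rescaling; substituting the branch parametrization and matching successive leading orders in $u$ forces in turn $c_{02} = -1$, $c_{11} = 0$, $c_{20} = 0$, so $Q' = \mathcal{Q}$. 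Applying this characterisation symmetrically to $C_{\tilde P}$ forces $\alpha(\mathcal{Q}) = \mathcal{Q}$.

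Once $\alpha$ preserves both $[0:0:1]$ and $\mathcal{Q}$, it lies in the stabilizer of the pair $(\mathcal{Q}, [0:0:1])$, a $2$-dimensional Borel subgroup of the subgroup of $\Aut(\p^2)$ preserving $\mathcal{Q}$ (itself isomorphic to $\PGL_2$). Parametrising $\mathcal{Q}$ by $[s:t] \mapsto [s^2:st:t^2]$, so that $[0:0:1]$ corresponds to $[s:t] = [0:1]$, a direct matrix computation shows that every element of this stabilizer is uniquely of the form $\beta_\rho \alpha_\mu$ for some $(\rho, \mu) \in \k^* \times \k$. Combined with the ``if'' direction, this yields $\tilde P(x,y) = \rho P(\rho^2 x, y) + \mu y^d$. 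The main obstacle is the uniqueness of $\mathcal{Q}$ as an osculating conic: it requires enough control on the branch parametrization to successively eliminate the three coefficients $c_{02}, c_{11}$ and $c_{20}$.
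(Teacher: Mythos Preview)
Your proof of part~(\ref{LinPPprime}) is correct and takes a genuinely different route from the paper's at the two non-routine steps.

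For the invariance of $\mathcal{Q}$, the paper argues as follows: since $\mathcal{Q}\cap C_{\tilde P}=\{[0:0:1]\}$ and $\tau(\mathcal{Q})\cap C_{\tilde P}=\{[0:0:1]\}$, both conics pass through the same initial string $q_1,\dots,q_r$ of infinitely near points on the (unibranch) curve $C_{\tilde P}$, by B\'ezout; after blowing these up, the strict transforms of $\mathcal{Q}$, $\tau(\mathcal{Q})$ and $C_{\tilde P}$ are pairwise disjoint, and a pencil argument in $|\mathcal{Q}|$ then forces $\tau(\mathcal{Q})=\mathcal{Q}$. You instead exploit the explicit parametrisation $u\mapsto \psi_P^{-1}([1:u:0])$ to compute the branch of $C_P$ at $[0:0:1]$ and show by successive order-matching that $\mathcal{Q}$ is the unique conic with maximal contact $8d+2$ there. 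Your argument is more hands-on and avoids infinitely near points and pencils; the paper's is more geometric and avoids power-series bookkeeping. Two small points on your side: the scalar in $f_P\circ\beta_\rho=\rho^{-1}f_{P_\rho}$ should be $\rho^{4d}$ (harmless projectively), and you should say one word ruling out conics singular at $[0:0:1]$ --- this follows since no line meets $C_P$ to order $4d+1$ at that point (the tangent line $x=0$ gives only $4d$, any other line only $2d$).

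For extracting the relation on $\tilde P$ once $\alpha$ is known to preserve $(\mathcal{Q},[0:0:1])$, the paper writes $\alpha$ in lower-triangular form, introduces the birational conjugate $s=\psi_{\tilde P}\,\alpha\,\psi_P^{-1}$, shows $s$ is a diagonal automorphism of $\p^2$ by tracking the invariants $x$, $w$, $z$, and then compares second coordinates of $\psi_{\tilde P}\alpha$ and $s\psi_P$. You instead identify the stabiliser of $(\mathcal{Q},[0:0:1])$ in $\Aut(\p^2)$ with $\{\beta_\rho\alpha_\mu:\rho\in\k^*,\mu\in\k\}$ via the Veronese parametrisation and read off $\tilde P$ from the forward direction. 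Your route is shorter and reuses the ``if'' computation; the paper's route is independent of it and gives the constants $\rho,\mu$ directly from the matrix entries of $\alpha$.
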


\begin{proof}
$(\ref{f_P is irreducible})$-$(\ref{psiPautoQ})$: As does each rational map $\A^3\dasharrow \A^3$, the rational map $\psi_P$ supplies a morphism of $\k$-algebras $(\psi_P)^*\colon \k[x,y,z]\to \k(x,y,z)$. This sends $x,y,z$ onto $x,y+x P(x^2w^{-1},1), z+2y P(x^2 w^{-1},1)+ x P^2(x^2 w^{-1},1) $. Note that $(\psi_P)^{*}$ fixes $x$ and $w$. This implies that $(\psi_P)^{*}$ extends to an endomorphism of $\k[x,y,z,w^{-1}]$, which is moreover an automorphism since $(\psi_P)^{*}\circ (\psi_{-P})^{*}=\mathrm{id}$. Extending to the quotient field $\k(x,y,z)$,
we get an automorphism of $\k(x,y,z)$, that we again denote by $(\psi_P)^*$, so $\psi_P$ is a birational map of $\A^3$ and induces moreover an isomorphism of $\A^3\setminus V_\mathcal{Q}$, because $(\psi_P)^{*}(\k[x,y,z,w^{-1} ])=\k[x,y,z,w^{-1} ]$.
We then observe
that $(\psi_P)^{*}(z)=f_Pw^{-2 d}$  where $f_P$ and $w=xz-y^2$ are coprime since $f_P(1,0,0) =P^2 (1,0) \not=0$. Let us also
notice
that $V_P \cap V_\mathcal{Q} = \{ (x,y,z) \in \A^3\;|\; x=y=0\}$ and that $V_\mathcal{L} \cap V_\mathcal{Q} =  \{ (x,y,z) \in \A^3\;|\; y=z=0\}$. Hence
$\psi_P$ restricts to an isomorphism of surfaces $V_P \setminus V_\mathcal{Q} \iso V_\mathcal{L} \setminus V_\mathcal{Q}$. This implies that $V_P$ and $C_P$ are rational, and that $f_P$ is geometrically irreducible, which proves $(\ref{f_P is irreducible})$. This also implies that $\psi_P$ restricts to an isomorphism $\A^3\setminus (V_\mathcal{Q}\cup V_P)\iso \A^3\setminus (V_\mathcal{Q}\cup V_\mathcal{L} )$. As $\psi_P$ is homogeneous, we get the analogous results by replacing $\A^3$, $V_P$, $V_\mathcal{L}$, $V_\mathcal{Q}$ by $\p^2$, $C_P$, $\mathcal{L}$, $\mathcal{Q}$ respectively.

$(\ref{varphilambdatheta})$:
We check that $\varphi_{\lambda}\circ \varphi_{\lambda^{-1}}=\mathrm{id}$, so $\varphi_{\lambda}$ is a birational map of $\A^3$, which  restricts to an automorphism of $\A^3\setminus V_\mathcal{L}$, since the denominators only involve $z$. Moreover, $(\varphi_{\lambda})^{*}(w)=\lambda w$ (where $(\varphi_{\lambda})^{*}$ is the automorphism of $\k(x,y,z)$ corresponding to $\varphi_{\lambda}$), so the surface $V_\mathcal{Q}\setminus V_\mathcal{L}$ is preserved, hence $\varphi_{\lambda}$ restricts to automorphisms of $\A^3\setminus V_\mathcal{L}$, $V_{\mathcal{Q}}\setminus V_\mathcal{L}$ and $\A^3\setminus (V_\mathcal{L}\cup V_\mathcal{Q})$. Since $\varphi_{\lambda}$ is homogeneous, the same formula then gives automorphisms of $\p^2\setminus \mathcal{L}$, $\mathcal{Q}\setminus \mathcal{L}$ and $\p^2\setminus (\mathcal{L}\cup \mathcal{Q})$.

$(\ref{PQphi})$:
By $(\ref{psiPautoQ})$-$(\ref{varphilambdatheta})$, the transformation $\kappa=(\psi_{\tilde{P}})^{-1}\varphi_\lambda \psi_P$  restricts to an isomorphism $\A^3\setminus (V_\mathcal{Q}\cup V_P) \iso \A^3\setminus(V_\mathcal{Q} \cup V_{\tilde{P}})$. Let us prove that with the special choice of $\tilde{P}$ that we have made,  $\kappa$ then restricts to an isomorphism $\A^3\setminus V_P\iso\A^3\setminus V_{\tilde{P}}$. For this, we prove that the restriction of $\kappa$ is the identity automorphism on $V_{\mathcal{Q}}\setminus V_P=V_{\mathcal{Q}}\setminus V_{\tilde{P}}=V_{\mathcal{Q}} \setminus \{ (x,y,z) \in \A^3\;|\; x=y=0 \}$. We compute 
\[\varphi_{\lambda}\psi_P(x,y,z)=\left(x+ (\lambda-1) w^{2d+1} f_P^{-1}, y+x P(x^2,w)w^{-d}, f_P w^{-2d} \right)\]
which satisfies $(\varphi_{\lambda}\psi_P)^*(w)=(\varphi_{\lambda})^{*}(w)=\lambda w$. To simplify the notation, we write $\delta=(\lambda-1)  w^{2d+1} f_P^{-1}$ and  get that $\kappa(x,y,z)=(\psi_{\tilde{P}})^{-1}\varphi_\lambda\psi_P(x,y,z)$ is equal to 
\[\left(x+\delta,y+x P(x^2,w)w^{-d}-(x+\delta )\tilde{P} \left( \lambda^{-1} (x+\delta)^2  w^{-1},1 \right), z+\zeta \right)\]
for some $\zeta \in \k(x,y,z)$. Since $\tilde{P}(x,y)=P(\lambda x,y)$, the second component is \[\kappa^{*}(y)=y+\frac{xP(x^2,w)-P((x+\delta)^2,w)(x+\delta)}{w^d}.\]
As $w^{d+1}$ divides the numerator of $\delta$,
we can write $\kappa^*(y)$ as $y+w (f_P)^{-n}R$, for some $R\in \k[x,y,z]$ and $n\ge 0$. Similarly, $\kappa^*(x)=x+w f_P^{-1} S$, where $S\in \k[x,y,z]$. Since $\kappa^*(w)=\lambda w$, we get
\[ \lambda w =(x+w f_P^{-1} S)(z+\zeta)-(y+w f_P^{-n} R)^2,\]
which shows that $\zeta (x+wf_P^{-1} S)= w f_P^{- \tilde{m}} \tilde{T}$ for some $\tilde{T}\in \k[x,y,z]$, $\tilde{m}\ge 0$. Hence
we can write $\kappa^*(z)=z+\zeta=z+w f_P^{-m} T$ for some $T\in \k[x,y,z]$ and $m\ge 0$. This shows that $\kappa$ is well defined on $V_{\mathcal{Q}}\setminus V_P=V_{\mathcal{Q}}\setminus V_{\tilde{P}}=V_{\mathcal{Q}}\setminus \{ (x,y,z) \in \A^3\;|\; x=y=0 \}$ and restricts to the identity on this surface.

Since $\kappa$ is homogeneous, the isomorphism $\A^3 \setminus V_{P}\iso \A^3\setminus V_{\tilde{P}}$ also induces an isomorphism $\p^2\setminus C_P\iso \p^2\setminus C_{\tilde{P}}$, which fixes pointwise the curve $\mathcal{Q}\setminus C_P=\mathcal{Q}\setminus C_{\tilde{P}}$.

$(\ref{LinPPprime})$:
Suppose first that $\tilde{P}(x,y)=\rho P( \rho^2 x, y)+\mu y^d$ for some $\rho\in \k^*,\mu \in \k$.
Define the transformation $\alpha \in \GL_3(\k)$ by
\[ \alpha (x,y,z) = (x,\rho y-\mu x,\rho ^2z-2\rho \mu y+\mu^2x) \]
and the birational transformation $s \in \mathrm{Bir}(\A^3)$ by $s=\psi_{\tilde{P}} \alpha (\psi_P)^{-1}$. Let us note that $s^* =  (\psi_{P}^*)^{-1} \alpha^{*}  \psi_{\tilde{P}} ^*$.
We check that $\alpha^* (w) = \rho^2 w$, from which we get $s^*(w) = \rho^2 w$.
The equality
\[\begin{array}{rcl}
\alpha^{*}  ( \psi_{\tilde{P}} ^*(y) )&=&\alpha^{*}(y+x\tilde{P}(x^2 w^{-1},1))=\rho y-\mu x+x \tilde{P}(\rho^{-2} x^2 w^{-1}, 1)\\
&=&\rho y+\rho xP(x^2w^{-1},1)=\rho \,  \psi_{P}^*(y) \end{array}\]
gives us $s^* (y) = \rho y$. The relation $z = x^{-1} (w-y^2)$
combined with the  equality $s^*(x) = x$ now proves that $s^* (z) = \rho^2 z$. But we have $(\psi_P)^{*}(z)=f_Pw^{-2 d}$ and $(\psi_{\tilde{P}} )^{*}(z)=f_{\tilde{P}} w^{-2 d}$, so that  we get $\alpha^* (f_{\tilde{P}} w^{-2 d}) = \rho^2  f_Pw^{-2 d}$. In turn, this latter equality  yields
\[ \alpha^* (f_{\tilde{P}} ) = \rho^{4d+2} f_P.\]
This shows that $\alpha$ induces an automorphism of $\p^2$ sending $C_P$ onto $C_{\tilde{P}}$.

Conversely, suppose that there exists $\tau\in \Aut(\p^2)$ sending $C_P$ onto $C_{\tilde{P}}$.

We begin by proving that $\tau$ preserves the conic $\mathcal{Q}$.
Since $C_{P}\setminus \mathcal{Q}\simeq C_{\tilde{P}}\setminus \mathcal{Q}\simeq L\setminus \mathcal{Q}\simeq \A^1$, the irreducible conic $\mathcal{Q}\subset \p^2$ intersects $C_P$ (respectively $C_{\tilde P}$) in exactly one $\kk$-point, the unique singular point $[0:0:1]$ of $C_P$ (resp.~$C_{\tilde P}$). The irreducible conic $\tau(\mathcal{Q})$ thus also intersects  $C_{\tilde{P}}$ in one $\kk$-point, namely $[0:0:1]$. Observe that this implies that $\tau(\mathcal{Q})=\mathcal{Q}$.
We first notice that $C_{\tilde{P}}\setminus \{ [0:0:1] \} \simeq \A^1$, so there is one $\k$-point at each step of the resolution of $C_{\tilde{P}}$. We can then write $q_1=[0:0:1]$ and define a sequence of points $(q_i)_{i\ge 1}$ such that $q_i$ is the point infinitely near $q_{i-1}$ belonging to the strict transform of $C_{\tilde{P}}$, for each $i\ge 2$. Denote by $r$ the biggest integer such that $q_r$ belongs to the strict transform of $\mathcal{Q}$ and by $r'$ the biggest integer such that $q_{r'}$ belongs to the strict transform of $\tau ( \mathcal{Q} )$.  By B\'ezout's Theorem (since $\mathcal{Q}$ and $\tau(\mathcal{Q})$ are smooth), we have
\[\sum_{i=1}^r m_{q_i}(C_{\tilde{P}})=\deg(\mathcal{Q})\deg(C_{\tilde{P}})=\deg(\tau(\mathcal{Q}))\deg(C_{\tilde{P}})=\sum_{i=1}^{r'} m_{q_i}(C_{\tilde{P}}),\]
which yields $r=r'$. On the blow-up $X\to \p^2$ of $q_1,\dots,q_r$, the strict transform of the curve $C_{\tilde{P}}$ is then disjoint from those of $\mathcal{Q}$ and $\tau(\mathcal{Q})$, which are linearly equivalent.
Assume by contradiction that we have $\tau(\mathcal{Q})\not=\mathcal{Q}$.
Then, we claim that the strict transform of any irreducible conic $\mathcal{Q}'$ in the pencil generated by $\mathcal{Q}$ and $\tau(\mathcal{Q})$ is also disjoint from the strict transform of $C_{\tilde{P}}$. Indeed, we first note that  $C_{\tilde{P}}$ and $\mathcal{Q}'$ have no common irreducible component since $C_{\tilde{P}}$ is an irreducible curve whose degree satisfies
\[ \deg C_{\tilde{P}} \geq 5 > 2= \deg \mathcal{Q}'.\]
Finally, since the (infinitely near) points $q_1, \ldots, q_r$ belong to both $\mathcal{Q}'$ and $C_{\tilde{P}}$ and since $\sum_{i=1}^r m_{q_i}(C_{\tilde{P}})=\deg(\mathcal{Q}')\deg(C_{\tilde{P}})$, the curves $\mathcal{Q}'$ and $C_{\tilde{P}}$ do not have any other common (infinitely near) point.

Choose now a general point $q$ of $\p^2$ which belongs to $C_{\tilde{P}} \setminus \{ q_1 \}\simeq \A^1$ and choose the conic $\mathcal{Q}'$ in the pencil generated by $\mathcal{Q}$ and $\tau(\mathcal{Q})$ which passes through $q$. Then, the strict transforms of $\mathcal{Q}'$ and $C_{\tilde{P}}$ intersect in $X$ (at the point $q$).
This contradiction shows that $\mathcal{Q}$ is preserved by $\tau$.

Since $\tau \in \Aut(\p^2)=\PGL_3(\k)$ fixes the point $[0:0:1]$ (which is the unique singular point of both $C_P$ and $C_{\tilde{P}}$) and preserves the line $x=0$ (which is the tangent line of both  $C_P$ and $C_{\tilde{P}}$ at the point $[0:0:1]$), it admits a (unique) lift $\alpha\in \GL_3(\k)$ which is triangular and satisfies $\alpha^* (x) = x$. This means that $\alpha$ is of the form:
\[ \alpha \colon (x,y,z) \mapsto (x, \rho y -\mu x, \gamma z + \delta y + \varepsilon x), \]
for some constants $\rho, \mu, \gamma, \delta, \varepsilon  \in \k$ (satisfying $\rho \gamma \neq 0$). Since $\alpha ^* (w)$ is proportional to $w$, we get $\gamma = \rho^2$, $\delta = -2 \rho \mu$ and $\varepsilon = \mu^2$, i.e.~$\alpha$ is of the form 
\[ \alpha \colon (x,y,z) \mapsto (x, \rho y -\mu x,  \rho ^2  z - 2 \rho \mu y + \mu^2 x). \]

Set $s:=\psi_{\tilde{P}} \alpha (\psi_P)^{-1}\in \mathrm{Bir}(\A^3)$. Since $\alpha^* (w) = \rho^2 w$, we also get $s^*(w) = \rho^2 w$. Since $(\psi_P)^{*}(z)=f_Pw^{-2 d}$, $(\psi_{\tilde{P}} )^{*}(z)=f_{\tilde{P}} w^{-2 d}$ and since $\alpha^*( f_{\tilde {P} })$ and $f_P$ are proportional, the fractions $s^*(z)$ and $z$ are also proportional. Therefore, there exists a nonzero constant $\xi \in \k$ such that 
\begin{equation} \label{sstar}s^{*}(x)=x, \quad s^{*}(w)=\rho^2 w, \quad s^{*}(z)=\xi z. \end{equation}

 Moreover, $s$ induces a birational map $\hat{s}$ of $\p^2$ which is an automorphism of $\p^2\setminus \mathcal{Q}$, because the same holds for $\alpha$, $\psi_P$ and $\psi_{\tilde{P}}$. Let us observe that $\hat{s}$ is in fact an automorphism of $\p^2$. Indeed, otherwise $\hat{s}$ would contract $\mathcal{Q}$ to one point.
This is impossible: Since $\hat{s}$ preserves the two pencils of conics given by $[x:y:z]\mapsto [w :x^2]$ and $[x:y:z]\mapsto [w:z^2]$, which have distinct base-points $[0:0:1]$ and $[1:0:0]$, these base-points are fixed by $\hat{s}$.
Hence, there exist some constants $\zeta, \eta, \theta \in \k$ such that $s^* (y) = \zeta x + \eta y + \theta z$. Hence (\ref{sstar}) gives us $\zeta= \theta =0$, i.e.~$s^* (y) = \eta y$. But the equality $s=\psi_{\tilde{P}} \alpha (\psi_P)^{-1}$ is equivalent to $\psi_{\tilde{P}} \alpha= s\psi_P$ and by taking the second coordinate we get
\[( \rho y-\mu x)+x\tilde{P} \left(  \rho^{-2} x^2 w^{-1},1 \right) =(\psi_{\tilde{P}} \alpha)^*(y)=(s\psi_P)^{*}(y)=\eta \left( y+xP\left(x^2 w^{-1},1 \right) \right) \]
which yields $\rho=\eta$ and $\tilde{P}( \rho^{-2} x^2w^{-1}, 1)=\rho P(x^2 w^{-1},1)+\mu$.
By substituting $\rho^{-2}y + x^{-1} y^2$ for $z$ and by noting that $w(x,y, \rho^{-2}y + x^{-1} y^2) = \rho^{-2} xy$, we obtain $\tilde{P}(xy^{-1},1)=\rho P(\rho^2 xy^{-1},1)+\mu$, which is equivalent to $\tilde{P}(x,y)=\rho P(\rho^2 x,y)+\mu y^d$, as we required. 
\end{proof}

The construction of Lemma~\ref{Lemm:Costa} yields, for each $d\ge 1$, families of curves of degree $4d+1$  having isomorphic complements. These are equivalent for $d=1$, at least when $\k$ is algebraically closed (Lemma~\ref{Lemm:Costa}$(\ref{LinPPprime})$), but not for $d\ge 2$.
We can now easily provide explicit examples:

\begin{proposition}\label{Prop:CostaFamilies}
Let $d\ge 2$ be an integer. Set $P=x^d+x^{d-1}y$ and $w= xz-y^2 \in \k[x,y]$. All curves of $\p^2$ given by
\[z w^{2d}+2yw^d P(\lambda x^2,w)+x P^2( \lambda x^2,w)  =0 \]
for $\lambda\in \k^*$, have isomorphic complements and are pairwise not equivalent up to automorphisms of $\p^2$.
\end{proposition}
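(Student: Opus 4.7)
The plan is to reduce both assertions directly to Lemma~\ref{Lemm:Costa}. For every $\lambda\in\k^*$, define
\[P_\lambda(x,y):=P(\lambda x,y)=\lambda^d x^d+\lambda^{d-1}x^{d-1}y.\]
Since $P_\lambda(x^2,w)=P(\lambda x^2,w)$, the curve in the statement is exactly $C_{P_\lambda}$ in the notation of Lemma~\ref{Lemm:Costa}. The polynomial $P_\lambda$ is homogeneous of degree $d$, and it is not divisible by $y$ because its coefficient of $x^d$ is $\lambda^d\neq 0$; hence Lemma~\ref{Lemm:Costa} applies to each $P_\lambda$.

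For the first assertion, I apply part~(\ref{PQphi}) of Lemma~\ref{Lemm:Costa} with the base polynomial $P=x^d+x^{d-1}y$ (which is $P_1$) and with $\tilde P=P_\lambda$: this yields, for each $\lambda\in\k^*$, an isomorphism $\p^2\setminus C_P\iso\p^2\setminus C_{P_\lambda}$. Composing one such isomorphism with the inverse of another gives an isomorphism $\p^2\setminus C_{P_\lambda}\iso\p^2\setminus C_{P_{\lambda'}}$ for any $\lambda,\lambda'\in\k^*$.

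For the second assertion, I invoke part~(\ref{LinPPprime}) of Lemma~\ref{Lemm:Costa}: the curves $C_{P_\lambda}$ and $C_{P_{\lambda'}}$ are equivalent under $\Aut(\p^2)$ if and only if there exist $\rho\in\k^*$ and $\mu\in\k$ with $P_{\lambda'}(x,y)=\rho\,P_\lambda(\rho^2 x,y)+\mu y^d$. Expanding both sides gives
\[\lambda'^d x^d+\lambda'^{d-1}x^{d-1}y=\lambda^d\rho^{2d+1}x^d+\lambda^{d-1}\rho^{2d-1}x^{d-1}y+\mu y^d.\]
Hence $\mu=0$ and, writing $t=\lambda'/\lambda\in\k^*$, we get the two relations $t^d=\rho^{2d+1}$ and $t^{d-1}=\rho^{2d-1}$. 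Dividing the first by the second yields $t=\rho^2$; substituting back into $t^d=\rho^{2d+1}=\rho\cdot(\rho^2)^d=\rho\cdot t^d$ forces $\rho=1$, whence $t=1$ and $\lambda=\lambda'$.

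No real obstacle is expected: the entire argument is a bookkeeping exercise on top of Lemma~\ref{Lemm:Costa}. The only slightly delicate point is the elementary arithmetic showing $\lambda=\lambda'$; it is important that $d\ge 2$ so that the two relations on $t$ and $\rho$ are genuinely independent and uniquely pin down $\rho=1$ over an arbitrary field (no extraction of roots is needed).
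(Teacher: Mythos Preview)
Your proof is correct and follows essentially the same approach as the paper: both identify the curves as $C_{P_\lambda}$, invoke Lemma~\ref{Lemm:Costa}\eqref{PQphi} for the isomorphic complements, and use Lemma~\ref{Lemm:Costa}\eqref{LinPPprime} together with coefficient comparison to deduce $\mu=0$, then $\rho=1$, then $\lambda=\lambda'$. The only cosmetic difference is that you introduce the ratio $t=\lambda'/\lambda$ whereas the paper divides the two coefficient equations directly.
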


\begin{proof}
The curves correspond to the curves $C_{P(\lambda x,y)}$ of Lemma~\ref{Lemm:Costa} and thus have isomorphic complements by Lemma~\ref{Lemm:Costa}$(\ref{PQphi})$. It remains to show that if $C_{P(\lambda x,y)}$ is equivalent to $C_{P(\tilde\lambda x,y)}$, then $\lambda=\tilde\lambda$. Lemma~\ref{Lemm:Costa}$(\ref{PQphi})$ yields the existence of $\rho\in \k^*,\mu \in \k$ such that $P(\tilde\lambda x,y)=\rho P(\rho^2\lambda x,y)+\mu y^d$. Since $d\ge 2$, both $P(\tilde\lambda x,y)$ and $\rho P( \rho^2\lambda x,y)$ do not have component with $y^d$, so $\mu=0$. We then compare the coefficients of $x^d$ and $x^{d-1}y$ and get
\[\tilde\lambda^d=\rho (\rho^2\lambda)^d,\ \ \tilde\lambda^{d-1}=\rho (\rho^2\lambda)^{d-1},\]
which yields $\tilde\lambda=\rho^2 \lambda$,  whence $\rho=1$ and $\tilde\lambda=\lambda$ as  desired.
\end{proof}

\end{document}